\newtheorem{theorem}{Theorem}
\newtheorem{lemma}{Lemma}
\newtheorem{definition}{Definition}
\newtheorem{proposition}{Proposition}
\newtheorem{assumption}{Assumption}
\newtheorem{remark}{Remark}
\newcommand{\defeq}{:=}
\newcommand{\norm}[1]{\left\lVert#1\right\rVert}
\newcommand{\vertiii}[1]{{\left\vert\kern-0.25ex\left\vert\kern-0.25ex\left\vert #1 
    \right\vert\kern-0.25ex\right\vert\kern-0.25ex\right\vert}}
\newcommand{\normop}[1]{\left\lVert#1\right\rVert_{\textup{op}}}
\newcommand{\normf}[1]{\left\lVert#1\right\rVert_{\textup{F}}}
\newcommand{\eps}{\epsilon}
\newcommand{\iid}{\textup{i.i.d.\ }} 
\newcommand{\R}{\mathbb{R}}
\newcommand{\bas}[1]{\begin{align*}#1\end{align*}}
\newcommand{\ba}[1]{\begin{align}#1\end{align}}
\newcommand{\bbb}[1]{\left[#1\right]}
\newcommand{\diag}[1]{{\textup{diag}}\left(#1\right)}
\newcommand{\bk}{\color{black}}
\newcommand{\rd}{\color{red}}
\newcommand{\E}{\mathbb{E}}
\newcommand{\Var}{\textup{Var}}
\newcommand{\cov}{\textup{Cov}}
\newcommand{\rem}{R}
\newcommand{\id}{\mathbf{I}}
\newcommand{\bb}[1]{\left(#1\right)}
\newcommand{\V}{\mathbb{V}}
\newcommand{\tr}[1]
{\text{trace}\left(#1\right)}
\definecolor{burntorange}{rgb}{0.8, 0.33, 0.0}
\newcommand{\Prob}{\mathbb{P}}
\newcommand{\Abs}[1]{\left|#1\right|}
\newcommand{\poly}{\textup{poly}}
\newcommand{\ind}{\mathcal{I}}
\newcommand{\Oja}{\mathsf{Oja}}
\newcommand{\Ojamain}{\widetilde{\mathsf{Oja}}}
\newcommand{\vp}{V_{\perp}}
\newcommand{\lambp}{\Lambda_{\perp}}
\newcommand{\ojavarest}{\mathsf{OjaVarEst}}
\newcommand{\var}{\mathsf{Var}}
\newcommand{\voja}{v_\mathsf{oja}}
\newcommand{\roja}{r_\mathsf{oja}}
\newcommand{\troja}{\tilde{r}_\mathsf{oja}}
\newcommand{\vmain}{\tilde{v}}
\newcommand{\median}{\mathsf{Median}}
\newcommand{\Res}{\mathsf{Res}}
\newcommand{\sign}{\mathsf{sign}}
\newcommand{\Tr}{\mathsf{Tr}}
\newcommand{\Ezero}[1]{\Psi_{#1, 0}}
\newcommand{\Eone}[1]{\Psi_{#1, 1}}
\newcommand{\Etwo}[1]{\Psi_{#1, 2}}
\newcommand{\Ethree}[1]{\Psi_{#1, 3}}
\newcommand{\Efour}[1]{\Psi_{#1, 4}}
\newcommand{\Mone}{\mathcal{M}}
\newcommand{\Mtwo}{\mathcal{M}_{2}}
\newcommand{\Mfour}{\mathcal{M}_{4}}
\newcommand{\Nu}{\mathcal{V}}
\newcommand{\eigengap}{\lambda_{1}-\lambda_{2}}
\newcommand{\errorsmall}{e_{\textsf{small}}}
\newcommand{\bootstrap}{\mathsf{BootstrapOja}}
\title{Beyond Sin-Squared Error: Linear-Time Entrywise Uncertainty Quantification for Streaming PCA}
\author{Syamantak Kumar\thanks{University of Texas at Austin, \texttt{syamantak@utexas.edu}} 
\and Shourya Pandey \thanks{University of Texas at Austin, \texttt{shouryap@utexas.edu}}
\and Purnamrita Sarkar\thanks{University of Texas at Austin, \texttt{purna.sarkar@utexas.edu}}}
\begin{document}

\maketitle


\begin{abstract}
    We propose a novel statistical inference framework for streaming principal component analysis (PCA) using Oja’s algorithm, enabling the construction of confidence intervals for individual entries of the estimated eigenvector. Most existing works on streaming PCA focus on providing sharp sin-squared error guarantees. Recently, there has been some interest in uncertainty quantification for the sin-squared error. However, uncertainty quantification or sharp error guarantees for \textit{entries of the estimated eigenvector} in the streaming setting remains largely unexplored. We derive a sharp Bernstein-type concentration bound for elements of the estimated vector matching the optimal error rate up to logarithmic factors. We also establish a Central Limit Theorem for a suitably centered and scaled subset of the entries.  To efficiently estimate the coordinate-wise variance, we introduce a provably consistent subsampling algorithm that leverages the median-of-means approach, empirically achieving similar accuracy to multiplier bootstrap methods while being significantly more computationally efficient. Numerical experiments demonstrate its effectiveness in providing reliable uncertainty estimates with a fraction of the computational cost of existing methods.
\end{abstract}


\section{Introduction}
Principal Component Analysis (PCA)~\citep{pearson1901liii, ziegel2003principal} is a cornerstone for statistical data analysis and visualization. Given a dataset $\{X_i\}_{i=1}^{n}$, where each $X_i \in \mathbb{R}^d$ is independently drawn from a distribution $\mathcal{P}$ with mean zero and covariance matrix $\Sigma$, PCA computes the eigenvector $v_1$ of $\Sigma$ that corresponds to the largest eigenvalue $\lambda_1$, and is the direction that explains the most variance in the data. It has been established~\citep{wedin1972perturbation, jain2016streaming, vershynin2010introduction} that the leading eigenvector $\hat{v}$ of the empirical covariance matrix $\hat{\Sigma} = \frac{1}{n} \sum_{i=1}^n X_iX_i^{\top}$ is a nearly optimal estimator of $v_1$ under suitable assumptions on  the data distribution. 

While theoretically appealing, computing the empirical covariance matrix $\hat{\Sigma}$ explicitly requires $O(d^2)$ time and space, which is expensive in high-dimensional settings when both the sample size and the dimension are large. Oja’s algorithm~\citep{oja1985stochastic}--- a streaming algorithm inspired by Hebbian learning~\citep{hebb2005organization}--- has emerged as an efficient and scalable algorithm for PCA. It maintains a running estimate of $v_1$ similar to a projected stochastic gradient descent (SGD) update
\begin{gather}\label{eq:ojaupdate}
u_i \gets u_{i-1} +\eta_n X_i(X_i^T u_{i-1}), \;\;
u_i \gets \frac{u_i}{\norm{u_i}_{2}}
\end{gather}
for $i \in [n]$, where $u_0$ is a random unit vector and $\eta_{n} > 0$ is the learning rate. The algorithm is single-pass, runs in time $\mathcal{O}(nd)$, and takes only $\mathcal{O}(d)$ space. We call the output $u_n$ of the above algorithm an \textit{Oja vector }$\voja$.

Oja's algorithm has fueled significant research in theoretical statistics, applied mathematics, and computer science~\citep{jain2016streaming, allenzhu2017efficient, chen2018dimensionality, yang2018history, henriksen2019adaoja, mouzakis2022spectral, lunde2021bootstrapping, monnez2022stochastic, DBLP:journals/corr/abs-2102-03646, kumarsarkar2024markovoja, kumarsarkar2024sparse}.  
Despite the plethora of work on sharp rates for the sin-squared error $\sin^2 \bb{\voja, v_1} := 1-(v_1^T\voja)^2$, entrywise uncertainty estimation for streaming PCA has received only limited attention. Since the update rule in Oja's algorithm is similar to a broad class of important non-convex problems, uncertainty estimation for Oja's algorithm has potential implications for matrix sensing~\citep{jain2013matcompl}, matrix completion~\citep{jain2013matcompl,keshavan2010completion}, subspace estimation~\citep{pmlr-v151-balzano22a}, and subspace tracking~\citep{balzano2010sstracking}. A notable exception is~\cite{lunde2021bootstrapping}, who show that $\sin^2 \bb{\voja, v_1} := 1-(v_1^T\voja)^2$ behaves asymptotically like a high-dimensional weighted chi-squared random variable. A main ingredient in their analysis is the Hoeffding decomposition of the matrix product $B_n$. Their method takes $O(bnd)$ time and $O(bd)$ space, where $b$ is the number of bootstrap replicas.
While \cite{lunde2021bootstrapping} do uncertainty estimation of the $\sin^2$ error, we are interested in coordinate-wise uncertainty estimation.

In contrast, in offline eigenvector analysis, there has been a surge of interest for \textit{two-to-infinity} ($\ell_{2\rightarrow \infty}$) error bounds for empirical eigenvectors and singular vectors of random matrices~\citep{eldridge2018unperturbed,Mao02102021,abbe2020entrywise,cape2017singular,abbe2022lptheory,cape2019signal}. However, none of these apply directly to the matrix product structure that arises from the Oja update in Eq~\eqref{eq:ojaupdate}. Recent advances on the concentration of matrix products~\citep{huang2022matrix,kathuria2020concentration} only provide operator norm or the $\ell_q$ moment of the Schatten norm of the deviation of a matrix product and do not provide non-trivial guarantees on the coordinates.

\textbf{Our contributions:} 

In this paper, we obtain \textit{finite sample and high probability deviation bounds} for elements of $\voja$.

1. We show that the deviation of the elements of $\voja$ is governed by a suitably defined limiting covariance matrix $\V$. Furthermore, for a subset $K$ of $[d]$ of interest, the distribution of the coordinate $\voja(k)$, when suitably centered and rescaled, is asymptotically normal with variance $\V_{kk}$.

2. We provide a sharp Bernstein-type concentration bound to show that \textit{uniformly over entries of $\voja$}, $\forall \; k \in [d],$
\ba{
   |e_k^{\top} (\underbrace{\voja -(v_1^T \voja) v_1}_{:= \roja})| = \tilde{O}\bb{\sqrt{\frac{\V_{kk}}{n}}}. \label{eq:per_coord_bound}
}
where $e_k$ denotes the $k^{\text{th}}$ standard basis vector. This is a surprising and sharp result because it can be used (see Lemma~\ref{lemma:entrywise_to_sin_squared}) to recover the optimal $\sin^2$ error up to logarithmic factors with high probability.

3. We provide an algorithm that couples a subsampling-based $O(nd)$ time and $O(d \log (d/\delta))$ space algorithm with Median of Means~\citep{nemirovskij1983problem} to estimate the marginal variances of the elements of $\roja:=\voja -(v_1^T \voja) v_1$. Theorem~\ref{thm:high_prob_error_bound} provides high-probability error bounds of our variance estimator \textit{uniformly} over $\forall k \in [d]$.

4. We present numerical experiments on synthetic and real-world data to show the empirical performance of our algorithm and also compare it to the multiplier bootstrap algorithm in~\cite{lunde2021bootstrapping} to show that our estimator achieves similar accuracy in significantly less time.

The paper is organized as follows: Section~\ref{ssec:related_work} discusses related work on streaming PCA, entrywise error bounds on eigenvectors, and statistical inference for Stochastic Gradient Descent. Section~\ref{sec:prelim} provides our problem setup, assumptions, and necessary preliminaries. Section~\ref{sec:main_results} provides our main results regarding entrywise concentration, CLT and our variance estimation algorithm, Algorithm~\ref{alg:variance_estimation}. We provide proof sketches in Section~\ref{sec:proof_techiniques} and experiments in Section~\ref{sec:experiments}.

\subsection{Related Work}\label{ssec:related_work}
\textbf{Streaming PCA.} A crucial measure of performance for Oja’s algorithm is the $\sin^2$ error, which quantifies the discrepancy between the estimated direction and the principal eigenvector of $\Sigma$ (the true population eigenvector, $v_1$) and the Oja vector, $\voja$. Notably, several studies~\citep{jain2016streaming, allenzhu2017efficient, DBLP:journals/corr/abs-2102-03646} have shown that Oja’s algorithm attains the same error as its offline counterpart, which computes the leading eigenvector of the empirical covariance matrix directly. More concretely, it has been shown that for an appropriately defined variance parameter $\Nu$ (equation~\eqref{eq:Nu_assumptions}),
\bas{
\sin^2(v_1, \voja) \defeq 1-(v_1^T\voja)^2=O\bb{\frac{\Nu}{n(\lambda_1-\lambda_2)^2}}.
}

\textbf{$\ell_\infty$ error bounds.} There is an extensive body of research on eigenvector perturbations of matrices. Most traditional bounds~\citep{davis1970rotation,wedin1972perturbation,stewart1990matrix}  measure error using the $\ell_2$ norm or other unitarily invariant norms. However, for machine learning and statistics applications, element-wise error bounds provide a better idea about the error in the estimated projection of \textit{a feature} in a given direction. This area has recently gained traction for random matrices.~\cite{eldridge2018unperturbed,abbe2020entrywise,cape2017singular,abbe2022lptheory} provide $\ell_{2\rightarrow \infty}$ bounds for eigenvectors and singular vectors of random matrices with low-rank structure.~\cite{cape2017singular} show an $\ell_{2\rightarrow \infty}$ norm for the error of the singular vectors of a covariance matrix formed by $n$ \iid Gaussian vectors; as long as $\lambda_1-\lambda_2>0$ and $v_1$ satisfies certain incoherence conditions, there exists a $w \in \{-1,1\}$ such that with probability $1-d^{-2}$, the top eigenvector $\hat{v}_1$ of the sample covariance matrix satisfies, up to logarithmic factors, 
\bas{
    \|v_1-w \hat{v}_1\|_\infty
    &\lesssim \bk
    \sqrt{\frac{\Tr\bb{\Sigma}/\lambda_{1}}{n}}\bb{\frac{\max_i\sqrt{\Sigma_{ii}}}{\sqrt{\lambda_1}}+\frac{\lambda_{2}}{\lambda_{1}}}  +  \frac{\Tr\bb{\Sigma}/\lambda_1}{n} \bb{\frac{1}{\sqrt{d}} + \sqrt{\frac{\lambda_{2}}{\lambda_{1}}}}. 
}
The guarantees of~\cite{cape2017singular} are offline and provide a common upper bound on all coordinates. Our algorithm has error guarantees that scale with the variances of the coordinates. 

\textbf{Uncertainty estimation for SGD.} 
For convex loss functions, the foundational work of~\cite{polyak1992sgd,ruppert1988efficient,SGD_bather1989stochastic} in Stochastic Gradient Descent (SGD) demonstrates that averaged SGD iterates are asymptotically Gaussian. A significant body of research has focused on the convex setting. These include notable works on covariance matrix estimation~\citep{SGD_conf/aaai/LiLKC18,su2018uncertainty,SGD_JMLR:v19:17-370,chen2020SGD,SGD_lee2022fast, zhu2023online}.
In comparison, work on uncertainty estimation for nonconvex loss functions is relatively few~\citep{yu2020nonconvexAnalysis,zhong2023online}. ~\citet{yu2020nonconvexAnalysis} establishes a Central Limit Theorem (CLT) under relaxations of strong convexity assumptions.~\citet{zhong2023online} weakens the conditions but relies on online multiplier bootstrap methods to estimate the asymptotic covariance matrix. Existing methods for estimating and storing the full covariance matrix suffer from numerical instability or slow convergence rates (see~\cite{pmlr-v206-chee23a}). For convex functions and their relaxations, ~\citet{zhu2024high,carter2025statistical} present computationally efficient uncertainty estimation approaches that are related but different from ours.
   
In large-scale, high-dimensional problems, maintaining numerous bootstrap replicas is computationally expensive.~\cite{pmlr-v206-chee23a} introduce a scalable method for confidence intervals around SGD iterates, which are informative yet conservative under regularity conditions such as strong convexity at the optima. In their setting, for an appropriate initial learning rate, the covariance matrix can be approximated by a constant multiple of identity (see also~\cite{ljung1992plusminusref}). In our setting, such an approximation requires knowledge of all eigenvalues and eigenvectors of $\Sigma$. The work most relevant to ours is by~\cite{lunde2021bootstrapping}. They provide asymptotic distributions for the $\sin^2$ error of the Oja vector and an online multiplier bootstrap algorithm to estimate the underlying distribution.

\textbf{Resampling Methods and Bootstrapping.}
Nonparametric bootstrap~\citep{efron1979bootstrap,hall1992bootstrap,efron1993introduction} is a resampling method where $b$ resamples of a given size $n$ dataset are drawn with replacement and treated as $b$ independent samples drawn from the underlying distribution. Of these varieties of bootstraps, the one widely used in SGD inference is the online multiplier bootstrap, where multiple bootstrap resamples are updated in a streaming manner by sampling multiplier random variables to emulate the inherent uncertainty in the data~\citep{ramprasad2023online, zhong2023online, lunde2021bootstrapping}.

A major concern about the bootstrap is its computational bottleneck. Maintaining many bootstrap replicates is computationally prohibitive if the number of data points $n$ and the dimension $d$ are large. Some computationally cheaper alternatives to bootstrap are subsampling~\citep{politis1999subsampling,Politis_10.1093/biomet/asad021, bertail1999subsampling, levina2017subsampling, chaudhuri2024differentially, chua2024scalable} and $m$-out-of-$n$ bootstrap~\citep{Bickel-m-out-of-n, bickel2008choice, sakov1998using, andrews2010asymptotic} both of which rely on drawing $o(n)$ with-replacement samples. These methods are used in~\cite{blbjrssb} to create $n$ with-replacement samples from smaller subsamples, but require multiple bootstrap replicates and are not directly applicable to the streaming setting.

\section{Problem Setup and Preliminaries}
\label{sec:prelim}


\textbf{Notation.} Let $[n]=\{1,\dots,n\}$ for all positive integers $n$. For a vector $v$, $\|v\| = \|v\|_2$ denotes its $\ell_2$ norm. For a matrix $A$, $\|A\| = \normop{A}$ is the operator norm, $\normf{A}$ is the Frobenius norm, and $\norm{A}_p$ is the Schatten $p$-norm of $A$, which is the $\ell_{p}$ norm of the vector of singular values of $A$. We define the \textit{two-to-infinity} norm $\norm{A}_{2\leftarrow \infty} := \sup_{\norm{x}_{2}=1}\norm{Ax}_{\infty}$. For a random matrix $M$ and $p,q \ge 1$, we define the norm $\vertiii{M}_{p,q} \defeq \E[\norm{M}_p^q]^{1/q}$. Let $I\in\R^{d\times d}$ be the identity matrix with $i^{\textsf{th}}$ column $e_i$. Define the inner product of matrices as $\langle A,B\rangle=\Tr(A^T B)$. We use $\widetilde{O}$ and $\widetilde{\Omega}$ for bounds up to logarithmic factors and use $a\lesssim b$ to mean $a\le Cb$ for some universal constant $C$. $\diag{a_1,\dots,a_d}$ denotes the diagonal matrix with entries $a_1,\dots,a_d$. For a vector $v\in\R^d$ and $S\subseteq [d]$ with $|S|=k$, $v[S]\in\R^k$ is the ``sub-vector'' of $v$ with its coordinates indexed by $S$.

\textbf{Data}. Let $\left\{X_i\right\}_{i\in[n]}$ be independent and identically distributed ($\iid$) mean-zero vectors sampled from the distribution $\mathcal{P}$ over $\mathbb{R}^{d}$ with covariance matrix $\Sigma := \E\bbb{X_{i}X_{i}^{T}}$. Let $A_i := X_iX_i^{\top}$. Let $v_{1}, v_{2},  \ldots, v_{d}$ 
denote the eigenvectors of $\Sigma$ with corresponding eigenvalues $\lambda_{1} > \lambda_{2} \geq \ldots \geq \lambda_{d}$. Let $\vp := \bbb{v_{2}, v_{3}, \ldots, v_{d}} \in \mathbb{R}^{d \times \bb{d-1}}$.


We operate under the following assumptions unless otherwise specified.

\begin{assumption}\label{assumption:bounded_moments}
For any $X_{i} \sim \mathcal{P}, A_i = X_iX_i^{\top}$, we assume the following moment bounds, where $\sqrt{\Nu} \le \mathcal{M}_2 \le \mathcal{M}_4$:
\begin{gather}
    \normop{\E\bbb{\bb{A_i - \Sigma}^{2}}} \leq \Nu \label{eq:Nu_assumptions}\\
    \E\bbb{\normop{A_i - \Sigma}^{2}}^{\frac{1}{2}} \leq \Mtwo \qquad \E\bbb{\normop{A_i - \Sigma}^{4}}^{\frac{1}{4}} \leq \Mfour. \label{eq:momentbound_assumptions}
\end{gather}
\end{assumption}

\begin{assumption}\label{assumption:sample_size}
There exists a universal constant $\kappa > 5$ such that $d = o\bb{n^{\kappa}}$ and  $\frac{n}{\log\bb{n}} \geq 2\max\left\{\kappa, \frac{\kappa^{2}\Mtwo^{4}\log\bb{d}}{\bb{\eigengap}^{4}} \right\}$. 
\end{assumption}

Assumption~\ref{assumption:bounded_moments} provides a suitable moment bound on the iterates $A_i$, and Assumption~\ref{assumption:sample_size} shows that we can handle the dimension $d$ growing polynomially with the sample size $n$, while requiring a mild base number of samples for convergence. We note that the constraint $\kappa > 5$ is arbitrary and our algorithm works as long as $d = \mathsf{poly}(n)$. These assumptions are commonly used in the streaming PCA literature (see for e.g. \cite{jain2016streaming}).

\textbf{Oja's Algorithm with constant learning rate.} With a constant learning rate, $\eta_{n}$, and initial vector, $u_{0}$, Oja's algorithm \citep{oja1982simplified} (denoted as $\Oja\bb{\left\{X_{t}\right\}_{t \in [n]}, \eta_n, u_{0}}$) performs the updates in Eq~\eqref{eq:ojaupdate}. Define $\forall t \in [n]$,
\ba{B_{t} := \prod_{i=0}^{t-1}\bb{I + \eta_{n} X_{t-i}X_{t-i}^{T}};\qquad 
B_{0} = I. 
\label{definition:Bn}
}
such that $u_{t} = B_{t}u_0/\norm{B_t u_0}_{2}$.
\section{Main Results}
\label{sec:main_results}

Recall the definition of Oja's algorithm with a constant learning rate, as defined in Section~\ref{sec:prelim}. For $\iid$ data $\mathcal{D}_{n} := \left\{X_{i}; X_{i} \in \R^{d}\right\}_{i \in [n]}$,  the learning rate $\eta_{n}$ defined in Lemma~\ref{lemma:learning_rate_choice}, and a random initial vector $u_0 \defeq g/\norm{g}$ where $g \sim \mathcal{N}(0, \id_d)$, define the \textit{Oja vector}
\ba{
 \voja(\mathcal{D}_{n}) := \Oja(\mathcal{D}_{n}, \eta_{n}, u_0). \label{eq:voja_def}
}
This is a random vector, with randomness over the data $\mathcal{D}_{n}$ as well as the initial vector $u_0$. While there are a myriad of works on the sin-squared error $1-(v_1^T\voja)^2$, there is, to our knowledge, no existing analysis on the concentration of the elements of the recovered vector around their population counterparts. One exception is~\citep{kumarsarkar2024sparse}, who showed that for sparse PCA, the elements of the Oja vector in the support of the true eigenvector are large, whereas those outside are small. However, these guarantees do not show concentration in our setting. We start our analysis with the Hoeffding decomposition of the matrix product (also see~\cite{lunde2021bootstrapping, vandervaart-asymptotic}).
The Hoeffding decomposition is a powerful tool that allows one to write the \textit{residual} of the Oja vector as
\ba{\label{eq:hoeffding}
\roja := \voja - \bb{v_1^{\top}\voja}v_1 = \Psi_{n,1} + \Res_n
}
where $\Psi_{n,1}$ is $\eta_n$ times a sum of independent but non-identically distributed random vectors and the residual $\Res_n$ is negligible compared to $\Psi_{n,1}$ (see Lemma~\ref{lemma:oja_error_decomposition} for details).

First, we show that the covariance matrix $\E[\Psi_{n,1}\Psi_{n,1}^T]$ of the dominant term in the residual converges to $\V$ when suitably scaled. Later, in Proposition~\ref{prop:main:clt} we will show that the distribution of the entries of $\roja$ is asymptotically normal with covariance matrix $\E[\Psi_{n,1}\Psi_{n,1}^T]/(\eta_n\bb{\eigengap})$.

\begin{lemma}[Asymptotic variance]\label{lemma:second_moment_matrix}
    Let 
    \bas{\widetilde{M} &:= \E\bbb{\vp^\top \bb{A_{1}-\Sigma}v_{1}v_{1}^\top\bb{A_{1}-\Sigma}\vp}, \\ d_k &:= 1-\bb{\frac{\lambda_1-\lambda_{k+1}}{1+\eta_n\lambda_1}} \eta_n. } 
    Then, the matrix $R^{(n)} \in \R^{(d-1) \times (d-1)}$ with entries 
    \bas{
         R^{(n)}_{k,l} &:= \frac{\widetilde{M}_{kl}}{(1+\eta_{n}\lambda_{1})^2} \bb{\frac{1 - \bb{d_kd_l}^{n}}{1 - d_kd_l}}, 
    }
    satisfies $\E\bbb{\Psi_{n,1}\Psi_{n,1}^{\top}} = \eta_n^2\vp R^{(n)}\vp^{\top}$. 
    
    Define the matrices $R_0 \in \R^{(d-1) \times (d-1)}$ and $ \mathbb{V} \in \R^{d \times d}$ as
    \ba{\label{eq:asympvar}
     (R_{0})_{k,l} \defeq \frac{\widetilde{M}_{k\ell}}{2\lambda_1-\lambda_{k+1}-\lambda_{\ell+1}}; \;\; \mathbb{V} \defeq \frac{1}{\eigengap}\vp R_0\vp^T. 
    }
    then,
    \ba{
    \norm{\frac{1}{\eta_n\bb{\eigengap}}\E[\Psi_{n,1}\Psi_{n,1}^T]-\mathbb{V}}_F\lesssim \frac{\eta_n \lambda_{1}\Mtwo^{2}}{\bb{\lambda_1-\lambda_2}^{2}}. \label{eq:variance_diff_bound}
    }
\end{lemma}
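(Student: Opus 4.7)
The plan is to first derive an exact formula for $\E[\Psi_{n,1}\Psi_{n,1}^\top]$ from the Hoeffding decomposition in Lemma~\ref{lemma:oja_error_decomposition}, then compare it entrywise with $\mathbb{V}$. Unpacking the leading linear term of the decomposition yields $\Psi_{n,1} = \frac{\eta_n}{1+\eta_n\lambda_1}\vp \sum_{i=1}^n \widetilde{D}^{n-i} \vp^\top \bb{A_i - \Sigma} v_1$, where $\widetilde{D} := \diag{d_1, \dots, d_{d-1}}$ with $d_k = (1+\eta_n \lambda_{k+1})/(1+\eta_n \lambda_1)$; this agrees with the lemma's definition since $1 - d_k = (\lambda_1 - \lambda_{k+1})\eta_n/(1+\eta_n\lambda_1)$. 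Because the $\{A_i\}$ are independent with $\E[A_i - \Sigma] = 0$, all cross-terms vanish under expectation, leaving $\E[\Psi_{n,1}\Psi_{n,1}^\top] = \eta_n^2(1+\eta_n\lambda_1)^{-2}\vp \bigl(\sum_{j=0}^{n-1}\widetilde{D}^j \widetilde{M} \widetilde{D}^j\bigr) \vp^\top$. The $(k,\ell)$ entry of the inner sum is the geometric series $\widetilde{M}_{k\ell}(1-(d_kd_\ell)^n)/(1-d_kd_\ell)$, which is precisely $\eta_n^2 R^{(n)}_{k\ell}$, establishing the first claim.

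For the Frobenius bound on $\eta_n R^{(n)} - R_0$, I would rely on the algebraic identity $(1+\eta_n\lambda_1)^2(1-d_kd_\ell) = \eta_n \alpha_{k\ell} + \eta_n^2 \beta_{k\ell}$, where $\alpha_{k\ell} := 2\lambda_1 - \lambda_{k+1} - \lambda_{\ell+1} \geq 2\bb{\eigengap}$ and $\beta_{k\ell} := \lambda_1^2 - \lambda_{k+1}\lambda_{\ell+1}$. Parameterizing $\lambda_{k+1} = \lambda_1 - a$ and $\lambda_{\ell+1} = \lambda_1 - b$ (so that $\alpha_{k\ell} = a+b$) yields $\beta_{k\ell} = \lambda_1(a+b)-ab \leq \lambda_1 \alpha_{k\ell}$. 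Subtracting $(R_0)_{k\ell} = \widetilde{M}_{k\ell}/\alpha_{k\ell}$ and using $\alpha_{k\ell} + \eta_n \beta_{k\ell} \geq \alpha_{k\ell}$, a short computation shows $|\eta_n R^{(n)}_{k\ell} - (R_0)_{k\ell}| \leq |\widetilde{M}_{k\ell}|\bigl((d_kd_\ell)^n + \eta_n \lambda_1\bigr)/\alpha_{k\ell}$.

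To conclude, I would take Frobenius norms, use the orthonormality of $\vp$ to write $\|\vp M \vp^\top\|_F = \|M\|_F$, and observe that under the learning rate of Lemma~\ref{lemma:learning_rate_choice} the tail $d_1^{2n} \leq \exp\bb{-2n\eta_n\bb{\eigengap}/(1+\eta_n\lambda_1)}$ decays polynomially in $n$ and is therefore subsumed by $\eta_n \lambda_1$. Combined with $\alpha_{k\ell} \geq \eigengap$, this yields $\|\eta_n\vp R^{(n)}\vp^\top - \vp R_0 \vp^\top\|_F \lesssim \eta_n \lambda_1 \|\widetilde{M}\|_F/\bb{\eigengap}$; dividing by $\eigengap$ matches Eq~\eqref{eq:variance_diff_bound} provided $\|\widetilde{M}\|_F \leq \Mtwo^2$. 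For this last ingredient, since $\widetilde{M} = \vp^\top N \vp$ for the PSD matrix $N := \E[(A_1-\Sigma)v_1 v_1^\top (A_1-\Sigma)]$, the compression $\vp^\top (\cdot) \vp$ cannot increase Frobenius norm, and for any PSD matrix $\|N\|_F \leq \mathrm{tr}(N) = \E[v_1^\top (A_1-\Sigma)^2 v_1] \leq \normop{\E\bbb{\bb{A_1-\Sigma}^2}} \leq \Mtwo^2$ by Assumption~\ref{assumption:bounded_moments}. The main subtlety lies in the first step: cleanly identifying the explicit weights $\widetilde{D}^{n-i}$ in the Hoeffding linear term requires careful bookkeeping through the noncommutative product $B_n = \prod_i(I + \eta_n A_i)$, since $A_i$ and $\Sigma$ do not commute.
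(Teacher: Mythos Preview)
Your proposal is correct and follows essentially the same approach as the paper: both expand $\Psi_{n,1}$ via the linear term of the Hoeffding decomposition, use independence of the $A_i$ to collapse to a geometric series in the diagonal weights $d_k$, and then compare $\eta_n R^{(n)}$ with $R_0$ entrywise before passing to the Frobenius norm and bounding $\|\widetilde M\|_F\le\Mtwo^2$.

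The one genuine difference is in how the entrywise comparison is carried out. The paper lower-bounds $1-d_kd_\ell$ through a chain of inequalities to show $1-d_kd_\ell \ge \eta_n(2\lambda_1-\lambda_{k+1}-\lambda_{\ell+1})(1-O(\eta_n\lambda_1))$ and then writes $(\eta_n R^{(n)}_{k\ell}-(R_0)_{k\ell})/(R_0)_{k\ell}=O(\eta_n\lambda_1)$, implicitly absorbing the tail $(d_kd_\ell)^n$ into the $O(\cdot)$ under the learning-rate assumption. Your exact identity $(1+\eta_n\lambda_1)^2(1-d_kd_\ell)=\eta_n\alpha_{k\ell}+\eta_n^2\beta_{k\ell}$ with $\beta_{k\ell}\le\lambda_1\alpha_{k\ell}$ is cleaner and makes the dependence on the tail explicit; your argument that $d_1^{2n}\lesssim\eta_n\lambda_1$ under Lemma~\ref{lemma:learning_rate_choice} is exactly what the paper leaves implicit. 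One small gap: you invoke $\alpha_{k\ell}+\eta_n\beta_{k\ell}\ge\alpha_{k\ell}$, which needs $\beta_{k\ell}\ge 0$; this follows from $ab\le\lambda_1\max(a,b)\le\lambda_1(a+b)$ in your parameterization but should be stated. Your final remark about noncommutativity is a non-issue here: the weights $\widetilde D^{n-i}$ emerge directly because the deterministic factor $(I+\eta_n\Sigma)^{n-i}$ is diagonalized by $\vp$ on the left, while $(I+\eta_n\Sigma)^{i-1}v_1=(1+\eta_n\lambda_1)^{i-1}v_1$ on the right, so no delicate bookkeeping is required.
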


This shows that suitably scaled, $\E[\Psi_{n,1}\Psi_{n,1}^T]$ converges to the matrix $\mathbb{V}$. Note that the scaling factor $\eta_n \bb{\eigengap} = \frac{\alpha \log n}{n}$ is independent of model parameters for the choice of $\eta_{n}$ defined in Lemma~\ref{lemma:learning_rate_choice}.

The next result establishes a Central Limit Theorem (CLT) for the subset of elements in the residual vector $r_{\text{oja}}$ with sufficiently large limiting variance. 


\begin{proposition}[CLT for a suitable subset of entries]\label{prop:main:clt}
Let $\{X_i\}_{i=1}^n$ be independent mean-zero random vectors with covariance matrix $\Sigma$ such that $\mathbb{E}\bigl[\exp(v^\top X_1)\bigr]\le\exp\bigl(\tfrac{\sigma^2\,v^\top\Sigma\,v}{2}\bigr)$ for all $v\in\mathbb{R}^d$ and $\sigma > 0$ is some constant. 

For all $i \in [n]$, let
\[
H_i := \frac{\sign\bb{v_{1}^{\top}u_0}}{(1+\eta_n \lambda_1)}\vp\,\lambp^{\,n-i}\vp^\top\bigl(A_i-\Sigma\bigr)v_1,
\]
Let $b > 0$ be a constant, and let $J \subseteq [d]$ be the set of coordinates with $\V_{jj} \ge b$. Let $p \defeq |J|$. 

Let $Y_i\in\mathbb{R}^p$ be independent mean-zero Gaussian vectors with covariance matrix $$\mathbb{E}[Y_iY_i^\top]=\frac{n\eta_n}{\eigengap}\,\mathbb{E}[H_i[J]H_i[J]^\top],$$ and let $S_{Y} := \sum_{i=1}^{n}Y_i$. Suppose the learning rate $\eta_n$, set according to Lemma~\ref{lemma:learning_rate_choice}, satisfies
$\frac{\Mtwo^{2} \lambda_1 \eta_n}{\bb{\eigengap}^2}\lesssim b$. Then,
\bas{
& \sup_{A\in\mathcal{A}^{\text{re}}}\bigg|\Prob\Bigl(\frac{\roja[J]}{\sqrt{\bb{\eigengap}\eta_n}}\in A\Bigr)-\Prob\Bigl(\frac{S_Y}{\sqrt{n}}\in A\Bigr)\bigg| =\tilde{O}\bb{ \bb{\frac{\Mfour}{\eigengap }}^{1/3}n^{-1/6} + \bb{\frac{\Mtwo}{\eigengap}}^{1/2}n^{-1/8}},
}
where $\mathcal{A}^{\text{re}}$ is the collection of all hyperrectangles in $\mathbb{R}^p$, i.e, sets of the form $A=\{u\in\mathbb{R}^p : a_j\le u_j\le b_j\text{ for }j=1,\dots,p\}$ and each $a_j$ and $b_j$ belongs to $\mathbb{R}\cup\{-\infty,\infty\}$. Here,
$\tilde{O}$ hides logarithmic factors in $n$, $d$, and polynomial factors in $b$ and in model parameters $ \lambda_1,\eigengap, \Mtwo, \Mfour$. 
\end{proposition}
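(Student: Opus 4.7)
The plan is to reduce the proposition to a high-dimensional hyperrectangle CLT for an independent sum, using the Hoeffding decomposition to isolate the leading term and then absorbing the residual via anti-concentration. Throughout I use the decomposition $\roja = \Psi_{n,1} + \Res_n$ from Lemma~\ref{lemma:oja_error_decomposition}, the covariance identification in Lemma~\ref{lemma:second_moment_matrix}, and the learning-rate calibration of Lemma~\ref{lemma:learning_rate_choice}.

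\textbf{Step 1 (identify the leading sum).} Read off from the Hoeffding expansion that, up to the sign factor $\sign(v_1^\top u_0)$ already absorbed into $H_i$, one has $\Psi_{n,1} = \eta_n \sum_{i=1}^n H_i$, so that
\[
\frac{\Psi_{n,1}[J]}{\sqrt{(\eigengap)\,\eta_n}} \;=\; \sqrt{\tfrac{\eta_n}{\eigengap}}\,\sum_{i=1}^n H_i[J].
\]
By construction of $Y_i$, its covariance matches the covariance of $S_Y/\sqrt{n}$ exactly: $\Var(S_Y/\sqrt{n}) = (\eta_n/(\eigengap)) \sum_i \E[H_i[J] H_i[J]^\top]$, which equals $\Var(\Psi_{n,1}[J])/((\eigengap)\eta_n)$ by independence.

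\textbf{Step 2 (high-dimensional CLT on hyperrectangles).} Apply a Chernozhukov--Chetverikov--Kato (CCK) style Gaussian approximation for sums of independent random vectors over the class $\mathcal{A}^{\text{re}}$ of hyperrectangles in $\mathbb{R}^p$. The two terms in the final bound match the two canonical CCK rates: $\tilde O((\Mfour/(\eigengap))^{1/3} n^{-1/6})$ from third-moment control and $\tilde O((\Mtwo/(\eigengap))^{1/2} n^{-1/8})$ from a sub-exponential envelope. Two hypotheses must be checked: (i) moment/tail control on coordinates of $H_i[J]$, which I would derive from the sub-Gaussian assumption on $X_1$ by bounding $\|\vp \lambp^{n-i} \vp^\top\|_{\mathrm{op}} \le 1$ and then controlling $q$-th moments of $(A_i - \Sigma) v_1$ via Assumption~\ref{assumption:bounded_moments}; and (ii) a uniform lower bound on the per-coordinate variance of $\sum_i H_i[J]_j/n$. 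For (ii), Lemma~\ref{lemma:second_moment_matrix} tells us $|\Var(\Psi_{n,1}[J]_j)/((\eigengap)\eta_n) - \V_{jj}| \lesssim \eta_n \lambda_1 \Mtwo^2/(\eigengap)^2$, so the hypothesis $\Mtwo^2 \lambda_1 \eta_n/(\eigengap)^2 \lesssim b$ combined with $\V_{jj} \ge b$ for $j \in J$ ensures the CCK non-degeneracy condition holds with a constant margin.

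\textbf{Step 3 (absorb the residual).} Convert the CLT for $\Psi_{n,1}[J]$ into a CLT for $\roja[J]$ through a standard sandwich argument over hyperrectangles. For any $A \in \mathcal{A}^{\text{re}}$ and any threshold $\tau>0$,
\[
\Prob\!\Bigl(\tfrac{\roja[J]}{\sqrt{(\eigengap)\eta_n}} \in A\Bigr) \le \Prob\!\Bigl(\tfrac{\Psi_{n,1}[J]}{\sqrt{(\eigengap)\eta_n}} \in A^{+\tau}\Bigr) + \Prob\!\Bigl(\tfrac{\|\Res_n[J]\|_\infty}{\sqrt{(\eigengap)\eta_n}} > \tau\Bigr),
\]
with a symmetric lower bound, where $A^{+\tau}$ is the coordinatewise $\tau$-enlargement. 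Use the high-probability bound on $\Res_n$ inherited from Lemma~\ref{lemma:oja_error_decomposition} to pick $\tau$ of order $\tilde O((\eta_n/(\eigengap))^{\text{small}})$, and apply Nazarov's anti-concentration inequality for the Gaussian $S_Y/\sqrt{n}$ (whose diagonal covariance is bounded below by $b$ on $J$) to bound $\Prob(S_Y/\sqrt n \in A^{+\tau}\setminus A) \lesssim \tau \sqrt{\log p}/\sqrt{b}$. This reduces the residual contribution to lower-order terms that fit inside $\tilde O(\cdot)$.

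\textbf{Main obstacle.} The hard part is Step 2: verifying the CCK moment/envelope conditions for the non-identically-distributed summands $H_i[J]$ (each has a different covariance since $\lambp^{n-i}$ varies with $i$) while keeping the variance lower bound uniform over $J$. The geometric decay of $\lambp^{n-i}$ for $i$ far from $n$ shrinks the variance contributions from early indices; the calibrated $\eta_n$ from Lemma~\ref{lemma:learning_rate_choice} is precisely what makes the aggregate covariance coincide with $\V_{jj}$ to leading order. A clean bookkeeping, combined with the compatibility condition $\Mtwo^2 \lambda_1 \eta_n /(\eigengap)^2 \lesssim b$, is what allows the final rates $n^{-1/6}$ and $n^{-1/8}$ to emerge with only logarithmic dimension dependence.
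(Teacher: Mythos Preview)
Your overall architecture matches the paper's exactly: isolate the Hájek term $\Psi_{n,1}=\eta_n\sum_i H_i$, apply the Chernozhukov--Chetverikov--Kato hyperrectangle CLT to the rescaled independent sum after verifying the variance lower bound via Lemma~\ref{lemma:second_moment_matrix}, and then transfer to $\roja[J]$ by the sandwich $A\mapsto A^{\pm\tau}$ combined with Nazarov's inequality. That is the paper's proof.

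However, there is a real misattribution in Step~2 that propagates into Step~3. You write that the $\tilde O\bigl((\Mtwo/(\eigengap))^{1/2}n^{-1/8}\bigr)$ term ``match[es] the canonical CCK rate from a sub-exponential envelope,'' and that the residual absorption in Step~3 ``reduces to lower-order terms.'' This is backwards. In the paper's argument, the CCK bound under (E.1) contributes only the $n^{-1/6}$ term; the $n^{-1/8}$ term---which is asymptotically dominant---comes entirely from the residual step. The reason is that the available high-probability bound on $\|\Res_n\|$ (the paper's Lemma bounding $\Psi_{n,2}+\Psi_{n,3}+\Psi_{n,4}$) has \emph{polynomial} dependence on the failure probability, of the form $\epsilon\lesssim \text{poly}(1/\delta)\cdot\tilde O(\eta_n^{3/2}\sqrt{n})$, not logarithmic. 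Consequently, after scaling by $((\eigengap)\eta_n)^{-1/2}$ and multiplying by the Nazarov factor $\sqrt{\log p}/\sqrt{b}$, one must add back $\delta$ and optimize: balancing $\delta$ against $\epsilon/\sqrt{b}$ yields $\delta\asymp n^{-1/8}$, and this is the bottleneck rate. Your Step~3 as written (``pick $\tau$ of order $\tilde O((\eta_n/(\eigengap))^{\text{small}})$'') does not surface this trade-off, and your ``Main obstacle'' paragraph misidentifies the hard part as checking CCK conditions---those are routine---when the genuine work is controlling $\Res_n$ well enough to get $n^{-1/8}$ rather than something worse.
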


\begin{remark}
    Note that the first $n^{-1/6}$ term in the convergence rate arises from the high-dimensional CLT result by~\cite{ChernoCLT2015} applied to $\Psi_{n,1}$. 
    The main bottleneck is the $n^{-1/8}$ term, resulting from the higher-order terms of the Hoeffding decomposition ($\Res_n$ in equation~\ref{eq:hoeffding}). We note that the second term may be tightened by using better concentration bounds. 
    We point the reader to Proposition~\ref{prop:clt_appendix} in the Appendix for a complete statement and proof. 
\end{remark}

Proposition~\ref{prop:main:clt} establishes a Gaussian approximation of suitably scaled $\roja[J]$, where $J$ is a set of elements with large enough asymptotic variance. Our proof uses results from~\cite{chernozhukov2017detailed} on the Hájek projection~\eqref{eq:hoeffding} and bounds the effect of the remainder term by using Nazarov's Lemma~\citep{nazarov2003maximal} (Theorem~\ref{thm:Nazarov}). We use this to derive concentration bounds for all coordinates. The lower bound on the variance is crucial and comes from Nazarov's inequality. It is also a condition of the results in~\cite{chernozhukov2017detailed}. A simple observation here is that when $b_k$ is zero, i.e. $v_1(k)=1$, then $\V_{kk}=0$. Here, CLT may not hold since the Hájek projection is zero, and the perturbation arises from some of the smaller error terms in the error decomposition. 

\begin{theorem}\label{thm:main:entrywise_concentration_bound} Let the learning rate $\eta_n$ be set according to Lemma~\ref{lemma:learning_rate_choice}. Further, for $X_i \sim \mathcal{P}, A_i = X_iX_i^{\top}$, let $\normop{A_i - \Sigma} \leq \Mone$ almost surely. Then, for $b_k \defeq \norm{e_{k}^{\top}\vp}_{2}$, with probability at least $3/4$, uniformly for all $k \in [d]$, 
\bas{
    \frac{\Abs{e_k^{\top}\roja}}{\sqrt{\eta_n\bb{\eigengap}}} &\lesssim \sqrt{\mathbb{V}_{kk}\log\bb{d}} + C b_k\sqrt{\frac{\log n}{n}},
}
where $\mathbb{V}$ is defined in Eq~\ref{eq:asympvar}, and $C$ is a constant that depends on $\lambda_1, \eigengap, \Mtwo,$ and $\Mfour$.
\end{theorem}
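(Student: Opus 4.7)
The plan is to start from the Hoeffding decomposition $\roja = \Ezero{n,1} + \Res_n$ (Lemma~\ref{lemma:oja_error_decomposition}) and bound the two pieces separately: apply a scalar Bernstein inequality coordinate-wise to the H\'ajek projection $\Ezero{n,1}$ (which has the nice structure of a sum of independent bounded random vectors), and use an auxiliary tail bound to control the residual $\Res_n$. A union bound over $k \in [d]$ then produces the uniform-in-$k$ guarantee.

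For the H\'ajek term, I would write $\Ezero{n,1} = \eta_n \sum_{i=1}^n H_i$ with $H_i$ as in Proposition~\ref{prop:main:clt}. Since each $H_i$ depends only on $X_i$, the scalar random variables $e_k^\top H_i$ are independent and mean-zero. Using $\normop{A_i - \Sigma} \le \Mone$ almost surely together with $\normop{\vp \lambp^{n-i} \vp^\top} \le 1$, each summand satisfies
\bas{
\Abs{e_k^\top H_i} \le \frac{\Mone \, b_k}{1 + \eta_n \lambda_1},
}
since $\vp \lambp^{n-i} \vp^\top (A_i-\Sigma) v_1$ lives in $\linspan(\vp)$ and its projection onto $e_k$ picks up the factor $b_k = \norm{e_k^\top \vp}_2$. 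For the variance, Lemma~\ref{lemma:second_moment_matrix} gives $\Abs{e_k^\top \E[\Ezero{n,1}\Ezero{n,1}^\top] e_k - \eta_n\bb{\eigengap} \mathbb{V}_{kk}} \le \eta_n\bb{\eigengap} \normf{\tfrac{1}{\eta_n(\eigengap)}\E[\Ezero{n,1}\Ezero{n,1}^\top] - \mathbb{V}}$, which by \eqref{eq:variance_diff_bound} is an $O\bb{\eta_n^2 \lambda_1 \Mtwo^2 / (\eigengap)}$ perturbation. A scalar Bernstein inequality applied at probability $1 - d^{-c}$ per coordinate, followed by a union bound over the $d$ coordinates, then yields
\bas{
\Abs{e_k^\top \Ezero{n,1}} \lesssim \sqrt{\eta_n\bb{\eigengap} \, \mathbb{V}_{kk} \log d} + \frac{\Mone \, b_k \, \eta_n \log d}{1 + \eta_n \lambda_1},
}
uniformly in $k$ with probability close to $1$. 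Dividing by $\sqrt{\eta_n(\eigengap)}$ produces the $\sqrt{\mathbb{V}_{kk}\log d}$ term in the theorem; the additive correction, under the learning rate choice of Lemma~\ref{lemma:learning_rate_choice}, is of lower order and absorbable into the residual bound.

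For $\Res_n$, I would use the tail bound on higher-order Hoeffding terms established in Lemma~\ref{lemma:oja_error_decomposition} (and supporting results on $B_n$). The key point is that $\Res_n \in \linspan(\vp)$ (since $\roja$ does, and so does $\Ezero{n,1}$), so the projection $e_k^\top \Res_n = (e_k^\top \vp)(\vp^\top \Res_n)$ inherits a factor $b_k$:
\bas{
\Abs{e_k^\top \Res_n} \le b_k \norm{\vp^\top \Res_n}_2 \le b_k \norm{\Res_n}_2.
}
Combined with the existing $\tilde O\bb{\sqrt{\eta_n(\eigengap) \cdot \log n / n}}$ high-probability bound on $\norm{\Res_n}_2$ coming from the matrix-product concentration machinery (Lemma~\ref{lemma:oja_error_decomposition}), this gives the second term $C\, b_k \sqrt{\log n / n}$ after dividing by $\sqrt{\eta_n(\eigengap)}$, with the constant $C$ depending polynomially on $\lambda_1, \eigengap, \Mtwo, \Mfour$. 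Adding a constant probability budget to the Bernstein and residual events, and tuning the failure probabilities to sum to at most $1/4$, yields the $3/4$ bound.

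The main obstacle is keeping the $b_k$-dependence sharp in both contributions. For the H\'ajek term, the factor $b_k$ is captured only if the boundedness of each $e_k^\top H_i$ is tracked through the structure $\vp \lambp^{n-i} \vp^\top$ rather than bounded crudely by $\Mone$; this forces us to use a scalar (not matrix) Bernstein. For the residual, the $b_k$-dependence only survives if we first project $\Res_n$ onto $\vp$ before invoking operator-norm bounds, exploiting the fact that $\roja \perp v_1$ by construction. Verifying that the $\Res_n$ bound from Lemma~\ref{lemma:oja_error_decomposition} genuinely holds at the scale $\sqrt{\eta_n(\eigengap)\log n / n}$, rather than at the weaker rate $\sqrt{\eta_n(\eigengap)}$ which would dominate the Bernstein term, is the delicate quantitative step that determines the final rate.
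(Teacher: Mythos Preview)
Your approach is essentially the same as the paper's: decompose $\roja = \Eone{n} + (\Etwo{n}+\Ethree{n}+\Efour{n})$, apply a scalar Bernstein inequality coordinate-wise to the H\'ajek term (this is Lemma~\ref{lemma:oja_error_hajek_tail_bound}), use $\Abs{e_k^\top \Res_n}\le b_k\norm{\Res_n}_2$ together with the higher-order tail bounds (Lemma~\ref{lemma:oja_error_decomposition_higher_order_tail_bounds}), and union-bound over $k\in[d]$. Two cosmetic points: your notation $\Ezero{n,1}$ should be $\Eone{n}$ (or $\Psi_{n,1}$), and the residual tail bound lives in Lemma~\ref{lemma:oja_error_decomposition_higher_order_tail_bounds}, not in Lemma~\ref{lemma:oja_error_decomposition} (which is just the decomposition).
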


\begin{remark}
The limiting marginal variances $\V_{kk}$ also appear in the finite-sample bound for the elements of the residual vector. Estimating these variances enables us to quantify the uncertainty associated with each component of $\hat{v}_1$, even when the sample size is finite.

\end{remark}
In Appendix~\ref{appendix:entrywise_error_bounds}, we provide a complete result with arbitrary failure probability  $\delta$ in Lemma~\ref{lemma:entrywise_concentration_bound}. The above guarantee can be boosted to a high probability one using geometric aggregation (see e.g. Alg. 3 in~\cite{kumarsarkar2024sparse}).

\subsection{Uncertainty estimation}
\label{ssec:uncertainty_estimation}
Proposition~\ref{prop:main:clt} shows that the asymptotic variance of elements of the residual $\roja(i)$ is governed by the variance of the entries $\E[(e_i^T\Psi_{n,1})^2]$ of $\Psi_{n,1}$. We cannot directly get to $\Psi_{n,1}$ since we only observe $\voja$. If we could estimate $\roja$, it would give us an idea of the error. However, we do not know $v_1$, and so cannot directly access $\roja$. We alleviate this difficulty by using the following high-accuracy estimate $\vmain$ of $v_1$ constructed using $N$ samples:
\ba{\label{eq:vtilde}
\vmain \gets \Ojamain(\mathcal{D}_{N}, u_0),} 
where $N$ satisfies the bounds of Theorem~\ref{thm:high_prob_error_bound}. The vector satisfies the bound
\bas{
    \sin^2 \bb{\vmain, v_1} \le \frac{C\log\bb{\frac{1}{\delta}}\log\bb{N/\log\bb{\frac{1}{\delta}}}\Mtwo^{2}}{N(\eigengap)^{2}}
}
with probability at least $1-\delta$. Such an estimator can be constructed by splitting the $N$ samples into $\Theta\bb{\log (1/\delta)}$ batches of roughly equal size, running Oja's algorithm on each batch, and aggregating them by geometric aggregation. 

Algorithm~\ref{alg:variance_estimation} takes as input the data $\{X_{i}\in \mathbb{R}^{d}\}_{i \in [n]}$, a failure probability $\delta$, and the proxy unit vector $\vmain$.
The $n$ samples are split into $m_1$ batches with $n/m_1$ samples each. Then, the ${\ell}^{\text{th}}$ batch of $n/m_1$ samples is further split into $m_2$ batches of size $B \defeq n/(m_1m_2)$ each. Oja vectors $\left\{\hat{v}_{\ell, j}\right\}_{j \in [m_2]}$ are computed on each of these $m_2$ batches, and the variance of the $k^{\mathsf{th}}$ coordinate is estimated as
\ba{
    \hat{\sigma}^{2}_{k, \ell} := \sum_{j \in [m_2]} \dfrac{\bb{e_k^{\top} \bb{\hat{v}_{\ell, j} - (\vmain^\top \hat{v}_{\ell, j})\vmain}}^2}{m_2}. \label{eq:def_sigma_hat_ell_main}
}


\begin{algorithm}[H]
\caption{$\ojavarest(\{X_{i}\in \mathbb{R}^{d}\}_{i \in [n]},  \delta, \vmain, \eigengap)$}
\label{alg:variance_estimation}
\begin{algorithmic}[1]
\State \textbf{Input:} Data $\mathcal{D}_n := \{X_{i}\in \mathbb{R}^{d}\}_{i \in [n]}$, failure probability $\delta\in (0,1)$, unit vector $\vmain$, eigengap $\eigengap$

\State \textbf{Output:} Estimates $\left \{ \hat{\gamma}_k \right \}_{k \in [d]}$ of $\{\V_{kk}\}_{k \in [d]}$
\State $m_{1} \gets 8\log(d/\delta), \;  m_{2} \gets \log n, \; B \gets n/(m_1m_2) $.
\For{$\ell \in [m_1]$}
    \For{$j \in [m_2]$}
        \State $\mathcal{D}_{\ell,j} \gets \left\{X_{B( m_2(\ell-1) + (j-1)) + t}\right\}_{t \in [B]}$
        \State $g \gets \mathcal{N}(0, I), \quad u \gets g/\norm{g}_2$
        \State $\hat{v}_{\ell,j} \gets \Oja\bb{\mathcal{D}_{\ell,j}, \eta_{B}, u_0}$
    \EndFor
    \For{$k \in [d]$}
        \State $\hat{\sigma}^{2}_{\ell, k} \gets \frac{\sum_{j \in [m_2]} \bb{e_k^\top\bb{\hat{v}_{\ell,j} - \bb{\vmain^\top \hat{v}_{\ell,j}} \vmain}}^2}{m_2}$
    \EndFor
\EndFor
\For{$k \in [d]$}
    \State $\hat{\gamma}_k \gets  \median\bb{\left\{\hat{\sigma}^{2}_{\ell, k}\right\}_{\ell \in [m_1]}}/\eta_B (\lambda_1 - \lambda_2)$
\EndFor
\State \Return $\left \{ \hat{\gamma}_k \right \}_{k \in [d]}$
\end{algorithmic}
\end{algorithm}


We will show that with a constant success probability, $\hat{\sigma}^{2}_{k, \ell}$ is close to the true variance of the corresponding coordinate. This is essentially the variance of a smaller dataset with scale $\eta_B$. To obtain a bound over all coordinates with an arbitrary failure probability, we take a median of the $m_1$ variances. 

For the final estimate of the diagonal elements $\V_{kk}$ of $\V$, the median is scaled by $\eta_B \bb{\eigengap}$. In Theorem~\ref{thm:high_prob_error_bound}, we show that $\hat{\gamma}_k$ concentrates around $\V_{kk}$ (see~\eqref{eq:main_error_bound_all}). For elements with large $\V_{kk}$, appropriate sample size $N$ and batch size $B$, Theorem~\ref{thm:high_prob_error_bound} also provides multiplicative error guarantees for the variance estimate (see~\eqref{eq:main_error_bound_some}).

\begin{remark}
We are using an estimate of $\E[(e_k^T\Psi_{n,1})^2]$ to provide the confidence interval around $\hat{v}_1(k)$. Algorithm~\ref{alg:variance_estimation} requires an estimate $\tilde{v}$ of $v_1$ for computing the estimates $\hat{\sigma}^{2}_{\ell, k}$ in Line 11, which is assumed to satisfy equation~\ref{eq:vtilde}. For large $N$, this error of approximating $v_1$ by $\vmain$ is small. In our experiments, we choose $N = n$ and obtain $\vmain$ by running the algorithm on the entire data.
\end{remark}


\begin{theorem}\label{thm:high_prob_error_bound}
Let $K$ be the set of indices in [d] that satisfy 
\begin{align}
    N &= \tilde{\Omega}\bb{B/c_k^2} ~~\text{ and} \label{eq:N_lower_bound_main}\\
    B &= \tilde{\Omega}\bb{\bb{\frac{b_k}{c_k}}^{2}\bb{\frac{\Mtwo}{\lambda_1 - \lambda_2}}^{2}+ \bb{\frac{b_k}{c_k}}^4 \bb{\frac{\Mfour}{\Mtwo}}^4 + \frac{\lambda_1}{c_k^2 \bb{\eigengap}}}, \label{eq:B_lower_bound_main}
\end{align}
where $b_k := \norm{e_k^{\top}\vp}$, $c_k := \sqrt{\tfrac{\E\bbb{\bb{e_k^{\top}\Eone{B}}^{2}}}{\eta_B}\tfrac{\eigengap}{\Mtwo^{2}}}$, and $B, N$ are respectively the batch size and the number of samples used for the proxy estimate $\tilde{v}$ in Algorithm~\ref{alg:variance_estimation}. 

Then, with probability at least $1-\delta$, the output $\left \{ \hat{\gamma}_k \right \}_{k \in [d]}$ of Algorithm~\ref{alg:variance_estimation} satisfies
\begin{gather}
\Abs{\hat{\gamma}_{k} - 
    \V_{kk}} \lesssim \frac{\V_{kk}}{\sqrt{m}} + \tilde{O}\bb{\frac{B}{N} + \frac{1}{B^{1/2}}}  ~~\forall k \in [d], \text{ and} \label{eq:main_error_bound_all} \\
\Abs{\hat{\gamma}_{k} - \V_{kk}} \lesssim \frac{\V_{kk}}{\sqrt{m}} ~~\forall k \in [K]. \label{eq:main_error_bound_some}
\end{gather}
\end{theorem}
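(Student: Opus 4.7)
My plan is to analyze $\hat\gamma_k$ in three stages: reduce the per-sub-batch squared estimate to the clean residual $\roja$ (paying a small price for replacing $v_1$ by the proxy $\vmain$), compute the mean and variance of that estimate via the Hoeffding decomposition $\roja=\Eone{B}+\Res_B$, and amplify the resulting constant-probability guarantee to a uniform high-probability bound using median-of-means together with a union bound over $[d]$.

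\textbf{Reduction to $\roja$.} Since $\vmain$ is computed from a disjoint dataset of size $N$, it is independent of $\mathcal{D}_n$. Conditioning on $\vmain$ and writing
\[
e_k^{\top}\bb{\hat v_{\ell,j}-(\vmain^{\top}\hat v_{\ell,j})\vmain}=e_k^{\top}r_{\ell,j}+e_k^{\top}\Delta_{\ell,j},
\]
where $r_{\ell,j}:=\hat v_{\ell,j}-(v_1^{\top}\hat v_{\ell,j})v_1$ and $\Delta_{\ell,j}$ is linear in $\vmain-v_1$, elementary manipulation gives $\|\Delta_{\ell,j}\|\lesssim\|\vmain-v_1\|$. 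Applying Theorem~\ref{thm:main:entrywise_concentration_bound} at sample size $N$ yields $\|\vmain-v_1\|^2=\tilde O(1/N)$ up to model-dependent factors; expanding the square and invoking Cauchy-Schwarz then converts this into an additive bias of order $\tilde O(\eta_B(\eigengap)B/N)$ in $\mathbb{E}[\hat\sigma^2_{\ell,k}]$, which is the $B/N$ term in~\eqref{eq:main_error_bound_all} after the final rescaling.

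\textbf{Mean and variance per sub-batch.} Substituting $r_{\ell,j}=\Eone{B}+\Res_B$ into $(e_k^{\top}r_{\ell,j})^2$ and invoking Lemma~\ref{lemma:second_moment_matrix},
\[
\mathbb{E}\bbb{(e_k^{\top}r_{\ell,j})^2}=\eta_B(\eigengap)\V_{kk}+\tilde O\bb{\eta_B^2\lambda_1\Mtwo^2/(\eigengap)}+\mathrm{cross}+\mathrm{remainder},
\]
with the cross term $\mathbb{E}[(e_k^{\top}\Eone{B})(e_k^{\top}\Res_B)]$ and the remainder $\mathbb{E}[(e_k^{\top}\Res_B)^2]$ controlled by Cauchy-Schwarz combined with moment bounds on $\|\Res_B\|$ from the Hoeffding decomposition lemma; after rescaling these contribute $\tilde O(1/\sqrt B)$. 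Because the sub-batches within a batch are disjoint, the $m_2$ summands $\xi_{\ell,j,k}:=(e_k^{\top}(\hat v_{\ell,j}-(\vmain^{\top}\hat v_{\ell,j})\vmain))^2$ are mutually independent and bounded by $\lesssim b_k^2$; a Chebyshev argument using a fourth-moment control $\mathbb{E}[\xi_{\ell,j,k}^2]=O((\eta_B(\eigengap)\V_{kk})^2)$ (which follows from the same Hoeffding expansion applied at fourth order) then yields $|\hat\sigma^2_{\ell,k}-\mathbb{E}\hat\sigma^2_{\ell,k}|\lesssim\eta_B(\eigengap)\V_{kk}/\sqrt{m_2}$ with probability at least $2/3$, delivering the $\V_{kk}/\sqrt m$ contribution after dividing by $\eta_B(\eigengap)$.

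\textbf{Amplification and the main obstacle.} The $m_1=8\log(d/\delta)$ batches are mutually independent, so a standard median-of-means analysis amplifies the per-batch $2/3$ success probability to $1-\delta/d$ for the median; union-bounding over $k\in[d]$ yields~\eqref{eq:main_error_bound_all}. For $k\in K$ the lower bounds~\eqref{eq:N_lower_bound_main} and~\eqref{eq:B_lower_bound_main} are calibrated precisely so that the additive $B/N$ and $1/\sqrt B$ contributions fall below $\V_{kk}/\sqrt m$, collapsing the bound to the multiplicative form~\eqref{eq:main_error_bound_some}. The most delicate step is the entrywise control of $\Res_B$: bare operator-norm bounds lose the factor $b_k=\|e_k^{\top}\vp\|$ that characterizes projections onto $\vp$, without which the multiplicative bound for $k\in K$ cannot be recovered. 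I expect to need to exploit the explicit form of the higher-order terms in the Hoeffding expansion, carefully propagating $\vp$ factors, and to pair this with a fourth-moment calculation scaling as $\V_{kk}^2$ rather than the crude $b_k^4$.
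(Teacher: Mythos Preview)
Your high-level plan---decompose the squared residual via the Hoeffding expansion, concentrate the Hájek part with Chebyshev, and amplify with median-of-means over $m_1$ batches---is the same as the paper's. The genuine gap is in your second stage, where you propose to control the bias and variance of $\hat\sigma^2_{\ell,k}$ through $\mathbb{E}[(e_k^\top\Res_B)^2]$ and $\mathbb{E}[\xi_{\ell,j,k}^2]$ via ``moment bounds on $\|\Res_B\|$ from the Hoeffding decomposition lemma.'' The Hoeffding decomposition is a decomposition of $B_n$, not of $\roja=B_n u_0/\|B_n u_0\|$; only $\Eone{B}$ and $\Etwo{B}$ come from $T_{n,1}$ and $\sum_{k\ge2}T_{n,k}$, while $\Ethree{B}$ and $\Efour{B}$ arise from the normalization and the random start and each carry a factor $|v_1^\top u_0|^{-1}$. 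Since $v_1^\top g$ is standard Gaussian, $\mathbb{E}[|v_1^\top u_0|^{-2}]=\infty$, so $\mathbb{E}[\|\Psi_{B,4}\|^2]=\infty$ and the natural term-by-term moment bound fails. Bounding $\mathbb{E}[\|\Res_B\|^2]$ indirectly via $\|\Res_B\|\le\|\roja\|+\|\Eone{B}\|$ does not help either: the $\sin^2$ tail has cubic $1/\delta$ dependence, so $\mathbb{E}[\sin^2(\voja,v_1)]=\tilde O(B^{-1/3})$, swamping the target $\tilde O(1/B)$. The paper avoids expectations of the residual terms altogether: it uses the elementary inequality of Lemma~\ref{lemma:square_expansion_cs} to separate $\frac1m\sum_j(e_k^\top\Eone{B}^{(j)})^2$ from $\frac1m\sum_j\sum_{i\ne1}(e_k^\top\Psi_{B,i}^{(j)})^2$, applies Chebyshev only to the former, and bounds each of the latter sums (Lemmas~\ref{lemma:en0_tail_bound},~\ref{lemma:en2_tail_bound},~\ref{lemma:en3_tail_bound},~\ref{lemma:en4_tail_bound}) in high probability at a fixed constant level $\delta_0=1/20$. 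The resulting $\poly(1/\delta_0)$ factors in those tails are harmless precisely because median-of-means only needs success probability $3/4$ per batch.

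A secondary inaccuracy: even restricted to the Hájek term, $\mathbb{E}[(e_k^\top\Eone{B})^4]$ is \emph{not} $O((\eta_B(\eigengap)\V_{kk})^2)$. The paper's variance computation (Lemma~\ref{lemma:en1_variance_bound}) gives $\Var\bb{(e_k^\top\Eone{B})^2}\le 2\,\mathbb{E}[(e_k^\top\Eone{B})^2]^2+\eta_B^4 b_k^4\Mfour^4 B$, and the additive $b_k^4\Mfour^4$ piece is exactly what produces the $\Mfour$-dependent $\tilde O(1/\sqrt B)$ contribution in~\eqref{eq:main_error_bound_all} and the $(b_k/c_k)^4(\Mfour/\Mtwo)^4$ requirement in~\eqref{eq:B_lower_bound_main}; it cannot be absorbed into $\V_{kk}^2$.
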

\begin{remark}
    The output of Algorithm~\ref{alg:variance_estimation} rescales the median of the variances by  $\eta_B \bb{\eigengap} = \frac{\alpha \log B}{B}$. This is consistent with the entrywise concentration bounds in Theorem~\ref{thm:main:entrywise_concentration_bound} (which shows that the error in the $j^{th}$ entry is $\sqrt{\eta_n\bb{\eigengap} \V_{kk}}$, up to logarithmic terms) for a sufficiently large sample size and with Proposition~\ref{prop:main:clt} and Lemma~\ref{lemma:second_moment_matrix} (which show that the limiting variance of suitable entries of $\roja$ is $\eta_n\bb{\eigengap} \V_{kk}$). 
\end{remark}

\begin{remark}Theorem~\ref{thm:main:entrywise_concentration_bound} provides bounds about entries of the leading eigenvector. We believe our techniques can be generalized to provide uncertainty estimates for entries of top-$k$ eigenvectors using deflation-based approaches (see e.g \cite{pmlr-v247-jambulapati24a}).
\end{remark}

Equation~\eqref{eq:main_error_bound_all} holds for all $k \in [d]$. In the Appendix (see Remark~\ref{remark:prop2_higher_order}) we show that for the choice of $B$ and $N$ in Theorem~\ref{thm:high_prob_error_bound}, the higher order terms are indeed $o\bb{\frac{1}{\sqrt{m}}}$. Moreover, for any coordinate $k$ for which equations~\eqref{eq:N_lower_bound_main} and~\eqref{eq:B_lower_bound_main} hold, the lower order terms of equation~\eqref{eq:main_error_bound_all} are $O(\V_{kk}/\sqrt{m})$. This implies an $O(1/\sqrt{\log n})$-multiplicative guarantee on the error of $\hat{\gamma}_k$ like equation~\eqref{eq:main_error_bound_some}.





\section{Proof Techniques}\label{sec:proof_techiniques}


Let $\voja \sim \Oja\bb{\mathcal{D}_{\ell,j}, \eta_{n}, u_0}$ for uniform unit vector $u_0$ and $\vmain$ defined as in equation~\eqref{eq:vtilde}. 
Proposition A.1 in~\cite{lunde2021bootstrapping} shows that $B_n$, defined in \eqref{definition:Bn}, can be written as
\ba{\label{eq:hoeffding-main}
    B_{n} &= \sum_{k=0}^{n} T_{n,k},
}
where
\ba{
T_{n,k} &\defeq \sum_{S \subseteq [n], |S| = k}\ \ \prod_{i=1}^{n}M_{S,n+1-i}, \label{eq:tnk} \text{ and} \\
M_{S,i} &\defeq \begin{cases}
        \eta_n\bb{X_{i}X_{i}^{\top}-\Sigma} \; \text{ if } i \in S, \\
        I + \eta_n\Sigma \; \text{ if } i \notin S.
    \end{cases}
}

The term $T_{n,1}$ is called the Hájek projection of the random variable $B_n$ on the random variables $X_1, \dots, X_n$. $T_{n,1}$ is the best approximation to $B_n$ among the estimators that can be written as the sum of independent random vectors and satisfy certain integrability conditions. Moreover,
\begin{itemize}
    \item $T_{n,k}$ and $T_{n,j}$ are uncorrelated for all $k \neq j$, and
    \item the summands in $T_{n,k}$ are also pairwise uncorrelated.
\end{itemize}

We exploit this structure of the Hoeffding decomposition to decompose the residual vector $\troja$. 

\begin{restatable}{lemma}{ojaerrordecomposition}[Error Decomposition of $\voja$]\label{lemma:oja_error_decomposition} Let $\voja, \tilde{v}$ be defined as in \eqref{eq:voja_def} and \eqref{eq:vtilde} respectively. Then, 
\ba{\label{eq:ojadecomp}
    \voja - (\vmain^{\top} \voja) \vmain = \Ezero{n} + \Eone{n} + \Etwo{n} + \Ethree{n} + \Efour{n},
}
\text{ where}
\begin{align}\label{eq:ojadecomperror}
    &\Ezero{n} := (v_{1}^{\top}\voja)v_{1}-(\vmain^{\top} \voja) \vmain, \notag \\ &\Eone{n} := \frac{\vp\vp^{\top}T_{n,1}v_{1}\sign(v_{1}^{\top}u_{0})}{(1+\eta_n\lambda_{1})^{n}}, \notag \\ &\Etwo{n} := \frac{\vp\vp^{\top}(\sum_{k\geq 2}T_{n,k})v_{1}\sign(v_{1}^{\top}u_{0})}{(1+\eta_n\lambda_{1})^{n}}, \notag \\
    &\Ethree{n} := \vp\vp^{\top}B_{n}u_{0}\bb{\frac{1}{\norm{B_{n}u_0}_{2}} - \frac{1}{\Abs{v_{1}^{\top}u_{0}}(1+\eta\lambda_{1})^{n}}}, \notag\\
    &\Efour{n} :=  \frac{\vp\vp^{\top}B_{n}\vp\vp^{\top}u_{0}}{\Abs{v_{1}^{\top}u_{0}}(1+\eta\lambda_{1})^{n}}. 
\end{align}
\end{restatable}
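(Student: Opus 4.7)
The plan is to unwind the left-hand side by successively adding and subtracting ``idealized'' quantities, so that each telescoping piece matches one of the five $\Psi_{n,\cdot}$ terms. Concretely, I would start from
\[
\voja - (\vmain^\top \voja)\vmain = \bigl[(v_1^\top \voja)v_1 - (\vmain^\top \voja)\vmain\bigr] + \bigl[\voja - (v_1^\top \voja)v_1\bigr],
\]
where the first bracket is, by definition, $\Ezero{n}$. Thus the task reduces to showing that the orthogonal residual $\vp\vp^\top \voja = \voja - (v_1^\top \voja)v_1$ equals $\Eone{n} + \Etwo{n} + \Ethree{n} + \Efour{n}$.

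Next, using $\voja = B_n u_0/\norm{B_n u_0}_2$, I would write
\[
\vp\vp^\top \voja = \vp\vp^\top B_n u_0 \cdot \tfrac{1}{\norm{B_n u_0}_2} = \tfrac{\vp\vp^\top B_n u_0}{\Abs{v_1^\top u_0}(1+\eta\lambda_1)^n} + \vp\vp^\top B_n u_0 \Bigl(\tfrac{1}{\norm{B_n u_0}_2} - \tfrac{1}{\Abs{v_1^\top u_0}(1+\eta\lambda_1)^n}\Bigr),
\]
isolating $\Ethree{n}$ as the second summand. For the first summand, I would split $u_0 = (v_1^\top u_0)v_1 + \vp\vp^\top u_0$ and distribute. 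The contribution from the $\vp\vp^\top u_0$ piece is exactly $\Efour{n}$, while the contribution from the $v_1$ piece is
\[
\tfrac{\vp\vp^\top B_n v_1 \,\sign(v_1^\top u_0)}{(1+\eta_n\lambda_1)^n},
\]
after simplifying $(v_1^\top u_0)/\Abs{v_1^\top u_0} = \sign(v_1^\top u_0)$.

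To conclude, I would apply the Hoeffding decomposition $B_n = \sum_{k=0}^n T_{n,k}$ from \eqref{eq:hoeffding-main} to the term above. The only nontrivial observation is that the $k=0$ summand vanishes: since $T_{n,0} = (I + \eta_n \Sigma)^n$ is a polynomial in $\Sigma$, we have $T_{n,0} v_1 = (1+\eta_n\lambda_1)^n v_1$, which lies in the kernel of $\vp\vp^\top$. The $k=1$ and $k \ge 2$ summands then yield $\Eone{n}$ and $\Etwo{n}$, respectively. Combining these identities gives the claimed decomposition.

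Since the identity is purely algebraic, I do not expect any genuine obstacle. The only place requiring care is bookkeeping around the denominators (in particular, ensuring that $v_1^\top u_0 \neq 0$ almost surely, which holds because $u_0$ is a uniform random unit vector) and the sign factor $\sign(v_1^\top u_0)$ that arises when converting $(v_1^\top u_0)/\Abs{v_1^\top u_0}$; both are standard and appear already in the definitions of $\Eone{n},\Etwo{n},\Ethree{n},\Efour{n}$. The use of the Hoeffding decomposition is the only structural ingredient, and it is used as a black box from \eqref{eq:hoeffding-main}.
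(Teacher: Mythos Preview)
Your proposal is correct and matches the paper's proof essentially step for step: the paper also writes $\voja = (v_1^\top \voja)v_1 + \vp\vp^\top B_n u_0/\norm{B_n u_0}$, introduces the idealized denominator $c_n = \Abs{v_1^\top u_0}(1+\eta_n\lambda_1)^n$ to peel off $\Ethree{n}$, splits $u_0$ along $v_1$ and $\vp$ to peel off $\Efour{n}$, and then uses the Hoeffding decomposition (phrased there as replacing $B_n$ by $B_n - \E[B_n]$ and citing \cite{lunde2021bootstrapping}) to obtain $\Eone{n}+\Etwo{n}$. Your observation that $\vp\vp^\top T_{n,0} v_1 = 0$ is exactly the paper's ``$B_n \to B_n - \E[B_n]$'' step in different clothing.
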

We bound the variance of each of these terms separately. The dominating term $\Eone{n}$ corresponding to the Hájek projection $T_{n,1}$ has the largest variance. Recall from Lemma~\ref{lemma:second_moment_matrix} that
\bas{
\Abs{\E\bbb{\bb{e_{k}^{\top}\Eone{n}}^{2}}- \eta_n \lambda_1 \V_{kk}} \le \tilde{O}\bb{\frac{1}{n^2}}.
}
A finer analysis is needed for this term than the other residual terms in~\eqref{eq:ojadecomperror}. To do this, we bound the variance of $\bb{e_k^{\top} \Eone{n}}^2$. Lemma~\ref{lemma:en1_variance_bound} shows that $\sqrt{\Var((e_k^\top \Eone{n})^2)}$ is a constant factor within $\E[(e_k^\top \Eone{n})^2] = \tilde{O}(1/n)$ up to an additive error term $\tilde{O}(1/n^{3/2})$ which depends polynomially on model parameters. 
\begin{lemma}[Variance of the Hájek projection]\label{lemma:en1_variance_bound} Let $\Eone{n}$ be defined as in Lemma~\ref{lemma:oja_error_decomposition}. Then, 
\bas{
\sqrt{\Var\bb{(e_k^\top \Eone{n})^2}} \leq \sqrt{2} \E\bbb{\bb{e_k^\top \Eone{n}}^2} + \tilde{O}\bb{\frac{1}{n^{3/2}}}.
}
\end{lemma}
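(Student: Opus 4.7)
The plan is to exploit the fact that \(e_k^\top \Eone{n}\) is, up to a sign that squares away, a sum of \emph{independent} mean-zero random variables, which lets me bound \(\Var((e_k^\top \Eone{n})^2)\) through a clean fourth-moment expansion. Using \(\Sigma v_1=\lambda_1 v_1\) and \(\vp^\top(I+\eta_n\Sigma)=\lambp\vp^\top\) inside the definition of \(T_{n,1}\) from \eqref{eq:tnk}, I rewrite
\[
e_k^\top \Eone{n} \;=\; \sign(v_1^\top u_0)\sum_{i=1}^n Y_i,\quad Y_i \;\defeq\; \frac{\eta_n(1+\eta_n\lambda_1)^{i-1}}{(1+\eta_n\lambda_1)^n}\, e_k^\top \vp\lambp^{n-i}\vp^\top(A_i-\Sigma)v_1,
\]
so that \((e_k^\top \Eone{n})^2=S^2\) with \(S\defeq\sum_i Y_i\), the \(Y_i\) are independent and mean-zero, and the \(u_0\)-randomness drops out after squaring.

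Next, by independence and mean-zeroness, \(\E[S^4]=\sum_i\E[Y_i^4]+3\sum_{i\ne j}\E[Y_i^2]\E[Y_j^2]\) and \(\E[S^2]=\sum_i\E[Y_i^2]\). Subtracting \((\E[S^2])^2\) and dropping the non-positive term \(-3\sum_i\E[Y_i^2]^2\) yields
\[
\Var(S^2)\;\le\;\sum_i\E[Y_i^4]+2\bigl(\E[S^2]\bigr)^2.
\]
Taking square roots and using subadditivity of \(\sqrt{\cdot}\) already produces \(\sqrt{\Var(S^2)}\le \sqrt{\sum_i\E[Y_i^4]}+\sqrt{2}\,\E[(e_k^\top\Eone{n})^2]\), which matches the claim and reduces the task to showing \(\sqrt{\sum_i\E[Y_i^4]}=\tilde{O}(n^{-3/2})\).

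For the fourth-moment sum, two Cauchy--Schwarz applications, together with \(\norms{e_k^\top\vp\lambp^{n-i}}\le b_k(1+\eta_n\lambda_2)^{n-i}\) and \(\norms{v_1}=1\), give
\[
\E[Y_i^4]\;\le\;\frac{\eta_n^4 b_k^4}{(1+\eta_n\lambda_1)^4}\Bigl(\tfrac{1+\eta_n\lambda_2}{1+\eta_n\lambda_1}\Bigr)^{4(n-i)}\E\bigl[\normsop{A_i-\Sigma}^4\bigr],
\]
and Assumption~\ref{assumption:bounded_moments} bounds the last factor by \(\Mfour^4\). Summing the geometric series and using \(1-r^4\ge 1-r=\eta_n(\lambda_1-\lambda_2)/(1+\eta_n\lambda_1)\) for \(r=(1+\eta_n\lambda_2)/(1+\eta_n\lambda_1)\) gives \(\sum_i\E[Y_i^4]\lesssim \eta_n^3 b_k^4\Mfour^4/(\lambda_1-\lambda_2)\). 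Substituting \(\eta_n=\Theta(\log n/(n(\lambda_1-\lambda_2)))\) from Lemma~\ref{lemma:learning_rate_choice} yields \(\sqrt{\sum_i\E[Y_i^4]}\lesssim b_k^2\Mfour^2\log^{3/2}n/(n^{3/2}(\lambda_1-\lambda_2)^2)=\tilde{O}(n^{-3/2})\), finishing the proof.

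The main obstacle is extracting a genuine \(1/n^{3/2}\) rate from \(\sum_i\E[Y_i^4]\): naively each summand is \(\tilde{O}(\eta_n^4)=\tilde{O}(1/n^4)\) and one might fear an extra factor of \(n\) from the sum. The geometric decay through \(r\), combined with an eigengap-sensitive lower bound on \(1-r^4\), is precisely what turns \(n\cdot\eta_n^4\) into \(\eta_n^3/(\lambda_1-\lambda_2)\). A secondary subtlety is that only an operator-norm moment bound on \(A_i-\Sigma\) is assumed, so Cauchy--Schwarz is needed to pull \(\normsop{A_i-\Sigma}\) out of the scalar \(e_k^\top\vp\lambp^{n-i}\vp^\top(A_i-\Sigma)v_1\) rather than a finer entrywise or trace-type expansion.
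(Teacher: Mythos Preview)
Your proof is correct and follows essentially the same route as the paper's. Both arguments write $e_k^\top\Eone{n}$ (up to a sign) as a sum $S=\sum_i Y_i$ of independent mean-zero scalars, expand $\Var(S^2)=\sum_i\E[Y_i^4]+2(\E[S^2])^2-3\sum_i\E[Y_i^2]^2\le \sum_i\E[Y_i^4]+2(\E[S^2])^2$, and then control $\sum_i\E[Y_i^4]$ via $|Y_i|\le \eta_n b_k\normsop{A_i-\Sigma}\cdot r^{n-i}/(1+\eta_n\lambda_1)$ together with the $\Mfour$ bound. The only difference is cosmetic: the paper simply uses $r\le 1$ to bound $\sum_i \E[Y_i^4]\le \eta_n^4 b_k^4\Mfour^4 n$, whereas you sum the geometric series to get $\sum_i \E[Y_i^4]\lesssim \eta_n^3 b_k^4\Mfour^4/(\lambda_1-\lambda_2)$; both give $\tilde O(n^{-3/2})$ after the square root, and your version has a slightly better dependence on model parameters.
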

The three terms $\Etwo{n}, \Ethree{n},$ and $\Efour{n}$ are lower order terms.
\begin{lemma}[Bound on lower order terms]\label{lemma:en234_variance_bound} Let $\Etwo{n}$, $\Ethree{n}$, and $\Efour{n}$ be defined as in Lemma~\ref{lemma:oja_error_decomposition}. Then, 
\bas{
\E\bbb{\bb{e_k^\top \Etwo{n}}^2+\bb{e_k^\top \Ethree{n}}^2+\bb{e_k^\top \Efour{n}}^2} = \tilde{O}\bb{\frac{1}{n^{2}}}.
}
\end{lemma}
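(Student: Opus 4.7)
The approach is to bound each of the three terms separately in squared $\ell_2$ norm, using $(e_k^\top x)^2 \le \norm{x}^2$ to reduce to vector bounds. Throughout we rely on the learning rate $\eta_n \asymp \tfrac{\alpha \log n}{n(\lambda_1-\lambda_2)}$ from Lemma~\ref{lemma:learning_rate_choice}, which yields the key geometric decay $\bigl(\tfrac{1+\eta_n\lambda_2}{1+\eta_n\lambda_1}\bigr)^n \asymp n^{-\alpha}$.

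For $\Etwo{n}$, I would exploit the orthogonality structure of the Hoeffding decomposition: the summands within each $T_{n,k}$ are pairwise uncorrelated in a second-moment sense, and $T_{n,k}$ and $T_{n,k'}$ are uncorrelated for $k \neq k'$. Expanding $\E\norm{T_{n,k} v_1}^2$ and invoking Assumption~\ref{assumption:bounded_moments} yields $\E\norm{T_{n,k} v_1}^2 \lesssim \binom{n}{k}\eta_n^{2k}\Mtwo^{2k}(1+\eta_n\lambda_1)^{2(n-k)}$. After projecting onto $\vp$ (which only shrinks), dividing by $(1+\eta_n\lambda_1)^{2n}$, and summing over $k \ge 2$, the dominant $k=2$ term contributes $\tilde{O}(\eta_n^4 n^2 \Mtwo^4/(\lambda_1-\lambda_2)^4) = \tilde{O}(\log^4 n / n^2)$, and higher-$k$ terms decay geometrically.

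For $\Efour{n}$, the matrix $\vp^\top B_n \vp$ effectively acts on the perpendicular subspace, where the deterministic leading term grows like $(1+\eta_n\lambda_2)^n$. A Hoeffding-style decomposition of $\vp^\top B_n \vp$ (or matrix-product concentration in the spirit of~\cite{huang2022matrix}) gives $\E\norm{\vp^\top B_n \vp}_{op}^2 \lesssim (1+\eta_n\lambda_2)^{2n}$ up to lower-order corrections; after dividing by $(1+\eta_n\lambda_1)^{2n}$ we obtain a factor of $n^{-2\alpha}$. The remaining factor $1/|v_1^\top u_0|$ is handled by anti-concentration: for Gaussian-initialized $u_0$, $|v_1^\top u_0| \gtrsim 1/(\sqrt{d}\,\polylog n)$ with probability $1-n^{-3}$, and outside this event a deterministic norm bound and the tail probability suffice. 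Since $d = \poly(n)$ and $\alpha$ can be chosen so that $\alpha \ge 2$, we conclude $\E\norm{\Efour{n}}^2 = \tilde{O}(1/n^2)$.

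The main obstacle is $\Ethree{n}$, which depends nonlinearly on $B_n$ through the normalization. I would decompose $B_n u_0$ along $v_1$ and $\vp$, and set $c = |v_1^\top u_0|(1+\eta_n\lambda_1)^n$, $\epsilon_1 = v_1^\top B_n u_0/c - 1$ (the relative leading-direction deviation), and $\gamma = \norm{\vp^\top B_n u_0}/c$ (the scaled perpendicular mass). The algebraic identity $c - \sqrt{\alpha^2+\beta^2} = (c^2-\alpha^2-\beta^2)/(c+\sqrt{\alpha^2+\beta^2})$ leads, after simplification on the high-probability event where $\norm{B_n u_0}$ is comparable to $c$, to
\[
\norm{\Ethree{n}}^2 \lesssim \gamma^2\bigl(\epsilon_1^2 + \epsilon_1^4 + \gamma^4\bigr).
\]
Both $\epsilon_1$ and $\gamma$ admit martingale-difference representations of the form $\eta_n \sum_{i=1}^n v_1^\top (A_i - \Sigma) B_{i-1} u_0/(1+\eta_n\lambda_1)^i$ (and its perpendicular analogue), so Burkholder--Davis--Gundy type inequalities combined with Assumption~\ref{assumption:bounded_moments} yield $\E[\epsilon_1^4], \E[\gamma^4] = \tilde{O}(1/n^2)$. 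Cauchy--Schwarz then gives $\E[\gamma^2 \epsilon_1^2] \le \sqrt{\E[\gamma^4]\E[\epsilon_1^4]} = \tilde{O}(1/n^2)$, and $\E[\gamma^6]$ is of smaller order by the same argument. The technical care lies in handling the low-probability event where $|v_1^\top u_0|$ or $\norm{B_n u_0}$ is atypically small, which is controlled by truncation together with the deterministic inequalities $\gamma^2 \le \tan^2(\voja, v_1)(1+\epsilon_1)^2$ and $\norm{\vp\vp^\top B_n u_0/\norm{B_n u_0}}^2 = \sin^2(\voja, v_1) \le 1$.
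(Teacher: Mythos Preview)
Your treatment of $\Etwo{n}$ via the orthogonality structure of the Hoeffding decomposition is essentially what the paper does (packaged there as a $\vertiii{\cdot}_{p,q}$ recursion in Lemma~\ref{lemma:higher-order-helper}), and that part is fine.

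The genuine gap is in $\Ethree{n}$ and $\Efour{n}$: their second moments are \emph{infinite}, so the lemma as literally stated cannot be proved. Since $u_0=g/\norm{g}$ with $g\sim\mathcal{N}(0,I_d)$, we have $\Efour{n}=\vp\vp^\top B_n\vp\,\vp^\top g/\bigl(|v_1^\top g|(1+\eta_n\lambda_1)^n\bigr)$, and the three quantities $B_n$, $\vp^\top g$, $v_1^\top g$ are mutually independent. Hence $\E[(e_k^\top\Efour{n})^2]$ contains the factor $\E[(v_1^\top g)^{-2}]=\infty$. A short calculation shows $\Ethree{n}=\bigl(\text{a vector with finite second moment}\bigr)-\Efour{n}$, so $\Ethree{n}$ inherits the same divergence, even though the sum $\Ethree{n}+\Efour{n}$ does have finite second moment. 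Your truncation step --- ``outside this event a deterministic norm bound and the tail probability suffice'' --- fails precisely here: there is no deterministic bound on $\norm{\Efour{n}}$ on the event $\{|v_1^\top g|\text{ small}\}$, so the tail contribution is not integrable.

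The paper does not in fact prove this statement in expectation. What it establishes (Lemmas~\ref{lemma:en2_norm}, \ref{lemma:en3_norm}, \ref{lemma:en4_norm}) are tail bounds of the form $\norm{\Psi_{n,j}}\le \tilde{O}\bigl(\poly(1/\delta)/n\bigr)$ with probability $1-\delta$; the polynomial (rather than logarithmic) dependence on $1/\delta$ is exactly the signature of the infinite second moment. These tail bounds are then fed into a constant-probability guarantee (Lemma~\ref{lemma:constant_prob_error_bound_all}), after which the median-of-means step (Lemma~\ref{lemma:MoM}) boosts the success probability. So the operative target is a tail bound, not a moment bound, and Lemma~\ref{lemma:en234_variance_bound} in the main text should be read as a heuristic summary of that tail behavior rather than a literal moment estimate.
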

The bound on the error term $e_k^{\top} \Etwo{n}$  stems from a more general analysis of the terms $T_{n,k}$ in the Hoeffding decomposition of $B_n$. Lemma~\ref{lemma:higher-order-norm-main} is shown by exploiting the Martingale structure of $T_{n,k}$ and using norm inequalities~\citep{huang2022matrix} to compare the operator norm with the $\vertiii{.}_{p,q}$ norm.

\begin{lemma}
\label{lemma:higher-order-norm-main} Let $T_{n,k}$ be as defined in equation~\eqref{eq:tnk}. Let for any $2 \le q \le 4 \log d$,  $\mathcal{M}_q$ be defined such that $\E\bbb{\norm{A_i -\Sigma}^q}^{1/q} \le \mathcal{M}_q$ and $\eta_n \mathcal{M}_q \sqrt{n \log d} \lesssim 1$. Then, for any $j \in [n]$, $\delta \in (0,1)$, with probability at least $1-\delta$
\bas{
\norm{\sum_{k \ge j} T_{n,k}} \le 
\frac{3 (1+\eta_n \lambda_1)^n \bb{\eta_n \mathcal{M}_q \sqrt{4n \log d}}^j}{\delta^{\frac{1}{4\log d}}}
}
\end{lemma}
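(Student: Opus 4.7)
The plan is to bound a high moment of $\normop{\sum_{k\ge j} T_{n,k}}$ via the matrix martingale structure of each $T_{n,k}$ in the $\vertiii{\cdot}_{p,q}$ norm, sum the resulting bound geometrically over $k \ge j$, and then invoke Markov's inequality to convert the moment bound into a high-probability tail bound.

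First, I would expose the martingale structure of $T_{n,k}$ by splitting the sum over subsets $S \subseteq [n]$ with $|S|=k$ according to the largest index $i := \max S$. Writing $N := I + \eta_n \Sigma$ and $M_i := \eta_n(X_iX_i^\top - \Sigma)$, the terms with $\max S = i$ contribute exactly $N^{n-i} M_i T_{i-1,k-1}$, where $T_{i-1,k-1}$ is the analogous sum over subsets of size $k-1$ in $[i-1]$ (with the same fixed $\eta_n$). Thus
\begin{align*}
T_{n,k} = \sum_{i=k}^{n} N^{n-i} \, M_i \, T_{i-1,k-1}.
\end{align*}
Since $M_i$ is mean zero and independent of $\mathcal{F}_{i-1} := \sigma(X_1,\ldots,X_{i-1})$, while $N^{n-i}$ is deterministic and $T_{i-1,k-1}$ is $\mathcal{F}_{i-1}$-measurable, the summands $D_i^{(k)} := N^{n-i} M_i T_{i-1,k-1}$ form a matrix martingale difference sequence with respect to $\{\mathcal{F}_i\}$.

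Next, I would bound $\vertiii{T_{n,k}}_{p,q}$ with $p = q = 4\log d$ by induction on $k$. The base case $T_{n,0} = N^n$ satisfies $\vertiii{T_{n,0}}_{p,q} \le e(1+\eta_n\lambda_1)^n$ since $\norm{N^n}_p \le d^{1/p}\normop{N^n}$ and $d^{1/(4\log d)} \le e$. For the inductive step, I would apply a non-commutative Burkholder-type inequality in the $\vertiii{\cdot}_{p,q}$ norm (of the flavor established in Huang--Tropp 2022) giving $\vertiii{\sum_i D_i^{(k)}}_{p,q}^2 \lesssim q \sum_i \vertiii{D_i^{(k)}}_{p,q}^2$. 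Using the independence of $M_i$ and $T_{i-1,k-1}$, combined with the sub-multiplicativity $\vertiii{AB}_{p,q} \le \vertiii{A}_{\textup{op},q} \cdot \vertiii{B}_{p,q}$ under independence, each summand satisfies $\vertiii{D_i^{(k)}}_{p,q} \le (1+\eta_n\lambda_1)^{n-i} \cdot \eta_n \mathcal{M}_q \cdot \vertiii{T_{i-1,k-1}}_{p,q}$. Summing $n$ such terms and iterating the induction hypothesis yields
\begin{align*}
\vertiii{T_{n,k}}_{p,q} \lesssim (1+\eta_n\lambda_1)^n \bb{\eta_n \mathcal{M}_q \sqrt{4n\log d}}^k,
\end{align*}
where the factor $\sqrt{4n\log d}$ per level arises from combining $\sqrt{q} = 2\sqrt{\log d}$ (Burkholder constant) with $\sqrt{n}$ (martingale length).

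Finally, by the triangle inequality and the assumption $\eta_n \mathcal{M}_q \sqrt{n\log d} \lesssim 1$, the tail series $\sum_{k\ge j} \vertiii{T_{n,k}}_{p,q}$ is geometric with ratio bounded strictly less than $1$, so $\vertiii{\sum_{k\ge j} T_{n,k}}_{p,q} \le 3(1+\eta_n\lambda_1)^n \bb{\eta_n \mathcal{M}_q \sqrt{4n\log d}}^j$ after absorbing universal constants. Applying Markov's inequality to the $q$-th moment of $\normop{\cdot} \le \norm{\cdot}_p$ at $q = 4\log d$ gives
\begin{align*}
\Prob\bb{\normop{\sum_{k\ge j} T_{n,k}} \ge t} \le t^{-4\log d}\, \vertiii{\sum_{k\ge j} T_{n,k}}_{p,q}^{4\log d},
\end{align*}
and setting $t$ equal to the bound asserted in the lemma makes this at most $\delta$ (the factor $\delta^{-1/(4\log d)}$ is precisely $\delta^{-1/q}$). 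The main obstacle is locking down the non-commutative Burkholder-type inequality with the right constants in the $\vertiii{\cdot}_{p,q}$ norm, so that exactly one factor of $\sqrt{4n\log d}$ (rather than an extra $\log d$ or $\sqrt{q}$) accumulates per level of the recursion; a subsidiary subtlety is carefully handling the independence of $M_i$ from $T_{i-1,k-1}$ to convert the conditional second-moment term in Burkholder's inequality into the clean product $\eta_n^2\mathcal{M}_q^2 \cdot \vertiii{T_{i-1,k-1}}_{p,q}^2$.
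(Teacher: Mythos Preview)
Your proposal is correct and uses the same core ingredients as the paper: the Huang--Tropp uniform smoothness inequality for the $\vertiii{\cdot}_{p,q}$ norm (their Proposition~4.3), the choice $p=4\log d$, submultiplicativity under independence (their Equation~5.3), a geometric sum over $k\ge j$, and Markov's inequality. The only structural difference is in how the recursion is organized. The paper splits on whether $n\in S$, obtaining the one-step recurrence $T_{n,k}=(I+\eta_n\Sigma)T_{n-1,k}+\eta_n(A_n-\Sigma)T_{n-1,k-1}$, applies Proposition~4.3 once to this two-term sum, and solves the resulting 2-D recurrence $f_{n,k}\le(1+\eta_n\lambda_1)^2 f_{n-1,k}+(p-1)\eta_n^2\mathcal{M}_q^2 f_{n-1,k-1}$ via a Pascal-identity substitution, yielding the factor $\binom{n}{k}$. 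Your decomposition by $\max S$ is exactly the unrolling of that recurrence in the $n$-variable: iterating the paper's Proposition~4.3 step along $n$ with $k$ fixed recovers your martingale Burkholder bound $\vertiii{T_{n,k}}_{p,q}^2\le(p-1)\sum_i\vertiii{D_i^{(k)}}_{p,q}^2$, and your induction on $k$ then reproduces the same $\binom{n}{k}\le n^k$ combinatorics. The two routes are equivalent and produce the same constants; the ``main obstacle'' you flag (a Burkholder inequality with constant $p-1$ in $\vertiii{\cdot}_{p,q}$) is precisely what iterating Huang--Tropp Proposition~4.3 gives, so there is no gap.
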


\begin{proof}[Proof sketch]
    Let $\mathcal{S}_{n,k}$ be the set of subsets of $[n]$ of size $k$. 
    \bas{
    T_{n,k} 
    &= (I+\eta_n \Sigma)T_{n-1,k} + \eta_n(A_n - \Sigma)T_{n-1,k-1}.
    }
    Proposition 4.3. of~\cite{huang2022matrix} implies
    \bas{
    \vertiii{T_{n,k}}_{p,q}^2 & \le \vertiii{(I+\eta_n \Sigma) T_{n-1,k}}_{p,q}^2 \\
    &\;\; + (p-1)\vertiii{\eta_n(A_n-\Sigma) T_{n-1,k-1}}_{p,q}^2.
    }
    as long as $\E\bbb{\eta_n(A_n - \Sigma)T_{n-1,k-1} | (I+\eta_n \Sigma)T_{n-1,k}} = 0$, which is true due to $A_1, A_2, \dots, A_n$ being mutually independent. Solving the recurrence shows the bound.
\end{proof}

The term $\Ezero{n}$ arises in the decomposition~\eqref{eq:ojadecomperror} because we use $\vmain$ as a proxy to $v_1$ in Algorithm~\ref{alg:variance_estimation}.
\begin{lemma}[Variance of Approximating $v_1$]\label{lemma:enzero_variance_bound} Let $\Ezero{n}$ be defined as in Lemma~\ref{lemma:oja_error_decomposition}. Then, $
\E\bbb{\bb{e_k^\top \Ezero{n}}^2} = \tilde{O}\bb{\frac{1}{N}},
$ where $\tilde{v}$ (Eq~\ref{eq:vtilde}) uses $N$
 samples.\end{lemma}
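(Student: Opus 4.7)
My plan is to reduce the claim to the standard sin-squared error bound for the $N$-sample Oja vector $\vmain$ by an algebraic decomposition of $\Ezero{n}$ that isolates the factor $\sin^2\bb{v_1,\vmain}$. I would start by writing both $\vmain$ and $\voja$ in the orthonormal basis $\{v_1\}\cup\text{cols}(\vp)$: set $\alpha \defeq v_1^\top \vmain$, $\zeta \defeq \vp^\top \vmain$, $\beta \defeq v_1^\top \voja$, $\xi \defeq \vp^\top \voja$, so that $\vmain = \alpha v_1 + \vp \zeta$ and $\voja = \beta v_1 + \vp \xi$, with the unit-norm constraints giving $\|\zeta\|^2 = 1-\alpha^2 = \sin^2\bb{v_1,\vmain}$ and $\|\xi\| \le 1$. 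Substituting into $\Ezero{n} = \bb{v_1^\top \voja}v_1 - \bb{\vmain^\top \voja}\vmain$ and collecting the $v_1$ terms via $1-\alpha^2 = \|\zeta\|^2$ yields the compact identity
\bas{
\Ezero{n} = \beta\|\zeta\|^2 v_1 \;-\; \alpha\bb{\zeta^\top \xi} v_1 \;-\; \bb{\alpha\beta + \zeta^\top \xi}\vp \zeta,
}
in which every term carries at least one factor of $\|\zeta\|$.

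From this identity I would take inner product with $e_k$ and apply routine Cauchy--Schwarz bounds ($|\zeta^\top \xi|\le \|\zeta\|$, $|e_k^\top \vp \zeta|\le b_k\|\zeta\|$) together with $|\alpha|,|\beta|\le 1$ to obtain a pointwise bound of the form $\bb{e_k^\top \Ezero{n}}^2 \lesssim v_1(k)^2\|\zeta\|^4 + \bb{v_1(k)^2 + b_k^2}\|\zeta\|^2$. The basis identity $v_1(k)^2 + b_k^2 = \|e_k\|^2 = 1$ (from $v_1 v_1^\top + \vp \vp^\top = I$) combined with $\|\zeta\|^2 \le 1$ then collapses this to $\bb{e_k^\top \Ezero{n}}^2 \lesssim \|\zeta\|^2$, uniformly in $k$. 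Taking expectation reduces the lemma to $\E\bbb{\|\zeta\|^2} = \E\bbb{\sin^2\bb{v_1,\vmain}} = \tilde{O}\bb{\Nu/\bb{N\bb{\eigengap}^2}} = \tilde{O}\bb{1/N}$, which is exactly the standard Oja sin-squared error rate invoked throughout the paper (e.g., \citep{jain2016streaming, DBLP:journals/corr/abs-2102-03646}), with the usual polynomial dependence on model parameters hidden in $\tilde{O}$.

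The only point requiring care---rather than a genuine obstacle---is the intrinsic sign ambiguity of Oja's output: the decomposition $\vmain = \alpha v_1 + \vp \zeta$ depends on the sign of $\alpha$, but $\Ezero{n}$ involves $\vmain$ only through the sign-invariant combination $\bb{\vmain^\top \voja}\vmain$, so flipping $\vmain \mapsto -\vmain$ leaves the identity unchanged. Notably the argument never uses independence between $\vmain$ and $\voja$ (the two vectors may even share samples): every $\voja$-dependent factor is bounded absolutely by $1$ or by $\|\zeta\|$ before taking the expectation, so the only probabilistic input is the marginal sin-squared error bound for $\vmain$. The proof is therefore essentially a short algebraic reduction plus a citation to a known rate, with no deeper technical difficulty.
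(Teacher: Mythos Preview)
Your proof is correct and takes essentially the same approach as the paper: both bound $(e_k^\top\Ezero{n})^2$ pointwise by a constant times $\sin^2(v_1,\vmain)$ and then invoke the standard Oja sin-squared rate for the $N$-sample vector $\vmain$. The only difference is that the paper reaches the pointwise bound in a single line via $|e_k^\top\Ezero{n}| = |e_k^\top(v_1v_1^\top - \vmain\vmain^\top)\voja| \le \normop{v_1v_1^\top - \vmain\vmain^\top} \lesssim |\sin(v_1,\vmain)|$, whereas your eigenbasis decomposition is more explicit but lands at the same inequality.
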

Theorem~\ref{thm:high_prob_error_bound} follows by combining all these bounds. See Appendix~\ref{sub_appendix:uncertainty} for a complete argument.

\section{Experiments}\label{sec:experiments}

In this section, we provide experiments on synthetic and real-world data to validate our theory. For all experiments, we estimate variance of the entries of $\roja$ (see Eq~\ref{eq:hoeffding}) by scaling the output of Algorithm~\ref{alg:variance_estimation} by $\eta_{B}\bb{\lambda_1-\lambda_2}$.

\subsection{Synthetic data experiments}
\label{sec:synthetic_experiments}

We provide numerical experiments to compare Algorithm~\ref{alg:variance_estimation} ($\ojavarest$) with the multiplier bootstrap based algorithm proposed in \cite{lunde2021bootstrapping}. As discussed in Section~\ref{ssec:uncertainty_estimation}, given a dataset $\mathcal{D}_n := \left\{X_i\right\}_{i \in [n]}$, we choose $\tilde{v}$ for $\ojavarest$ as $\tilde{v} := \Oja\bb{\mathcal{D}_n, \eta_n, z/\norm{z}_2}$ for $z \sim \mathcal{N}\bb{0, I}$ and set $m_{1} = 3$, $m_{2}$ = $\log\bb{n}$, $N = n$. Given a variance estimate, $\hat{\sigma}^{2}_{\ojavarest}$, we construct a $\bb{1-\alpha}$-confidence interval as $\tilde{v} \pm z_{\frac{\alpha}{2}}\hat{\sigma}_{\ojavarest}$. 

For the bootstrap algorithm, using Algorithm 1 in the aforementioned paper, we use $b$ bootstrap samples to generate estimates $v^{*(1)}, \cdots, v^{*(b)}$ and measure the empirical variance by computing the average squared residual with $\vmain$. Again, given a variance estimate, $\hat{\sigma}^{2}_{\bootstrap}$, we construct a $\bb{1-\alpha}$-confidence interval as $\vmain \pm z_{\frac{\alpha}{2}}\hat{\sigma}_{\bootstrap}$. 

We also use the data generation process proposed in \cite{lunde2021bootstrapping} for our experiments. Specifically, we begin by generating independent samples $Z_{ij} \sim \operatorname{Uniform}(-\sqrt{3},\sqrt{3})$ for indices $i \in [n]$ and $j \in [d]$. Next, we define a positive semidefinite matrix $K$ with entries $K_{ij} = \exp(-c\,|i-j|)$ using the constant $c = 0.01$. With this matrix, we construct a covariance matrix $\Sigma$ via $\Sigma_{ij} = K(i,j)\,\sigma_i\,\sigma_j$, where the scaling factors are specified by $\sigma_i = 5\,i^{-\beta}$ for $\beta \in \left\{0.2, 1\right\}$. We finally transform the samples as $X_i = \Sigma^{1/2} Z_i$.

\begin{figure}[!hbt]
    \centering
    \includegraphics[width=0.6\linewidth]{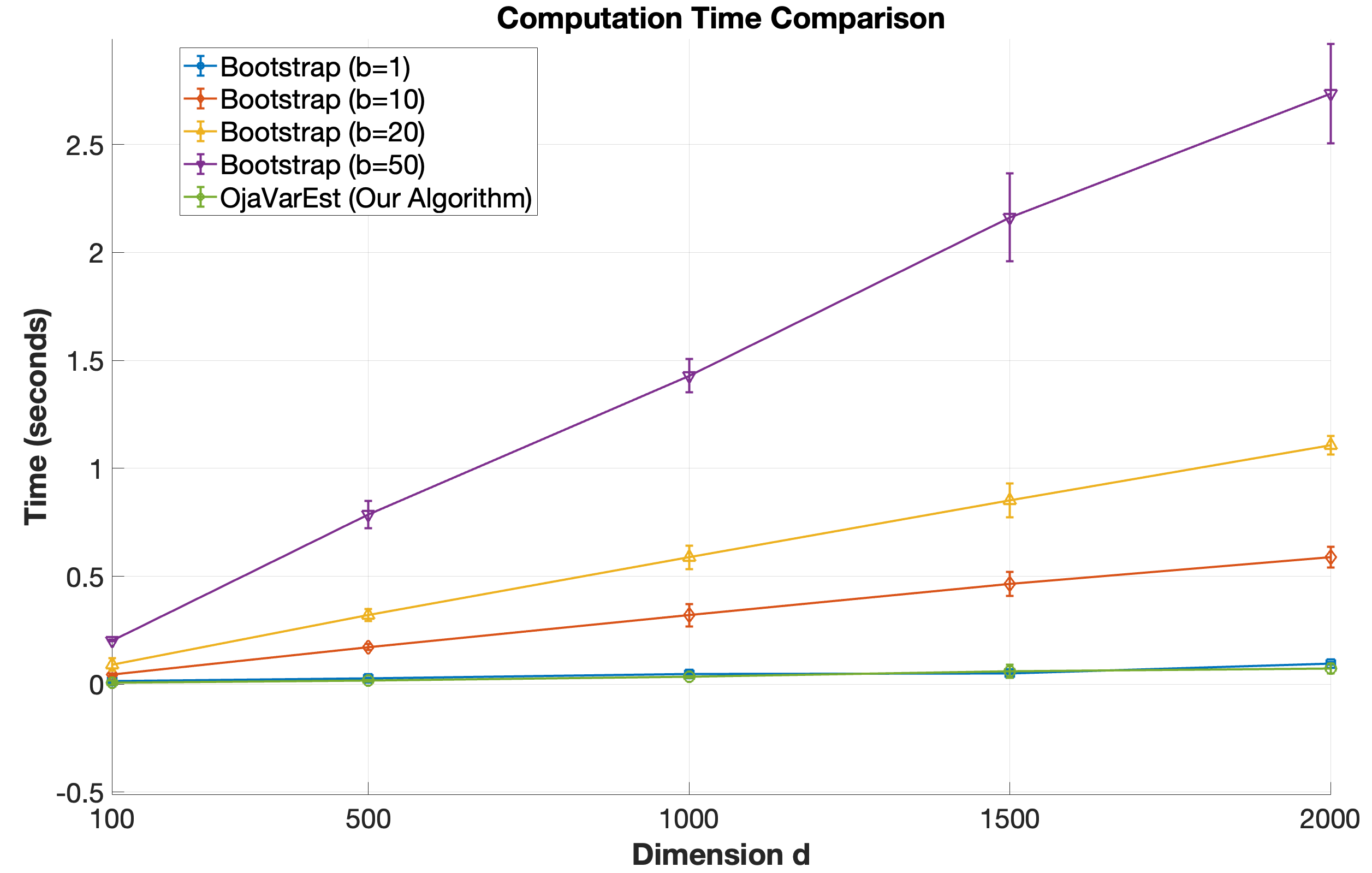} 
    \caption{Time taken by the bootstrap methods and the OjaVarEst algorithm. Experiments verify that our proposed algorithm is as fast as bootstrap with $b=1$.}
    \label{fig:computation_time}
\end{figure}

The first experiment (see Figure~\ref{fig:computation_time}) compares the computational performance of $\ojavarest$ with bootstrap to measure variance, varying the number of bootstrap samples, $b$, and recording performance for different values of $d$ with a fixed $n = 5000$ and $\beta = 1$. We note that the performance of our algorithm is computationally at par with bootstrap when using only 1 bootstrap sample, and is substantially better if the number of bootstrap samples increase. This is to be expected since for our algorithm, only two passes over the entire dataset suffice, whereas for bootstrap, $b$ bootstrap vectors are required to be maintained, which slows computation by a factor of $b$. Furthermore, it also requires $b$ times as much space to maintain $b$ different iterates, which may be costly in context of training large models.

\begin{table*}[!htb]
    \centering
    \renewcommand{\arraystretch}{1.1} 
    \resizebox{\textwidth}{!}{ 
    \begin{tabular}{l|cccc|cccc}
        \toprule
        & \multicolumn{4}{c|}{\textbf{Dist. 1} $(\beta = 1)$, Coordinate 1} 
        & \multicolumn{4}{c}{\textbf{Dist. 1} $(\beta = 1)$, Coordinate 2} \\
        \cmidrule(lr){2-5} \cmidrule(lr){6-9}
        ($n, d$) & $\ojavarest$ & BS ($b=1$) & BS ($b=10$) & BS ($b=20$) 
        & $\ojavarest$ & BS ($b=1$) & BS ($b=10$) & BS ($b=20$) \\
        \midrule
        2e3, 2e3 & 96.50\% & 65.00\% & 93.00\% & 95.00\% & 94.00\% & 69.50\% & 91.00\% & 91.50\% \\
        5e3, 2e3 & 95.50\% & 73.00\% & 91.50\% & 94.00\% & 95.50\% & 73.00\% & 89.00\% & 92.00\% \\
        1e4, 2e3 & 96.00\% & 69.00\% & 93.50\% & 94.50\% & 96.00\% & 71.50\% & 93.50\% & 96.00\% \\
        \midrule \midrule
        & \multicolumn{4}{c|}{\textbf{Dist. 2} $(\beta = 0.02)$, Coordinate 1} 
        & \multicolumn{4}{c}{\textbf{Dist. 2} $(\beta = 2)$, Coordinate 2} \\
        \cmidrule(lr){2-5} \cmidrule(lr){6-9}
        ($n, d$) & $\ojavarest$ & BS ($b=1$) & BS ($b=10$) & BS ($b=20$) 
        & $\ojavarest$ & BS ($b=1$) & BS ($b=10$) & BS ($b=20$) \\
        \midrule
        2e3, 2e3 & 94.50\% & 74.00\% & 87.00\% & 93.50\% & 94.00\% & 75.00\% & 86.50\% & 92.00\% \\
        5e3, 2e3 & 96.00\% & 71.00\% & 87.50\% & 92.00\% & 96.50\% & 72.50\% & 87.00\% & 93.00\% \\
        1e4, 2e3 & 94.00\% & 65.00\% & 95.00\% & 94.00\% & 94.50\% & 66.50\% & 94.50\% & 93.50\% \\
        \bottomrule
    \end{tabular}
    }
\caption{\label{tab:coverage_stats}Coverage statistics for our algorithm, $\ojavarest$, and the Bootstrap(BS) estimator, with varying bootstrap samples $(b = 1, 10, 20)$, data distributions ($\beta = 1, 0.02$) and sample sizes $(n = 2000, 5000, 10000)$ with a fixed dimension $d = 2000$.}
\end{table*}

The next experiment (Table~\ref{tab:coverage_stats}) compares the quality of the variance estimates of our algorithm, $\hat{\sigma}^{2}_{\ojavarest}$ with that of bootstrap $\hat{\sigma}^{2}_{\bootstrap}$ for different number of bootstrap samples, $b$, and distributions, $\beta$. We record the average coverage rate, which is the proportion of times the confidence interval provided by the algorithm contains the coordinate of the true eigenvector, for a target coverage probability of $95\%$ for the first two coordinates of the eigenvector. $\ojavarest$ performs similarly to Bootstrap with $b = 20$. However, as shown in Figure~\ref{fig:computation_time}, the bootstrap method is $20$ times slower. The time taken by bootstrap with $b=1$ is similar to $\ojavarest$ but has a significantly worse average coverage rate. 

Our final experiment compares the Algorithm~\ref{alg:variance_estimation} with $m_1 = 3$ to using just the mean ($m_1 = 1$). Even with the choice $m_1 = 3$, the uncertainty in variance estimation is reduced.

\begin{figure}[!hbt]
    \centering
    \begin{minipage}{0.6\columnwidth}
        \centering
        \includegraphics[width=\columnwidth]{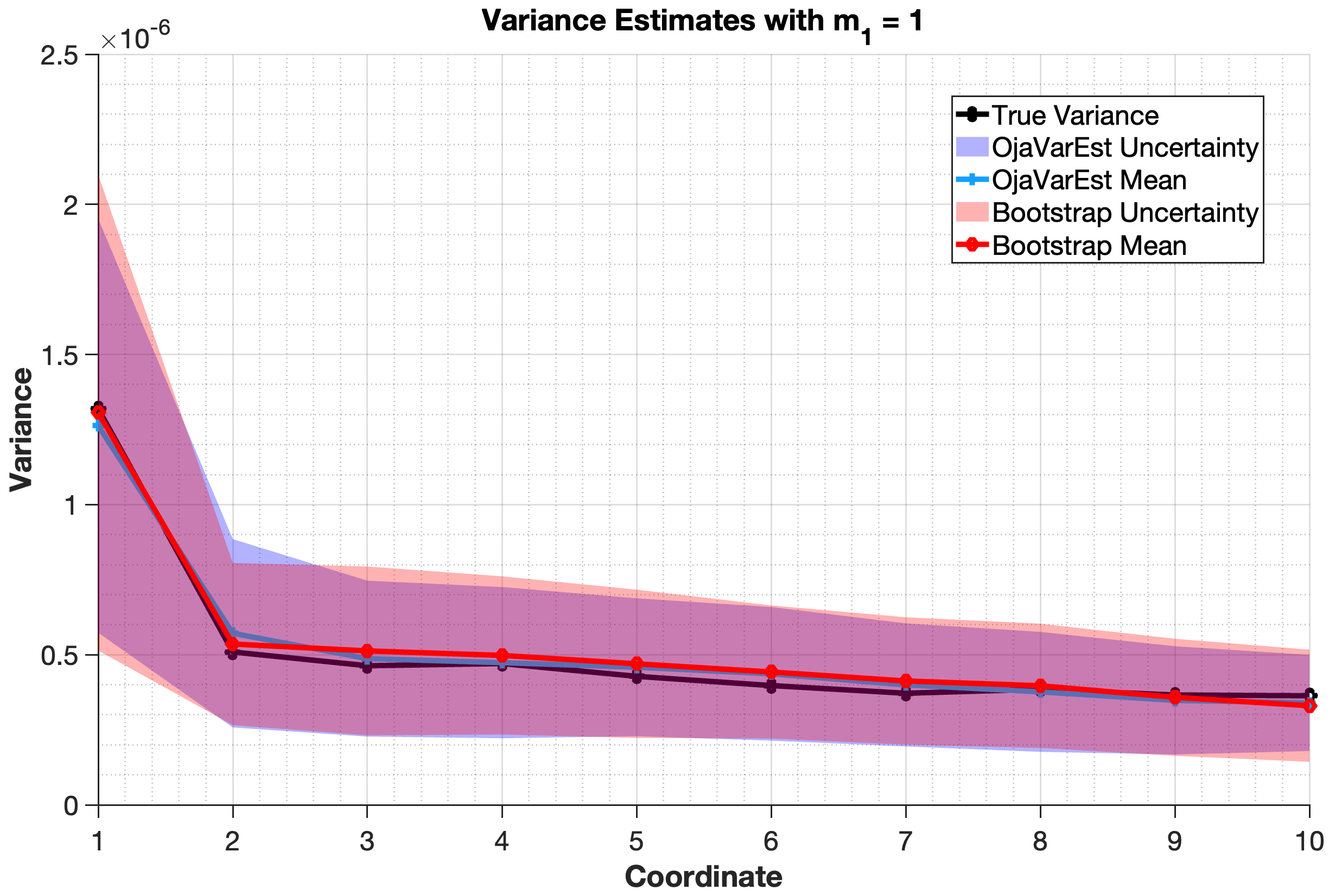}
        \captionsetup{labelformat=empty}
        \caption*{(a) Mean (with $m_1 = 1$)}
    \end{minipage}%
    \hfill
    \begin{minipage}{0.6\columnwidth}
        \centering
        \includegraphics[width=\columnwidth]{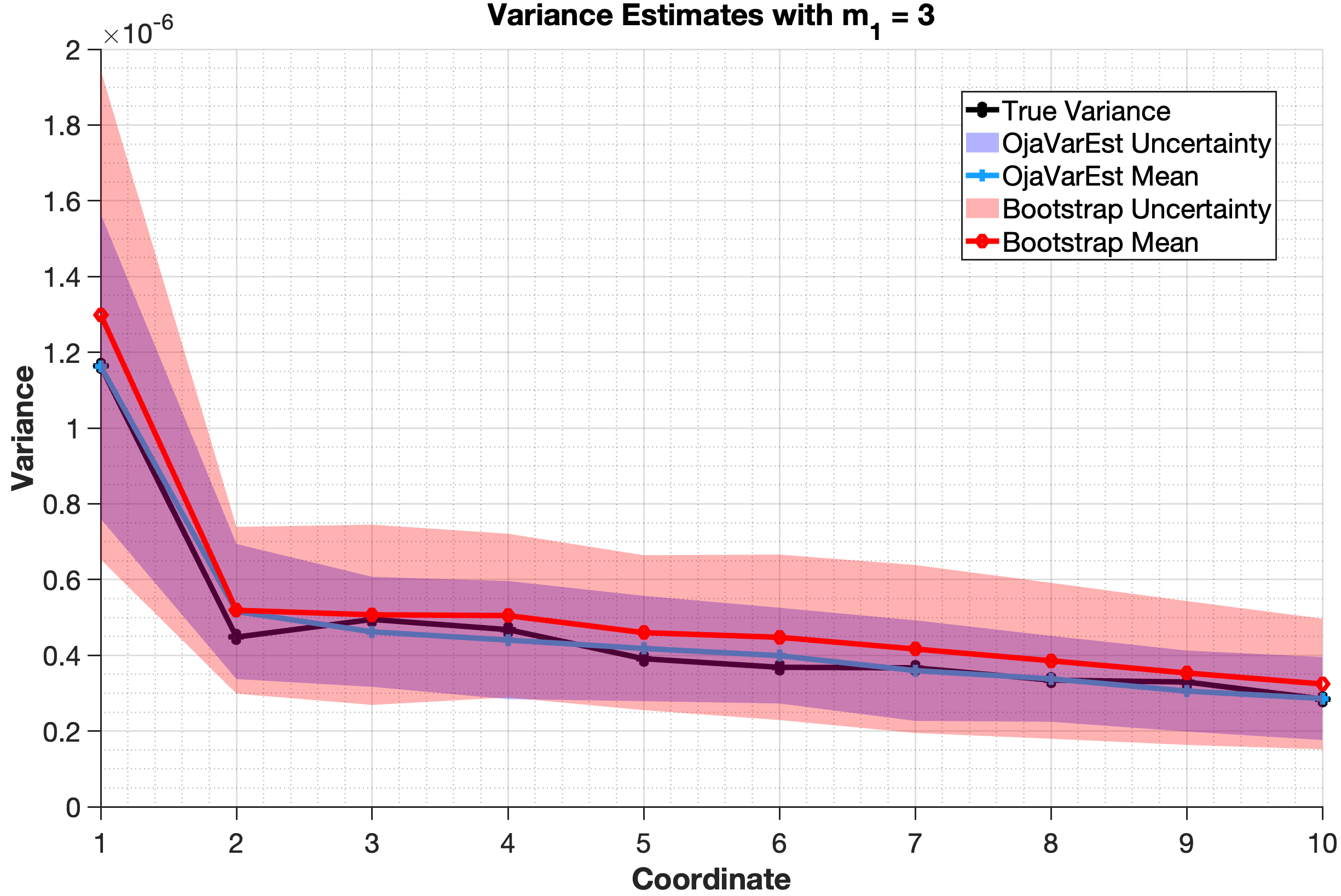}
        \captionsetup{labelformat=empty}
        \caption*{(b) Median (with $m_1 = 3$)}
    \end{minipage}
    \caption{Comparison of Median and Mean in Algorithm~\ref{alg:variance_estimation} for $n = 5000$, $d = 2000$, $\beta = 1$, $b = 10$.}
    \label{fig:mean_median_comparison}
\end{figure}

\subsection{Real-world data experiments}
\label{sec:real_world_experiments}

We provide experiments on two real-world datasets in this section. For each dataset, we show the 95\% confidence intervals and plot the top 20 coordinates of the true offline eigenvector (red dot), used as a proxy for the ground truth.

\textbf{Time series+missing data}: The Human Activity Recognition (HAR) Dataset \citep{anguita2013public} contains smartphone sensor readings from 30 subjects performing daily activities (walking, sitting, standing, etc.). Each data instance is a 2.56-second window of inertial sensor signals represented as a feature vector. Here, $n=7352$ and $d=561$. For each datum, we also replace 10\% of features randomly by zero to simulate missing data. Even in this setting, which we do not analyze theoretically, most of the top 20 coordinates of the offline eigenvector are inside the 95\% CI returned by our algorithm (see Figure~\ref{fig:har_dataset}).

\begin{figure}[!hbt]
    \centering
    \begin{minipage}{0.6\columnwidth}
        \centering
        \includegraphics[width=\columnwidth]{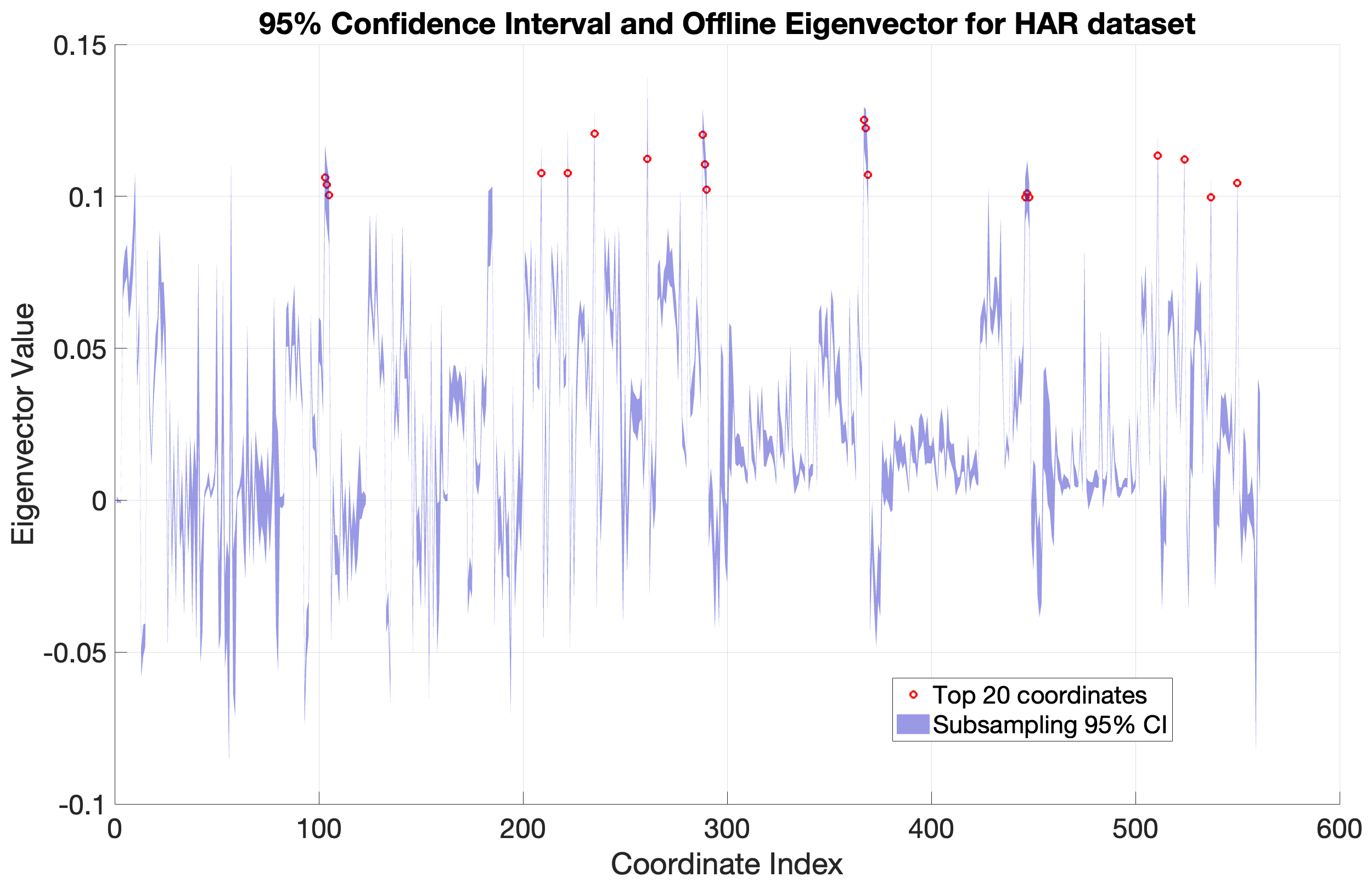}
        \captionsetup{labelformat=empty}
        \caption*{(a)}
    \end{minipage}%
    \hfill
    \begin{minipage}{0.6\columnwidth}
        \centering
        \includegraphics[width=\columnwidth]{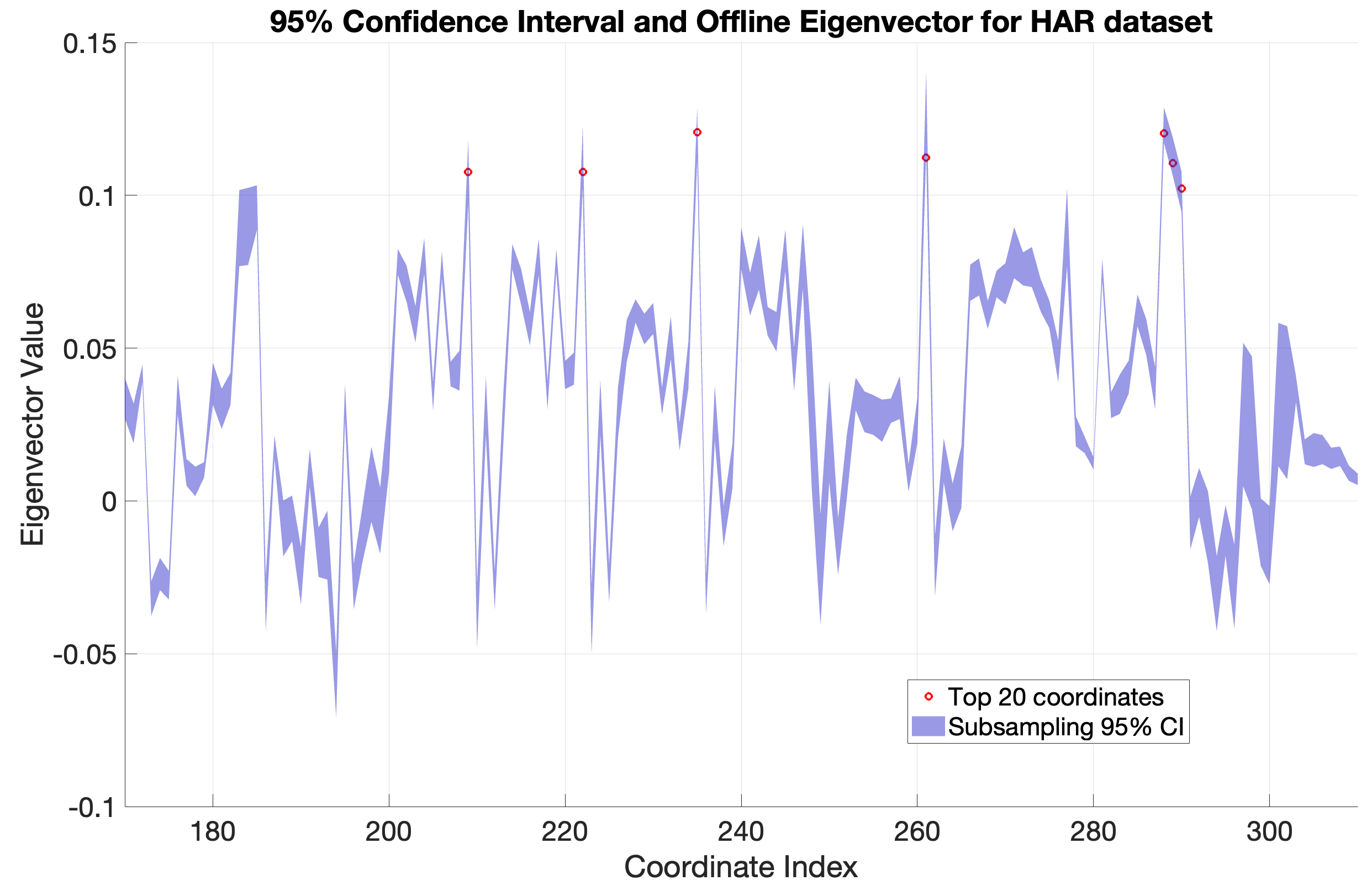}
        \captionsetup{labelformat=empty}
        \caption*{(b)}
    \end{minipage}
    \caption{Uncertainty Estimation for HAR dataset ($n = 7352, d = 561$). The sin2 error of Oja’s algorithm is equal to 0.057 for this dataset. (a) plot of the eigenvector with 95\% confidence interval for all coordinates and (b) the same plot zoomed in on indices 170-310 for exposition.}
    \label{fig:har_dataset}
\end{figure}

\begin{table*}[!hbt]
  \centering
  \begin{tabular}{c c c c c c c c c c c}
    \toprule
    Class & 0 & 1 & 2 & 3 & 4 & 5 & 6 & 7 & 8 & 9 \\
    \midrule
    $\sin^2$ error
      & 0.12 & 0.07 & 0.18 & 0.32 & 0.53 & 0.18 & 0.08 & 0.09 & 0.20 & 0.17 \\
    \bottomrule
  \end{tabular}
  \caption{  
    $\sin^2$ of the angle between the offline eigenvector and the subsampling
    eigenvector output by our algorithm, computed separately after filtering the
    MNIST data for each class.
  }
  \label{tab:mnist-sin2-error}
\end{table*}

\textbf{Image data}: We use the MNIST dataset~\citep{lecun1998gradient} of images of handwritten digits (0 through 9). Here, $n=60,000, d=784$, with each image normalized to a $28 \times 28$ pixel resolution. We see (Figure~\ref{fig:mnist_dataset}) that for the classes where Oja’s algorithm converges (small $\sin^2$ error in Table~\ref{tab:mnist-sin2-error}), most of the top 20 coordinates are inside their confidence intervals (CIs). Notable exceptions are classes 3 and 4, where several of the top 20 coordinates are not contained inside the corresponding CIs. This is expected because our theory is applicable when Oja’s algorithm converges.

\begin{figure}[!hbt]
    \centering
    \begin{minipage}{0.32\columnwidth}
        \centering
        \includegraphics[width=\columnwidth]{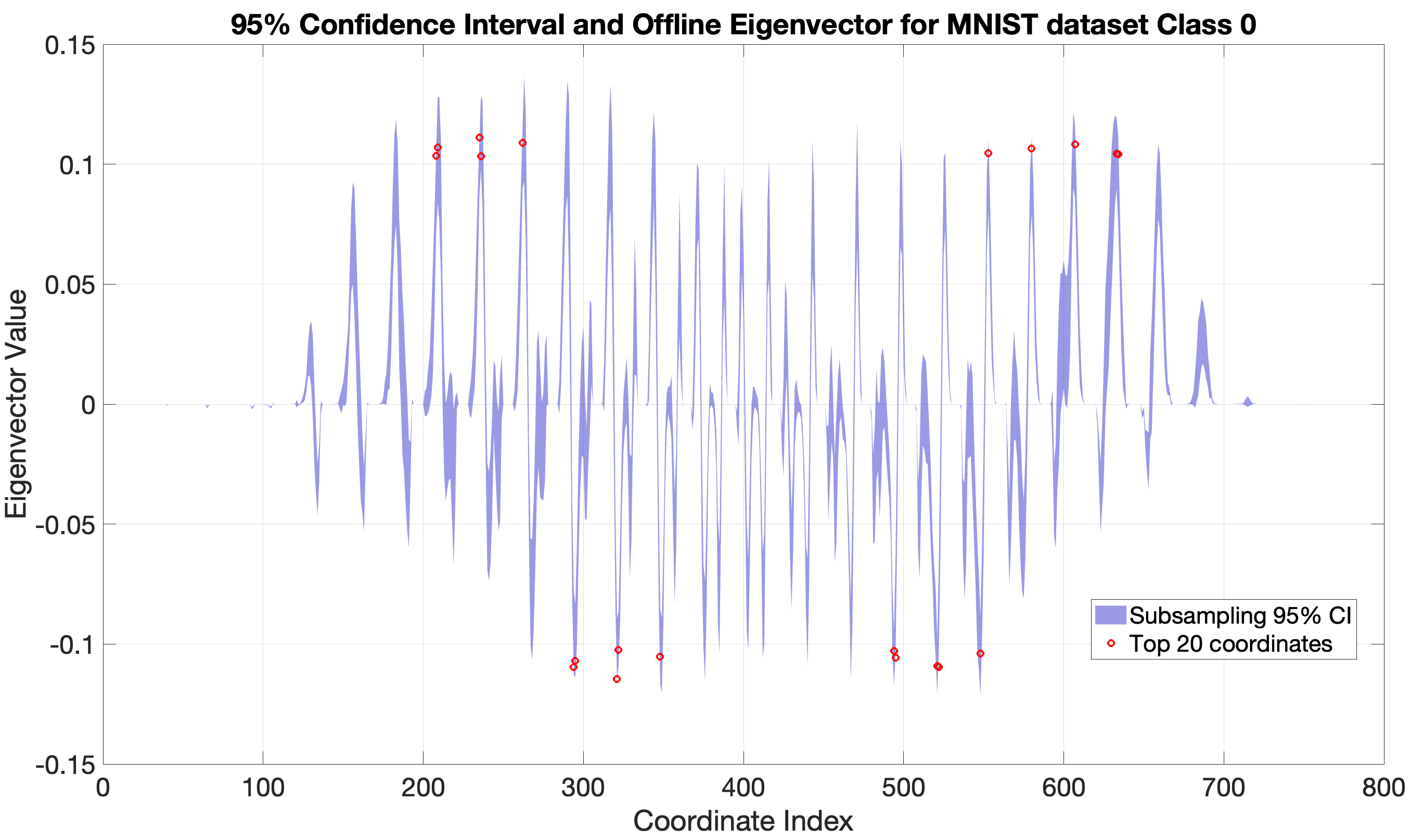}
        \captionsetup{labelformat=empty}
        \caption*{(0)}
    \end{minipage}%
    \hfill
    \begin{minipage}{0.32\columnwidth}
        \centering
        \includegraphics[width=\columnwidth]{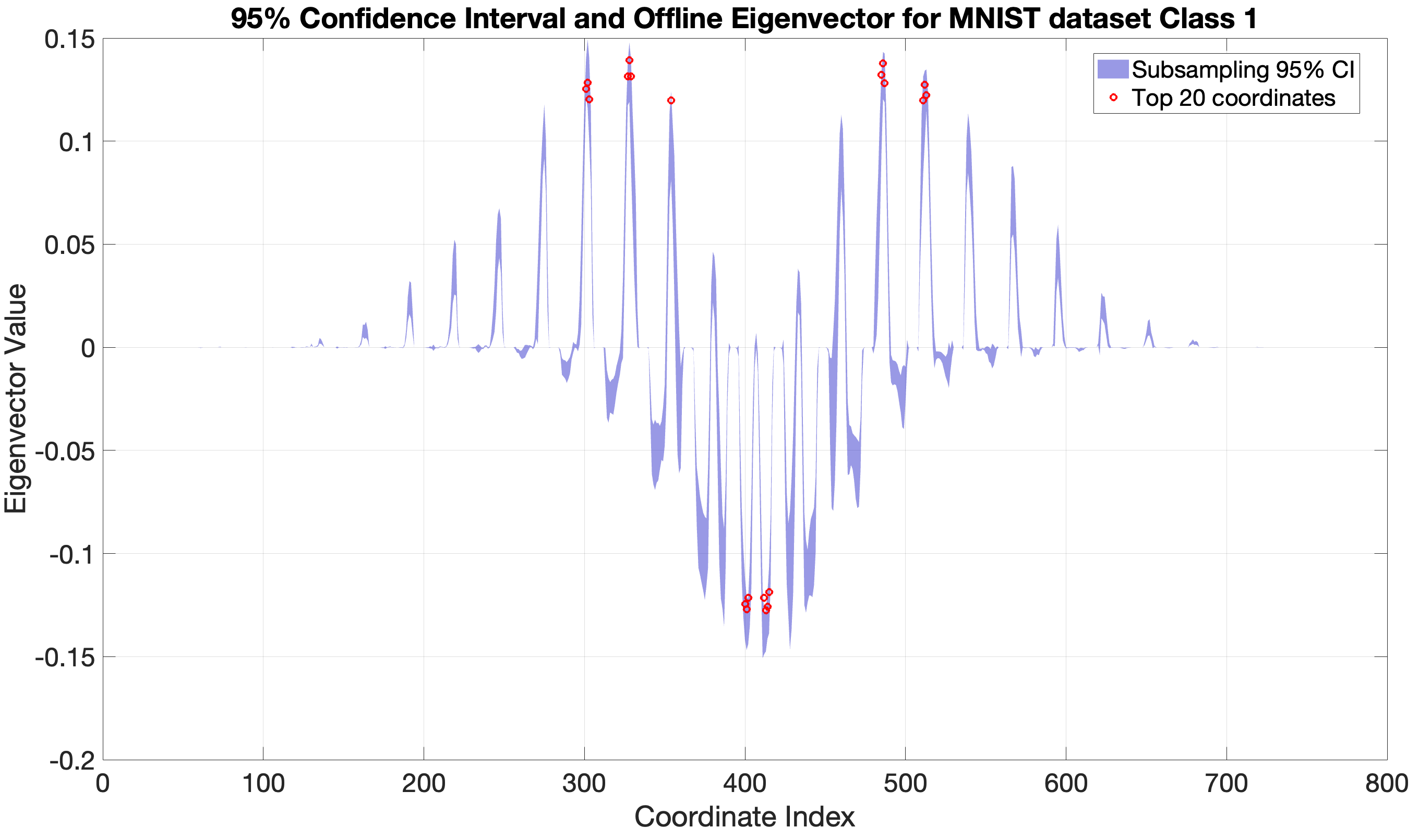}
        \captionsetup{labelformat=empty}
        \caption*{(1)}
    \end{minipage}
    \begin{minipage}{0.32\columnwidth}
        \centering
        \includegraphics[width=\columnwidth]{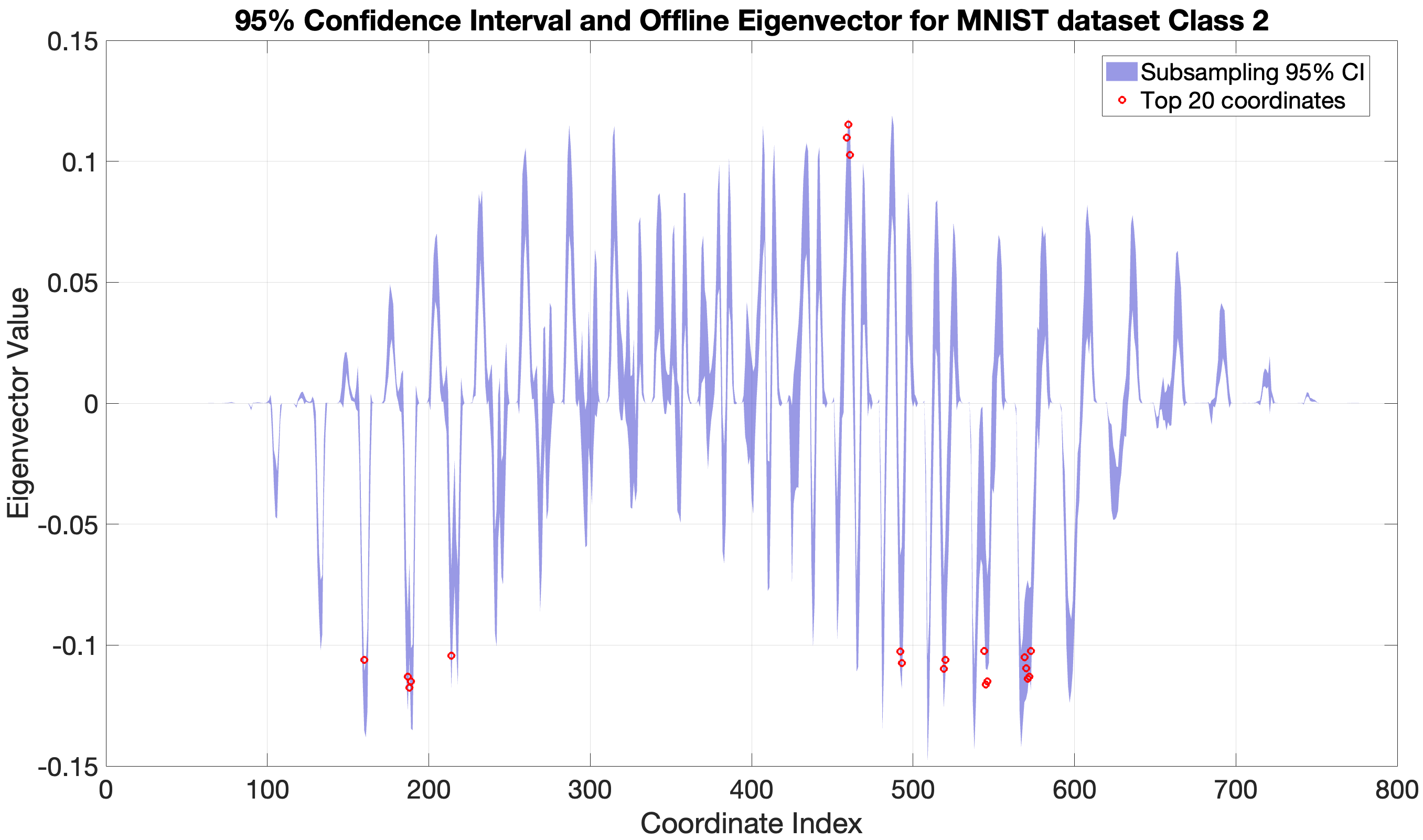}
        \captionsetup{labelformat=empty}
        \caption*{(2)}
    \end{minipage}%
    \hfill
    \begin{minipage}{0.32\columnwidth}
        \centering
        \includegraphics[width=\columnwidth]{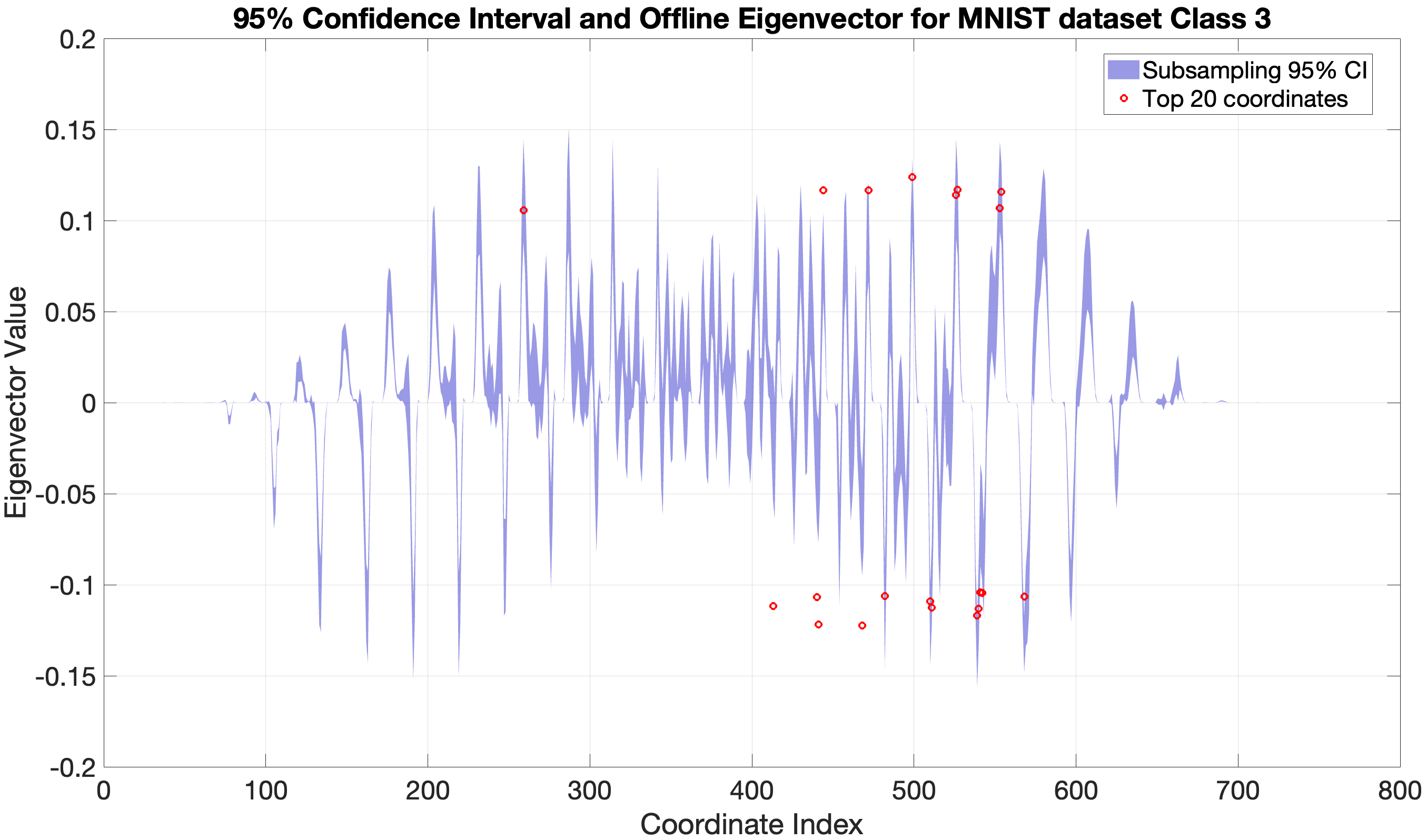}
        \captionsetup{labelformat=empty}
        \caption*{(3)}
    \end{minipage}%
    \hfill
    \begin{minipage}{0.32\columnwidth}
        \centering
        \includegraphics[width=\columnwidth]{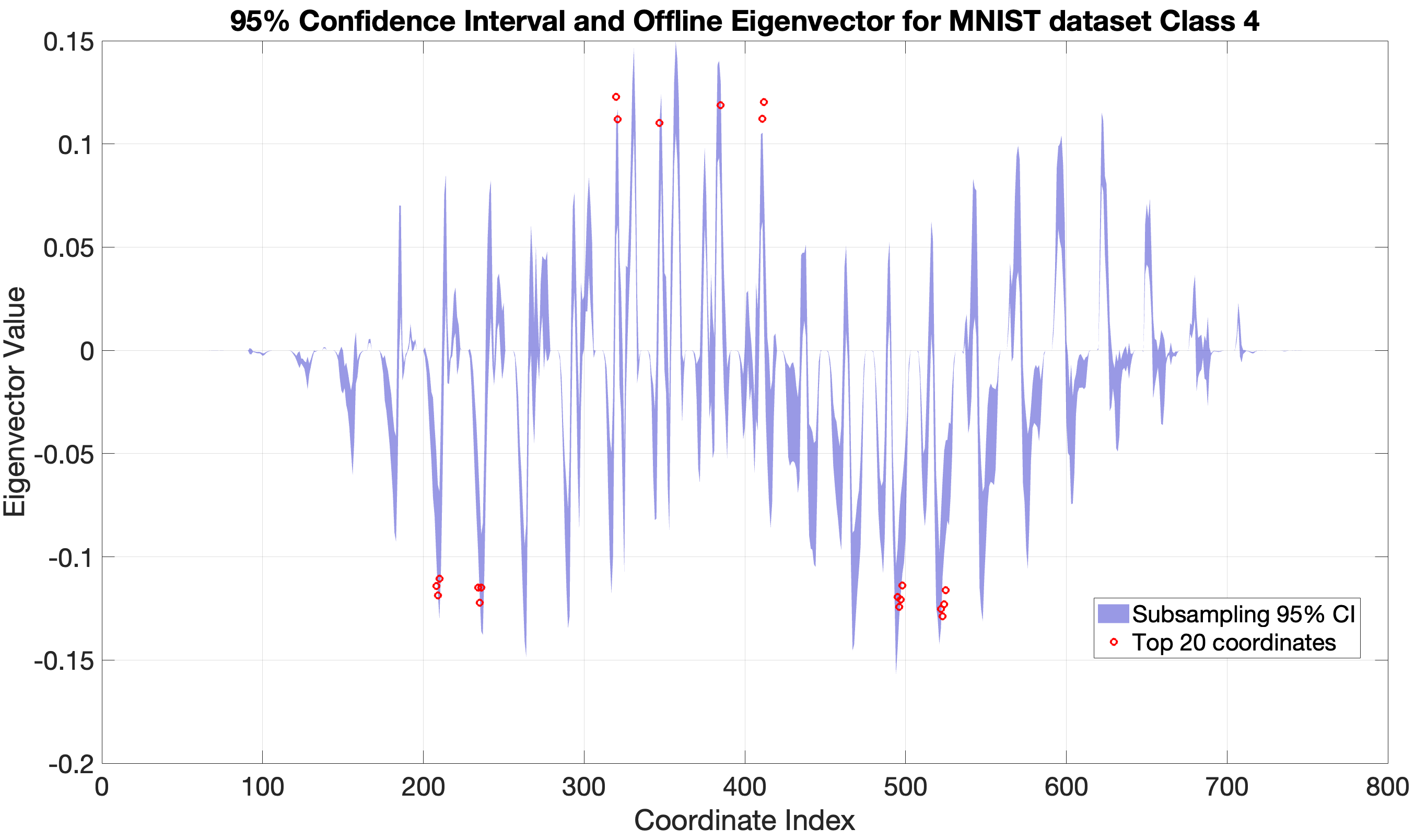}
        \captionsetup{labelformat=empty}
        \caption*{(4)}
    \end{minipage}%
    \hfill
    \begin{minipage}{0.32\columnwidth}
        \centering
        \includegraphics[width=\columnwidth]{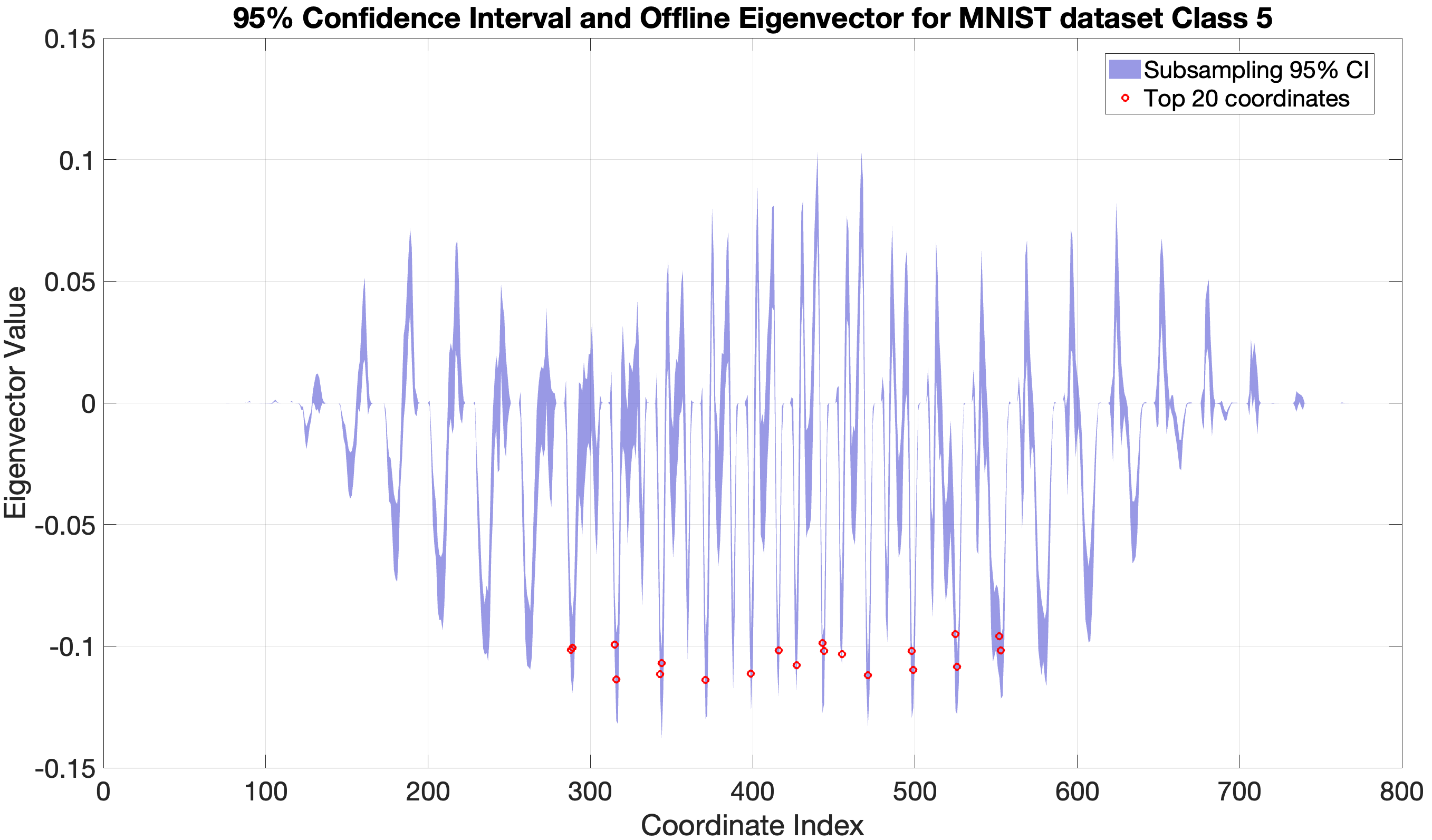}
        \captionsetup{labelformat=empty}
        \caption*{(5)}
    \end{minipage}%
    \hfill
    \begin{minipage}{0.32\columnwidth}
        \centering
        \includegraphics[width=\columnwidth]{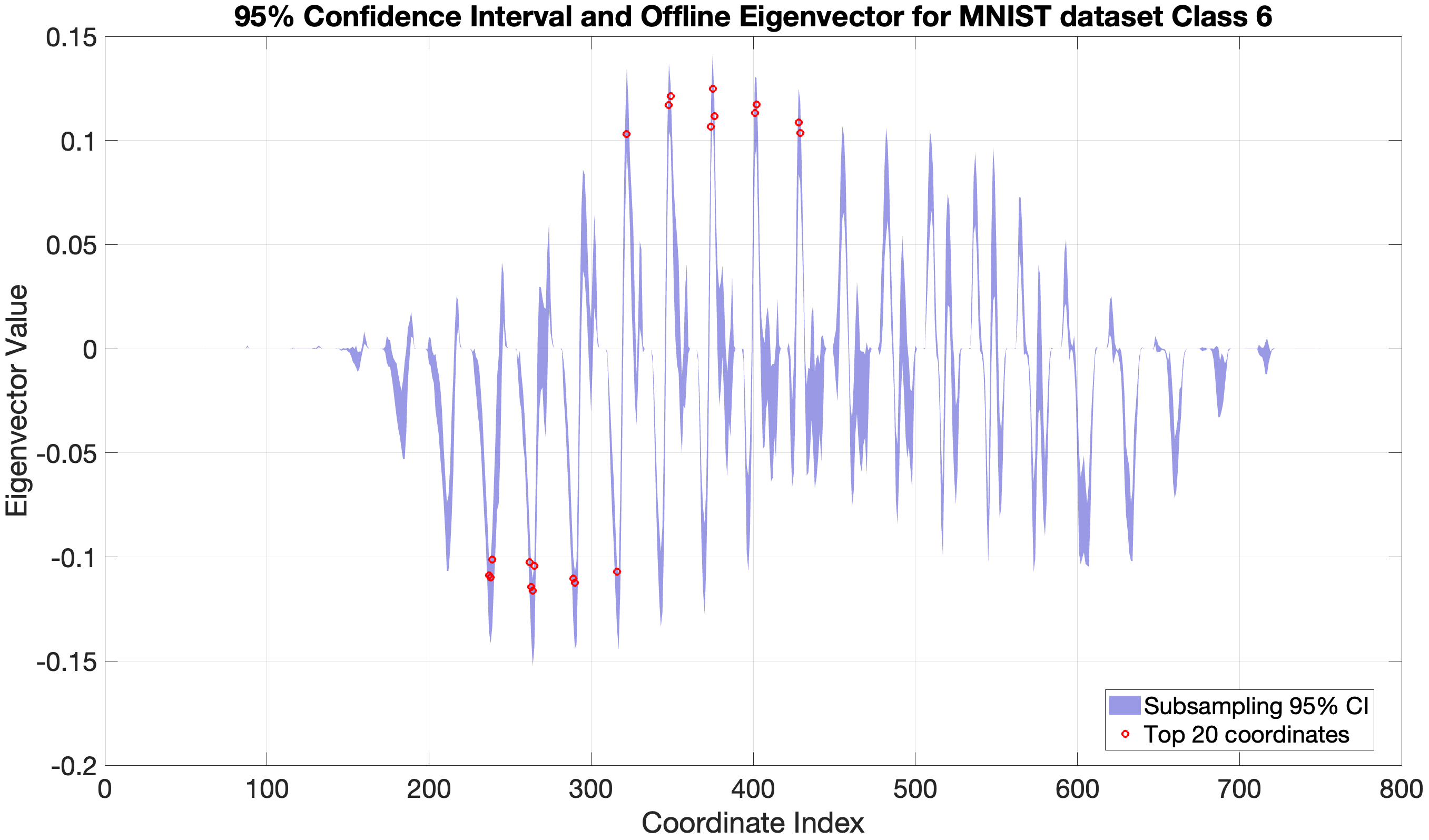}
        \captionsetup{labelformat=empty}
        \caption*{(6)}
    \end{minipage}%
    \hfill
    \begin{minipage}{0.32\columnwidth}
        \centering
        \includegraphics[width=\columnwidth]{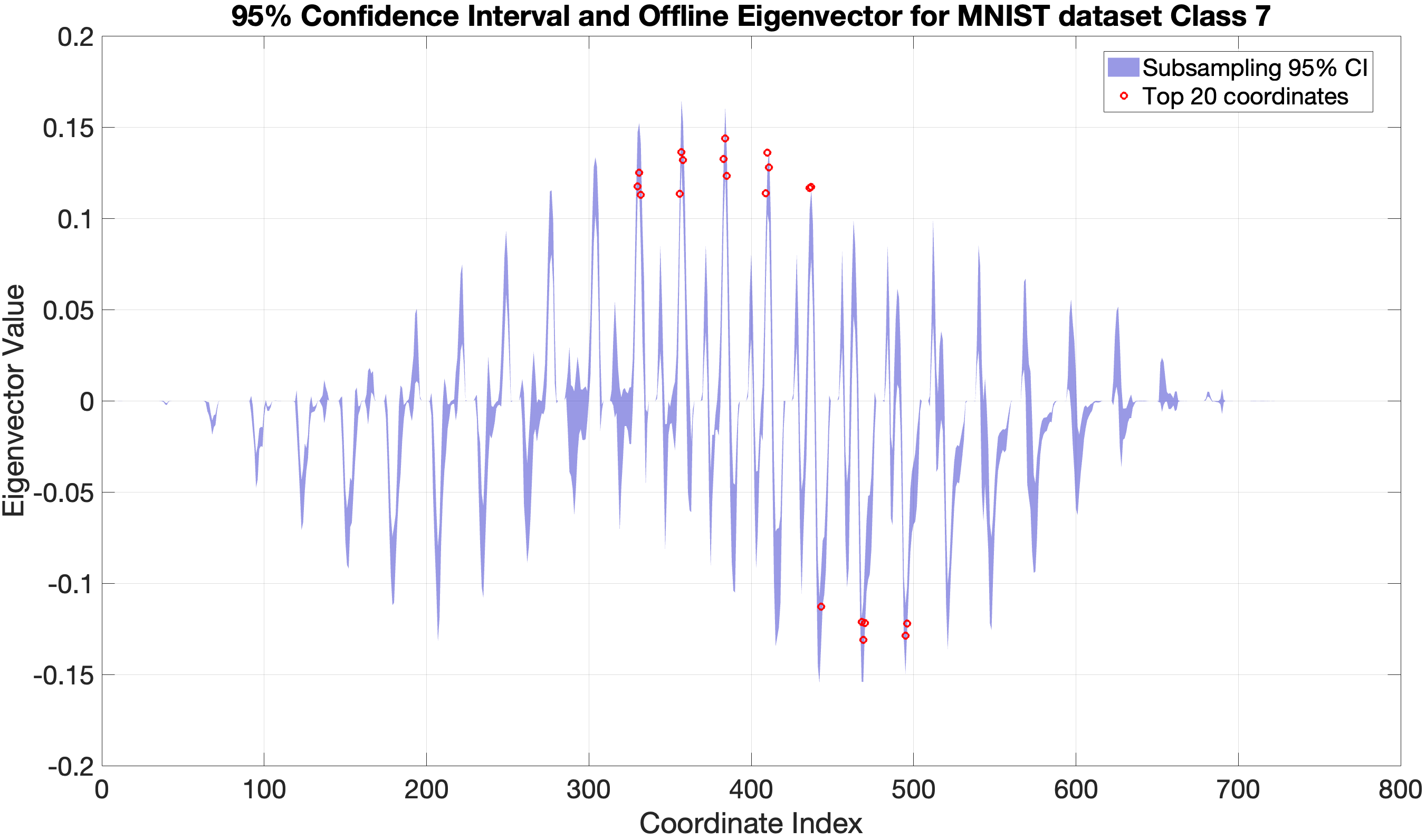}
        \captionsetup{labelformat=empty}
        \caption*{(7)}
    \end{minipage}%
    \hfill
    \begin{minipage}{0.32\columnwidth}
        \centering
        \includegraphics[width=\columnwidth]{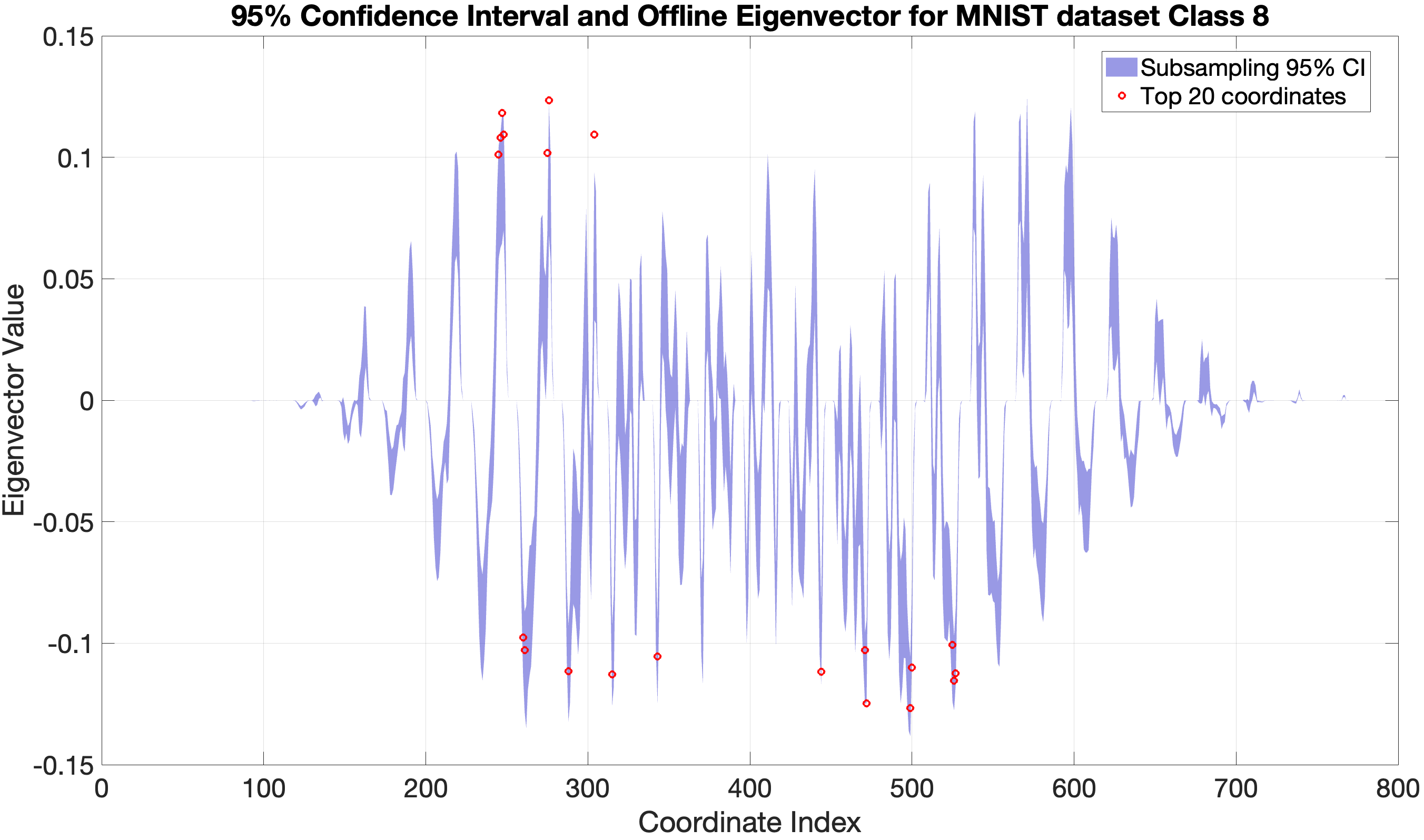}
        \captionsetup{labelformat=empty}
        \caption*{(8)}
    \end{minipage}%
    \hfill
    \begin{minipage}{0.32\columnwidth}
        \centering
        \includegraphics[width=\columnwidth]{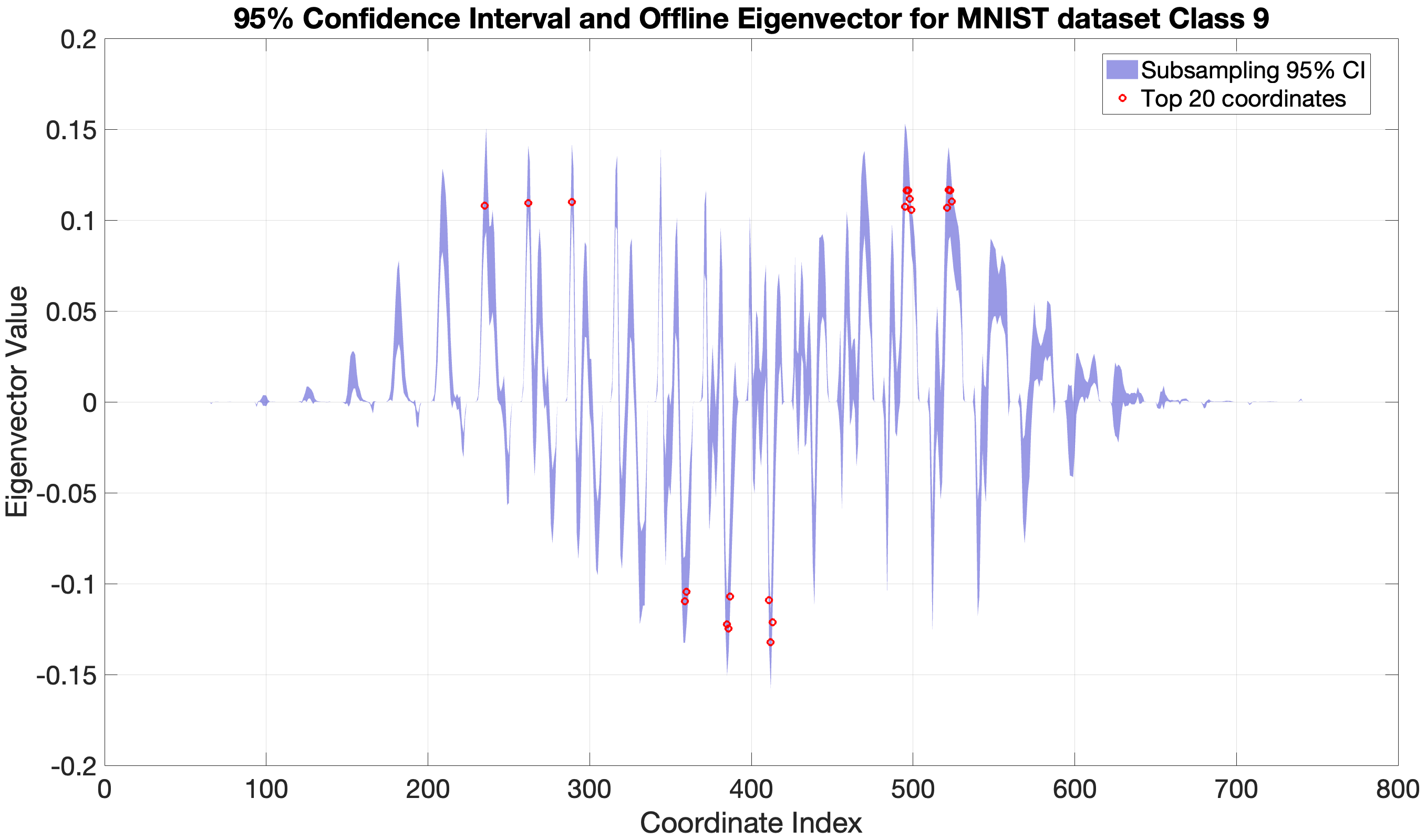}
        \captionsetup{labelformat=empty}
        \caption*{(9)}
    \end{minipage}%
    \hfill
    \caption{Uncertainty Estimation for MNIST dataset. The $\sin^{2}$ error of Oja’s algorithm for each class is provided in Table~\ref{tab:mnist-sin2-error}.}
    \label{fig:mnist_dataset}
\end{figure}

\section{Conclusion}
In this work, we develop a novel statistical inference framework for streaming PCA using Oja’s algorithm. We derive finite-sample and high-probability deviation bounds for the coordinates of the estimated eigenvector, establish a Bernstein-type concentration bound on the residual of the Oja vector, establish a Central Limit Theorem for suitable subsets of entries, and devise an efficient subsampling-based variance estimation algorithm. By leveraging the structure of the Oja updates, we provide entrywise confidence intervals, bypassing expensive resampling techniques such as bootstrapping. Our theoretical results are supported by extensive numerical experiments, indicating that our proposed estimator achieves accuracy similar to the multiplier bootstrap method while requiring significantly less time. 

We believe that our subsampling algorithm can be adapted to any SGD problem where the covariance matrix of the estimator $\hat{\theta}_n$ scales as $c_n$ times some scale-free matrix $\V$, where $c_n$ is known. This structure aligns with subsampling and m-out-of-n bootstrap methods, where the variance estimated from a subsample of size $m$ is scaled by $m/n$ to approximate the variance of the full sample estimator. Our findings also highlight the potential for improved uncertainty quantification techniques in streaming non-convex optimization problems beyond PCA, since Oja-type updates can be found in many important non-convex optimization algorithms such as matrix sensing, matrix completion, and subspace estimation. Further directions include deflation-based methods to apply our method to variance estimation for top $k$ eigenvectors.

\section*{Acknowledgments}
We gratefully acknowledge NSF grants 2217069, 2019844, and DMS 2109155. We are thankful to Soumendu Sundar Mukherjee and Arun Kuchibhotla for helpful discussions.

\bibliographystyle{plainnat}
\bibliography{refs}

\begin{thebibliography}{72}
\providecommand{\natexlab}[1]{#1}
\providecommand{\url}[1]{\texttt{#1}}
\expandafter\ifx\csname urlstyle\endcsname\relax
  \providecommand{\doi}[1]{doi: #1}\else
  \providecommand{\doi}{doi: \begingroup \urlstyle{rm}\Url}\fi

\bibitem[Abbe et~al.(2020)Abbe, Fan, Wang, and Zhong]{abbe2020entrywise}
Emmanuel Abbe, Jianqing Fan, Kaizheng Wang, and Yiqiao Zhong.
\newblock Entrywise eigenvector analysis of random matrices with low expected rank.
\newblock \emph{Annals of Statistics}, 48\penalty0 (3):\penalty0 1452--1474, 2020.

\bibitem[Abbe et~al.(2022)Abbe, Fan, and Wang]{abbe2022lptheory}
Emmanuel Abbe, Jianqing Fan, and Kaizheng Wang.
\newblock {An ${\ell _{p}}$ theory of PCA and spectral clustering}.
\newblock \emph{The Annals of Statistics}, 50\penalty0 (4):\penalty0 2359 -- 2385, 2022.
\newblock \doi{10.1214/22-AOS2196}.
\newblock URL \url{https://doi.org/10.1214/22-AOS2196}.

\bibitem[Allen-Zhu and Li(2017)]{allenzhu2017efficient}
Zeyuan Allen-Zhu and Yuanzhi Li.
\newblock First efficient convergence for streaming k-pca: a global, gap-free, and near-optimal rate.
\newblock In \emph{2017 IEEE 58th Annual Symposium on Foundations of Computer Science (FOCS)}, pages 487--492. IEEE, 2017.

\bibitem[Andrews and Guggenberger(2010)]{andrews2010asymptotic}
Donald~WK Andrews and Patrik Guggenberger.
\newblock Asymptotic size and a problem with subsampling and with the m out of n bootstrap.
\newblock \emph{Econometric Theory}, 26\penalty0 (2):\penalty0 426--468, 2010.

\bibitem[Anguita et~al.(2013)Anguita, Ghio, Oneto, Parra, and Reyes-Ortiz]{anguita2013public}
Davide Anguita, Alessandro Ghio, Luca Oneto, Xavier Parra, and Juan-Luis Reyes-Ortiz.
\newblock A public domain dataset for human activity recognition using smartphones.
\newblock In \emph{Proceedings of the 21st European Symposium on Artificial Neural Networks, Computational Intelligence and Machine Learning (ESANN)}, pages 437--442, 2013.

\bibitem[Balzano(2022)]{pmlr-v151-balzano22a}
Laura Balzano.
\newblock On the equivalence of oja’s algorithm and grouse.
\newblock In Gustau Camps-Valls, Francisco J.~R. Ruiz, and Isabel Valera, editors, \emph{Proceedings of The 25th International Conference on Artificial Intelligence and Statistics}, volume 151 of \emph{Proceedings of Machine Learning Research}, pages 7014--7030. PMLR, 28--30 Mar 2022.
\newblock URL \url{https://proceedings.mlr.press/v151/balzano22a.html}.

\bibitem[Balzano et~al.(2010)Balzano, Nowak, and Recht]{balzano2010sstracking}
Laura Balzano, Robert Nowak, and Benjamin Recht.
\newblock Online identification and tracking of subspaces from highly incomplete information.
\newblock In \emph{2010 48th Annual allerton conference on communication, control, and computing}, pages 704--711. IEEE, 2010.

\bibitem[Bather(1989)]{SGD_bather1989stochastic}
J.A. Bather.
\newblock Stochastic approximation: a generalisation of the robbins-monro procedure.
\newblock In \emph{Proceedings of the Fourth Prague Symposium on Asymptotic Statistics (Prague, 1988)}, pages 13--27. Charles University, Prague, 1989.

\bibitem[Bertail et~al.(1999)Bertail, Politis, and Romano]{bertail1999subsampling}
Patrice Bertail, Dimitris~N Politis, and Joseph~P Romano.
\newblock On subsampling estimators with unknown rate of convergence.
\newblock \emph{Journal of the American Statistical Association}, 94\penalty0 (446):\penalty0 569--579, 1999.

\bibitem[Bickel and Sakov(2008)]{bickel2008choice}
Peter~J Bickel and Anat Sakov.
\newblock On the choice of m in the m out of n bootstrap and confidence bounds for extrema.
\newblock \emph{Statistica Sinica}, pages 967--985, 2008.

\bibitem[Bickel et~al.(1997)Bickel, G{\"o}tze, and van Zwet]{Bickel-m-out-of-n}
Peter~J. Bickel, Friedrich G{\"o}tze, and Willem van Zwet.
\newblock Resampling fewer than n obersvations: Gains, losses, and remedies for losses.
\newblock \emph{Statistica Sinica}, pages 1--31, 1997.

\bibitem[Cape et~al.(2019{\natexlab{a}})Cape, Tang, and Priebe]{cape2017singular}
Joshua Cape, Minh Tang, and Carey~E. Priebe.
\newblock {The two-to-infinity norm and singular subspace geometry with applications to high-dimensional statistics}.
\newblock \emph{The Annals of Statistics}, 47\penalty0 (5):\penalty0 2405 -- 2439, 2019{\natexlab{a}}.
\newblock \doi{10.1214/18-AOS1752}.
\newblock URL \url{https://doi.org/10.1214/18-AOS1752}.

\bibitem[Cape et~al.(2019{\natexlab{b}})Cape, Tang, and Priebe]{cape2019signal}
Joshua Cape, Minh Tang, and Carey~E Priebe.
\newblock Signal-plus-noise matrix models: eigenvector deviations and fluctuations.
\newblock \emph{Biometrika}, 106\penalty0 (1):\penalty0 243--250, 2019{\natexlab{b}}.

\bibitem[Carter and Kuchibhotla(2025)]{carter2025statistical}
Selina Carter and Arun~K Kuchibhotla.
\newblock Statistical inference for online algorithms.
\newblock \emph{arXiv preprint arXiv:2505.17300}, 2025.

\bibitem[Chaudhuri et~al.(2024)Chaudhuri, Loh, Pandey, and Sarkar]{chaudhuri2024differentially}
Kamalika Chaudhuri, Po-Ling Loh, Shourya Pandey, and Purnamrita Sarkar.
\newblock On differentially private u statistics.
\newblock In \emph{Advances in Neural Information Processing Systems}, volume~37, 2024.

\bibitem[Chee et~al.(2023)Chee, Kim, and Toulis]{pmlr-v206-chee23a}
Jerry Chee, Hwanwoo Kim, and Panos Toulis.
\newblock “plus/minus the learning rate”: Easy and scalable statistical inference with sgd.
\newblock In Francisco Ruiz, Jennifer Dy, and Jan-Willem van~de Meent, editors, \emph{Proceedings of The 26th International Conference on Artificial Intelligence and Statistics}, volume 206 of \emph{Proceedings of Machine Learning Research}, pages 2285--2309. PMLR, 25--27 Apr 2023.
\newblock URL \url{https://proceedings.mlr.press/v206/chee23a.html}.

\bibitem[Chen et~al.(2018)Chen, Yang, Wang, and Zhao]{chen2018dimensionality}
Minshuo Chen, Lin Yang, Mengdi Wang, and Tuo Zhao.
\newblock Dimensionality reduction for stationary time series via stochastic nonconvex optimization.
\newblock \emph{Advances in Neural Information Processing Systems}, 31, 2018.

\bibitem[Chen et~al.(2020)Chen, Lee, Tong, and Zhang]{chen2020SGD}
Xi~Chen, Jason~D. Lee, Xin~T. Tong, and Yichen Zhang.
\newblock Statistical inference for model parameters in stochastic gradient descent.
\newblock \emph{Ann. Statist.}, 48\penalty0 (1):\penalty0 251--273, 02 2020.
\newblock \doi{10.1214/18-AOS1801}.
\newblock URL \url{https://doi.org/10.1214/18-AOS1801}.

\bibitem[Chernozhukov et~al.(2017{\natexlab{a}})Chernozhukov, Chetverikov, and Kato]{ChernoCLT2015}
Victor Chernozhukov, Denis Chetverikov, and Kengo Kato.
\newblock {Central limit theorems and bootstrap in high dimensions}.
\newblock \emph{The Annals of Probability}, 45\penalty0 (4):\penalty0 2309 -- 2352, 2017{\natexlab{a}}.
\newblock \doi{10.1214/16-AOP1113}.
\newblock URL \url{https://doi.org/10.1214/16-AOP1113}.

\bibitem[Chernozhukov et~al.(2017{\natexlab{b}})Chernozhukov, Chetverikov, and Kato]{chernozhukov2017detailed}
Victor Chernozhukov, Denis Chetverikov, and Kengo Kato.
\newblock Detailed proof of nazarov's inequality, 2017{\natexlab{b}}.
\newblock URL \url{https://arxiv.org/abs/1711.10696}.

\bibitem[Chua et~al.(2024)Chua, Ghazi, Kamath, Kumar, Manurangsi, Sinha, and Zhang]{chua2024scalable}
Lynn Chua, Badih Ghazi, Pritish Kamath, Ravi Kumar, Pasin Manurangsi, Amer Sinha, and Chiyuan Zhang.
\newblock Scalable dp-sgd: Shuffling vs. poisson subsampling.
\newblock \emph{Advances in Neural Information Processing Systems}, 37:\penalty0 70026--70047, 2024.

\bibitem[Davis and Kahan(1970)]{davis1970rotation}
Chandler Davis and William~M. Kahan.
\newblock The rotation of eigenvectors by a perturbation. {III}.
\newblock \emph{SIAM Journal on Numerical Analysis}, 7\penalty0 (1):\penalty0 1--46, 1970.

\bibitem[Efron(1979)]{efron1979bootstrap}
B.~Efron.
\newblock {Bootstrap Methods: Another Look at the Jackknife}.
\newblock \emph{The Annals of Statistics}, 7\penalty0 (1):\penalty0 1 -- 26, 1979.
\newblock \doi{10.1214/aos/1176344552}.
\newblock URL \url{https://doi.org/10.1214/aos/1176344552}.

\bibitem[Efron and Tibshirani(1993)]{efron1993introduction}
Bradley Efron and Robert~J. Tibshirani.
\newblock \emph{An Introduction to the Bootstrap}.
\newblock Number~57 in Monographs on Statistics and Applied Probability. Chapman \& Hall/CRC, Boca Raton, Florida, USA, 1993.

\bibitem[Eldridge et~al.(2018)Eldridge, Belkin, and Wang]{eldridge2018unperturbed}
Justin Eldridge, Mikhail Belkin, and Yusu Wang.
\newblock Unperturbed: spectral analysis beyond davis-kahan.
\newblock In \emph{Algorithmic learning theory}, pages 321--358. PMLR, 2018.

\bibitem[Fang et~al.(2018)Fang, Xu, and Yang]{SGD_JMLR:v19:17-370}
Yixin Fang, Jinfeng Xu, and Lei Yang.
\newblock Online bootstrap confidence intervals for the stochastic gradient descent estimator.
\newblock \emph{Journal of Machine Learning Research}, 19\penalty0 (78):\penalty0 1--21, 2018.
\newblock URL \url{http://jmlr.org/papers/v19/17-370.html}.

\bibitem[Hall(1992)]{hall1992bootstrap}
Peter Hall.
\newblock \emph{The Bootstrap and Edgeworth Expansion}.
\newblock Springer, 1992.

\bibitem[Hebb(2005)]{hebb2005organization}
Donald~Olding Hebb.
\newblock \emph{The organization of behavior: A neuropsychological theory}.
\newblock Psychology press, 2005.

\bibitem[Henriksen and Ward(2019)]{henriksen2019adaoja}
Amelia Henriksen and Rachel Ward.
\newblock Adaoja: Adaptive learning rates for streaming pca, 2019.
\newblock URL \url{https://arxiv.org/abs/1905.12115}.

\bibitem[Huang et~al.(2021)Huang, Niles{-}Weed, and Ward]{DBLP:journals/corr/abs-2102-03646}
De~Huang, Jonathan Niles{-}Weed, and Rachel Ward.
\newblock Streaming k-pca: Efficient guarantees for oja's algorithm, beyond rank-one updates.
\newblock \emph{CoRR}, abs/2102.03646, 2021.
\newblock URL \url{https://arxiv.org/abs/2102.03646}.

\bibitem[Huang et~al.(2022)Huang, Niles-Weed, Tropp, and Ward]{huang2022matrix}
De~Huang, Jonathan Niles-Weed, Joel~A Tropp, and Rachel Ward.
\newblock Matrix concentration for products.
\newblock \emph{Foundations of Computational Mathematics}, 22\penalty0 (6):\penalty0 1767--1799, 2022.

\bibitem[Jain et~al.(2013)Jain, Netrapalli, and Sanghavi]{jain2013matcompl}
Prateek Jain, Praneeth Netrapalli, and Sujay Sanghavi.
\newblock Low-rank matrix completion using alternating minimization.
\newblock In \emph{Proceedings of the Forty-Fifth Annual ACM Symposium on Theory of Computing}, STOC '13, page 665–674, New York, NY, USA, 2013. Association for Computing Machinery.
\newblock ISBN 9781450320290.
\newblock \doi{10.1145/2488608.2488693}.
\newblock URL \url{https://doi.org/10.1145/2488608.2488693}.

\bibitem[Jain et~al.(2016)Jain, Jin, Kakade, Netrapalli, and Sidford]{jain2016streaming}
Prateek Jain, Chi Jin, Sham Kakade, Praneeth Netrapalli, and Aaron Sidford.
\newblock Streaming {PCA}: Matching matrix bernstein and near-optimal finite sample guarantees for {Oja's} algorithm.
\newblock In \emph{Proceedings of The 29th Conference on Learning Theory (COLT)}, June 2016.

\bibitem[Jambulapati et~al.(2024)Jambulapati, Kumar, Li, Pandey, Pensia, and Tian]{pmlr-v247-jambulapati24a}
Arun Jambulapati, Syamantak Kumar, Jerry Li, Shourya Pandey, Ankit Pensia, and Kevin Tian.
\newblock Black-box k-to-1-pca reductions: Theory and applications.
\newblock In Shipra Agrawal and Aaron Roth, editors, \emph{Proceedings of Thirty Seventh Conference on Learning Theory}, volume 247 of \emph{Proceedings of Machine Learning Research}, pages 2564--2607. PMLR, 30 Jun--03 Jul 2024.
\newblock URL \url{https://proceedings.mlr.press/v247/jambulapati24a.html}.

\bibitem[Kathuria et~al.(2020)Kathuria, Mukherjee, and Srivastava]{kathuria2020concentration}
Tarun Kathuria, Satyaki Mukherjee, and Nikhil Srivastava.
\newblock On concentration inequalities for random matrix products.
\newblock \emph{arXiv preprint arXiv:2003.06319}, 2020.

\bibitem[Keshavan et~al.(2010)Keshavan, Montanari, and Oh]{keshavan2010completion}
Raghunandan Keshavan, Andrea Montanari, and Sewoong Oh.
\newblock Matrix completion from a few entries.
\newblock \emph{Information Theory, IEEE Transactions on}, 56:\penalty0 2980 -- 2998, 07 2010.
\newblock \doi{10.1109/TIT.2010.2046205}.

\bibitem[Kleiner et~al.(2014)Kleiner, Talwalkar, Sarkar, and Jordan]{blbjrssb}
Ariel Kleiner, Ameet Talwalkar, Purnamrita Sarkar, and Michael~I. Jordan.
\newblock A scalable bootstrap for massive data.
\newblock \emph{Journal of the Royal Statistical Society: Series B (Statistical Methodology)}, 76\penalty0 (4):\penalty0 795--816, 2014.

\bibitem[Kumar and Sarkar(2024{\natexlab{a}})]{kumarsarkar2024markovoja}
Syamantak Kumar and Purnamrita Sarkar.
\newblock Streaming pca for markovian data.
\newblock \emph{Advances in Neural Information Processing Systems}, 36, 2024{\natexlab{a}}.

\bibitem[Kumar and Sarkar(2024{\natexlab{b}})]{kumarsarkar2024sparse}
Syamantak Kumar and Purnamrita Sarkar.
\newblock Oja's algorithm for streaming sparse pca.
\newblock In \emph{The Thirty-eighth Annual Conference on Neural Information Processing Systems}, 2024{\natexlab{b}}.

\bibitem[LeCun et~al.(1998)LeCun, Bottou, Bengio, and Haffner]{lecun1998gradient}
Yann LeCun, Le{\'o}n Bottou, Yoshua Bengio, and Patrick Haffner.
\newblock Gradient-based learning applied to document recognition.
\newblock \emph{Proceedings of the IEEE}, 86\penalty0 (11):\penalty0 2278--2324, 1998.
\newblock \doi{10.1109/5.726791}.

\bibitem[Lee et~al.(2022)Lee, Liao, Seo, and Shin]{SGD_lee2022fast}
Sokbae Lee, Yuan Liao, Myung~Hwan Seo, and Youngki Shin.
\newblock Fast and robust online inference with stochastic gradient descent via random scaling.
\newblock In \emph{Proceedings of the AAAI Conference on Artificial Intelligence}, pages 7381--7389, 2022.

\bibitem[Levina and Priesemann(2017)]{levina2017subsampling}
Anna Levina and Viola Priesemann.
\newblock Subsampling scaling.
\newblock \emph{Nature communications}, 8\penalty0 (1):\penalty0 15140, 2017.

\bibitem[Li et~al.(2018)Li, Liu, Kyrillidis, and Caramanis]{SGD_conf/aaai/LiLKC18}
Tianyang Li, Liu Liu, Anastasios Kyrillidis, and Constantine Caramanis.
\newblock Statistical inference using sgd.
\newblock In Sheila~A. McIlraith and Kilian~Q. Weinberger, editors, \emph{AAAI}, pages 3571--3578. AAAI Press, 2018.
\newblock URL \url{http://dblp.uni-trier.de/db/conf/aaai/aaai2018.html#LiLKC18}.

\bibitem[Ljung et~al.(1992)Ljung, Pflug, and Walk]{ljung1992plusminusref}
Lennart Ljung, Georg Pflug, and Harro Walk.
\newblock \emph{Stochastic approximation and optimization of random systems}.
\newblock Birkhauser Verlag, CHE, 1992.
\newblock ISBN 3764327332.

\bibitem[Lunde et~al.(2021)Lunde, Sarkar, and Ward]{lunde2021bootstrapping}
Robert Lunde, Purnamrita Sarkar, and Rachel Ward.
\newblock Bootstrapping the error of oja's algorithm.
\newblock \emph{Advances in neural information processing systems}, 34:\penalty0 6240--6252, 2021.

\bibitem[Mao et~al.(2021)Mao, Sarkar, and Chakrabarti]{Mao02102021}
Xueyu Mao, Purnamrita Sarkar, and Deepayan Chakrabarti.
\newblock Estimating mixed memberships with sharp eigenvector deviations.
\newblock \emph{Journal of the American Statistical Association}, 116\penalty0 (536):\penalty0 1928--1940, 2021.
\newblock \doi{10.1080/01621459.2020.1751645}.
\newblock URL \url{https://doi.org/10.1080/01621459.2020.1751645}.

\bibitem[Monnez(2022)]{monnez2022stochastic}
Jean-Marie Monnez.
\newblock Stochastic approximation of eigenvectors and eigenvalues of the q-symmetric expectation of a random matrix.
\newblock \emph{Communications in Statistics-Theory and Methods}, pages 1--15, 2022.

\bibitem[Nazarov(2003)]{nazarov2003maximal}
Fedor Nazarov.
\newblock On the maximal perimeter of a convex set in \( \mathbb{R}^n \) with respect to a gaussian measure.
\newblock In \emph{Geometric Aspects of Functional Analysis (GAFA) 2001-2002}, volume 1807 of \emph{Lecture Notes in Mathematics}, pages 169--187. Springer, 2003.

\bibitem[Nemirovskij and Yudin(1983)]{nemirovskij1983problem}
Arkadij~Semenovi{\v{c}} Nemirovskij and David~Borisovich Yudin.
\newblock Problem complexity and method efficiency in optimization.
\newblock 1983.

\bibitem[Oja(1982)]{oja1982simplified}
Erkki Oja.
\newblock Simplified neuron model as a principal component analyzer.
\newblock \emph{Journal of mathematical biology}, 15:\penalty0 267--273, 1982.

\bibitem[Oja and Karhunen(1985)]{oja1985stochastic}
Erkki Oja and Juha Karhunen.
\newblock On stochastic approximation of the eigenvectors and eigenvalues of the expectation of a random matrix.
\newblock \emph{Journal of mathematical analysis and applications}, 106\penalty0 (1):\penalty0 69--84, 1985.

\bibitem[Pearson(1901)]{pearson1901liii}
Karl Pearson.
\newblock Principal components analysis.
\newblock \emph{The London, Edinburgh, and Dublin Philosophical Magazine and Journal of Science}, 6\penalty0 (2):\penalty0 559, 1901.

\bibitem[Politis(2023)]{Politis_10.1093/biomet/asad021}
Dimitris~N Politis.
\newblock {Scalable subsampling: computation, aggregation and inference}.
\newblock \emph{Biometrika}, 111\penalty0 (1):\penalty0 347--354, 03 2023.
\newblock ISSN 1464-3510.
\newblock \doi{10.1093/biomet/asad021}.
\newblock URL \url{https://doi.org/10.1093/biomet/asad021}.

\bibitem[Politis et~al.(1999)Politis, Romano, and Wolf]{politis1999subsampling}
Dimitris~N. Politis, Joseph~P. Romano, and Michael. Wolf.
\newblock \emph{Subsampling / by Dimitris N. Politis, Joseph P. Romano, Michael Wolf.}
\newblock Springer Series in Statistics. Springer New York, New York, NY, 1st ed. 1999. edition, 1999.
\newblock ISBN 1-4612-1554-4.

\bibitem[Polyak and Juditsky(1992)]{polyak1992sgd}
B.~T. Polyak and A.~B. Juditsky.
\newblock Acceleration of stochastic approximation by averaging.
\newblock \emph{SIAM J. Control Optim.}, 30\penalty0 (4):\penalty0 838–855, July 1992.
\newblock ISSN 0363-0129.
\newblock \doi{10.1137/0330046}.
\newblock URL \url{https://doi.org/10.1137/0330046}.

\bibitem[Price and Xun(2024)]{mouzakis2022spectral}
Eric Price and Zhiyang Xun.
\newblock Spectral guarantees for adversarial streaming pca.
\newblock In \emph{2024 IEEE 65th Annual Symposium on Foundations of Computer Science (FOCS)}, pages 1768--1785, 2024.
\newblock \doi{10.1109/FOCS61266.2024.00108}.

\bibitem[Ramprasad et~al.(2023)Ramprasad, Li, Yang, Wang, Sun, and Cheng]{ramprasad2023online}
Pratik Ramprasad, Yuantong Li, Zhuoran Yang, Zhaoran Wang, Will~Wei Sun, and Guang Cheng.
\newblock Online bootstrap inference for policy evaluation in reinforcement learning.
\newblock \emph{Journal of the American Statistical Association}, 118\penalty0 (544):\penalty0 2901--2914, 2023.

\bibitem[Ruppert(1988)]{ruppert1988efficient}
David Ruppert.
\newblock Efficient estimations from a slowly convergent robbins-monro process.
\newblock Technical report, Cornell University Operations Research and Industrial Engineering, 1988.

\bibitem[Sakov(1998)]{sakov1998using}
Anat Sakov.
\newblock \emph{Using the m out of n bootstrap in hypothesis testing}.
\newblock University of California, Berkeley, 1998.

\bibitem[Stewart and Sun(1990)]{stewart1990matrix}
G.~W. Stewart and Ji-Guang Sun.
\newblock \emph{Matrix Perturbation Theory}.
\newblock Academic Press, 1990.

\bibitem[Su and Zhu(2018)]{su2018uncertainty}
Weijie~J. Su and Yuancheng Zhu.
\newblock Uncertainty quantification for online learning and stochastic approximation via hierarchical incremental gradient descent, 2018.

\bibitem[{van der Vaart}(2000)]{vandervaart-asymptotic}
Aad {van der Vaart}.
\newblock \emph{Asymptotic statistics}.
\newblock {Cambridge University Press}, 2000.

\bibitem[Vershynin(2012)]{vershynin2010introduction}
Roman Vershynin.
\newblock Introduction to the non-asymptotic analysis of random matrices.
\newblock In Yonina~C. Eldar and GittaEditors Kutyniok, editors, \emph{Compressed Sensing: Theory and Practice}, pages 210--268. Cambridge University Press, 2012.
\newblock ISBN 9780511794308.
\newblock \doi{10.1017/CBO9780511794308.006}.

\bibitem[Vershynin(2018)]{vershynin2018high}
Roman Vershynin.
\newblock \emph{High-dimensional probability: An introduction with applications in data science}, volume~47.
\newblock Cambridge university press, 2018.

\bibitem[Wainwright(2019)]{wainwright2019high}
Martin~J Wainwright.
\newblock \emph{High-dimensional statistics: A non-asymptotic viewpoint}, volume~48.
\newblock Cambridge university press, 2019.

\bibitem[Wedin(1972)]{wedin1972perturbation}
Per-{\AA}ke Wedin.
\newblock Perturbation bounds in connection with singular value decomposition.
\newblock \emph{BIT Numerical Mathematics}, 12:\penalty0 99--111, 1972.

\bibitem[Yang et~al.(2018)Yang, Hsieh, and Wang]{yang2018history}
Puyudi Yang, Cho-Jui Hsieh, and Jane-Ling Wang.
\newblock History pca: A new algorithm for streaming pca.
\newblock \emph{arXiv preprint arXiv:1802.05447}, 2018.

\bibitem[Yu et~al.(2021)Yu, Balasubramanian, Volgushev, and Erdogdu]{yu2020nonconvexAnalysis}
Lu~Yu, Krishnakumar Balasubramanian, Stanislav Volgushev, and Murat~A. Erdogdu.
\newblock An analysis of constant step size sgd in the non-convex regime: asymptotic normality and bias.
\newblock In \emph{Proceedings of the 35th International Conference on Neural Information Processing Systems}, NIPS '21, Red Hook, NY, USA, 2021. Curran Associates Inc.
\newblock ISBN 9781713845393.

\bibitem[Zhong et~al.(2023)Zhong, Kuffner, and Lahiri]{zhong2023online}
Yanjie Zhong, Todd Kuffner, and Soumendra Lahiri.
\newblock Online bootstrap inference with nonconvex stochastic gradient descent estimator.
\newblock \emph{arXiv preprint arXiv:2306.02205}, 2023.

\bibitem[Zhu et~al.(2023)Zhu, Chen, and Wu]{zhu2023online}
Wanrong Zhu, Xi~Chen, and Wei~Biao Wu.
\newblock Online covariance matrix estimation in stochastic gradient descent.
\newblock \emph{Journal of the American Statistical Association}, 118\penalty0 (541):\penalty0 393--404, 2023.

\bibitem[Zhu et~al.(2024)Zhu, Lou, Wei, and Wu]{zhu2024high}
Wanrong Zhu, Zhipeng Lou, Ziyang Wei, and Wei~Biao Wu.
\newblock High confidence level inference is almost free using parallel stochastic optimization.
\newblock \emph{arXiv preprint arXiv:2401.09346}, 2024.

\bibitem[Ziegel(2003)]{ziegel2003principal}
Eric~R Ziegel.
\newblock Principal component analysis.
\newblock \emph{Technometrics}, 45\penalty0 (3):\penalty0 276--277, 2003.

\end{thebibliography}

\newpage
\onecolumn
\begin{appendix}



The Appendix is organized as follows:
\begin{enumerate}
    \item Section~\ref{appendix:utility_results} provides some useful results used in subsequent analyses
    \item Section~\ref{appendix:estimator_bias_variance} has the Bias and Concentration calculation of our estimator designed in Algorithm~\ref{alg:variance_estimation}
    \item Section~\ref{appendix:entrywise_error_bounds} provides high probability Entrywise Error Bounds on the entries of $\voja$
    \item Section~\ref{appendix:entrywise_clt} provides a Central Limit Theorem for the entries of the Oja vector, $\voja$, which ties the results developed in Section~\ref{appendix:estimator_bias_variance} to provide confidence intervals
\end{enumerate}

\section{Utility Results}\label{appendix:utility_results}

\begin{lemma}\label{lemma:square_expansion_cs}
    For any integer $n \ge 2$, real $\eps \in (0,1)$, and reals $\left\{a_i\right\}_{i \in [n]}$, 
    \bas{
        (1-\epsilon)a_1^2 - \frac{n-1}{\epsilon} \sum_{i=2}^n a_i^2
        \le \Bigl(\sum_{i=1}^n a_i\Bigr)^2
        \le (1+\epsilon)a_1^2 + \frac{2(n-1)}{\epsilon} \sum_{i=2}^n a_i^2.
    }
\end{lemma}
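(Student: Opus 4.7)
The plan is to reduce the statement to the standard $(a+b)^2$ expansion combined with Young's inequality and Cauchy--Schwarz. First I would set $S \defeq \sum_{i=2}^n a_i$ and expand
\[
\Bigl(\sum_{i=1}^n a_i\Bigr)^2 = (a_1 + S)^2 = a_1^2 + 2a_1 S + S^2,
\]
so the whole problem reduces to bounding the cross term $2a_1 S$ in terms of $a_1^2$ and $S^2$ with the right coefficients.

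Next I would apply Young's inequality (i.e.\ $2xy \le \epsilon x^2 + y^2/\epsilon$) to handle the cross term in both directions. This gives $2a_1 S \le \epsilon a_1^2 + S^2/\epsilon$ (for the upper bound) and $2a_1 S \ge -\epsilon a_1^2 - S^2/\epsilon$ (for the lower bound). Plugging back in yields
\[
(1-\epsilon)a_1^2 + \Bigl(1 - \tfrac{1}{\epsilon}\Bigr) S^2 \;\le\; \Bigl(\sum_{i=1}^n a_i\Bigr)^2 \;\le\; (1+\epsilon)a_1^2 + \Bigl(1 + \tfrac{1}{\epsilon}\Bigr) S^2.
\]
The final step is to control $S^2$ by $\sum_{i=2}^n a_i^2$: by Cauchy--Schwarz, $S^2 \le (n-1)\sum_{i=2}^n a_i^2$. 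For the upper bound, I would use $1 + 1/\epsilon \le 2/\epsilon$ (valid since $\epsilon \in (0,1)$) to get the claimed $2(n-1)/\epsilon$ factor. For the lower bound, I would note that $1 - 1/\epsilon \le 0$ on this range, so I can drop it after replacing $S^2$ by its Cauchy--Schwarz upper bound, obtaining the term $-(n-1)/\epsilon \cdot \sum_{i=2}^n a_i^2$.

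There is no real obstacle here --- the proof is a two-line Young plus Cauchy--Schwarz argument. The only mild care point is tracking signs on the lower bound (using $S^2 \le (n-1)\sum a_i^2$ to upper bound the subtracted $S^2/\epsilon$ rather than lower bounding it, which is valid since the coefficient in front of $S^2$ is negative after the Young step).
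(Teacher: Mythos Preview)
Your proposal is correct and essentially identical to the paper's proof: both expand $(a_1+S)^2$, bound the cross term $2a_1 S$ via $|2xy|\le \epsilon x^2 + y^2/\epsilon$, and control $S^2$ by $(n-1)\sum_{i\ge 2} a_i^2$ through Cauchy--Schwarz, with the same sign-tracking on the lower bound. The only cosmetic difference is the order in which Young's inequality and Cauchy--Schwarz are applied.
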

\begin{proof}
    We begin by writing
    \ba{
        \Bigl(a_1+ \sum_{i=2}^n a_i\Bigr)^2
        = a_1^2 + 2a_1\Bigl(\sum_{i=2}^n a_i\Bigr) + \Bigl(\sum_{i=2}^n a_i\Bigr)^2. \label{eq:square_expansion}
    }
    By Cauchy-Schwarz inequality,
    \ba{
         0 \le \Bigl(\sum_{i=2}^n a_i\Bigr)^2 \le (n-1)\sum_{i=2}^n a_i^2. \label{eq:cs_applicaition}
    }
    The cross-term can be bounded using the inequality
    \[
        -\epsilon x^2-\frac{1}{\epsilon}\,y^2 \le 2xy \le \epsilon x^2+\frac{1}{\epsilon}\,y^2
    \]
    with \(x=a_1\) and \(y=\sum_{i=2}^n a_i\) to get
    \[
        2a_1\Bigl(\sum_{i=2}^n a_i\Bigr)
        \ge -\epsilon a_1^2 - \frac{1}{\epsilon}\Bigl(\sum_{i=2}^n a_i\Bigr)^2 \geq -\epsilon a_1^2 - \frac{n-1}{\epsilon}\sum_{i=2}^n a_i^2,
    \]
    and
    \[
        2a_1\Bigl(\sum_{i=2}^n a_i\Bigr)
        \le \epsilon a_1^2 + \frac{1}{\epsilon}\Bigl(\sum_{i=2}^n a_i\Bigr)^2 \leq \epsilon a_1^{2} + \frac{n-1}{\epsilon}\sum_{i=2}^n a_i^2.
    \]
    The proof follows by using the above inequalities in \eqref{eq:square_expansion} followed by another application of \eqref{eq:cs_applicaition}. 
\end{proof}

\begin{lemma}\label{lemma:entrywise_to_sin_squared} Let $\mathbb{V}$ be the asymptotic variance matrix defined in Lemma~\ref{lemma:second_moment_matrix}, and let $\voja$ be the Oja vector as defined in \eqref{eq:voja_def}. If the coordinate-wise bound
\bas{
    \Abs{e_i^{\top}\bb{\voja - \bb{v_1^{\top}\voja}v_1}} \lesssim C_{d,n}\sqrt{\frac{\mathbb{V}_{kk}}{n}}
}
holds for every $i \in [d]$, where $C_{d,n}^2$ hides logarithmic factors in $d, n$, then
\bas{
    \sin^{2}\bb{\voja, v_1} = \sum_{i \in [d]}\bb{e_i^{\top}\bb{\voja - \bb{v_1^{\top}\voja}v_1}}^{2} \lesssim C_{d,n}^2\frac{\Nu}{\bb{\eigengap}^{2}n},
}
where $\Nu$ is the matrix variance statistic defined in Assumption~\ref{assumption:bounded_moments}. 
\end{lemma}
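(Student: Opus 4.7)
The plan is to use the fact that $\roja := \voja - (v_1^{\top}\voja)v_1$ is orthogonal to $v_1$ and has squared norm equal to $\sin^{2}(\voja, v_1)$, so that the $\sin^2$ error exactly equals $\sum_{i \in [d]} (e_i^{\top}\roja)^2$. The hypothesis provides an entrywise bound $(e_i^{\top}\roja)^2 \lesssim C_{d,n}^2 \, \mathbb{V}_{ii}/n$, so summing over $i$ reduces the problem to showing $\operatorname{tr}(\mathbb{V}) \lesssim \Nu/(\eigengap)^2$.

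To bound $\operatorname{tr}(\mathbb{V})$, I would use the expression $\mathbb{V} = \frac{1}{\eigengap}\vp R_0 \vp^{\top}$ from Lemma~\ref{lemma:second_moment_matrix}. Since $\vp \in \R^{d \times (d-1)}$ has orthonormal columns, the cyclic property of trace gives
\bas{
\operatorname{tr}(\mathbb{V}) \;=\; \frac{1}{\eigengap}\operatorname{tr}(R_0 \vp^{\top}\vp) \;=\; \frac{1}{\eigengap}\operatorname{tr}(R_0).
}
From the definition $(R_0)_{kk} = \widetilde{M}_{kk}/(2\lambda_1 - 2\lambda_{k+1})$, I would use the elementary inequality $\lambda_1 - \lambda_{k+1} \geq \lambda_1 - \lambda_2 = \eigengap$ for every $k \in [d-1]$ to obtain $\operatorname{tr}(R_0) \leq \operatorname{tr}(\widetilde{M})/(2\eigengap)$, and hence $\operatorname{tr}(\mathbb{V}) \leq \operatorname{tr}(\widetilde{M})/(2(\eigengap)^2)$.

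It remains to bound $\operatorname{tr}(\widetilde{M})$ by $\Nu$. Here I would use the identity $\vp \vp^{\top} = I - v_1 v_1^{\top}$ together with cyclicity of trace to write
\bas{
\operatorname{tr}(\widetilde{M}) \;=\; \E\!\left[v_1^{\top}(A_1 - \Sigma)(I - v_1 v_1^{\top})(A_1 - \Sigma) v_1\right] \;\leq\; \E\!\left[v_1^{\top}(A_1 - \Sigma)^2 v_1\right],
}
where the inequality drops the nonnegative quantity $(v_1^{\top}(A_1 - \Sigma)v_1)^2$. Pulling the expectation inside and using the operator norm bound $\normop{\E[(A_1 - \Sigma)^2]} \leq \Nu$ from Assumption~\ref{assumption:bounded_moments} then yields $\operatorname{tr}(\widetilde{M}) \leq \Nu$, hence $\operatorname{tr}(\mathbb{V}) \lesssim \Nu/(\eigengap)^2$.

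Combining everything gives
\bas{
\sin^{2}(\voja, v_1) \;=\; \sum_{i \in [d]} (e_i^{\top}\roja)^2 \;\lesssim\; \frac{C_{d,n}^2}{n}\operatorname{tr}(\mathbb{V}) \;\lesssim\; \frac{C_{d,n}^2 \, \Nu}{n(\eigengap)^2},
}
as required. There is essentially no serious obstacle here; the proof is a direct computation and the only subtlety is the small manipulation using $\vp\vp^{\top} = I - v_1 v_1^{\top}$ that converts the projector-style inner expectation into a clean bound in terms of the matrix variance parameter $\Nu$.
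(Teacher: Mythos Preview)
Your proof is correct and follows essentially the same approach as the paper: sum the entrywise bounds to get $\operatorname{tr}(\mathbb{V})$, reduce to $\operatorname{tr}(R_0)$ via $\vp^{\top}\vp = I$, bound each denominator $2(\lambda_1-\lambda_{k+1})$ below by $2(\eigengap)$, and then use $\vp\vp^{\top}\preceq I$ together with cyclicity of trace to obtain $\operatorname{tr}(\widetilde{M})\le v_1^{\top}\E[(A_1-\Sigma)^2]v_1\le \Nu$. The paper's proof is identical in structure; your write-up is in fact slightly more explicit about the $\vp\vp^{\top}=I-v_1v_1^{\top}$ step.
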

\begin{proof}
    By the definitions of $\mathbb{V}$ and $R_0$ as in Lemma~\ref{lemma:second_moment_matrix},
    \bas{
    \sum_{i \in [d]}\bb{e_i^{\top}\bb{\voja - \bb{v_1^{\top}\voja}v_1}}^{2} &\lesssim C_{d,n}^2\frac{\Tr\bb{\mathbb{V}}}{n} \leq \bb{\frac{C_{d,n}^2}{\eigengap}}\frac{\Tr\bb{R_0}}{n} = \bb{\frac{C_{d,n}^2}{\eigengap}}\frac{1}{n}\sum_{2 \le k \le d}\frac{\widetilde{M}_{kk}}{2\bb{\lambda_{1}-\lambda_{k}}} \\
    &\leq \frac{C_{d,n}^2}{\bb{\eigengap}^{2}}\frac{\Tr\bb{\E\bbb{\vp\bb{A-\Sigma}v_1v_1^{\top}\bb{A-\Sigma}\vp^{\top}}}}{n} \\
    &=  \frac{C_{d,n}^2}{\bb{\eigengap}^{2}}\frac{\E\bbb{\Tr\bb{\vp\bb{A-\Sigma}v_1v_1^{\top}\bb{A-\Sigma}\vp^{\top}}}}{n} \\
    &=  \frac{C_{d,n}^2}{\bb{\eigengap}^{2}}\frac{v_{1}^{\top}\E\bbb{\bb{A-\Sigma}\vp\vp^{\top}\bb{A-\Sigma}}v_{1}}{n} \\
    &\leq  \frac{C_{d,n}^2}{\bb{\eigengap}^{2}}\frac{v_{1}^{\top}\E\bbb{\bb{A-\Sigma}^{2}}v_{1}}{n} \leq C_{d,n}^2\frac{\Nu}{\bb{\eigengap}^{2}n}.
    }
\end{proof}

\begin{lemma}[Choice of learning rate]\label{lemma:learning_rate_choice} Let $\eta_{n} := \frac{\alpha\log\bb{n}}{n\bb{\lambda_1-\lambda_2}}$ for $\alpha > 1$. Then, under Assumptions~\ref{assumption:bounded_moments} and~\ref{assumption:sample_size} 
\begin{enumerate}
    \item $nd\exp\bb{-\eta_n n \bb{\eigengap}} = o\bb{1}$.
    \item $\max\left\{\eta_{n}, \frac{\log\bb{d}}{\eigengap}\right\}\frac{\Mtwo^{4}}{\eigengap}\eta_n^{2} = o\bb{1}$.
    \item $n\eta_n^2(2\lambda_1^2+\Mtwo^2) \leq 1$
\end{enumerate}
\end{lemma}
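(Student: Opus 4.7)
The plan is to verify each of the three claims by plugging the explicit form $\eta_n = \alpha \log n/(n(\lambda_1-\lambda_2))$ directly into the left-hand side and then invoking the two parts of Assumption~\ref{assumption:sample_size}: the polynomial growth $d = o(n^{\kappa})$ and the lower bound $n/\log n \ge 2\kappa^2 \mathcal{M}_2^4 \log d/(\lambda_1-\lambda_2)^4$. A useful identity to keep in hand throughout is $\eta_n n(\lambda_1-\lambda_2) = \alpha \log n$, which converts exponential decay in $\eta_n n(\lambda_1-\lambda_2)$ into polynomial decay in $n$.

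For (1), the identity reduces $nd \exp(-\eta_n n(\lambda_1-\lambda_2))$ to $d \cdot n^{1-\alpha}$, which by $d = o(n^\kappa)$ is $o(n^{\kappa+1-\alpha})$; this vanishes once $\alpha$ is taken large enough relative to $\kappa$ (the intended reading of the condition $\alpha > 1$ in concert with Assumption~\ref{assumption:sample_size}). For (2), substitution yields $\mathcal{M}_2^4 \eta_n^2/(\lambda_1-\lambda_2) = \alpha^2 \mathcal{M}_2^4 \log^2 n/[n^2 (\lambda_1-\lambda_2)^3]$. The central step is to rearrange Assumption~\ref{assumption:sample_size} as $\mathcal{M}_2^4/(\lambda_1-\lambda_2)^4 \le n/(2\kappa^2 \log n \log d)$. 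In the $\max$ this has to be multiplied by either $\eta_n$ or $\log d/(\lambda_1-\lambda_2)$: in the first case the bound collapses to $O(\log^2 n/(n^2 \log d))$, in the second case to $O(\log n/n)$, both $o(1)$.

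For (3), the quantity is $\alpha^2 \log^2 n (2\lambda_1^2 + \mathcal{M}_2^2)/[n(\lambda_1-\lambda_2)^2]$, which I would split into a $\lambda_1^2$ piece and an $\mathcal{M}_2^2$ piece. Each piece should be bounded using the square root of Assumption~\ref{assumption:sample_size}, namely $\mathcal{M}_2^2/(\lambda_1-\lambda_2)^2 \le \sqrt{n/(2\kappa^2 \log n \log d)}$. The only real obstacle here is comparing $\lambda_1$ to $\mathcal{M}_2$: since $\mathcal{M}_2^2 \ge \|\mathbb{E}[(A_i-\Sigma)^2]\| = \mathcal{V}$ and (in typical streaming PCA setups) $\mathcal{V}$ dominates $\lambda_1^2$ up to constants, one has $\lambda_1 \lesssim \mathcal{M}_2$ and the $\lambda_1^2$ piece inherits the same bound as the $\mathcal{M}_2^2$ piece. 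After this comparison everything is routine arithmetic on the assumption, and the bound $\le 1$ follows directly.

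The entire lemma is bookkeeping, with the only non-mechanical step being the mild comparison $\lambda_1 \lesssim \mathcal{M}_2$ needed in (3) and the constant calibration of $\alpha$ relative to $\kappa$ in (1); the remainder is pure substitution.
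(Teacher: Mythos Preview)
Your treatment of (1) and (2) is essentially the paper's own. For (1) the paper also computes $nd\exp(-\eta_n n(\lambda_1-\lambda_2)) = d/n^{\alpha-1}$ and appeals to the polynomial bound on $d$, with the same tacit requirement that $\alpha$ be large relative to $\kappa$ that you flag. For (2) the paper is marginally slicker: it first observes that $n \ge \alpha \log n$ (from $n/\log n \ge 2\kappa$) forces $\eta_n \le 1/(\lambda_1-\lambda_2) \le \log d/(\lambda_1-\lambda_2)$, so the $\max$ is always the second argument; then only that case needs checking. Your handling of both arms of the $\max$ separately reaches the same conclusion with one more line of arithmetic.

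For (3) there is a small gap in your argument, though you are in good company: the paper's proof simply omits (3) altogether. Your reduction hinges on $\lambda_1 \lesssim \mathcal{M}_2$, but this does not follow from Assumptions~\ref{assumption:bounded_moments} and~\ref{assumption:sample_size} as stated. A clean counterexample in one dimension is $X_i$ uniform on $\{\pm 1\}$: then $A_i = X_i^2 \equiv 1$, so $\Sigma = 1$, $\lambda_1 = 1$, and $\mathcal{M}_2 = 0$. More generally, whenever $(v_1^\top X_i)^2$ has small variance relative to $\lambda_1^2$, the inequality $\lambda_1 \lesssim \mathcal{M}_2$ can fail. What one actually needs to control the $\lambda_1^2$ piece of (3) is a bound on the eigengap ratio $\lambda_1/(\lambda_1-\lambda_2)$, and neither assumption supplies one directly. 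In practice this is harmless---standard streaming PCA analyses fold a condition like $\lambda_1 \lesssim \mathcal{M}_2$ or an explicit eigengap ratio into the sample-size requirement---but as written it is an extra hypothesis, and you were right to single it out as the non-mechanical step.
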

\begin{proof}
The above conditions on $\eta_n$ imply Corollary 1 in \cite{lunde2021bootstrapping}. Let's start with the first condition. We have
\bas{
    nd\exp\bb{-\eta_n n \bb{\eigengap}} \leq nd\exp\bb{-\alpha\log\bb{n}} = \frac{d}{n^{\alpha-1}} = o(1), \text{ using the bound on } d
}
For the second condition, we first note that for $n \geq \alpha\log\bb{n}$ provided by Assumption~\ref{assumption:sample_size}, 
\bas{
    \eta_n \leq \frac{\log\bb{d}}{(\eigengap)}
}
Now for the second condition, we require,
\bas{
\frac{\alpha^{2}\Mtwo^{4}\log^{2}\bb{n}\log\bb{d}}{n^2\bb{\eigengap}^{4}} = o(1)
}
which is again ensured by the condition on $n$ in Assumption~\ref{assumption:sample_size}.
\end{proof}

\begin{lemma}\label{lemma:MoM}
Let $t$ be a positive integer, $\delta \in (0,1)$, and let $I$ be an interval in $\mathcal{R}$. Suppose $a_1, a_2, \dots, a_t$ are independent random variables such that $P\bb{a_i \in I} \ge 3/4$. Then, for $t \ge 8 \log\bb{1/\delta}$,
\bas{
P\bb{\median\bb{\left\{a_{i} \right\}_{i \in [t]}} \in I} \ge 1-\delta.
}
\end{lemma}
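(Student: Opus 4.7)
The plan is a standard median-of-means argument using Hoeffding's inequality on Bernoulli indicators.

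First, I would reduce the event $\{\median(\{a_i\}_{i \in [t]}) \in I\}$ to a counting event. Write $I = [\ell, r]$ (handling $\pm\infty$ endpoints analogously), and define the independent Bernoulli indicators $Z_i := \mathbbm{1}[a_i \in I]$ with $p := \Prob(a_i \in I) \ge 3/4$. The key deterministic observation is that if strictly more than $t/2$ of the $a_i$'s lie in $I$, then strictly fewer than $t/2$ lie strictly below $\ell$ and strictly fewer than $t/2$ lie strictly above $r$, so any median (whichever convention one adopts for even $t$) must fall in $[\ell, r]$. Consequently,
\[
\Prob\bigl(\median(\{a_i\}_{i \in [t]}) \notin I\bigr) \le \Prob\!\left(\sum_{i=1}^{t} Z_i \le \tfrac{t}{2}\right).
\]

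Next I would apply Hoeffding's inequality to the independent, $[0,1]$-valued variables $Z_i$. Since $\E[Z_i] = p \ge 3/4$, we have
\[
\Prob\!\left(\sum_{i=1}^{t} Z_i \le \tfrac{t}{2}\right) \le \Prob\!\left(\sum_{i=1}^{t}(Z_i - p) \le -\tfrac{t}{4}\right) \le \exp\!\left(-2t\cdot(1/4)^2\right) = \exp\!\left(-\tfrac{t}{8}\right).
\]
Plugging in the assumption $t \ge 8\log(1/\delta)$ gives the bound $\exp(-t/8) \le \delta$, which yields the desired $\Prob(\median \in I) \ge 1 - \delta$.

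There is essentially no obstacle here; the only subtlety is the deterministic median-containment step, which is immediate for an interval but would fail for a general set $I$. The use of $p \ge 3/4$ is precisely what produces the constant gap $1/4$ between $p$ and $1/2$ needed for Hoeffding to give an exponent of $t/8$, matching the $8\log(1/\delta)$ threshold in the statement. No other results from the paper are needed.
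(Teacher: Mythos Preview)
Your proposal is correct and follows essentially the same approach as the paper's proof: both reduce the median-in-$I$ event to a majority count of Bernoulli indicators (the paper uses $b_i = \mathbbm{1}[a_i \notin I]$ while you use the complementary $Z_i = \mathbbm{1}[a_i \in I]$), then apply Hoeffding's inequality with the gap $3/4 - 1/2 = 1/4$ to obtain the bound $\exp(-t/8) \le \delta$.
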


\begin{proof}
Since $I$ is an interval, the median does lies in $I$ if at least half the $a_i$ are in $I$. Let $b_i$ be the indicator that $a_i \notin I$, and let $B = \sum_{i \in [t]} b_i$. Then, $b_1, b_2, \dots, b_t$ are independent Bernoulli random variables each with mean at most $1/4$. By Hoeffding's inequality, 
\bas{
P\bb{\median\bb{\left\{a_{i} \right\}_{i \in [t]}} \notin I} \le P\bb{B > t/2} \le \exp\bb{-2(t/2 - \E\bbb{B})^2/t} \le \exp\bb{-t/8} \le \delta. 
}
\end{proof}

\begin{lemma}\label{lemma:MoM_modified_helper}
    Consider random variables $\left\{\bb{a_i, b_i}\right\}_{i \in [T]}$, $a_i, b_i \in \mathbb{R}$ such that $\left\{b_{i}\right\}_{i \in [T]}$ are mutually independent. For a fixed $c_{T} > 0$, define the event $\mathcal{E} := \left\{\forall i \in [n], a_{i} \leq c_{T}\right\}$. Further, suppose there exists a bound $d_{T}$ such that for any $i \in [T]$, $\Prob\bb{b_i \leq d_T} \geq 3/4$. Then for any $\delta \in \bb{0,1}$ and $T \geq 8\log\bb{1/\delta}$, we have 
    \bas{
        \Prob\bb{\median\bb{\left\{a_{i}+b_{i} \right\}_{i \in [T]}} \geq c_T + d_T} \leq \delta + \Prob\bb{\mathcal{E}^{\complement}}
    }
\end{lemma}
\begin{proof}
    Define indicator random variables $X_{i}, Y_{i}, i \in [T]$ as 
    \ba{
        X_{i} := \ind\bb{a_i + b_i \geq c_{T}+d_{T}}, \; Y_{i} := \ind\bb{b_i \geq d_{T}}
    }
    Then, we have
    \bas{
        \Prob\bb{\median\bb{\left\{a_{i}+b_{i} \right\}_{i \in [T]}} \geq c_T + d_T} &\leq \Prob\bb{\sum_{i\in[T]}X_i \geq \frac{T}{2}} \\
        &\leq \Prob\bb{\left\{\sum_{i\in[T]}X_i \geq \frac{T}{2}\right\} \bigcap \mathcal{E}} + \Prob\bb{\mathcal{E}^{\complement}} \\
        &\stackrel{(i)}{\leq} \Prob\bb{\left\{\sum_{i\in[T]}Y_i \geq \frac{T}{2}\right\} \bigcap \mathcal{E}} + \Prob\bb{\mathcal{E}^{\complement}} \\
        &\leq \Prob\bb{\sum_{i\in[T]}Y_i \geq \frac{T}{2}} + \Prob\bb{\mathcal{E}^{\complement}} \\
        &\stackrel{(ii)}{\leq} \exp\bb{-2\bb{T/2 -\sum_{i\in[T]}\E\bbb{Y_i}}^{2}/T} + \Prob\bb{\mathcal{E}^{\complement}} \\
        &\stackrel{(iii)}{\leq} \exp\bb{-T/8} + \Prob\bb{\mathcal{E}^{\complement}}\leq \delta + \Prob\bb{\mathcal{E}^{\complement}}
    }
    Here $(i)$ follows by the definition of the event $\mathcal{E}$, $(ii)$ follows from Hoeffding's inequality and $(iii)$ follows from $\E\bbb{Y_i} \leq 1/4$.
\end{proof}

\begin{lemma}\label{lemma:MoM_modified} Let $\left\{\nu_{i}\right\}_{i \in [T]}, \nu_{i} \in \R$ be random variables satisfying $\nu_{i} = \nu_{T} + a_{i} + b_{i}$ for a fixed scalar $\nu_{T}$ and random variables $\left\{\bb{x_i, y_i}\right\}_{i \in [T]}$, $x_i, y_i \in \mathbb{R}$ such that $\left\{y_{i}\right\}_{i \in [T]}$ are mutually independent. For a fixed $c_{T} > 0$, define the event $\mathcal{E} := \left\{\forall i \in [n], |x_{i}| \leq c_{T}\right\}$. Further, let there exist $d_{T}$ such that for any $i \in [T]$, $\Prob\bb{|y_i| \leq d_T} \geq 3/4$. Then for any $\delta \in \bb{0,1}$ and $T \geq 8\log\bb{1/\delta}$, 
\bas{
    \Prob\bb{\left|\median\bb{\left\{\nu_i \right\}_{i \in [T]}} - \nu_{T}\right| \geq c_T + d_T} \leq \delta + \Prob\bb{\mathcal{E}^{\complement}}
}   
\end{lemma}
\begin{proof}
    We first note that 
    \ba{
        \median\bb{\left\{\nu_i \right\}_{i \in [T]}} = \median\bb{\left\{\nu_{T} + x_{i} + y_{i} \right\}_{i \in [T]}} = \nu_{T} + \median\bb{\left\{ x_{i} + y_{i} \right\}_{i \in [T]}} \label{eq:median_eq_1}
    }
    Next we note that for any $\lambda > 0$, 
    \bas{
        \median\bb{\left\{ |x_{i}| + |y_{i}| \right\}_{i \in [T]}} \leq \lambda \implies \left|\left\{i \in [T] : |x_i| + |y_i| \leq \lambda\right\}\right| \geq \frac{T}{2}
    }
    Therefore, for all $i\in \mathcal{S} := \left\{i \in [T] : |x_i| + |y_i| \leq \lambda\right\}$ we have
    \bas{
       |x_i + y_i| \leq  |x_i| + |y_i| \leq \lambda 
    }
    This in turn implies that for $\mathcal{\tilde{S}} := \left\{i \in [T] : -\lambda \leq x_i + y_i \leq \lambda \right\}$, $|\tilde{S}| \geq \frac{T}{2}$. Therefore, 
    \bas{
        \median\bb{\left\{ x_{i} + y_{i} \right\}_{i \in [T]}} \in \bbb{-\lambda, \lambda} \iff |\median\bb{\left\{ x_{i} + y_{i} \right\}_{i \in [T]}}| \leq \lambda
    }
    Therefore, we have shown that for any $\lambda > 0$, 
    \bas{
        \median\bb{\left\{ |x_{i}| + |y_{i}| \right\}_{i \in [T]}} \leq \lambda \implies |\median\bb{\left\{ x_{i} + y_{i} \right\}_{i \in [T]}}| \leq \lambda 
    }
    or equivalently, 
    \bas{
        \median\bb{\left\{ |x_{i}| + |y_{i}| \right\}_{i \in [T]}} \geq |\median\bb{\left\{ x_{i} + y_{i} \right\}_{i \in [T]}}|
    }
    Substituting in \eqref{eq:median_eq_1}, we have
    \bas{
        |\median\bb{\left\{\nu_i \right\}_{i \in [T]}} - \nu_{T}| = |\median\bb{\left\{ x_{i} + y_{i} \right\}_{i \in [T]}}| \leq \median\bb{\left\{ |x_{i}| + |y_{i}| \right\}_{i \in [T]}}
    }
    The result then follows from Lemma~\ref{lemma:MoM_modified_helper} with $a_{i} := |x_i|$ and $b_i := |y_i|$.
\end{proof}
\bk

We next define a variant of Oja's algorithm, $\Ojamain$, which provides a high-probability convergence guarantee. This can be satisfied by geometric boosting of the standard $\Oja$ algorithm (see for e.g. Lemma 3.9 in \citet{kumarsarkar2024sparse}), and we use it to obtain our high-accuracy estimate, $\vmain$, used for recentering and variance computation subsequently in Algorithm~\ref{alg:variance_estimation}.

\begin{definition}\label{def:oja_high_prob} Let $N > 0$, $u_{0} \sim \mathcal{N}\bb{0, I}$ and $\left\{X_{i}\right\}_{i \in [N]}$ be $\iid$ datapoints satisfying Assumption~\ref{assumption:bounded_moments}. We define $\vmain \gets \Ojamain(\left\{X_{i}\right\}_{i \in [N]}, u_0)$ to be any variant of Oja's algorithm satisfying, with probability at least $1-\delta$,
\bas{
    \sin^2 \bb{\vmain, v_1} \le \frac{C\log\bb{\frac{1}{\delta}}\log\bb{N/\log\bb{\frac{1}{\delta}}}\Mtwo^{2}}{N(\eigengap)^{2}}
}
where $C > 0$ is a universal constant and $\delta \in \bb{0,1}$.
\end{definition}
\section{Estimator Concentration}\label{appendix:estimator_bias_variance}

\ojaerrordecomposition*
\begin{proof}
    We have, 
    \bas{
        \voja &= (v_{1}^{\top}\voja)v_{1} + \vp\vp^{\top}\voja \\
        &= (v_{1}^{\top}\voja)v_{1} + \frac{\vp\vp^{\top}B_{n}u_{0}}{\norm{B_nu_0}_{2}} \\
        &= (v_{1}^{\top}\voja)v_{1} + \frac{\vp\vp^{\top}B_{n}u_{0}}{c_{n}} + \Ethree{n} \\
        &= (v_{1}^{\top}\voja)v_{1} + \frac{\vp\vp^{\top}B_{n}v_{1}\sign(v_{1}^{\top}u_{0})}{(1+\eta_{n}\lambda_{1})^{n}} + \Ethree{n} + \Efour{n} \\
        &= (v_{1}^{\top}\voja)v_{1} + \frac{\vp\vp^{\top}(B_{n} - \E\bbb{B_n})v_{1}\sign(v_{1}^{\top}u_{0})}{(1+\eta_{n}\lambda_{1})^{n}} + \Ethree{n} + \Efour{n} \\
        &= (v_{1}^{\top}\voja)v_{1} + \frac{\vp\vp^{\top}(\sum_{k\geq1}T_{n,k})v_{1}\sign(v_{1}^{\top}u_{0})}{(1+\eta_{n}\lambda_{1})^{n}} + \Ethree{n} + \Efour{n}, \text{ using Theorem A.1 \cite{lunde2021bootstrapping}} \\
        &= (\vmain^{\top}\voja)\vmain + \Ezero{n} + \Eone{n} + \Etwo{n} + \Ethree{n} + \Efour{n}.
    }
\end{proof}

\begin{lemma}\label{lemma:hajek_decomposition} Let $\Eone{n}$ be as defined in Lemma~\ref{lemma:oja_error_decomposition}. Then, 
\bas{
    \Eone{n} := \eta_{n}Y_{n}, \text{ for } Y_{n} := \sum_{j=1}^{n}X^{n}_{j} \text{ and } X^{n}_{j} := \frac{\sign\bb{v_1^{\top}u_0}}{1+\eta_n\lambda_1}\vp\lambp^{n-j}\vp^{\top}\bb{A_{j}-\Sigma}v_{1}
}
where $\lambp \in \R^{(d-1) \times (d-1)}$ is a diagonal matrix with entries $\lambp(i,i) = \frac{1+\eta_n\lambda_{i+1}}{1+\eta_n\lambda_{1}}$.
\end{lemma}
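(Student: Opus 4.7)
The plan is to unpack the definition of $T_{n,1}$ from equation~\eqref{eq:tnk} and simplify each summand using the spectral structure of $\Sigma$ relative to $v_1$ and $\vp$.

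First I would recall that by Eq~\eqref{eq:tnk}, the Hájek projection is a sum over singletons:
\[
T_{n,1} \;=\; \sum_{j=1}^{n} \prod_{i=1}^{n} M_{\{j\}, n+1-i}.
\]
For a fixed $j$, the factor $M_{\{j\}, n+1-i}$ equals $\eta_n(A_j - \Sigma)$ exactly when $n+1-i = j$, i.e.\ at position $i = n+1-j$, and equals $I+\eta_n\Sigma$ for all other positions. Reading the product left-to-right, the $(n-j)$ factors on the left and the $(j-1)$ factors on the right are copies of $I+\eta_n\Sigma$, so
\[
\prod_{i=1}^{n} M_{\{j\}, n+1-i} \;=\; (I+\eta_n\Sigma)^{\,n-j}\,\eta_n(A_j-\Sigma)\,(I+\eta_n\Sigma)^{\,j-1}.
\]

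Next I would use the eigenstructure of $\Sigma$. Since $(I+\eta_n\Sigma)v_1 = (1+\eta_n\lambda_1)v_1$, the factors on the right of $\eta_n(A_j-\Sigma)$ collapse to a scalar when applied to $v_1$:
\[
(I+\eta_n\Sigma)^{\,j-1} v_1 = (1+\eta_n\lambda_1)^{\,j-1} v_1.
\]
For the factors on the left, note that $I+\eta_n\Sigma$ commutes with the projection $\vp\vp^\top$ (they share eigenvectors), and $(I+\eta_n\Sigma)\vp = \vp \cdot \mathrm{diag}((1+\eta_n\lambda_{k+1}))_{k=1}^{d-1}$. Iterating and factoring out $(1+\eta_n\lambda_1)^{n-j}$ then gives
\[
\vp\vp^\top (I+\eta_n\Sigma)^{\,n-j} \;=\; (1+\eta_n\lambda_1)^{\,n-j}\, \vp\,\lambp^{\,n-j}\,\vp^\top,
\]
with $\lambp$ as defined in the statement.

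Finally I would assemble the pieces. Combining the two displays above,
\[
\vp\vp^\top T_{n,1} v_1 \;=\; \sum_{j=1}^{n}\eta_n (1+\eta_n\lambda_1)^{n-1}\,\vp\,\lambp^{\,n-j}\,\vp^\top (A_j-\Sigma)v_1,
\]
and dividing by $(1+\eta_n\lambda_1)^n$ and multiplying by $\sign(v_1^\top u_0)$ (per the definition of $\Eone{n}$ in Lemma~\ref{lemma:oja_error_decomposition}) produces exactly $\eta_n Y_n$ with the prescribed summands $X_j^n$. The proof is entirely a bookkeeping argument, so there is no serious obstacle; the only small care needed is getting the orientation of the product right (i.e., tracking which factors sit to the left vs.\ right of the $\eta_n(A_j-\Sigma)$ term so that the powers $(1+\eta_n\lambda_1)^{j-1}$ and $(1+\eta_n\lambda_1)^{n-j}$ combine to $(1+\eta_n\lambda_1)^{n-1}$) and confirming that $\vp\vp^\top$ commutes with $(I+\eta_n\Sigma)^{n-j}$.
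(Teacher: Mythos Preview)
Your proposal is correct. The paper states this lemma without proof (treating it as a direct computation), and your argument supplies exactly that computation: unpacking $T_{n,1}$ from~\eqref{eq:tnk}, using the eigen-action of $(I+\eta_n\Sigma)$ on $v_1$ and on $\vp$, and collecting the scalar factors $(1+\eta_n\lambda_1)^{j-1}$ and $(1+\eta_n\lambda_1)^{n-j}$ to match the normalization in the definition of $\Eone{n}$.
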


Let $\left\{A_{i}\right\}_{i \in [n]}$ be symmetric independent matrices satisfying $\E\bbb{A_i} = \Sigma$, $\norm{\E\bbb{\bb{A_i-\Sigma}^{2}}}_{2} \leq \mathcal{V}$ and $\norm{A_i-\Sigma}_{2} \leq \mathcal{M}$. Define, 
\bas{
    \forall j \in [n], \;\; X_{j}^{n} := \vp\lambp^{n-j}\vp^{\top}\bb{A_{j}-\Sigma}v_{1}, \text{ and } Y_{n} := \sum_{j \in [n]}X_{j}^{n}
}

\subsection{Estimator Bias}\label{appendix:estimator_bias}


\begin{proof}[Proof of Lemma~\ref{lemma:second_moment_matrix}]
    Using the definitions of $Y_n$ and $X_{j}^n$ from Lemma~\ref{lemma:hajek_decomposition}, we have
    \bas{
        \frac{1}{\eta_n^2} \E\bbb{\Psi_{n,1}\Psi_{n,1}^{\top}} = \E\bbb{Y_n Y_n^\top} &= \sum_{j, k \in [i]}\E\bbb{X_j^n X_k^{n \top}} \\
        &= \sum_{j \in [n]}\E\bbb{X_{j}^{n}X_{j}^{n \top}}, \;\; \text{ since } A_{j}, A_{k} \text{ are independent for } j \neq k \\
        &= \frac{1}{\bb{1+\eta_n\lambda_1}^{2}}\sum_{j \in [n]}\vp\lambp^{n-j}\vp^{\top}\E\bbb{\bb{A_{j}-\Sigma}v_{1}v_{1}^\top\bb{A_{j}-\Sigma}}\vp\lambp^{n-j}\vp^{\top} \\
        & = \frac{1}{\bb{1+\eta_n\lambda_1}^{2}}\vp\bb{\sum_{j \in [n]}\lambp^{n-j}\underbrace{\vp^{\top}\E\bbb{\bb{A_{j}-\Sigma}v_{1}v_{1}^\top\bb{A_{j}-\Sigma}}\vp}_{:= \widetilde{M}}\lambp^{n-j}}\vp^{\top}.
    }
    Recall $R^{(n)} := \frac{1}{\bb{1+\eta_n\lambda_1}^{2}}\sum_{j \in [n]}\lambp^{n-j}\widetilde{M}\lambp^{n-j}$ and consider $\bb{k,l}^{\text{th}}$ entry of $R^{(n)}$. 
    \bas{
        R_{kl}^{(n)} &= \frac{1}{\bb{1+\eta_n\lambda_1}^{2}}e_{k}^{\top}\sum_{j \in [n]}\lambp^{n-j}\widetilde{M}\lambp^{n-j}e_{l} = \frac{1}{\bb{1+\eta_n\lambda_1}^{2}}\widetilde{M}_{kl}\sum_{j=1}^{n}\bb{d_{k}d_{l}}^{n-j} = \frac{1}{\bb{1+\eta_n\lambda_1}^{2}}\widetilde{M}_{kl} \bb{\frac{1-\bb{d_kd_l}^{n}}{1-d_kd_l}}.
    }
    Let $R_0(k,l)=\widetilde{M}_{k\ell}/(2\lambda_1-\lambda_{k+1}-\lambda_{\ell+1})$. 
    Note that 
    \bas{
        1-d_kd_l &= \frac{\eta_n\bb{2\lambda_{1}-\lambda_{k+1}-\lambda_{l+1}}}{1+\eta\lambda_{1}} - \frac{\eta_n^{2}\bb{\lambda_{1}-\lambda_{k+1}}\bb{\lambda_{1}-\lambda_{l+1}}}{\bb{1+\eta\lambda_{1}}^{2}} \\
        &= \frac{\eta_n\bb{2\lambda_{1}-\lambda_{k+1}-\lambda_{l+1}}}{1+\eta\lambda_{1}}\bbb{1 - \frac{\eta_{n}\bb{\lambda_{1}-\lambda_{k+1}}\bb{\lambda_{1}-\lambda_{l+1}}}{\bb{1+\eta\lambda_{1}}\bb{ \lambda_{1}-\lambda_{k+1} +  \lambda_{1}-\lambda_{l+1}}} } \\
        &\geq \frac{\eta_n\bb{2\lambda_{1}-\lambda_{k+1}-\lambda_{l+1}}}{1+\eta\lambda_{1}}\bbb{1 - \frac{\eta_{n}\bb{\lambda_{1}-\lambda_{k+1}}\bb{\lambda_{1}-\lambda_{l+1}}}{\bb{ \lambda_{1}-\lambda_{k+1} +  \lambda_{1}-\lambda_{l+1}}} } \\
        &\geq \frac{\eta_n\bb{2\lambda_{1}-\lambda_{k+1}-\lambda_{l+1}}}{1+\eta\lambda_{1}}\bbb{1 - \eta_{n}\min\left\{ \lambda_{1}-\lambda_{k+1},  \lambda_{1}-\lambda_{l+1}\right\} } \\
        &\geq \frac{\eta_n\bb{2\lambda_{1}-\lambda_{k+1}-\lambda_{l+1}}}{1+\eta\lambda_{1}}\bbb{1 - \eta_{n}\lambda_{1} } \\
        &\geq \eta_n\bb{2\lambda_{1}-\lambda_{k+1}-\lambda_{l+1}}\bb{1-O\bb{\eta_n\lambda_{1}}}
    }
    Then,
    \bas{
    R^{(n)}_{kl}-R_0(k,l)/\eta_n&=\frac{\widetilde{M}_{k\ell}}{\eta_n(2\lambda_1-\lambda_{k+1}-\lambda_{\ell+1})}\frac{(1+O(\eta_n\lambda_1))}{\bb{1+\eta_n\lambda_1}^{2}}-\frac{\widetilde{M}_{k\ell}}{\eta_n(2\lambda_1-\lambda_{k+1}-\lambda_{\ell+1})}\\
    &=\frac{\widetilde{M}_{k\ell}}{\eta_n(2\lambda_1-\lambda_{k+1}-\lambda_{\ell+1})}(1+O(\eta_n\lambda_1))-\frac{\widetilde{M}_{k\ell}}{\eta_n(2\lambda_1-\lambda_{k+1}-\lambda_{\ell+1})}\\
    &=\frac{\widetilde{M}_{k\ell}}{\eta_n(2\lambda_1-\lambda_{k+1}-\lambda_{\ell+1})}O(\eta_n\lambda_1)
    }
    So we have:
    \bas{
    \frac{\eta_n R^{(n)}_{kl}-R_0(k,l)}{R_0(k,l)}&=O(\eta_n\lambda_1)
    }
    Finally, we have:
    \bas{
    \|\eta_n R^{(n)}-R_0\|_F\leq \frac{\eta_{n} \lambda_1}{\lambda_1-\lambda_2}\|\widetilde{M}\|_F/2
    }
    Note that
    \bas{
    \|\widetilde{M}\|_F^{2} \leq \E\bbb{\norm{(A_i-\Sigma)v_1v_1^{\top}(A_i-\Sigma)}} \leq \E\bbb{\|A_i-\Sigma\|^2} \le \mathcal{M}_{2}^{2}.
    }
\end{proof}

\subsection{Estimator Concentration}\label{appendix:estimator_concentration}

In this section, we estimate the bias of the variance estimate output by Algorithm~\ref{alg:variance_estimation}. In the entirety of this section, we assume that the vector $\vmain$ is ``good'', i.e $\sin^{2}\bb{\vmain, v_1} \lesssim \frac{\log\bb{1/\delta}}{\delta^{3}}\frac{\eta_{N}\Mtwo^{2}}{\bb{\eigengap}}$, which happens with probability at least $1-\delta$. Recall that $\vmain \gets \Oja(\mathcal{D}_{N}, \eta_{N}, u_0)$ is the high accuracy estimate of $v_1$. We present all results using a general $n$ number of $\iid$ samples per split, which will later be replaced by $n/(m_1m_2)$ as required by Algorithm~\ref{alg:variance_estimation}. We denote $s_{n} := \frac{\log\bb{1/\delta}}{\delta^{3}}\frac{\eta_{n}\Mtwo^{2}}{\bb{\eigengap}}$ to be the upper bound on the $\sin^2$ error of the Oja vector due to~\cite{jain2016streaming}. While our results henceforth are written using $s_n$ and $s_n$ is not guaranteed to be smaller than $1$, it is straightforward to replace it by $\min\left\{s_n, 1\right\}$ since the $\sin^2$ error between any two vectors is always at most $1$.

\subsubsection{$\Ezero{n}$ Tail Bound}\label{sec:en0_concentration}

\begin{lemma}\label{lemma:en0_tail_bound} Let $\Ezero{n}$ be defined as in Lemma~\ref{lemma:oja_error_decomposition} for $\voja$ defined in \eqref{eq:voja_def}. Let $\left\{\Ezero{n}^{(i)}\right\}_{i \in [m]}$ and $\left\{\voja^{(i)}\right\}_{i \in [m]}$ be $m$ $\iid$ instances of $\Ezero{n}$ and $\voja$ respectively. 

Suppose the vector $\vmain$ satisfies the bound of Definition~\ref{def:oja_high_prob}. Then, for any $k \in [d]$,
\bas{
\sum_{i\in [m]}\frac{\bb{e_k^{\top} \Ezero{n}^{(i)}}^2}{m} \le \frac{C\log\bb{\frac{1}{\delta}}\log\bb{N/\log\bb{\frac{1}{\delta}}}\Mtwo^{2}}{N(\eigengap)^{2}}.
}
\end{lemma}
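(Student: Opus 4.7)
The key observation is that $\Ezero{n}^{(i)}$ is, once $\vmain$ is fixed, a purely Lipschitz function of $\voja^{(i)}$: namely $\Ezero{n}^{(i)} = (v_1 v_1^\top - \vmain \vmain^\top)\,\voja^{(i)}$, so the entire randomness in $\Ezero{n}^{(i)}$ arising from $\voja^{(i)}$ is irrelevant up to a multiplicative constant. The problem collapses to a Davis--Kahan style entrywise bound on the projector difference $v_1 v_1^\top - \vmain \vmain^\top$, combined with the sin-squared convergence guarantee for Oja's algorithm applied to the ``proxy'' run on $N$ samples. No concentration argument over the $m$ iid copies is needed; the bound will be uniform in $i$ on a single good event for $\vmain$.

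The plan is as follows. First, I would invoke the standard sin-squared rate for Oja's algorithm (as quoted in the paper just before the lemma) to introduce the event
\[
\mathcal{E} \defeq \Bigl\{\, \sin^2(\vmain, v_1) \le s_N \,\Bigr\}, \qquad s_N \defeq \tfrac{\log(1/\delta)}{\delta^{3}}\tfrac{\eta_N \Mtwo^2}{\eigengap},
\]
with $\Prob(\mathcal{E}) \ge 1-\delta$. All subsequent work takes place on $\mathcal{E}$.

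Second, I would derive a deterministic, per-sample entrywise bound. Writing $c \defeq v_1^\top \vmain$ (with the sign chosen so that $c \ge 0$), the $k$th row of the projector difference has norm
\[
\bigl\lVert e_k^\top (v_1 v_1^\top - \vmain \vmain^\top)\bigr\rVert^2 \;=\; \bigl(v_1(k) - c\,\vmain(k)\bigr)^2 \,+\, (1-c^2)\,\vmain(k)^2.
\]
Using the orthogonal decomposition $v_1 = c \vmain + \sqrt{1-c^2}\,v_\perp$ for some unit vector $v_\perp \perp \vmain$, the first term equals $(1-c^2)\,v_\perp(k)^2$; since $|v_\perp(k)|,|\vmain(k)| \le 1$, both terms are bounded by $1-c^2 = \sin^2(\vmain, v_1)$. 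Hence
\[
\bigl(e_k^\top \Ezero{n}^{(i)}\bigr)^2 \;\le\; \bigl\lVert e_k^\top (v_1 v_1^\top - \vmain \vmain^\top)\bigr\rVert^2 \,\lVert \voja^{(i)} \rVert^2 \;\le\; 2\,\sin^2(\vmain, v_1),
\]
uniformly over every realization of $\voja^{(i)}$ (using $\lVert \voja^{(i)} \rVert = 1$).

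Third, I would combine the two. On $\mathcal{E}$, every summand satisfies $(e_k^\top \Ezero{n}^{(i)})^2 \le 2 s_N$, so their average is also at most $2 s_N$. Absorbing the factor $2$ into the constant $C$ yields the desired tail bound with probability $\ge 1-\delta$.

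I do not anticipate any real obstacle: there is no martingale or empirical-process step, because the $m$-sample average is controlled by the uniform-in-$i$ bound. The only mild care is the sign choice in $c = v_1^\top \vmain$ (equivalent to flipping $\vmain$) so that the orthogonal decomposition is clean and the constant in the Davis--Kahan step is $2$ rather than something $c$-dependent.
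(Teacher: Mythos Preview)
Your proposal is correct and follows essentially the same argument as the paper: the paper bounds $|e_k^\top \Ezero{n}^{(i)}| \le \|v_1v_1^\top - \vmain\vmain^\top\| = \sqrt{2}\,|\sin(\vmain,v_1)|$ via the projector operator norm and then invokes the $\sin^2$ rate for $\vmain$, while you reach the identical bound $2\sin^2(\vmain,v_1)$ by expanding the $k$th row norm explicitly through the orthogonal decomposition. The sign issue you flag is moot since $\vmain\vmain^\top$ is sign-invariant, but this does no harm.
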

\begin{proof}
    For any $i \in [m]$,
    \bas{
    \Abs{e_k^{\top} \Ezero{n}^{(i)}} &= \Abs{e_k^{\top}\bb{v_1v_1^{\top} - \vmain \vmain^{\top}} \voja^{(i)}} \le \norm{v_1v_1^{\top} - \vmain \vmain^{\top}} = \sqrt{2}\Abs{\sin\bb{\vmain, v_1}}.
    }
    The result now follows from the assumed bound 
    \bas{
    \sin^2 \bb{\vmain, v_1} \le \frac{C\log\bb{\frac{1}{\delta}}\log\bb{N/\log\bb{\frac{1}{\delta}}}\Mtwo^{2}}{N(\eigengap)^{2}}.
    }
    for some universal constant $C > 0$. 
\end{proof} 
\subsubsection{$\Eone{n}$ (Hajek Projection) Concentration}
\label{appendix:Eone_bound}

\begin{lemma}\label{lemma:en1_concentration_bound} Let $\Eone{n}$ be defined as in Lemma~\ref{lemma:oja_error_decomposition} for $u_{0} = g/\norm{g}_{2}$ with $g \sim \mathcal{N}(0, \id_d)$. Let $\left\{\Eone{n}^{(i)}\right\}_{i \in [m]}$ and $\left\{g^{(i)}\right\}_{i \in [m]}$ be $m$  $\iid$ instances of $\Eone{n}$ and $g$ respectively. Then, for any $\delta \in \bb{0,1}$ and $k \in [d]$, with probability at least $1-\delta$,
\bas{
    \Abs{\frac{\sum_{i \in [m]}\bb{e_{k}^{\top}\Eone{n}^{(i)}}^{2}}{m} - \E\bbb{\bb{e_{k}^{\top}\Eone{n}}^{2}}} \le \frac{\sqrt{2} \E\bbb{\bb{e_k^\top \Eone{n}}^2} + \eta_n^2 b_k^2 \mathcal{M}_4^2 \sqrt{n}}{\sqrt{m\delta}}.
}
where $b_k := \norm{\vp^{\top}e_k}_{2}$.
\end{lemma}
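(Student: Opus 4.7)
\textbf{Proof proposal for Lemma~\ref{lemma:en1_concentration_bound}.}

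The plan is to apply Chebyshev's inequality to the sample mean of the i.i.d.\ random variables $Z_i \defeq (e_k^\top \Eone{n}^{(i)})^2$, and then substitute a variance bound that has already been established (in sketch form) in Lemma~\ref{lemma:en1_variance_bound}. Concretely, by Chebyshev, for any $\delta \in (0,1)$,
\[
\Prob\Bigl(\Bigl|\tfrac{1}{m}\textstyle\sum_{i=1}^{m} Z_i - \E[Z_1]\Bigr| > t\Bigr) \le \frac{\Var(Z_1)}{m t^2},
\]
so choosing $t = \sqrt{\Var(Z_1)/(m\delta)}$ yields
\[
\Bigl|\tfrac{1}{m}\textstyle\sum_{i=1}^{m}(e_k^\top \Eone{n}^{(i)})^2 - \E[(e_k^\top \Eone{n})^2]\Bigr| \le \sqrt{\tfrac{\Var((e_k^\top \Eone{n})^2)}{m\delta}}
\]
with probability at least $1-\delta$. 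The remaining task is to plug in the bound on $\sqrt{\Var((e_k^\top \Eone{n})^2)}$ given by Lemma~\ref{lemma:en1_variance_bound}, namely $\sqrt{2}\,\E[(e_k^\top \Eone{n})^2] + \tilde{O}(n^{-3/2})$, and use the subadditivity $\sqrt{a+b} \le \sqrt{a}+\sqrt{b}$ to split the right-hand side.

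The only step requiring care is identifying the constant hidden by $\tilde{O}(n^{-3/2})$ as $\eta_n^2 b_k^2 \mathcal{M}_4^2 \sqrt{n}$. I would trace back to the computation in Lemma~\ref{lemma:en1_variance_bound}, which uses $\Eone{n} = \eta_n \sum_{j=1}^{n} X_j^n$ from Lemma~\ref{lemma:hajek_decomposition}. Expanding $(e_k^\top \Eone{n})^4$ as a quadruple sum over indices $j_1,j_2,j_3,j_4 \in [n]$ and using independence of $A_1,\dots,A_n$, the cross-moments beyond the $\E[(e_k^\top \Eone{n})^2]^2$ "Gaussian" contribution can be bounded in terms of $\sum_{j=1}^n \E[(e_k^\top X_j^n)^4]$. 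Since $e_k^\top X_j^n$ is a linear functional whose deterministic component has norm at most $b_k = \|\vp^\top e_k\|_2$ (using $\|\lambp\|_{\mathrm{op}} \le 1$), the fourth moment is bounded by $b_k^4 \E\|A_j-\Sigma\|_{\mathrm{op}}^4 \le b_k^4 \mathcal{M}_4^4$ by Assumption~\ref{assumption:bounded_moments}. Summing over $n$ terms and accounting for the leading $\eta_n^4$ from $\Eone{n}$ yields a contribution of order $\eta_n^4 b_k^4 \mathcal{M}_4^4 n$ to $\Var((e_k^\top \Eone{n})^2)$, whose square root is exactly $\eta_n^2 b_k^2 \mathcal{M}_4^2 \sqrt{n}$.

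The main (minor) obstacle is the bookkeeping that matches the $\tilde{O}(n^{-3/2})$ bound in Lemma~\ref{lemma:en1_variance_bound} with the explicit form $\eta_n^2 b_k^2 \mathcal{M}_4^2 \sqrt{n}$ needed here, which amounts to tracking the $b_k$ and $\mathcal{M}_4$ dependence through the fourth-moment calculation; the concentration step itself is just a one-line Chebyshev argument. No heavier machinery (e.g.\ Bernstein or matrix concentration) is needed because we are only aggregating $m$ independent scalar copies.
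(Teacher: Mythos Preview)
Your proposal is correct and follows essentially the same route as the paper: both apply Chebyshev's inequality to the i.i.d.\ sample mean of $(e_k^\top \Eone{n})^2$ and reduce the problem to bounding $\Var\bigl((e_k^\top \Eone{n})^2\bigr)$, splitting it into the ``Gaussian'' cross-term contribution $2\,\E[(e_k^\top \Eone{n})^2]^2$ and the diagonal fourth-moment contribution $\eta_n^4 b_k^4 \mathcal{M}_4^4 n$ via $|e_k^\top X_j^n|\le b_k\|A_j-\Sigma\|$. The only cosmetic difference is that the paper carries out the variance computation explicitly within this lemma's proof rather than invoking Lemma~\ref{lemma:en1_variance_bound} as a black box, but the underlying argument is identical.
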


\begin{proof}
Recall the notations $X_j^n = \vp\lambp^{n-j}\vp^{\top}\bb{A_{j}-\Sigma}v_{1}$ and $Y_n = \sum_{j=1}^n X_j^n$ from Lemma~\ref{lemma:second_moment_matrix}. Since $\vp \vp^\top X_j^n = X_j^n$ and $T_{n,1} = \eta_n \sum_{i=1}^n X_j^n$, $e_k^\top \Eone{n}$ can be written as
\ba{
e_k^\top \Eone{n} = \frac{e_k^\top \vp \vp^\top T_{n,1} v_1 \sign(v_1^\top u_0)}{(1+\eta_n \lambda_1)^n} = \frac{\eta_n \sign(v_1^\top u_0)}{(1+\eta_n \lambda_1)} \sum_{j=1}^n e_k^\top \vp \vp^\top X_j^n = \frac{\eta_n \sign(v_1^\top u_0)}{1+\eta_n \lambda_1} e_k^\top Y_n. \label{eq:en1-Y-relation}
}
Next, we bound the variance of $(e_k^{\top}Y_n)^2$.
\bas{
(e_k^{\top} Y_n)^2 = \sum_{j=1}^n \bb{e_k^{\top} X_j^n}^2 + 2\sum_{j < j'} \bb{e_k^{\top} X_j^n}\bb{e_k^{\top} X_{j'}^n}.
}
Most pairs of summands are uncorrelated.
\begin{itemize}
\item $\cov\bb{(e_k^\top X_j^n)^2, (e_k^\top X_{j'}^n)^2} = 0$ for any distinct $j, j' \in [n]$.

\item $\cov\bb{(e_k^\top X_{\ell}^n)^2, (e_k^\top X_j^n)(e_k^\top X_{j'}^n)} = 0$ for any $\ell \in [n]$ and $1 \le j < j' \le n$.

\item $\cov\bb{(e_k^\top X_{j}^n)(e_k^\top X_{j'}^n), (e_k^\top X_{\ell}^n)(e_k^\top X_{\ell'}^n)} = 0$ for any $1 \le j < j' \le n$ and $1 \le \ell < \ell' \le n$ such that $(j,j') \neq (\ell, \ell')$.
\end{itemize}
It follows that
\ba{
\Var\bb{(e_k^{\top} Y_n)^2} = \sum_{j=1}^n \Var\bb{(e_k^\top X_{j}^n)^2} + 4\sum_{j < j'} \Var\bb{(e_k^\top X_{j}^n)(e_k^\top X_{j'}^n)}. \label{eq:en1_error_decomposition}
}
We bound both terms separately. By Lemma~\ref{lemma:second_moment_matrix}, the second term can be bounded as
\ba{
4\sum_{j < j'} \Var\bb{(e_k^\top X_{j}^n)(e_k^\top X_{j'}^n)} &= 4 \sum_{i < j} \E\bbb{(e_k^\top X_{j}^n)^2}\E\bbb{(e_k^\top X_{j'}^n)^2} \nonumber \\
&\le 2 \sum_{j=1}^n \sum_{j'=1}^n \E\bbb{(e_k^\top X_{j}^n)^2}\E\bbb{(e_k^\top X_{j'}^n)^2} = 2 \E\bbb{(e_k^\top Y_n)^2}^2. \label{eq:en1_error_decomposition_1}
}
Next, we bound the first term of Equation~\eqref{eq:en1_error_decomposition}. For any $j \in [n]$,
\bas{
\Abs{e_k^\top X_j^n} &= \Abs{e_k^\top \vp \lambp^{n-j} \vp^\top (A_i-\Sigma) v_1} \le \norm{e_k^\top \vp} \norm{\lambp^{n-j}} \norm{\vp^\top (A_j-\Sigma) v_1} \le b_k \norm{A_j-\Sigma},
}
which implies
\ba{
\sum_{j=1}^n \Var\bb{(e_k^\top X_{j}^n)^2} \le \sum_{j=1}^n \E\bbb{(e_k^\top X_{j}^n)^4} \le \sum_{j=1}^n \E\bbb{b_k^4 \norm{A_j-\Sigma}^4} \le b_k^4 \mathcal{M}_4^4 n. \label{eq:en1_error_decomposition_2}
}
Combining equations~\eqref{eq:en1_error_decomposition},~\eqref{eq:en1_error_decomposition_1}, and~\eqref{eq:en1_error_decomposition_2} and using equality~\eqref{eq:en1-Y-relation}, 
\bas{
\Var\bb{(e_k^\top \Eone{n})^2} \leq 2\E\bbb{\bb{(e_k^\top \Eone{n})^2}}^2 + \frac{\eta_n^4}{(1+\eta_n \lambda_1)^4}b_k^4 \mathcal{M}_4^4 n.
}

By Chebyshev's inequality, for any $t > 0$,
\bas{
P\bb{\Abs{\frac{1}{m} \sum_{i=1}^m \bb{e_k^\top \Eone{n}^{(i)}}^2 - \E\bbb{\bb{e_k^\top \Eone{n}}^2}} \ge t} &\le \frac{\var\bb{\bb{e_k^\top \Eone{n}}^2}}{mt^2} \\
&\le \frac{ 2\E\bbb{\bb{(e_k^\top \Eone{n})^2}}^2 + \frac{\eta_n^4}{(1+\eta_n \lambda_1)^4}b_k^4 \mathcal{M}_4^4 n}{mt^2}.\label{eq:en2_inequality}
}
The result follows by setting $t = \frac{\sqrt{2} \E\bbb{\bb{(e_k^\top \Eone{n})^2}} + \eta_n^2 b_k^2 \mathcal{M}_4^2 \sqrt{n}}{\sqrt{m\delta}}$.

\end{proof}

\begin{remark} 
Note that in Lemma~\ref{lemma:en1_concentration_bound}, one can always provide a uniform bound on all elements using a Bernstein-type tail inequality rather than a Chebyshev bound. This is possible because we can use our concentration inequality in Lemma~\ref{lemma:oja_error_hajek_tail_bound}. However, there are two pitfalls of this approach; first, for failure probability $\delta$, the errors of the lower order terms ($\Etwo{n}, \Ethree{n}, \Efour{n}$) still depend polynomially on the $1/\delta$ (see Lemma~\ref{lemma:en2_tail_bound}, \ref{lemma:en3_tail_bound}, \ref{lemma:en4_tail_bound}), which limits the sample complexity of our estimator to have a  $\poly(1/\delta)$ factor, and secondly, Lemma~\ref{lemma:oja_error_hajek_tail_bound} requires a stronger $a.s.$ upper bound on $A_i-\Sigma$ for $i\in [n]$. However, we can get both a uniform bound over all coordinates $k \in [d]$, and a $\log(1/\delta)$ dependence on the sample complexity, using our median of means based algorithm (Algorithm~\ref{alg:variance_estimation}).
\end{remark}
\subsubsection{$\Etwo{n}$ tail bound}\label{appendix:Etwo_bound}



We start by providing a tail bound on higher order terms in the Hoeffding decomposition of $B_{n}-\E\bbb{B_n}$, which may be of independent interest. Let $\mathcal{S}_{n,k}:=\{\{i_1,\dots, i_k\}:1\leq i_1<\dots<i_k\leq n\}$.
    Consider a general product of $n$ matrices, where all but $k$ of the matrices are constant, and $k$ indexed by the subset $S$ are mean zero independent random matrices. With slight abuse of notation, let $M_{S,i}$ denote a constant matrix $M_i$ with $\|M_i\|=:m_i$ when $i\not\in S$ and $W_i$ when $i\in S$, $EW_i=0$, $W_i,i=1,\dots,n$ are mutually independent.
    \begin{align}
    T_{n,k}:=\sum_{S\in \mathcal{S}_{n,k}}\prod_{i=1}^n M_{S,n+1-i}
    \end{align} 
    Let $T_{n,k}$ be a scaled version of the $k^{th}$ term in the Hoeffding projection of the matrix product 
    $B_n:=\prod_{i=1}^n (I+\eta_n A_i)$. Let $W_i=A_i-\Sigma$. We want a tail bound for $\sum_{k \ge 2} T_{n,k}$.

    \begin{lemma}
    \label{lemma:higher-order-helper}
    For $S\in\mathcal{S}_{n,k}$, denote a function $M_{S,i}:=\eta_{n}(A_i-\Sigma)$ when $i\in S$ and $I+\eta_{n}\Sigma$ when $i\not\in S$. Suppose $q \ge 2$ and $\mathcal{M}_q$ are such that $\E\bbb{\norm{A_i-\Sigma}^q}^{1/q} \le \mathcal{M}_q$. Then, for any $1 \le j \le n$ and any $p \ge q$,
    \bas{
    \vertiii{\sum_{k \ge j} T_{n,j}}_{p,q} \leq 2 d^{1/p} (1+\eta_{n} \lambda_1)^n \bb{\frac{\eta_{n} \mathcal{M}_q \sqrt{np}}{1+\eta_{n} \lambda_1}}^j,
    }
    as long as $\frac{2\eta_{n} \mathcal{M}_q \sqrt{np}}{1+\eta_{n} \lambda_1} < 1$.
    \end{lemma}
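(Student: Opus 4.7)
The plan is to prove a clean closed-form bound on each individual $\vertiii{T_{n,k}}_{p,q}$ by induction, and then assemble them through the triangle inequality together with a geometric-series estimate. The starting point is the one-step decomposition obtained by splitting subsets $S\in\mathcal{S}_{n,k}$ according to whether the index $n$ belongs to $S$:
\[
T_{n,k} \;=\; (I+\eta_n\Sigma)\,T_{n-1,k} \;+\; \eta_n\,(A_n-\Sigma)\,T_{n-1,k-1}.
\]
Since $A_n$ is independent of $\sigma(A_1,\dots,A_{n-1})$ and is centered, the second summand has conditional mean zero given the first, which puts us in the martingale setting of Proposition~4.3 of \cite{huang2022matrix}. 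Combining that inequality with the submultiplicativity $\|XY\|_p \le \|X\|_{\text{op}}\|Y\|_p$ of Schatten norms and with the independence of $A_n$ from $T_{n-1,k-1}$ yields the scalar recursion
\[
\vertiii{T_{n,k}}_{p,q}^{\,2} \;\le\; (1+\eta_n\lambda_1)^{2}\,\vertiii{T_{n-1,k}}_{p,q}^{\,2} \;+\; (p-1)\,\eta_n^{2}\,\mathcal{M}_q^{2}\,\vertiii{T_{n-1,k-1}}_{p,q}^{\,2}.
\]

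The next step is to solve this recursion by a straightforward double induction, using the boundary data $T_{n,0}=(I+\eta_n\Sigma)^n$, hence $\vertiii{T_{n,0}}_{p,q}\le d^{1/p}(1+\eta_n\lambda_1)^n$ via $\|A\|_p \le d^{1/p}\|A\|_{\text{op}}$, together with $T_{0,k}=0$ for $k\ge 1$. The Pascal identity $\binom{n-1}{k}+\binom{n-1}{k-1}=\binom{n}{k}$ makes the inductive step immediate and delivers the closed form
\[
\vertiii{T_{n,k}}_{p,q} \;\le\; d^{1/p}\,\binom{n}{k}^{1/2}\,(1+\eta_n\lambda_1)^{n-k}\,\bigl(\sqrt{p-1}\,\eta_n\,\mathcal{M}_q\bigr)^{k}.
\]
Summing over $k\ge j$ via the triangle inequality, applying the crude $\binom{n}{k}^{1/2}\le n^{k/2}$, and factoring $(1+\eta_n\lambda_1)^n$ out leaves a geometric series with common ratio $r := \frac{\sqrt{(p-1)n}\,\eta_n\,\mathcal{M}_q}{1+\eta_n\lambda_1}$. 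The hypothesis $\frac{2\eta_n\mathcal{M}_q\sqrt{np}}{1+\eta_n\lambda_1}<1$ forces $r \le \tfrac12$, so the tail sum is at most $2r^j$, and replacing $\sqrt{p-1}$ by $\sqrt{p}$ recovers the claimed bound exactly.

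The main technical obstacle will be transferring Proposition~4.3 of \cite{huang2022matrix} to the mixed $(p,q)$-norm used here; specifically, I would need to confirm that the $(p-1)$ constant from the matrix Khintchine-type inequality applies when the Schatten-$p$ norm is followed by an $L^q$ expectation, and that the martingale condition is correctly phrased with respect to the natural filtration $(\sigma(A_1,\dots,A_n))_{n\ge 1}$. A secondary and easier check is that the deterministic factor $(I+\eta_n\Sigma)$ does not introduce additional dimensional dependence beyond the $d^{1/p}$ inherited from $T_{n,0}$: this is automatic, since each multiplication by $(I+\eta_n\Sigma)$ contributes only the scalar operator norm $1+\eta_n\lambda_1$ to the Schatten norm under submultiplicativity, so no extra $d$-factor accumulates across the recursion.
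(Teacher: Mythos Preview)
Your proposal is correct and follows essentially the same route as the paper: the same one-step recurrence for $T_{n,k}$, the same application of Proposition~4.3 of \cite{huang2022matrix} to obtain the scalar recursion with constant $(p-1)$, the same Pascal-identity induction yielding $\vertiii{T_{n,k}}_{p,q}\le d^{1/p}\binom{n}{k}^{1/2}(1+\eta_n\lambda_1)^{n-k}(\sqrt{p-1}\,\eta_n\mathcal{M}_q)^k$, and the same geometric-series tail sum. Your flagged concern about the mixed $(p,q)$-norm is a non-issue---Proposition~4.3 of \cite{huang2022matrix} is already stated for $\vertiii{\cdot}_{p,q}$---and the paper handles the deterministic factor $(I+\eta_n\Sigma)$ exactly as you describe, via submultiplicativity (their Equation~4.1).
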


    \begin{proof}
    We start by deriving a recurrence relation for $T_{n,k}$ as follows:
    \bas{
    T_{n,k} &= \sum_{S \in \mathcal{S}_{n,k}} \prod_{i=1}^n M_{S,n+1-i} \\
    &= \sum_{S \in \mathcal{S}_{n,k}, n \notin S} \prod_{i=1}^n M_{S,n+1-i} +  \sum_{S \in \mathcal{S}_{n,k}, n \in S} \prod_{i=1}^n M_{S,n+1-i} \\
    &= \sum_{S \in \mathcal{S}_{n-1,k}}  (I + \eta_{n} \Sigma) \prod_{i=2}^{n} M_{S,n+1-i} + \sum_{S \in \mathcal{S}_{n-1,k-1}} \eta_{n} (A_n - \Sigma) \prod_{i=2}^{n} M_{S,n+1-i} M_{S,n+1-i} \\
    &=  (I + \eta_{n} \Sigma) \bb{\sum_{S \in \mathcal{S}_{n-1,k}} \prod_{i=1}^{n-1} M_{S,n-i}} + \eta_{n} (A_n - \Sigma) \bb{\sum_{S \in \mathcal{S}_{n-1,k-1}} \prod_{i=1}^{n-1} M_{S,n-i}} \\
    &= (I+\eta_{n} \Sigma)T_{n-1,k} + \eta_{n}(A_n - \Sigma)T_{n-1,k-1}.
    }
    Next, we apply Proposition 4.3. of~\cite{huang2022matrix} to bound $\vertiii{T_{n,k}}_{p,q}$. To apply the proposition, we require $\E\bbb{\eta_{n}(A_n - \Sigma)T_{n-1,k-1} | (I+\eta_{n} \Sigma)T_{n-1,k}} = 0$. Indeed, by independence of $A_1, A_2, \dots, A_n$,
    \bas{
    \E\bbb{\eta_{n}(A_n - \Sigma)T_{n-1,k-1} | (I+\eta_{n} \Sigma)T_{n-1,k}} = \E\bbb{\eta_{n}(A_n - \Sigma)} \E\bbb{T_{n-1,k-1} | (I+\eta_{n} \Sigma)T_{n-1,k}} = 0.
    }
    Therefore, the proposition implies that
    \bas{
    \vertiii{T_{n,k}}_{p,q}^2 \le \vertiii{(I+\eta_{n} \Sigma)T_{n-1,k}}_{p,q}^2 + (p-1) \vertiii{\eta_{n}(A_n - \Sigma)T_{n-1,k-1}}_{p,q}^2.
    }
    From Equation~4.1. and Equation~5.3. of~~\cite{huang2022matrix},
    \bas{
    \vertiii{(I+\eta_{n} \Sigma)T_{n-1,k}}_{p,q} &\le \norm{I+\eta_{n} \Sigma}_{\text{op}} \vertiii{T_{n-1,k}}_{p,q}, \text{ and} \\
    \vertiii{\eta_{n}(A_n - \Sigma)T_{n-1,k-1}}_{p,q} &\le \eta_{n} \E\bbb{\norm{A_n-\Sigma}^q}^{1/q} \vertiii{T_{n-1,k-1}}_{p,q}.
    }
    Plugging these bounds into the recurrence yields
    \bas{
    \vertiii{T_{n,k}}_{p,q}^2 \le (1+\eta_{n} \lambda_1)^2 \vertiii{T_{n,k-1}}_{p,q}^2 + \eta_{n}^2 \mathcal{M}_q^2 (p-1) \E\bbb{\norm{A_n-\Sigma}^q}^{2/q} \vertiii{T_{n-1,k-1}}_{p,q}^2.
    }
    Letting $f_{n,k} \defeq \vertiii{T_{n,k}}_{p,q}^2$, we have the following recurrence for all $n \ge k \ge 1$:
    \bas{
    f_{n,k} \le (1+\eta_{n} \lambda_1)^2 f_{n-1,k} + \eta_{n}^2 \mathcal{M}_q^2 (p-1) f_{n-1,k-1}.
    }
    Defining $a_{n,k} \defeq \frac{f_{n,k}}{(1+\eta_{n} \lambda_1)^{2(n-k)} (\eta_{n}^2 \mathcal{M}_q^2 (p-1))^k}$, we recover an inequality resembling Pascal's identity:
    \bas{
    a_{n,k} \le a_{n-1,k} + a_{n-1,k-1}.
    }
    Moreover, $a_{n,k} = 0$ for all $n < k$ and $a_{n,0} = (1+\eta_{n} \lambda_1)^{-2n} \vertiii{(I+\eta_{n} \Sigma)^{n}}_{p,q}^2 \le d^{2/p}$. Inducting on $n$ and $k$ shows
    \bas{
    a_{n,k} \le d^{2/p} \binom{n}{k}.
    }
    Translating this back to the bound on the norm of $T_{n,k}$, we conclude
    \bas{
    \vertiii{T_{n,k}}_{p,q} \le \sqrt{(1+\eta_{n} \lambda_1)^{2(n-k)} \bb{\eta_{n}^2 \mathcal{M}_q^2 (p-1)}^k d^{2/p} \binom{n}{k}} \le d^{1/p} (1+\eta_{n} \lambda_1)^{n-k} \bb{\eta_{n} \mathcal{M}_q \sqrt{np}}^k 
    }
    Since norms are sub-additive and $\frac{\eta_{n} \mathcal{M}_q \sqrt{np}}{1+\eta_{n} \lambda_1} < \frac{1}{2}$,
    \bas{
    \vertiii{\sum_{k \ge j} T_{n,k}}_{p,q} &\le \sum_{k=j}^n d^{1/p} (1+\eta_{n} \lambda_1)^{n-k} \bb{\eta_{n} \mathcal{M}_q \sqrt{np}}^k \\
    &= d^{1/p} (1+\eta_{n} \lambda_1)^n \sum_{k=j}^n \bb{\frac{\eta_{n} \mathcal{M}_q \sqrt{np}}{1+\eta_{n} \lambda_1}}^k \\
    &\le 2 d^{1/p} (1+\eta_{n} \lambda_1)^n \bb{\frac{\eta_{n} \mathcal{M}_q \sqrt{np}}{1+\eta_{n} \lambda_1}}^j.
    }
    \end{proof}

    
    \begin{lemma}
    \label{lemma:higher-order-norm}
        For $S\in\mathcal{S}_{n,k}$, denote a function $M_{S,i}:=\eta_n(A_i-\Sigma)$ when $i\in S$ and $I+\eta_n\Sigma$ when $i\not\in S$. Then, for any $1 \le j \le n$, and $2 \le q \le 4 \log d$,
    \bas{
    P\bb{\norm{\sum_{k \ge j} T_{n,k}} \ge 
    \frac{3 (1+\eta_n \lambda_1)^n \bb{\eta_n \mathcal{M}_q \sqrt{4n \log d}}^j}{\delta^{\frac{1}{4\log d}}}} \le \delta,
    }
    as long as $4 \eta_n \mathcal{M}_q \sqrt{n \log d} < 1$.
    \end{lemma}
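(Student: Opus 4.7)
The plan is to upgrade the $\vertiii{\cdot}_{p,q}$-moment bound produced by Lemma~\ref{lemma:higher-order-helper} into a tail bound on the operator norm by combining (i)~the domination of the operator norm by any Schatten $p$-norm, and (ii)~a Markov inequality applied at a carefully chosen power. The whole argument hinges on picking the Schatten exponent $p$ to be $4\log d$, which turns the otherwise troublesome dimension factor $d^{1/p}$ into an absolute constant.

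More concretely, I would proceed in three short steps. First, write $M := \sum_{k \ge j} T_{n,k}$ and observe that $\norm{M}_{\textup{op}} \le \norm{M}_p$ for every $p \ge 1$, so $\E[\norm{M}_{\textup{op}}^r] \le \E[\norm{M}_p^r] = \vertiii{M}_{p,r}^r$ for any $r \ge 1$. Applying Markov to $\norm{M}_{\textup{op}}^r$ then gives
\[
\Prob\!\left(\norm{M}_{\textup{op}} \ge \vertiii{M}_{p,r}/\delta^{1/r}\right) \le \delta.
\]
Second, take $p = 4\log d$, so $d^{1/p} = e^{1/4}$. The helper-lemma hypothesis $\tfrac{2\eta_n \mathcal{M}_q \sqrt{np}}{1+\eta_n\lambda_1} < 1$ then becomes $\tfrac{4\eta_n \mathcal{M}_q \sqrt{n\log d}}{1+\eta_n\lambda_1} < 1$, which follows from the stated assumption $4\eta_n \mathcal{M}_q \sqrt{n\log d} < 1$ because $1+\eta_n\lambda_1 \ge 1$. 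Plugging $p = 4\log d$ into Lemma~\ref{lemma:higher-order-helper} yields
\[
\vertiii{M}_{p,q} \le 2 e^{1/4}\,(1+\eta_n\lambda_1)^{\,n-j} \bigl(\eta_n \mathcal{M}_q \sqrt{4 n\log d}\bigr)^j \le 2 e^{1/4}\,(1+\eta_n\lambda_1)^n \bigl(\eta_n \mathcal{M}_q \sqrt{4 n\log d}\bigr)^j.
\]
Third, take the Markov exponent $r = 4\log d$ as well, so that $\delta^{1/r} = \delta^{1/(4\log d)}$, and use $2 e^{1/4} < 3$ to absorb the numerical constant. This matches the threshold in the statement of the lemma.

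The main obstacle is the bookkeeping of exponents: three quantities, the Schatten index $p$, the Markov power $r$, and the moment exponent $q$ entering $\mathcal{M}_q$, must all be coordinated so that the final bound contains $d^{1/p} = O(1)$, $\sqrt{np} = \sqrt{4n\log d}$, and $\delta^{1/r} = \delta^{1/(4\log d)}$ simultaneously. The natural (and I believe intended) choice is $p = r = 4\log d$, applying the helper lemma at this Schatten exponent and the Markov inequality at the matching power; this is the regime where the three simplifications just mentioned all occur at once. A subsidiary housekeeping point is that the factor $(1+\eta_n\lambda_1)^{-j}$ produced by the helper is dropped using $(1+\eta_n\lambda_1)^j \ge 1$, which is what allows the $(1+\eta_n\lambda_1)^n$ (rather than $(1+\eta_n\lambda_1)^{n-j}$) to appear in the final bound.
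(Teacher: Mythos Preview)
Your approach is correct and is essentially the paper's own proof: fix the Schatten index $p = 4\log d$ (so $d^{1/p} = e^{1/4}$), invoke Lemma~\ref{lemma:higher-order-helper} at that $p$, dominate the operator norm by the Schatten-$p$ norm, and apply Markov's inequality at the matching power $4\log d$. The paper phrases the Markov step as an infimum over $p' \ge 2$ and then selects $p' = p = 4\log d$, absorbing $2d^{1/p} = 2e^{1/4} < 3$ and checking the helper-lemma hypothesis from $4\eta_n\mathcal{M}_q\sqrt{n\log d} < 1$ exactly as you do.
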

    \begin{proof}

    Let $p = 4 \log d$; note that the assumption $\frac{2 \eta_n \mathcal{M}_q \sqrt{np}}{1+\eta_n \lambda_1} < 1$ holds. By Markov's inequality, Equation~4.2. of~\cite{huang2022matrix}, and Lemma~\ref{lemma:higher-order-helper},
    \bas{
    P\bb{\norm{\sum_{k \ge j} T_{n,k}} \ge (1+\eta_n \lambda_1)^n t} &\le \inf_{p' \ge 2} \bb{(1+\eta_n \lambda_1)^n t}^{-p'} \E\bbb{\norm{\sum_{k \ge j} T_{n,k}}^{p'}} \\
    &\le \inf_{p' \ge 2} \bb{(1+\eta_n \lambda_1)^n t}^{-p'} \E\bbb{\vertiii{\sum_{k \ge j}  T_{n,k}}_{p',q}^{p'}} \\
    &\le \bb{\frac{2 d^{1/p} \bb{\frac{\eta_n \mathcal{M}_q \sqrt{np}}{1+\eta_n \lambda_1}}^j}{t}}^p \le \bb{\frac{3 \bb{\eta_n \mathcal{M}_q \sqrt{4n \log d}}^j}{t}}^{4\log d}.
    }
    for all $t > 0$. The lemma follows by setting $t = 3 \bb{\eta_n \mathcal{M}_q \sqrt{4n \log d}}^j{\delta^{\frac{-1}{4\log d}}}$.

    

    \end{proof}


\begin{lemma}\label{lemma:en2_norm}
Let $\Etwo{n}$ be as defined in Lemma~\ref{lemma:oja_error_decomposition} with $u_0 = g/\norm{g}_{2}$. Then, for any $\delta \in (0,1)$,
\bas{
\Prob\bb{\norm{\Etwo{n}} \le \frac{12\eta_{n}^2 \mathcal{M}_2^2 n \log d} {\sqrt{\delta}}} \geq 1-\delta.
}
\end{lemma}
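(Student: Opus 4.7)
The plan is to reduce everything to the higher-order Hoeffding tail bound already established in Lemma~\ref{lemma:higher-order-helper} (the moment version of Lemma~\ref{lemma:higher-order-norm}), applied with $j=2$, and then convert a Schatten-$p$-norm moment bound into a tail bound by Markov's inequality. The rescaling step should provide the final answer.

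First I would use the definition of $\Psi_{n,2}$ in Lemma~\ref{lemma:oja_error_decomposition} together with $\|\vp\vp^\top\|_{\textup{op}} \le 1$, $\|v_1\|_2 = 1$, and $|\sign(v_1^\top u_0)| \le 1$ to obtain the deterministic bound
\[
\|\Psi_{n,2}\|_2 \le \frac{\bigl\|\sum_{k\ge 2} T_{n,k}\bigr\|_{\textup{op}}}{(1+\eta_n\lambda_1)^n}.
\]
So the task reduces to a tail bound on $\bigl\|\sum_{k\ge 2} T_{n,k}\bigr\|_{\textup{op}}$. Crucially this is independent of the initial direction $u_0$, which is why no distributional assumption on $g$ is needed here.

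Next I would apply Markov's inequality in the Schatten-$p$ scale with $p = 4\log d$ and $q=2$, using the standard domination $\|M\|_{\textup{op}} \le \|M\|_p$. This gives, for any $s>0$,
\[
\Prob\!\Bigl(\bigl\|\sum_{k\ge 2} T_{n,k}\bigr\|_{\textup{op}} \ge s\Bigr) \le \frac{\vertiii{\sum_{k\ge 2} T_{n,k}}_{4\log d,\,2}^{\,2}}{s^2}.
\]
Plugging $j=2$, $q=2$, $p = 4\log d$ into Lemma~\ref{lemma:higher-order-helper}, and noting that the prefactor $d^{1/p} = e^{1/4}$ is an absolute constant, yields
\[
\vertiii{\sum_{k\ge 2} T_{n,k}}_{4\log d,\,2} \le 2 e^{1/4}\,(1+\eta_n\lambda_1)^n\,\frac{4\eta_n^{2}\mathcal{M}_2^{2}\,n\log d}{(1+\eta_n\lambda_1)^{2}}.
\]
Setting the right-hand side of Markov's inequality equal to $\delta$, solving for $s$, and dividing through by $(1+\eta_n\lambda_1)^n$ gives $\|\Psi_{n,2}\|_2 \le \frac{8 e^{1/4}\,\eta_n^2 \mathcal{M}_2^2\, n\log d}{\sqrt{\delta}(1+\eta_n\lambda_1)^2}$ with probability at least $1-\delta$, and since $8 e^{1/4} < 12$ and $(1+\eta_n\lambda_1)^2 \ge 1$, this delivers the stated bound.

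The only side condition to verify for Lemma~\ref{lemma:higher-order-helper} is $\frac{2\eta_n \mathcal{M}_2 \sqrt{n\cdot 4\log d}}{1+\eta_n\lambda_1} < 1$, equivalently $4\eta_n \mathcal{M}_2 \sqrt{n\log d} < 1+\eta_n\lambda_1$; this follows directly from Assumption~\ref{assumption:sample_size} together with the learning-rate choice in Lemma~\ref{lemma:learning_rate_choice} (items 2 and 3 control exactly this combination). I do not anticipate a genuine obstacle here: the only mildly delicate point is choosing $p = 4\log d$ rather than $p = 2$, since using $p=2$ introduces a $\sqrt{d}$ factor via $d^{1/p}$ that would spoil the bound, whereas $p=\Theta(\log d)$ trades that for only an $e^{1/4}$ constant while still paying only $\log d$ inside the $\sqrt{np}$ term.
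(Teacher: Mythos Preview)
Your proof is correct and follows essentially the same route as the paper: both reduce $\|\Psi_{n,2}\|$ to $\|\sum_{k\ge 2} T_{n,k}\|_{\textup{op}}/(1+\eta_n\lambda_1)^n$ via the same deterministic inequality and then invoke the higher-order moment bound of Lemma~\ref{lemma:higher-order-helper} with $j=2$, $q=2$, $p=4\log d$. The only cosmetic difference is that the paper packages the Markov step inside Lemma~\ref{lemma:higher-order-norm} (using the $4\log d$-th moment and then relaxing $\delta^{-1/(4\log d)}$ to $\delta^{-1/2}$), whereas you apply Markov at the second moment directly; both land on the same constant and the same side condition.
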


\begin{proof}
By Lemma~\ref{lemma:higher-order-norm}, with probability at least $1-\delta$,
\bas{
\norm{\sum_{k\geq 2}T_{n,k}} \le \frac{3 (1+\eta_{n} \lambda_1)^n \bb{\eta_{n} \mathcal{M}_2 \sqrt{4n \log d}}^2}{\delta^{\frac{1}{4\log d}}} < \frac{12(1+\eta_{n} \lambda_1)^n \eta_{n}^2 \mathcal{M}_2^2 n \log d} {\sqrt{\delta}}.
}
Conditioned on this event,
\bas{
\norm{\Etwo{n}} &= \frac{\norm{\vp\vp^{\top}(\sum_{k\geq 2}T_{n,k})v_{1}\sign(v_{1}^{\top}u_{0})}}{(1+\eta_{n}\lambda_{1})^{n}} \le \frac{\norm{\vp \vp^{\top}} \norm{\sum_{k\geq 2}T_{n,k}}\norm{v_{1}}}{(1+\eta_{n}\lambda_{1})^{n}} \\
&\le \frac{\norm{\sum_{k\geq 2}T_{n,k}}}{(1+\eta_{n}\lambda_{1})^{n}} \le \frac{12\eta_{n}^2 \mathcal{M}_2^2 n \log d} {\sqrt{\delta}}.
}
\end{proof}

\begin{lemma}\label{lemma:en2_tail_bound} Let $\Etwo{n}$ be defined as in Lemma~\ref{lemma:oja_error_decomposition} for $u_{0} = g/\norm{g}_{2}$ with $g \sim \mathcal{N}(0, \id_d)$. Let $\left\{\Etwo{n}^{(i)}\right\}_{i \in [m]}$ and $\left\{g^{(i)}\right\}_{i \in [m]}$ be $m$ $\iid$ instances of $\Etwo{n}$ and $g$ respectively, and let $\delta \in (0,1)$. Then, with probability at least $1-\delta$,
\bas{
    \frac{\sum_{i \in [m]}\bb{e_{k}^{\top}\Etwo{n}^{(i)}}^{2}}{m} \le \frac{144b_k^2 \eta_{n}^4 \mathcal{M}_2^4 n^2 \log^2 d}{\delta},
}
for all $k \in [d]$, where $b_k := \norm{\vp^{\top}e_k}_{2}$.
\end{lemma}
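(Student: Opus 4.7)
The plan is to reduce the coordinate-wise empirical second moment to a single scalar statement about $\frac{1}{m}\sum_{i=1}^{m}\norm{\sum_{k'\ge 2} T_{n,k'}^{(i)}}^{2}$, which can then be controlled in expectation via Lemma~\ref{lemma:higher-order-helper} and converted to a tail bound by Markov's inequality. The main payoff of this reduction is that the resulting operator-norm bound is independent of $k$, so uniformity over all $d$ coordinates is obtained for free, with no $\log d$ penalty from union-bounding.

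First, for every realization I would use $\norm{e_{k}^{\top}\vp}=b_{k}$ and $\norm{v_{1}}=1$, together with sub-multiplicativity of the operator norm, to get the deterministic bound
\bas{
    \Abs{e_{k}^{\top}\Etwo{n}^{(i)}} \le \frac{b_{k}\,\norm{\sum_{k'\ge 2} T_{n,k'}^{(i)}}}{(1+\eta_{n}\lambda_{1})^{n}}.
}
Squaring and averaging pulls $b_{k}^{2}$ outside the sum, so it is enough to show that with probability at least $1-\delta$,
\bas{
    \frac{1}{m}\sum_{i=1}^{m}\norm{\sum_{k'\ge 2} T_{n,k'}^{(i)}}^{2} \le \frac{144\,\eta_{n}^{4}\mathcal{M}_{2}^{4}\,n^{2}\log^{2} d\,(1+\eta_{n}\lambda_{1})^{2n}}{\delta},
}
which immediately yields the claim uniformly in $k \in [d]$ after multiplying by $b_{k}^{2}/(1+\eta_{n}\lambda_{1})^{2n}$.

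Next, I would bound $\E\bbb{\norm{\sum_{k'\ge 2} T_{n,k'}}^{2}}$ by invoking Lemma~\ref{lemma:higher-order-helper} with $j=2$, $q=2$, and $p=4\log d$, combined with the Schatten comparison $\norm{\cdot}_{\textup{op}}\le\norm{\cdot}_{p}$ valid for any $p\ge 1$. The side condition $\tfrac{2\eta_{n}\mathcal{M}_{2}\sqrt{np}}{1+\eta_{n}\lambda_{1}}<1$ required by that lemma follows from Assumption~\ref{assumption:sample_size} and the choice of $\eta_{n}$ in Lemma~\ref{lemma:learning_rate_choice}. Since $d^{1/(4\log d)}=e^{1/4}$ is a universal constant, this gives
\bas{
    \E\bbb{\norm{\sum_{k'\ge 2} T_{n,k'}}^{2}} \le 64\,e^{1/2}\,(1+\eta_{n}\lambda_{1})^{2n-4}\,\eta_{n}^{4}\mathcal{M}_{2}^{4}\,n^{2}\log^{2} d,
}
which is comfortably below the target right-hand side scaled by $\delta$ since $64 e^{1/2}\approx 105.5 < 144$.

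Finally, since $\frac{1}{m}\sum_{i=1}^{m}\norm{\sum_{k'\ge 2} T_{n,k'}^{(i)}}^{2}$ is a nonnegative random variable whose expectation equals that of a single copy by the \iid assumption, applying Markov's inequality with the threshold from the displayed target completes the proof. I do not anticipate a substantial obstacle: the tightness of the constant $144$ rests only on the numerical gap between $64 e^{1/2}$ and $144(1+\eta_{n}\lambda_{1})^{4}$, and the core technical content (control of the higher-order terms of the Hoeffding decomposition in the $\vertiii{\cdot}_{p,q}$ norm) is already packaged inside Lemma~\ref{lemma:higher-order-helper}.
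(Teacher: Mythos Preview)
Your proposal is correct and, in fact, a bit cleaner than the paper's own argument. Both proofs begin with the same deterministic reduction, bounding $|e_k^\top \Etwo{n}^{(i)}|$ by $b_k (1+\eta_n\lambda_1)^{-n}\norm{\sum_{k'\ge 2} T_{n,k'}^{(i)}}$, so that the coordinate dependence factors out and uniformity over $k\in[d]$ is automatic. They diverge in how the remaining operator norm is controlled: the paper applies Lemma~\ref{lemma:en2_norm} (itself a $p$-th-moment Markov bound with $p=4\log d$, via Lemma~\ref{lemma:higher-order-norm}) to each of the $m$ instances at level $\delta/m$ and then takes a union bound, whereas you bound the second moment once via Lemma~\ref{lemma:higher-order-helper} with $q=2$ and apply Markov directly to the empirical average. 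Your route is more economical and lands exactly on the constant $144$; the paper's per-instance-then-union-bound approach, as written, actually picks up an extra factor of $m$ in the final display (squaring $1/\sqrt{\delta/m}$ gives $m/\delta$, and averaging $m$ identical bounds does not cancel it), so your argument in fact yields the stated bound without that arithmetic slip.
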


\begin{proof}
We have
\bas{
\Abs{e_k^\top \Etwo{n}} &= \frac{\Abs{e_k^\top \vp\vp^{\top}(\sum_{k\geq 2}T_{n,k})v_{1}\sign(v_{1}^{\top}u_{0})}}{(1+\eta_{n}\lambda_{1})^{n}} = \frac{\Abs{e_k^\top \vp\vp^{\top} \vp \vp^{\top} (\sum_{k\geq 2}T_{n,k})v_{1}\sign(v_{1}^{\top}u_{0})}}{(1+\eta_{n}\lambda_{1})^{n}} \\
&= \Abs{e_k^{\top} \vp^{\top} \vp \Etwo{n}} \le \norm{e_k^\top \vp} \norm{\Etwo{n}} \le \frac{b_k \norm{\sum_{k\geq 2}T_{n,k}}}{(1+\eta_{n}\lambda_{1})^{n}}.
}
By Lemma~\ref{lemma:en2_norm}, for each $i \in [m]$, with probability at least $1-\frac{\delta}{m}$,
\bas{
\Abs{e_k^\top \Etwo{n}^{(i)}} \le \frac{12b_k \eta_{n}^2 \mathcal{M}_2^2 n \log d} {\sqrt{\delta/m}}.
}
By a union bound, the above holds for all $i \in [m]$ with probability at least $1-\delta$. Under this event,
\bas{
\frac{\sum_{i \in [m]}\bb{e_{k}^{\top}\Etwo{n}^{(i)}}^{2}}{m} \le \frac{\sum_{i \in [m]} \bb{\frac{12b_k \eta_{n}^2 \mathcal{M}_2^2 n \log d} {\sqrt{\delta/m}}}^2}{m} = \frac{144b_k^2 \eta_{n}^4 \mathcal{M}_2^4 n^2 \log^2 d}{\delta}.
}
\end{proof}

\subsubsection{$\Ethree{n}$ tail bound}
\label{appendix:Ethree_bound}

\begin{lemma}\label{lemma:en3_norm}
Let $\Ethree{n}$ be as defined in Lemma~\ref{lemma:oja_error_decomposition} with $u_0 = g/\norm{g}_{2}$. Let $\eta_n$ be set according to Lemma~\ref{lemma:learning_rate_choice}. Fix $\delta \in \bb{0,1}$. Then for any $\epsilon > 0$
we have with probability at least $1-\delta$,
\bas{
    \norm{\Ethree{n}}_{2} \lesssim \sqrt{s_n} \bb{ \frac{ d\exp\bb{-2\eta_{n}n\bb{\lambda_{1}-\lambda_{2}} + \eta_{n}^{2}n\bb{\lambda_{1}^{2}+\Mtwo^{2}}} +  \frac{\eta_{n}\Mtwo^2}{\bb{\lambda_{1}-\lambda_{2}}  } }{\delta^3 (1-\delta) \log^{-1}(1/\delta)}}^{\frac{1}{2}} + \sqrt{s_n} \frac{\eta_n\sqrt{n}\Mtwo\log\bb{d}}{\delta^{\frac{1}{2}}}.
}
where $s_n := \frac{C\log\bb{\frac{1}{\delta}}}{\delta^{3}}\frac{\eta_n \Mtwo^{2}}{\bb{\eigengap}}$ for a universal constant $C > 0$.
\end{lemma}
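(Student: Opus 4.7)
The plan is to factor $\Ethree{n}$ so as to separate a sine-type error (controlled by the sin-squared bound $s_n$) from a scalar-normalization error. Using $\|\vp\vp^{\top} B_n u_0\|_2 = \sin(\voja, v_1)\,\|B_n u_0\|_2$ and writing $c_n := (1+\eta_n\lambda_1)^n |v_1^{\top} u_0|$, one gets
\[ \|\Ethree{n}\|_2 \;=\; \sin(\voja, v_1)\cdot \Bigl|1 - \frac{\|B_n u_0\|_2}{c_n}\Bigr|. \]
The first factor is $\le \sqrt{s_n}$ with probability $\ge 1-\delta$ by the sin-squared guarantee of \cite{lunde2021bootstrapping} (the same bound used in Lemma~\ref{lemma:en0_tail_bound}). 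Thus the task reduces to controlling $\bigl|1 - \|B_n u_0\|_2/c_n\bigr|$ with high probability.

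For this remaining factor, I would use $|A-1|\le |A^2-1|$ for $A=\|B_n u_0\|_2/c_n\ge 0$, together with the Pythagorean split
\[ \|B_n u_0\|_2^2 - c_n^2 \;=\; \bigl((v_1^{\top} B_n u_0)^2 - c_n^2\bigr) \;+\; \|\vp\vp^{\top} B_n u_0\|_2^2, \]
to break the problem into two pieces. The piece $\|\vp\vp^{\top} B_n u_0\|_2^2/c_n^2$ equals $\sin^2(\voja,v_1)\cdot\|B_n u_0\|_2^2/c_n^2$, and on the convergence event $\|B_n u_0\|_2/c_n\lesssim 1$ (which follows from $\sin(\voja,v_1)\le 1/2$ and $|v_1^{\top} B_n u_0|\lesssim c_n$ via $\|B_n u_0\|_2 = |v_1^{\top} B_n u_0|/\cos(\voja,v_1)$) this is $\lesssim s_n$; after multiplication by $\sqrt{s_n}$ it contributes $s_n^{3/2}$, aligning with the first summand of the stated bound (both its factors are of sin-type). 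The other piece, $|c_n^2 - (v_1^{\top} B_n u_0)^2|/c_n^2$, is attacked via the Hoeffding decomposition $B_n = T_{n,0}+T_{n,1}+\sum_{k\ge 2}T_{n,k}$: since $T_{n,0}=\E\bbb{B_n}=(I+\eta_n\Sigma)^n$ gives $v_1^{\top} T_{n,0} u_0 = c_n\sign(v_1^{\top} u_0)$, one has
\[ \bigl|c_n^2 - (v_1^{\top} B_n u_0)^2\bigr| \;\le\; \bigl|v_1^{\top}(B_n-T_{n,0})u_0\bigr|\cdot\bigl(|v_1^{\top} B_n u_0|+c_n\bigr) \;\lesssim\; c_n\cdot\Bigl\|T_{n,1}+\sum_{k\ge 2}T_{n,k}\Bigr\|. \]
Applying Lemma~\ref{lemma:higher-order-norm} with $j=1$, $q=2$, $p=4\log d$ yields $\|T_{n,1}\|\lesssim (1+\eta_n\lambda_1)^n\eta_n\Mtwo\sqrt{n\log d}$, and under Assumption~\ref{assumption:sample_size} the $k\ge 2$ contributions are genuinely lower order. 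Dividing by $c_n^2$ and using Gaussian anti-concentration for $v_1^{\top} u_0$ produces a bound of the claimed order $\eta_n\sqrt{n}\Mtwo\log(d)/\sqrt{\delta}$, which after multiplication by $\sqrt{s_n}$ gives the second summand.

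The main obstacle is ensuring that the $1/|v_1^{\top} u_0|$ factor emerging from the division does not introduce a spurious $\sqrt d$. The key observation is that the inequality $|c_n^2-(v_1^{\top} B_n u_0)^2|\lesssim c_n\cdot\|B_n-T_{n,0}\|$ already carries one factor of $c_n$ in the numerator, so only a single copy of $1/|v_1^{\top} u_0|$ remains after dividing by $c_n^2$; any residual $\sqrt d$ from the crude bound $|v_1^{\top} u_0|\gtrsim \delta/\sqrt d$ (obtained by treating $v_1^{\top} g\sim \mathcal N(0,1)$ and $\|g\|\approx\sqrt d$ separately, where $u_0 = g/\|g\|$) is absorbed into the $d\exp(-2\eta_n n(\lambda_1-\lambda_2)+\cdots)$ initialization term that sits inside the first summand. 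The proof is then completed by a union bound over the constituent high-probability events: sin-squared convergence of $\voja$, the operator-norm bounds on $T_{n,1}$ and $\sum_{k\ge 2}T_{n,k}$ from Lemma~\ref{lemma:higher-order-norm}, and the anti-concentration event for $|v_1^{\top} u_0|$, delivering the stated $1-\delta$ guarantee.
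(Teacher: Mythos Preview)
Your initial factorization
\[
\|\Ethree{n}\|_2 \;=\; \sin(\voja,v_1)\cdot\Bigl|1-\frac{\|B_nu_0\|_2}{c_n}\Bigr|
\]
and the use of the $\sin^2$ bound for the first factor match the paper exactly. The divergence is in how you control $|1-\|B_nu_0\|/c_n|$, and here your argument has a real gap.

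You pass to $|A^2-1|$ and use the Pythagorean split $\|B_nu_0\|^2=(v_1^{\top}B_nu_0)^2+\|\vp\vp^{\top}B_nu_0\|^2$. Both pieces then carry a denominator $c_n^2=(1+\eta_n\lambda_1)^{2n}|v_1^{\top}u_0|^2$, and the $|v_1^{\top}u_0|$ factors do not cancel. Concretely, your key inequality $|c_n^2-(v_1^{\top}B_nu_0)^2|\lesssim c_n\cdot\|B_n-T_{n,0}\|$ requires $|v_1^{\top}B_nu_0|\lesssim c_n$; but $|v_1^{\top}B_nu_0|\le c_n+\|B_n-T_{n,0}\|$, so you would first need $\|B_n-T_{n,0}\|\lesssim c_n$, i.e.\ $\eta_n\Mtwo\sqrt{n\log d}\lesssim |v_1^{\top}u_0|$. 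With $|v_1^{\top}u_0|\gtrsim \delta/\sqrt d$ this is not implied by the stated assumptions. Without that step the honest bound is $|c_n^2-(v_1^{\top}B_nu_0)^2|\lesssim (1+\eta_n\lambda_1)^n\|B_n-T_{n,0}\|$, leaving \emph{two} factors of $1/|v_1^{\top}u_0|$ after dividing by $c_n^2$. Either way you pick up at least $\sqrt d/\delta$ multiplying the $\eta_n\sqrt n\,\Mtwo\log d$ term, and this cannot be ``absorbed'' into the $d\exp(-2\eta_n n(\eigengap)+\cdots)$ term, which under Lemma~\ref{lemma:learning_rate_choice} is $o(1)$ and plays no role in the dominant rate. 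The same circularity afflicts your perpendicular piece: $\|\vp\vp^{\top}B_nu_0\|^2/c_n^2=\sin^2(\voja,v_1)\cdot A^2$ needs $A\lesssim 1$, which is precisely what you are trying to prove.

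The paper sidesteps this by splitting \emph{before} taking norms: writing $u_0=v_1(v_1^{\top}u_0)+\vp\vp^{\top}u_0$ gives
\[
\Bigl|\frac{\|B_nu_0\|}{c_n}-1\Bigr|\;\le\;\frac{\bigl|\|B_nv_1\|-(1+\eta_n\lambda_1)^n\bigr|}{(1+\eta_n\lambda_1)^n}\;+\;\frac{\|B_n\vp\vp^{\top}u_0\|}{c_n},
\]
where the $|v_1^{\top}u_0|$ factors cancel in the first term, which is then $\le\|B_n-\E B_n\|/(1+\eta_n\lambda_1)^n$ and handled by Markov plus the moment bound of \cite{huang2022matrix}. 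The second term is controlled not via anti-concentration of $|v_1^{\top}u_0|$ alone but through the ratio event $\mathcal G=\{\|B_n\vp\vp^{\top}u_0\|^2/|v_1^{\top}u_0|^2\le C\delta^{-2}\log(1/\delta)\,\tr{\vp^{\top}B_n^{\top}B_n\vp}\}$ from Proposition~B.6 of \cite{lunde2021bootstrapping}, followed by Lemma~5.2 of \cite{jain2016streaming} on the trace. This is where the $d\exp(\cdots)$ and $\eta_n\Mtwo^2/(\eigengap)$ terms in the first summand of the statement actually come from; your route does not produce them.
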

\begin{proof}
Let $c_n=(1+\eta_{n}\lambda_1)^n |u_0^Tv_1|$. We first note that 
\ba{
    \norm{\Ethree{n}}_2 &= \norm{\vp\vp^{\top}B_{n}u_{0}\bb{\frac{1}{\norm{B_{n}u_0}_{2}} - \frac{1}{c_{n}}}}_2 = \norm{\frac{\vp\vp^{\top}B_{n}u_{0}}{\norm{B_{n}u_0}_{2}}\bb{1 - \frac{\norm{B_{n}u_0}_{2}}{c_{n}}}}_2 \notag \\
    &\leq \norm{\frac{\vp\vp^{\top}B_{n}u_{0}}{\norm{B_{n}u_0}_{2}}}_{2}\Abs{\frac{\norm{B_{n}u_0}_{2}}{c_{n}} - 1}. \label{eq:en3_error_decomposition_1}
}
We bound each of the two multiplicands separately. The first term corresponds to the $\sin$ error between $\voja$ and $v_1$:
\bas{
    \norm{\frac{\vp\vp^{\top}B_{n}u_{0}}{\norm{B_{n}u_0}_{2}}}_{2}^{2} = 1 - \frac{\bb{v_{1}^{\top}B_{n}u_0}^{2}}{\norm{B_{n}u_0}_{2}^{2}} = \sin^{2}\bb{\voja, v_{1}}.
} 
By Corollary 1 of \cite{lunde2021bootstrapping}, 
\ba{
\Prob\bb{\norm{\frac{\vp\vp^{\top}B_{n}u_{0}}{\norm{B_{n}u_0}_{2}}}_{2}^{2} > s_n} = \Prob\bb{\sin^{2}\bb{\voja, v_{1}} > s_n} \leq \delta. \label{eq:sinsquared_scaling}
}

It follows that for any $\eps > 0$,
\ba{
 \Prob\bb{\norm{\Ethree{n}}_{2} > \epsilon\sqrt{s_n}} & \leq \Prob\bb{\norm{\frac{\vp\vp^{\top}B_{n}u_{0}}{\norm{B_{n}u_0}_{2}}}_{2}^{2} > s_n} +\Prob\bb{\Abs{\frac{\norm{B_{n}u_0}_{2}}{c_{n}} - 1} > \epsilon} \\
 &\leq \delta + \Prob\bb{\Abs{\frac{\norm{B_{n}u_0}_{2}}{c_{n}} - 1} > \epsilon}. 
 \label{eq:en3_union_bound}
}
To bound the second term, we adapt the proof of Lemma B.2 in~\cite{lunde2021bootstrapping}. Letting $a_1=\Abs{v_1^{\top}u_0}$,
\begin{align}
 \left|\frac{\|B_nu_0\|}{c_n}-1\right|  & \leq  \left|\frac{\|B_n v_1 a_1\| - \|a_1 (I+\eta_{n}\Sigma)^n v_1\|}{c_n} \right| +   \frac{\|B_n V_\perp V_\perp^T u_0 \|}{c_n} \notag \\
 &= \left|\frac{\|B_n v_1\| - \| (I+\eta_{n}\Sigma)^n v_1\|}{(1+\eta_{n} \lambda_1)^{n}} \right| + \frac{\|B_n V_\perp V_\perp^T u_0 \|}{c_n} \notag \\
 &\le \frac{\normop{B_n - \E [B_n]}}{(1+\eta_n \lambda_1)^n} + \frac{\|B_n V_\perp V_\perp^T u_0 \|}{c_n}. 
 \label{eq:en3_error_decomposition_2} 
\end{align}
For the first summand, using Eq 5.6 of~\cite{huang2022matrix} with $q=2$ and by Markov's inequality, 
\begin{align}
\Prob\left( \frac{\normop{B_n - \E\bbb {B_n}}}{ (1+\eta_n\lambda_1/n)^{n}}  > \frac{\epsilon}{2} \right) \leq   \ \frac{\E\bbb{\normop{B_n - \E\bbb{B_n}}^2}}{(1+\eta_{n}\lambda_1)^{n}\epsilon^2} \leq \frac{C\eta_n^2 n\Mtwo^{2}(1+\log d)^2}{\epsilon^2} \label{eq:en3_bound_first_summand}
\end{align}

For the second summand of equation~\eqref{eq:en3_error_decomposition_2}, 
define the event
\bas{
\mathcal{G}=\left\{\frac{\|B_n \vp\vp^T u_0\|^2}{|v_1^T u_0|^2}\leq \frac{C\log (1/\delta)}{\delta^2}\tr{\vp^T B_n^T B_n \vp}\right\}.
}
By Proposition B.6 of~\cite{lunde2021bootstrapping}, $P(\mathcal{G}) \ge 1-\delta$ where $C > 0$ is some universal constant. Since $P(A|B)P(B) = P(A\cap B) \leq P(A)$,
Markov's inequality 
together with Lemma 5.2 of \cite{jain2016streaming} with $\mathcal{V} \leq \Mtwo^{2}$ yields


\begin{align}
 &\Prob\bb{\frac{\|B_n \vp\vp^T u_0\|}{c_n}\geq \frac{\eps}{2}|\mathcal{G}}\\
 &\leq 
 \frac{1}{1-\delta}\Prob\bb{\mathrm{trace}(V_\perp B_n^T B_n V_\perp^T)  \geq \frac{\epsilon^2}{4} \cdot \frac{\delta^{2}}{C\log (1/\delta)} } \notag \\
 & \leq \frac{1}{1-\delta}C\frac{ d\exp\bb{-2\eta_{n}n\bb{\lambda_{1}-\lambda_{2}} + \eta_{n}^{2}n\bb{\lambda_{1}^{2}+\Mtwo^{2}}} +  \frac{\eta_{n}\Mtwo^2\exp(n\eta_n^2(2\lambda_1^2+\Mtwo^2))}{2\bb{\lambda_{1}-\lambda_{2}}  }  }{ \epsilon^2 \delta^{2}\log^{-1}\bb{1/\delta}  } \\
 & \leq \frac{1}{1-\delta}C\frac{ d\exp\bb{-2\eta_{n}n\bb{\lambda_{1}-\lambda_{2}} + \eta_{n}^{2}n\bb{\lambda_{1}^{2}+\Mtwo^{2}}} +  \frac{e \eta_{n}\Mtwo^2 }{2\bb{\lambda_{1}-\lambda_{2}}}  }{ \epsilon^2 \delta^{2}\log^{-1}\bb{1/\delta}},
 \label{eq:en3_bound_second_summand}
\end{align}
where the last bound follows from Lemma~\ref{lemma:learning_rate_choice}.




Finally, define the error $\eps$ as
\ba{
    \epsilon := \bb{C \frac{ d\exp\bb{-2\eta_{n}n\bb{\lambda_{1}-\lambda_{2}} + \eta_{n}^{2}n\bb{\lambda_{1}^{2}+\Mtwo^{2}}} +  \frac{\eta_{n}\Mtwo^2}{\bb{\lambda_{1}-\lambda_{2}}  }}{\delta^3 (1-\delta) \log^{-1}(1/\delta)}}^{\frac{1}{2}} + \frac{\eta_n\sqrt{n}\Mtwo\log\bb{d}}{\delta^{\frac{1}{2}}}. \label{eq:choice_of_epsilon}
}
Substituting $\epsilon$ in equations~\eqref{eq:en3_bound_second_summand} and~\eqref{eq:en3_bound_first_summand}, and combining with equation~\eqref{eq:en3_error_decomposition_2},
\ba{
    \Prob\bb{\left|\frac{\|B_nu_0\|}{c_n}-1\right| > \epsilon} &\leq \Prob\bb{\frac{\|B_n \vp\vp^T u_0\|}{c_n} >  \frac{\eps}{2}} +  \Prob\bb{ \frac{\normop{B_n - \E\bbb {B_n}}}{ (1+\eta_n\lambda_1/n)^{n}}  > \frac{\epsilon}{2}} \\
    &\leq \Prob\bb{\frac{\|B_n \vp\vp^T u_0\|}{c_n} >  \frac{\eps}{2} | \mathcal{G}} + \Prob(\mathcal{G}^{\complement}) +  \Prob\bb{ \frac{\normop{B_n - \E\bbb {B_n}}}{ (1+\eta_n\lambda_1/n)^{n}}  > \frac{\epsilon}{2}} \le 3\delta.
    \label{eq:en3_decomposition_term_1_bound}
}
From equations~\eqref{eq:en3_union_bound} and~\eqref{eq:en3_decomposition_term_1_bound}, we conclude
\bas{
\Prob\bb{\norm{\Ethree{n}}_{2} > \epsilon\sqrt{s_n}} \le 4\delta.
}
\end{proof}

\begin{lemma}\label{lemma:en3_tail_bound} Let $\Ethree{n}$ be defined as in Lemma~\ref{lemma:oja_error_decomposition} for $u_{0} = g/\norm{g}_{2}$ with $g \sim \mathcal{N}(0, \id_d)$. Let $\eta_n$ be set according to Lemma~\ref{lemma:learning_rate_choice}. Let $\left\{\Ethree{n}^{(i)}\right\}_{i \in [m]}$ and $\left\{g^{(i)}\right\}_{i \in [m]}$ be $m$ $\iid$ instances of $\Ethree{n}$ and $g$ respectively. Then for any $\delta \in \bb{0,1}$, with probability at least $1-\delta$,
\bas{
    & \frac{\sum_{i \in [m]}\bb{e_{k}^{\top}\Ethree{n}^{(i)}}^{2}}{m} \\
    & \lesssim s_nb_{k}^{2}\bb{m^{3}\bb{ \frac{ d\exp\bb{-2\eta_{n}n\bb{\lambda_{1}-\lambda_{2}} + \eta_{n}^{2}n\bb{\lambda_{1}^{2}+\Mtwo^{2}}} +  \frac{\eta_{n}\Mtwo^2}{\bb{\lambda_{1}-\lambda_{2}}  } }{\delta^3 (1-\delta/m) \log^{-1}(m/\delta)}} + m\frac{\eta_n^{2}n\Mtwo^{2}\log^{2}\bb{d}}{\delta}}.
}
for all $k \in [d]$, where $b_k := \norm{\vp^{\top}e_{k}}_{2}$ and $s_n := \frac{C\log\bb{\frac{1}{\delta}}}{\delta^{3}}\frac{\eta_n \Mtwo^{2}}{\bb{\eigengap}}$ for a universal constant $C > 0$.
\end{lemma}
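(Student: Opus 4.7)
The proof of Lemma~\ref{lemma:en3_tail_bound} will mirror the structure of Lemma~\ref{lemma:en2_tail_bound}, reducing the per-coordinate squared bound to the operator-norm bound already established in Lemma~\ref{lemma:en3_norm}. The key structural observation is that $\Ethree{n}$ lies in the column span of $\vp$, since by its definition $\Ethree{n} = \vp\vp^{\top} B_n u_0 \bb{\frac{1}{\norm{B_n u_0}} - \frac{1}{c_n}}$. Consequently $e_k^{\top}\Ethree{n} = e_k^{\top}\vp\vp^{\top}\Ethree{n}$, and Cauchy--Schwarz yields $\Abs{e_k^{\top}\Ethree{n}} \le \norm{\vp^{\top}e_k}\,\norm{\Ethree{n}} = b_k\,\norm{\Ethree{n}}$.

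Given this reduction, the plan is to apply Lemma~\ref{lemma:en3_norm} to each of the $m$ iid copies $\Ethree{n}^{(i)}$ with failure probability $\delta/m$, and then take a union bound so that the operator-norm bound holds simultaneously for all $i \in [m]$ with probability at least $1-\delta$. Substituting $\delta \mapsto \delta/m$ in the bound of Lemma~\ref{lemma:en3_norm} and squaring gives, on this good event,
\bas{
\norm{\Ethree{n}^{(i)}}^2 \lesssim s_n\,\frac{m^3\bb{d\exp\bb{-2\eta_n n(\eigengap)+\eta_n^2 n(\lambda_1^2+\Mtwo^2)} + \frac{\eta_n \Mtwo^2}{\eigengap}}}{\delta^3 (1-\delta/m)\log^{-1}(m/\delta)} + s_n\,\frac{m\,\eta_n^2 n \Mtwo^2 \log^2 d}{\delta}.
}
Multiplying by $b_k^2$ and averaging over $i \in [m]$ preserves the bound (since the right-hand side is deterministic on the good event) and gives precisely the claimed inequality.

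The main bookkeeping task—and the only real obstacle—is ensuring the $\delta \mapsto \delta/m$ substitution propagates cleanly through every occurrence of $\delta$ in Lemma~\ref{lemma:en3_norm}. The first term in that lemma contributes a $(\delta/m)^{-3}$ factor (hence the $m^3$ scaling in the first summand), while the additive term has a $(\delta/m)^{-1/2}$ factor that squares to $m/\delta$ (matching the second summand). The $(1-\delta)$ factor in the denominator of Lemma~\ref{lemma:en3_norm} becomes $(1-\delta/m)$, and the $\log(1/\delta)$ becomes $\log(m/\delta)$, as stated. One should also verify that the implicit constant $4$ absorbed in the $\lesssim$ of Lemma~\ref{lemma:en3_norm} (coming from the sum $\delta + 3\delta$ in equations~\eqref{eq:en3_union_bound}--\eqref{eq:en3_decomposition_term_1_bound}) is folded into the hidden constant after the union bound, which is routine.
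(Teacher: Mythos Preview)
Your proposal is correct and follows essentially the same approach as the paper: reduce $|e_k^{\top}\Ethree{n}^{(i)}|$ to $b_k\|\Ethree{n}^{(i)}\|$ via the column-span observation, invoke Lemma~\ref{lemma:en3_norm} on each of the $m$ copies with failure parameter $\delta/m$, and take a union bound. The paper presents the steps in a slightly different order (stating the per-instance bound first, then the $e_k$-reduction, then the union bound) but the content is identical.
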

\begin{proof}
Using Lemma~\ref{lemma:en3_norm}, for any fixed $i \in [m]$, with probability at least $1-\delta$,
\ba{
    \norm{\Ethree{n}^{(i)}}_{2} \lesssim \sqrt{s_n}\bb{\bb{ \frac{ d\exp\bb{-2\eta_{n}n\bb{\lambda_{1}-\lambda_{2}} + \eta_{n}^{2}n\bb{\lambda_{1}^{2}+\Mtwo^{2}}} +  \frac{\eta_{n}\Mtwo^2}{\bb{\lambda_{1}-\lambda_{2}}  } }{\delta^3 (1-\delta) \log^{-1}(1/\delta)}}^{\frac{1}{2}} + \frac{\eta_n\sqrt{n}\Mtwo\log\bb{d}}{\delta^{\frac{1}{2}}}}. \label{eq:en3_norm_i_bound}
}
Furthermore, note that 
\ba{
\Abs{e_{k}^{\top}\Ethree{n}^{(i)}}_{2} &= \Abs{e_{k}^{\top}\vp\vp^{\top}B_{n}u_{0}\bb{\frac{1}{\norm{B_{n}u_0}_{2}} - \frac{1}{c_{n}}}}_{2} \notag \\
&= \Abs{e_{k}^{\top}\vp\vp^{\top}\vp\vp^{\top}B_{n}u_{0}\bb{\frac{1}{\norm{B_{n}u_0}_{2}} - \frac{1}{c_{n}}}}_{2} \notag \\
&\leq \norm{e_{k}^{\top}\vp\vp^{\top}}_{2}\norm{\vp\vp^{\top}B_{n}u_{0}\bb{\frac{1}{\norm{B_{n}u_0}_{2}} - \frac{1}{c_{n}}}}_{2} \notag \\
&= b_k\norm{\vp\vp^{\top}B_{n}u_{0}\bb{\frac{1}{\norm{B_{n}u_0}_{2}} - \frac{1}{c_{n}}}}_{2} = b_{k}\norm{\Ethree{n}^{(i)}}_{2} \label{eq:en3_ek_to_norm_conversion}
}
The result then follows by a union bound over all $i \in [m]$ for the event in \eqref{eq:en3_norm_i_bound} and using \eqref{eq:en3_ek_to_norm_conversion}.

\end{proof}

\subsubsection{$\Efour{n}$ tail bound}
\label{appendix:Efour_bound}

\begin{lemma}\label{lemma:en4_norm} Let $\Efour{n}$ be defined as in Lemma~\ref{lemma:oja_error_decomposition} for $u_{0} = g/\norm{g}_{2}$ with $g \sim \mathcal{N}(0, \id_d)$. Let $\eta_n$ be set according to Lemma~\ref{lemma:learning_rate_choice}. For any $\delta \in \bb{0,1}$, with probability at least $1-\delta$,
\bas{
    \norm{\Efour{n}} \leq \frac{1}{\delta^{3/2}}\bb{d\exp\bb{-2\eta_{n}n\bb{\lambda_{1}-\lambda_{2}} + \eta_{n}^{2}n\bb{\lambda_{1}^{2}+\Mtwo^{2}}} + \frac{e\eta_{n}^{3}n\Mtwo^{4}\bb{1+2\log\bb{d}}}{2\bb{\lambda_{1}-\lambda_{2}} + \eta_{n} \bb{\lambda_{1}^{2}-\lambda_{2}^{2}-\Mtwo^{2}}}}^{1/2}.
}
\end{lemma}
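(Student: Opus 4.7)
The key observation for bounding $\Efour{n}$ is that the Gaussian initialization decouples conveniently along $v_1$ and $\vp$. Writing $u_0 = g/\norm{g}_2$ with $g\sim\mathcal{N}(0,I_d)$, I would set $r := v_1^\top g$ and $h := \vp^\top g$. By orthogonality of $v_1$ and the columns of $\vp$, the pair $(r,h)$ consists of mutually independent Gaussians $r\sim\mathcal{N}(0,1)$ and $h\sim\mathcal{N}(0,I_{d-1})$, and is independent of $B_n$. The common factor $\norm{g}_2$ cancels in the ratio defining $\Efour{n}$, and since $\vp$ has orthonormal columns, $\norm{\vp\vp^\top B_n \vp h}_2 = \norm{\vp^\top B_n \vp\, h}_2$, yielding
\[
    \norm{\Efour{n}}_2^2 \;=\; \frac{\norm{\vp^\top B_n \vp\, h}_2^2}{r^2\,(1+\eta_n\lambda_1)^{2n}}.
\]

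I would then control the numerator and denominator on separate $1-\delta/2$ events. Gaussian anti-concentration gives $\Prob(|r|\le t)\le t\sqrt{2/\pi}$, hence $r^{-2}\le 8/(\pi\delta^2)$ with probability at least $1-\delta/2$. For the numerator, conditioning on $B_n$ and using that $h$ is standard Gaussian,
\[
    \E\bbb{\norm{\vp^\top B_n \vp\, h}_2^2 \mid B_n} \;=\; \norm{\vp^\top B_n \vp}_F^2 \;\le\; \tr{\vp^\top B_n^\top B_n \vp},
\]
since $\vp\vp^\top \preceq I$. Markov's inequality applied to the joint expectation then gives $\norm{\vp^\top B_n \vp\, h}_2^2 \le (2/\delta)\,\E\bbb{\tr{\vp^\top B_n^\top B_n \vp}}$ with probability at least $1-\delta/2$. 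A union bound combines the two events into
\[
    \norm{\Efour{n}}_2^2 \;\le\; \frac{16}{\pi\delta^3}\cdot\frac{\E\bbb{\tr{\vp^\top B_n^\top B_n \vp}}}{(1+\eta_n\lambda_1)^{2n}}
\]
with probability at least $1-\delta$, producing the $\delta^{-3/2}$ scaling after the square root.

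The remaining step is to bound $\E[\tr{\vp^\top B_n^\top B_n \vp}]/(1+\eta_n\lambda_1)^{2n}$ by the bracketed expression in the lemma. This is Lemma~5.2 of \citet{jain2016streaming} (invoked similarly in the proof of Lemma~\ref{lemma:en3_norm}), but in a sharpened form: the first summand captures the deterministic contribution $\vp^\top\E[B_n]^2\vp = \mathrm{diag}\bigl((1+\eta_n\lambda_{j+1})^{2n}\bigr)$ and follows by bounding $(1+\eta_n\lambda_2)^{2n}/(1+\eta_n\lambda_1)^{2n}$ via $\log(1+x)\ge x-x^2/2$ together with Lemma~\ref{lemma:learning_rate_choice}; the second summand arises from the Hoeffding decomposition $B_n-\E[B_n] = \sum_{k\ge 1}T_{n,k}$, where the second moment of the Hájek piece $T_{n,1}$ accumulates as a geometric series in $(1+\eta_n\lambda_{j+1})^2/(1+\eta_n\lambda_1)^2$ whose summation produces the denominator $2(\lambda_1-\lambda_2)+\eta_n(\lambda_1^2-\lambda_2^2-\Mtwo^2)$.

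The main obstacle is obtaining the precise form of the fluctuation term, in particular the $(1+2\log d)$ factor and the $\eta_n^3 n\Mtwo^4$ numerator. A naive bound $\tr{M}\le d\,\normop{M}$ would introduce a spurious factor of $d$, and a direct operator-norm argument for $B_n-\E[B_n]$ does not yield the trace-level scaling needed. Avoiding both issues requires a matrix $\ell_p$ moment estimate at $p\asymp\log d$ in the spirit of Lemma~\ref{lemma:higher-order-norm-main} (using Proposition~4.3 of \citet{huang2022matrix}), applied to the martingale structure of $\sum_{k\ge 1}T_{n,k}$; this is the only genuinely delicate step, while the rest of the argument consists of the Gaussian decoupling and elementary union bounds outlined above.
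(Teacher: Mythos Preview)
Your proof is correct and follows the same skeleton as the paper: decompose $u_0$ along $v_1$ and $\vp$, apply Markov's inequality to the numerator and Gaussian anti-concentration to $|v_1^\top g|$ on separate $1-\delta/2$ events, then union bound to get the $\delta^{-3/2}$ scaling. The paper finishes by directly citing Lemma~B.3 of \cite{lunde2021bootstrapping} for the bound on $\E\bbb{\Tr\bb{\vp^\top B_n\vp\vp^\top B_n^\top\vp}}/(1+\eta_n\lambda_1)^{2n}$, which already has the exact bracketed expression including the $(1+2\log d)$ factor and the $\eta_n^3 n\Mtwo^4$ numerator; your proposed re-derivation via the Hoeffding decomposition and Schatten-$p$ martingale estimates in the spirit of Lemma~\ref{lemma:higher-order-norm-main} is therefore unnecessary, and is not the route taken in the cited source, which proceeds by a direct one-step recursion on the expected trace rather than a term-by-term analysis of $\sum_{k\ge 1}T_{n,k}$.
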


\begin{proof}
Recall that
\bas{
\norm{\Efour{n}} &= \frac{\norm{\vp \vp^{\top} B_{n} \vp \vp^{\top} u_0}}{|v_1^{\top} u_0| (1+\eta_n \lambda_1)^n} =  \frac{\norm{\vp \vp^{\top} B_{n} \vp \vp^{\top} g}}{|v_1^{\top} g| (1+\eta_n \lambda_1)^n}.
}
To bound this quantity, we will bound its square instead. Using Markov's inequality, with probability at least $1-\delta/2$, 
\bas{
\norm{\vp \vp^{\top} B_{n} \vp \vp^{\top} g}^2 
&\le \frac{2}{\delta} E\bbb{\norm{\vp \vp^{\top} B_{n} \vp \vp^{\top} g}^2} \\
&= \frac{2}{\delta} \Tr\bb{E\bbb{\bb{\vp \vp^{\top} B_{n} \vp \vp^{\top} g}\bb{\vp \vp^{\top} B_{n} \vp \vp^{\top} g}^{\top}}} \\
&= \frac{2}{\delta} \E\bbb{\Tr\bb{\vp^{\top}B_{n}\vp \vp^{\top}B_{n}^{\top}\vp}}.
}
By Lemma B.3 of \cite{lunde2021bootstrapping},
\bas{\frac{\E\bbb{\Tr\bb{\vp^{\top}B_{n}\vp\vp^{\top}B_{n}^{\top}\vp}}}{(1+\eta_{n}\lambda_{1})^{2n}} &\leq d\exp\bb{-2\eta_{n}n\bb{\lambda_{1}-\lambda_{2}} + \eta_{n}^{2}n\bb{\lambda_{1}^{2}+\Mtwo^{2}}} + \frac{e\eta_{n}^{3}n\Mtwo^{4}\bb{1+2\log\bb{d}}}{2\bb{\lambda_{1}-\lambda_{2}} + \eta_{n} \bb{\lambda_{1}^{2}-\lambda_{2}^{2}-\Mtwo^{2}}}. 
}
Also, with probability at least $1-\delta/2$, $|v_1^{\top} g| \ge \delta/2$ (see Proposition 7 from \cite{lunde2021bootstrapping} for anticoncentration of gaussians). Combining the two bounds yields the result.
\end{proof}

\begin{lemma}\label{lemma:en4_tail_bound} Let $\Efour{n}$ be defined as in Lemma~\ref{lemma:oja_error_decomposition} for $u_{0} = g/\norm{g}_{2}$ with $g \sim \mathcal{N}(0, \id_d)$. Let $\eta_n$ be set according to Lemma~\ref{lemma:learning_rate_choice}. Let $\left\{\Efour{n}^{(i)}\right\}_{i \in [m]}$ and $\left\{g^{(i)}\right\}_{i \in [m]}$ be $m$ $\iid$ instances of $\Efour{n}$ and $g$ respectively. Fix $\delta \in \bb{0,1}$. Then, conditioned on $\mathcal{E}$, with probability at least $1-\delta$, 
\bas{
    &\frac{\sum_{i \in [m]}\bb{e_{k}^{\top}\Efour{n}^{(i)}}^{2}}{m} \\
    &\quad\quad\quad \leq \frac{b_{k}^{2}m^{2}}{\delta^{3}(1-\delta)}\bb{d\exp\bb{-2\eta_{n}n\bb{\lambda_{1}-\lambda_{2}} + \eta_{n}^{2}n\bb{\lambda_{1}^{2}+\Mtwo^{2}}} + \frac{e\eta_{n}^{3}n\Mtwo^{4}\bb{1+2\log\bb{d}}}{2\bb{\lambda_{1}-\lambda_{2}} + \eta_{n} \bb{\lambda_{1}^{2}-\lambda_{2}^{2}-\Mtwo^{2}}}}
}
for all $k \in [d]$, where $b_k := \norm{\vp^{\top}e_k}_{2}$.
\end{lemma}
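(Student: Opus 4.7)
The plan is to mirror the strategy used for Lemma~\ref{lemma:en3_tail_bound}, combining a deterministic reduction from the coordinate-wise quantity to the operator-norm-type quantity with the pointwise norm bound of Lemma~\ref{lemma:en4_norm}, but avoiding a direct union bound over the $m$ copies (which would yield an $m^3/\delta^3$ scaling instead of the claimed $m^2/\delta^3$). The starting observation is that $\Efour{n}$ lies in the range of $\vp\vp^{\top}$, so for every coordinate $k$,
\[
\Abs{e_k^{\top}\Efour{n}^{(i)}} \;=\; \Abs{e_k^{\top}\vp\vp^{\top}\Efour{n}^{(i)}} \;\le\; \norm{\vp^{\top}e_k}_2 \cdot \norm{\Efour{n}^{(i)}}_2 \;=\; b_k\,\norm{\Efour{n}^{(i)}}_2.
\]
Squaring and averaging reduces the claim to an upper bound on $\frac{1}{m}\sum_{i\in[m]}\norm{\Efour{n}^{(i)}}_2^2$ that holds uniformly (no $k$-dependence remains), and a factor of $b_k^2$ can then be attached at the end.

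To bound this average, I will use the representation $\norm{\Efour{n}^{(i)}}_2^2 = \alpha_i/(\beta_i\,(1+\eta_n\lambda_1)^{2n})$, where $\alpha_i \defeq \norm{\vp\vp^{\top}B_n^{(i)}\vp\vp^{\top}g^{(i)}}_2^2$ and $\beta_i \defeq (v_1^{\top}g^{(i)})^2$, and split the $m$ copies between numerator and denominator in a way that differs from the single-shot proof of Lemma~\ref{lemma:en4_norm}. First, I apply Markov's inequality to the sample mean $\frac{1}{m}\sum_i \alpha_i$ (not to each $\alpha_i$ individually), which, combined with Lemma B.3 of~\cite{lunde2021bootstrapping} for $\E[\alpha_1]/(1+\eta_n\lambda_1)^{2n}$, gives with probability $\ge 1-\delta/2$,
\[
\frac{1}{m}\sum_{i=1}^m \alpha_i \;\le\; \frac{2}{\delta}\,\E[\alpha_1] \;\le\; \frac{2\,(1+\eta_n\lambda_1)^{2n}}{\delta}\Bigl(d\exp\bigl(-2\eta_n n(\lambda_1-\lambda_2) + \eta_n^2 n(\lambda_1^2+\Mtwo^2)\bigr) + \tfrac{e\,\eta_n^3 n\,\Mtwo^4(1+2\log d)}{2(\lambda_1-\lambda_2)+\eta_n(\lambda_1^2-\lambda_2^2-\Mtwo^2)}\Bigr).
\]
Second, since $v_1^{\top}g^{(i)} \sim \mathcal{N}(0,1)$, Gaussian anticoncentration (Proposition 7 of~\cite{lunde2021bootstrapping}) gives $\Prob(|v_1^{\top}g^{(i)}| \le \delta/(4m)) \le \delta/(2m)$, so a union bound over $i\in[m]$ yields $\min_{i\in[m]}\beta_i \ge \delta^2/(16 m^2)$ with probability $\ge 1-\delta/2$.

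Combining the two events via a union bound, with probability $\ge 1-\delta$,
\[
\frac{1}{m}\sum_{i=1}^m \norm{\Efour{n}^{(i)}}_2^2 \;\le\; \frac{\frac{1}{m}\sum_i \alpha_i}{\min_i \beta_i \cdot (1+\eta_n\lambda_1)^{2n}} \;\le\; \frac{32\,m^2}{\delta^3}\,\Bigl(d\exp(\cdots) + \tfrac{e\,\eta_n^3 n\,\Mtwo^4(1+2\log d)}{2(\lambda_1-\lambda_2)+\eta_n(\lambda_1^2-\lambda_2^2-\Mtwo^2)}\Bigr).
\]
Multiplying by $b_k^2$ and noting that all the $k$-dependence is captured by $b_k^2$ (so the bound is simultaneously valid for every $k\in[d]$) delivers the stated estimate, with the $(1-\delta)$ factor in the denominator coming from normalizing by $\Prob(\mathcal{E})\ge 1-\delta$ when translating the unconditional bound into the statement conditioned on $\mathcal{E}$. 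The main subtlety, and the reason I avoid the naive per-copy union bound used in Lemma~\ref{lemma:en3_tail_bound}, is that applying Lemma~\ref{lemma:en4_norm} separately to each of the $m$ copies would inflate the denominator factor $\delta^{3/2}$ to $(\delta/m)^{3/2}$ and yield an $m^3/\delta^3$ factor rather than $m^2/\delta^3$; using Markov directly on the sample mean of $\alpha_i$ (leveraging the fact that only the first moment is needed for the numerator) is what saves the extra factor of $m$.
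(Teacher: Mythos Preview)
Your proof is correct and follows essentially the same approach as the paper: reduce from the $k$-th coordinate to $b_k\norm{\Efour{n}^{(i)}}_2$, split $\norm{\Efour{n}^{(i)}}_2^2$ into a numerator $\alpha_i$ controlled by $\E[\Tr(\vp^\top B_n \vp\vp^\top B_n^\top \vp)]$ via Lemma~B.3 of \cite{lunde2021bootstrapping} and a denominator $\beta_i = (v_1^\top g^{(i)})^2$ controlled by Gaussian anticoncentration plus a union bound over the $m$ copies, and combine via Markov. The only organizational difference is that the paper first \emph{conditions} on the event $\bigcap_i\{|v_1^\top g^{(i)}|\ge \delta/m\}$ and applies Markov to the conditional mean of $\Phi_n^{(i)}$ (this is where the $1/(1-\delta)$ factor actually enters, through the inequality $\E[X\mid\mathcal{E}]\le \E[X]/\Prob(\mathcal{E})$), whereas you apply Markov directly to the unconditional sum $\frac{1}{m}\sum_i\alpha_i$ and separately control $\min_i\beta_i$; both routes yield the same $m^2/\delta^3$ scaling and your explicit remark about why a per-copy invocation of Lemma~\ref{lemma:en4_norm} would lose a factor of $m$ is exactly the right diagnosis.
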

\begin{proof}
Note that 
\bas{
\bb{e_{k}^{\top}\Efour{n}}^{2} \le \norm{\vp^{\top}e_k}_{2}^{2} \underbrace{\bb{\frac{\norm{\vp^{\top}B_{n}\vp\vp^{\top}u_{0}}_{2}}{\Abs{v_{1}^{\top}u_{0}}(1+\eta_{n}\lambda_{1})^{n}}}^{2}}_{\Phi_{n}} 
}
Let $\Phi_{n}^{(i)}$ correspond to the $i^{\text{th}}$ instance of the random variable $\Phi_{n}$.
Then, for any $k \in [d]$, 
    \ba{
        \frac{1}{m}\sum_{i \in [m]}\bb{e_{k}^{\top}\Efour{n}^{(i)}}^{2} &\leq \frac{\norm{\vp^{\top}e_k}_{2}^{2}}{m}\sum_{i \in [m]} \Phi_{n}^{(i)}. \label{eq:en4_cs}
    }
    Define the event $\mathcal{E} := \left\{|v_{1}^{\top}g| \geq \frac{\delta}{m}\right\}$ and let $\mathcal{E}^{(i)}$, $i\in [m]$ be the $i^{th}$ instance of this event. First, observe that:
    \ba{
    \E[\Phi_{n} | \mathcal{E}] = \E\bbb{\bb{\frac{\norm{\vp^{\top}B_{n}\vp\vp^{\top}u_{0}}_{2}}{\Abs{v_{1}^{\top}u_{0}}(1+\eta_{n}\lambda_{1})^{n}}}^{2}\bigg|\mathcal{E}}
    \notag &= \E\bbb{\frac{\vp^{\top}B_{n}\vp\vp^{\top}gg^{\top}\vp\vp^{\top}B_{n}^{\top}\vp}{\bb{v_{1}^{\top}g}^{2}(1+\eta_{n}\lambda_{1})^{2n}}\bigg|\mathcal{E}} \notag \\
       &\leq \frac{m^{2}}{\delta^2(1+\eta_{n}\lambda_{1})^{2n}}\E\bbb{\vp^{\top}B_{n}\vp\vp^{\top}gg^{\top}\vp\vp^{\top}B_{n}^{\top}\vp\bigg|\mathcal{E}} \notag \\
       &\leq \frac{m^{2}}{\delta^{2}\mathbb{P}(\mathcal{E})}\frac{\E\bbb{\Tr\bb{\vp^{\top}B_{n}\vp\vp^{\top}B_{n}^{\top}\vp}}}{(1+\eta_{n}\lambda_{1})^{2n}} \label{eq:en4_exp}
    }
    Now, using Markov's inequality conditioned on $\bigcap_{i \in [m]}\mathcal{E}^{(i)}$, we have with probability at least $1-\Prob(\bigcap_{i \in [m]}\mathcal{E}^{(i)})$, 
     \ba{\frac{1}{m}\sum_{i \in [m]}\Phi_{n}^{(i)}&\leq \frac{1}{\delta}\E\bbb{\Phi_n^{(i)}\bigg|\bigcap_{j \in [m]}\mathcal{E}^{(j)}} \notag \\
     \mbox{(By $\iid$ nature of the instances)}&=\frac{1}{\delta}\E\bbb{\Phi_n^{(i)}\bigg|\mathcal{E}^{(i)}} 
     = \frac{1}{\delta}\E[\Phi_n | \mathcal{E}] \notag \\
       &\leq \frac{m^{2}}{\delta^{3}\mathbb{P}(\mathcal{E})}\frac{\E\bbb{\Tr\bb{\vp^{\top}B_{n}\vp\vp^{\top}B_{n}^{\top}\vp}}}{(1+\eta_{n}\lambda_{1})^{2n}} \label{eq:en4_markov}
    }
    The last step uses Eq~\ref{eq:en4_exp}.
    Using Lemma B.3 from \cite{lunde2021bootstrapping}, we have
    \ba{\frac{\E\bbb{\Tr\bb{\vp^{\top}B_{n}\vp\vp^{\top}B_{n}^{\top}\vp}}}{(1+\eta_{n}\lambda_{1})^{2n}} &\leq d\exp\bb{-2\eta_{n}n\bb{\lambda_{1}-\lambda_{2}} + \eta_{n}^{2}n\bb{\lambda_{1}^{2}+\Mtwo^{2}}} + \frac{e\eta_{n}^{3}n\Mtwo^{4}\bb{1+2\log\bb{d}}}{2\bb{\lambda_{1}-\lambda_{2}} + \eta_{n} \bb{\lambda_{1}^{2}-\lambda_{2}^{2}-\Mtwo^{2}}} \label{eq:en4_expectation_bound}
    }
    Finally, we note that using Proposition 7 from \cite{lunde2021bootstrapping}, we have 
    \ba{
        \forall i \in [m], \Prob\bb{\mathcal{E}^{(i)}} \geq 1 - \frac{\delta}{m} \implies  \mathbb{P}\bb{\bb{\bigcap_{i \in [m]}\mathcal{E}^{(i)}}^{\complement}} \leq \sum_{i \in [m]}\Prob\bb{\mathcal{E}_i^{\complement}} \leq \sum_{i \in [m]}\frac{\delta}{m} = \delta \label{eq:good_event_union_bound}
    }
    The result follows by substituting \eqref{eq:en4_expectation_bound} in \eqref{eq:en4_markov} and then using \eqref{eq:en4_cs}, along with the union-bound provided in \eqref{eq:good_event_union_bound}.
\end{proof}

\subsubsection{Total Variance Bound}\label{sub_appendix:uncertainty}
We now put together the results from Lemmas~\ref{lemma:en0_tail_bound},~\ref{lemma:en1_concentration_bound},~\ref{lemma:en2_tail_bound},~\ref{lemma:en3_tail_bound}, and~\ref{lemma:en4_tail_bound} to provide a high probability bound on the error of the variance estimator Algorithm~\ref{alg:variance_estimation}. 

\begin{figure}
    \centering   \includegraphics[width=0.5\linewidth]{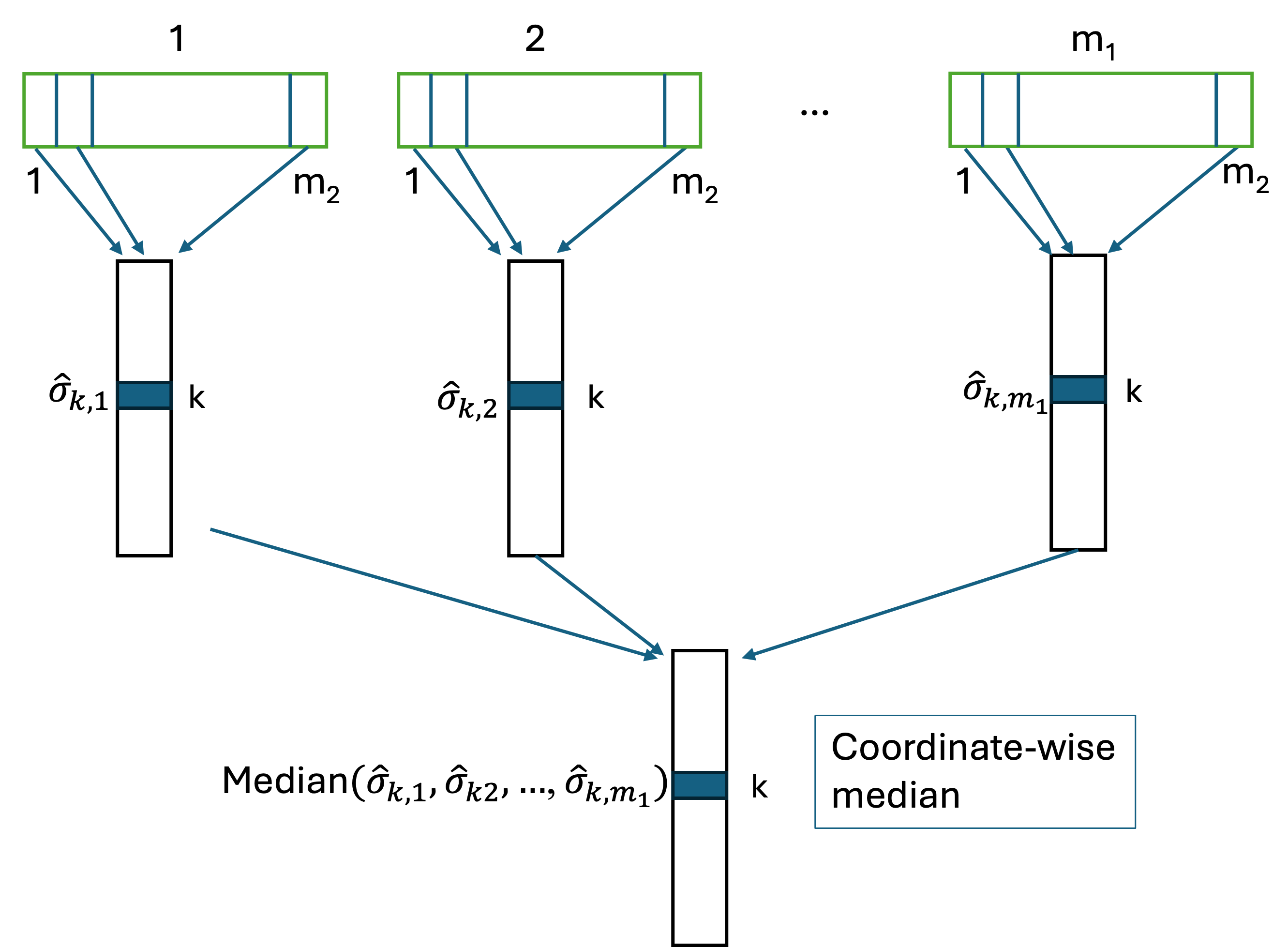}
    \caption{Schematic picture of Algorithm~\ref{alg:variance_estimation}}
    \label{fig:subsampling}
\end{figure}

Figure~\ref{fig:subsampling} summarizes how the variance estimation algorithm works. The algorithm first computes an Oja vector $\vmain$ using $N$ samples. Then, $n$ samples are divided into $m_1$ batches, with each batch containing $n/m_1$ samples. These $n$ samples need not be disjoint from the $N$ samples used to compute the high-accuracy estimate $\vmain$. Then, the ${\ell}^{\text{th}}$ batch of $n/m_1$ samples is split into $m = m_2$ batches of size $B \defeq n/m_1m_2$ each. Oja vectors $\left\{\hat{v}_{j}\right\}_{j \in [m_2]}$ are computed on each of these $m_2$ batches, and
\ba{
    \hat{\sigma}^{2}_{k, \ell} := \sum_{j \in [m_2]} \dfrac{\bb{e_k^{\top} \bb{\hat{v}_j - (\vmain^\top \hat{v}_j)\vmain}}^2}{m_2}. \label{eq:def_sigma_hat_ell}
}
for all $k \in [d]$. The overall estimate for the variance of the $k^{\text{th}}$ coordinate is $\mathsf{Median}\bb{\left\{\hat{\sigma}_{k, \ell}\right\}_{\ell \in [m_1]}}$. Since this variance scales with the inverse of the learning parameter $\eta_B$, we define the scale-free $\hat{\gamma}_k \defeq \mathsf{Median}\bb{\left\{\hat{\sigma}_{k, \ell}\right\}_{\ell \in [m_1]}}/(\eta_B \bb{\eigengap})$.
For each $k \in [d]$, define the quantities
\bas{
    b_k := \norm{e_k^{\top}\vp}, \;\;\;\; c_k := \sqrt{\frac{\E\bbb{\bb{e_k^{\top}\Eone{B}}^{2}}}{\eta_B}\frac{\eigengap}{\Mtwo^{2}}}.
}
Under this setting, we show that each $\hat{\sigma}_{k, \ell}^2$ approximates the true variance with at least $3/4$ probability. We assume that the learning rate $\eta_B$ satisfies
\ba{
\eta_B \le \frac{1}{2\lambda_1} + \frac{\eigengap}{2 \Mtwo^2}. \label{eta_B_upper_bound}
}
It can be verified that this assumption is satisfied by the bounds on $B$ provided in \eqref{eq:n_lower_bound}.

Let $\vmain$ be the high probability estimate as defined in~\ref{def:oja_high_prob}. For the remainder of the proof, we condition on the event 
\ba{
    \mathcal{E} := \left\{\sin^2 \bb{\vmain, v_1} \le \frac{C\log\bb{\frac{1}{\delta}}\log\bb{N/\log\bb{\frac{1}{\delta}}}\Mtwo^{2}}{N(\eigengap)^{2}}\right\} \label{def:good_event_vmain}
}
for some universal constant $C > 0$, and assume this event $\mathcal{E}$ holds with probability at least $1-\delta$. 

\begin{lemma}\label{lemma:constant_prob_error_bound_all}
For $\delta \in \bb{0,1}$ and any $\ell \in [m_1]$, under assumption~\ref{eta_B_upper_bound}, with probability at least $3/4$,
\ba{
    \Abs{\hat{\sigma}_{k,\ell}^2 - \eta_B \bb{\eigengap} e_k^\top \V e_k} &\le  8\bb{\frac{1}{\sqrt{m}} + \frac{2}{m}}\eta_B \bb{\eigengap} e_k^\top \V e_k \notag \\ & + O\bb{\frac{b_k^2 \log^2 B}{B^{3/2} m^{1/2}} \bb{\frac{\Mfour}{\eigengap}}^2 + \frac{\log\bb{\frac{1}{\delta}}\log\bb{N/\log\bb{\frac{1}{\delta}}}}{N} \bb{\frac{\Mtwo}{\eigengap}}^2} \notag \\
    &+ O\bb{\frac{b_k^2 m^2 \log^2 d \log^4 B}{B^2}  \bb{\frac{\Mtwo}{\eigengap}}^4 + \frac{\lambda_1 \Mtwo^2 \log^2 B}{B^2 \bb{\eigengap}^3}}. \label{eq:total_error_bound_all_coord}
}
\end{lemma}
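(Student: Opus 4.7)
The plan is to combine the error decomposition in Lemma~\ref{lemma:oja_error_decomposition} with the five per-term tail bounds already established (Lemmas~\ref{lemma:en0_tail_bound}, \ref{lemma:en1_concentration_bound}, \ref{lemma:en2_tail_bound}, \ref{lemma:en3_tail_bound}, \ref{lemma:en4_tail_bound}), and then calibrate the leading $\Psi_{B,1}$ term against the target $\eta_B(\lambda_1-\lambda_2)\V_{kk}$ via Lemma~\ref{lemma:second_moment_matrix}. Concretely, for the fixed batch $\ell$, each of the $m = m_2$ sub-batches of size $B$ produces an Oja vector $\hat{v}_{\ell,j}$; applying Lemma~\ref{lemma:oja_error_decomposition} to each gives $\hat{v}_{\ell,j}-(\vmain^\top \hat{v}_{\ell,j})\vmain = \sum_{i=0}^{4}\Psi_{B,i}^{(j)}$. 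Thus
\[
\hat{\sigma}_{k,\ell}^2 = \frac{1}{m}\sum_{j=1}^{m}\Bigl(\sum_{i=0}^{4} e_k^\top \Psi_{B,i}^{(j)}\Bigr)^2.
\]

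Next, I would use Lemma~\ref{lemma:square_expansion_cs} with $a_1 = e_k^\top \Psi_{B,1}^{(j)}$ (the Hájek projection, the dominant term) and $a_2,\ldots,a_5$ equal to the lower-order $e_k^\top \Psi_{B,i}^{(j)}$. This yields two-sided bounds of the form
\[
(1-\epsilon)\tfrac{1}{m}\sum_j (e_k^\top \Psi_{B,1}^{(j)})^2 - \tfrac{4}{\epsilon}\sum_{i\ne 1}\tfrac{1}{m}\sum_j (e_k^\top \Psi_{B,i}^{(j)})^2 \;\le\; \hat{\sigma}_{k,\ell}^2 \;\le\; (1+\epsilon)\tfrac{1}{m}\sum_j (e_k^\top \Psi_{B,1}^{(j)})^2 + \tfrac{8}{\epsilon}\sum_{i\ne 1}\tfrac{1}{m}\sum_j (e_k^\top \Psi_{B,i}^{(j)})^2.
\]
Choosing $\epsilon$ of order $1/\sqrt{m}$ will then split the error into (i) a multiplicative piece on the $\Psi_{B,1}$ average and (ii) an additive piece from the other four terms. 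I would apply Lemma~\ref{lemma:en1_concentration_bound} to the $\Psi_{B,1}$ piece (with a constant failure probability, say $1/20$) to get concentration of $\tfrac{1}{m}\sum_j (e_k^\top \Psi_{B,1}^{(j)})^2$ around $\E[(e_k^\top \Psi_{B,1})^2]$ with error $\tfrac{1}{\sqrt{m}}\E[(e_k^\top \Psi_{B,1})^2] + \tfrac{\eta_B^2 b_k^2 \mathcal{M}_4^2 \sqrt{B}}{\sqrt{m}}$; then Lemma~\ref{lemma:second_moment_matrix} replaces $\E[(e_k^\top \Psi_{B,1})^2]$ by $\eta_B(\eigengap)\V_{kk}$ up to the stated $O(\eta_B^2 \lambda_1 \Mtwo^2/(\eigengap)^2)$ Frobenius perturbation.

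For the additive piece, I would apply Lemmas~\ref{lemma:en0_tail_bound}, \ref{lemma:en2_tail_bound}, \ref{lemma:en3_tail_bound}, \ref{lemma:en4_tail_bound} with constant failure probability ($\approx 1/20$ each) to control $\tfrac{1}{m}\sum_j (e_k^\top \Psi_{B,i}^{(j)})^2$ for $i\in\{0,2,3,4\}$. Each of these is $\tilde O(\cdot)$ in the model parameters; translating them to the batch size $B$ and the sample size $N$ (for $\Psi_{B,0}$), and using the learning rate choice $\eta_B = \alpha \log B/(B(\eigengap))$ (Lemma~\ref{lemma:learning_rate_choice}) so that $\exp(-\eta_B B(\eigengap))$ decays polynomially fast and the matrix-norm perturbations are controlled, gives exactly the lower order terms displayed in \eqref{eq:total_error_bound_all_coord}: the $N^{-1}\log N$ term from $\Psi_{B,0}$, the $B^{-2}\log^4 B$ term from $\Psi_{B,2}$, the $B^{-2}\lambda_1\Mtwo^2/(\eigengap)^3$ term from $\Psi_{B,3}$ and $\Psi_{B,4}$, and the $B^{-3/2}\Mfour^2$ term from the Chebyshev remainder in the $\Psi_{B,1}$ concentration. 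A final union bound over the five events yields total failure probability at most $5 \cdot 1/20 = 1/4$, giving the $3/4$ success guarantee.

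The main technical obstacle is bookkeeping: one must verify that with the learning rate from Lemma~\ref{lemma:learning_rate_choice}, every lower-order contribution collapses into the additive $O(\cdot)$ terms stated in the lemma rather than polluting the multiplicative $(1\pm 1/\sqrt m)$ bound on $\eta_B(\eigengap)\V_{kk}$. In particular, one has to check that the $\tfrac{1}{\epsilon}\approx \sqrt{m}$ blow-up of the lower-order terms is absorbed by the fact that each such term is already $o(1/B)$, so multiplying by $\sqrt{m}=\sqrt{\log B}$ still leaves them negligible compared to $\eta_B(\eigengap)\V_{kk} \sim \log B/B$ up to the exhibited polynomial factors in model parameters. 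Once this accounting is done carefully, the stated inequality follows directly.
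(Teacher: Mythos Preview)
Your proposal is correct and mirrors the paper's proof essentially step for step: apply Lemma~\ref{lemma:square_expansion_cs} with $\epsilon = 2/\sqrt{m}$ to isolate the H\'ajek term, use Lemma~\ref{lemma:en1_concentration_bound} and Lemma~\ref{lemma:second_moment_matrix} on that term, bound the remaining four averages via Lemmas~\ref{lemma:en0_tail_bound}, \ref{lemma:en2_tail_bound}, \ref{lemma:en3_tail_bound}, \ref{lemma:en4_tail_bound} each at failure probability $\delta_0 = 1/20$, and union-bound. One small bookkeeping correction: the $\lambda_1\Mtwo^2\log^2 B/(B^2(\eigengap)^3)$ term comes from the Frobenius perturbation in Lemma~\ref{lemma:second_moment_matrix} (as you in fact note earlier), not from $\Psi_{B,3}$ and $\Psi_{B,4}$; those two, together with $\Psi_{B,2}$, all feed into the single $b_k^2 m^2\log^2 d\log^4 B/B^2\,(\Mtwo/(\eigengap))^4$ term after the $8/\epsilon \asymp \sqrt{m}$ inflation.
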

\begin{proof}
Drop the index $\ell$ for convenience of notation. Let $\delta_0 \defeq 1/20$. By triangle inequality,
\ba{
\Abs{\hat{\sigma}_{k}^2 - \eta_B \bb{\eigengap} e_k^\top \V e_k} &\le \Abs{\hat{\sigma}_{k,\ell}^2 - \E\bbb{\bb{e_{k}^{\top}\Eone{B}}^{2}}} +  \Abs{\E\bbb{\bb{e_{k}^{\top}\Eone{B}}^{2}}- \eta_B \bb{\eigengap} e_k^\top \V e_k} \label{eq:triangle_ineq}
}
and by Lemma~\ref{lemma:second_moment_matrix},
\ba{
\Abs{\E\bbb{\bb{e_{k}^{\top}\Eone{B}}^{2}}- \eta_B \bb{\eigengap} e_k^\top \V e_k} \le \frac{\eta_B^2 \Mtwo^2 \lambda_1}{\eigengap} \lesssim \frac{\lambda_1 \Mtwo^2 \log^2 B}{B^2 \bb{\eigengap}^3}. \label{eq:true_hajek_close}
}
By equation~\eqref{eq:ojadecomp} and Lemma~\ref{lemma:square_expansion_cs}, for any $\eps \in (0,1)$,
    \ba{
    \Abs{\hat{\sigma}_{k}^{2} - \E\bbb{\bb{e_k^{\top} \Eone{B}}^2}} &\le (1+\eps) \Abs{\frac{\sum_{j \in [m]} \bb{e_k^{\top} \Eone{B}^{(j)}}^2}{m} - \E\bbb{\bb{e_k^{\top} \Eone{B}}^2}} + \eps \E\bbb{\bb{e_k^{\top} \Eone{B}}^2} \nonumber \\
    &+ \underbrace{\frac{8}{\eps} \sum_{j \in [m]} \frac{\bb{e_k^{\top} \Ezero{B}^{(j)}}^2 + \bb{e_k^{\top} \Etwo{B}^{(j)}}^2 + \bb{e_k^{\top} \Ethree{B}^{(j)}}^2 + \bb{e_k^{\top} \Efour{B}^{(j)}}^2}{m}}_{:= \errorsmall}. \label{eq:total_concentration_bound}
    }
Set $\eps = 2/\sqrt{m}$. By Lemmas~\ref{lemma:en0_tail_bound},~\ref{lemma:en2_tail_bound},~\ref{lemma:en3_tail_bound}, and~\ref{lemma:en4_tail_bound}, along with Lemma~\ref{lemma:learning_rate_choice} to bound $nd\exp\bb{-\eta_n n \bb{\eigengap}} = o\bb{1}$, we have with probability at least $1-4\delta_0$ that 
\ba{
    & \frac{\errorsmall}{8/\epsilon} \\ &\lesssim 
     \frac{\log\bb{\frac{1}{\delta}}\log\bb{N/\log\bb{\frac{1}{\delta}}} \Mtwo^2}{\eigengap} + b_k^2 \eta_B^4 \Mtwo^4 B^2 \log^2 d + s_{B}b_k^{2} m \eta_{B}^{2}B \Mtwo^{2}\log^{2} d + \frac{\eta_{B}^{3} B \Mtwo^{4} \log (d) b_k^{2} m^{2}}{2\bb{\eigengap} + \eta_B\bb{\lambda_1^2-\lambda_2^2-\Mtwo^2}} \nonumber\\
    &\lesssim \frac{\log\bb{\frac{1}{\delta}}\log\bb{N/\log\bb{\frac{1}{\delta}}}}{N} \bb{\frac{\Mtwo}{\eigengap}}^2 + \frac{b_k^2 m \log^2 d \log^4 B}{B^2} \bb{\frac{\Mtwo}{\eigengap}}^4 + \frac{b_k^2 m^2 \log d \log^3 B}{B^2}  \bb{\frac{\Mtwo}{\eigengap}}^4. \label{eq:const_prob_small}
}

where we used Assumption~\ref{eta_B_upper_bound} to bound the last term. By Lemma~\ref{lemma:en1_concentration_bound}, with probability $1-\delta_0$, 
    \ba{
    \Abs{\frac{\sum_{j \in [m]} \bb{e_k^{\top} \Eone{B}^{(j)}}^2}{m} - \E\bbb{\bb{e_k^{\top} \Eone{B}}^2}} &\le \frac{\sqrt{2} \E\bbb{(e_k^\top \Eone{B})^2} + \eta_B^2 b_k^2 \mathcal{M}_4^2 \sqrt{B}}{\sqrt{m\delta_0}} \nonumber \\ 
    &\le 4\epsilon \E\bbb{(e_k^\top \Eone{B})^2} + \frac{b_k^2 \log^2 B}{B^{3/2} m^{1/2}} \bb{\frac{\Mfour}{\eigengap}}^2 
    \label{eq:const_prob_hajek}
    }
We now combine equations~\eqref{eq:true_hajek_close}, ~\eqref{eq:total_concentration_bound},~\eqref{eq:const_prob_small}, and~\eqref{eq:const_prob_hajek} in~\eqref{eq:triangle_ineq} to conclude that with probability at least $1-5\delta_0 = 3/4$,
\bas{
& \Abs{\hat{\sigma}_{k,\ell}^2 - \eta_B \bb{\eigengap} e_k^\top \V e_k} \\
& \leq (1+\epsilon) \bb{4\epsilon \eta_B \bb{\eigengap} e_k^\top \V e_k + \frac{b_k^2 \log^2 B}{B^{3/2} m^{1/2}} \bb{\frac{\Mfour}{\eigengap}}^2} + (1+\epsilon)(1+4\epsilon) \frac{\eta_B^2 \Mtwo^2 \lambda_1}{\eigengap} + \errorsmall,
}
which simplifies to the lemma statement.

\end{proof}
Next, assume that the following relations hold:
\begin{gather}
 N \gtrsim \frac{m B\log\bb{\frac{1}{\delta}}}{c_k^2 \log B} \log \bb{\frac{m B}{ c_k^2\log B}}. \label{eq:N_lower_bound}\\
B \gtrsim  m^{3}\bb{\frac{b_k}{c_k}}^{2}\bb{\frac{\Mtwo}{\lambda_1 - \lambda_2}}^{2}\log^{3}\bb{B}\log^{2}\bb{d}. \label{eq:n_lower_bound}\\
B \gtrsim \max\bb{m \bb{\frac{b_k}{c_k}}^4 \bb{\frac{\Mfour}{\Mtwo}}^4 \log^2 B, \frac{m \lambda_1 \log B}{c_k^2 \bb{\eigengap}}}. \label{eq:B_lower_bound}
\end{gather}

These assumptions on $N$ and $B$ \textit{subsume} the assumption on the learning rate $\eta_B$ in equation~\ref{eta_B_upper_bound}.

Using equation~\ref{eq:true_hajek_close} and the relation
\ba{\frac{\E\bbb{\bb{e_k^{\top}\Eone{B}}^{2}}}{m} &= \frac{\eta_{B}c_{k}^{2}}{m}\frac{\Mtwo^{2}}{\eigengap}. \label{eq:Eone_expectation_c_k}
} and comparing it with each term in the smaller order error of Lemma~\ref{lemma:constant_prob_error_bound_all} yields the following Lemma.
\begin{lemma}\label{lemma:constant_prob_helper}
Under assumptions~\eqref{eq:N_lower_bound},~\eqref{eq:n_lower_bound}, and~\eqref{eq:B_lower_bound}, we have the following upper bound on the right hand side of equation~\eqref{eq:total_error_bound_all_coord} in Lemma~\ref{lemma:constant_prob_error_bound_all}.
\ba{
 & \frac{\log\bb{\frac{1}{\delta}}\log\bb{N/\log\bb{\frac{1}{\delta}}}}{N} \bb{\frac{\Mtwo}{\eigengap}}^2 + \frac{b_k^2 \log^2 B}{B^{3/2} m^{1/2}} \bb{\frac{\Mfour}{\eigengap}}^2 + \frac{b_k^2 m^2 \log^2 d \log^4 B}{B^2} \bb{\frac{\Mtwo}{\eigengap}}^4 \notag \\
 &  + \frac{\lambda_1 \Mtwo^2 \log^2 B}{B^2 \bb{\eigengap}^3}  \leq \frac{\eta_B \bb{\eigengap} e_k^\top \V e_k}{m}. \label{eq:total_error_bound_some_coord}
}
\end{lemma}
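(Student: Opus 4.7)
The plan is to reduce the lemma to four separate elementary comparisons. The right-hand side $\eta_B(\eigengap) e_k^\top \V e_k / m$ is essentially $\E[(e_k^\top \Eone{B})^2]/m$: indeed, equation~\eqref{eq:true_hajek_close} gives $|\E[(e_k^\top \Eone{B})^2] - \eta_B(\eigengap)\V_{kk}| \lesssim \lambda_1 \Mtwo^2 \log^2 B / (B^2(\eigengap)^3)$, and by equation~\eqref{eq:Eone_expectation_c_k}, $\E[(e_k^\top \Eone{B})^2]/m = \eta_B c_k^2 \Mtwo^2 / (m(\eigengap))$. Using $\eta_B = \alpha \log B/(B(\eigengap))$, this equals $\alpha c_k^2 \Mtwo^2 \log B / (m B (\eigengap)^2)$. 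So the first step is to establish that this ``main'' expression dominates the correction error, which reduces to $B \gtrsim m \lambda_1 \log B / (c_k^2 (\eigengap))$; this is precisely the second clause of assumption~\eqref{eq:B_lower_bound}. After that, it suffices to show each of the four LHS terms is bounded by (a constant times) $\alpha c_k^2 \Mtwo^2 \log B / (m B (\eigengap)^2)$.

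Next, I would check the four terms one by one. For the $\log N / N$ term: the required inequality $\log N / N \lesssim c_k^2 \log B / (m B)$ rearranges to $N \gtrsim (m B / (c_k^2 \log B)) \log N$, and since assumption~\eqref{eq:N_lower_bound} pins $\log N$ to be of the same order as $\log(m B/(c_k^2 \log B))$, the assumption is exactly the required bound. For the Bernstein-type term $b_k^2 \log^2 B \cdot \Mfour^2 /(B^{3/2} m^{1/2} (\eigengap)^2)$: clearing denominators reduces the comparison to $B \gtrsim m(b_k/c_k)^4 (\Mfour/\Mtwo)^4 \log^2 B$, which is the first clause of assumption~\eqref{eq:B_lower_bound}. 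For the $E_3/E_4$-type term $b_k^2 m^2 \log^2 d \log^4 B \cdot \Mtwo^4/(B^2 (\eigengap)^4)$: rearranging yields $B \gtrsim m^3(b_k/c_k)^2(\Mtwo/(\eigengap))^2 \log^3 B \log^2 d$, which is assumption~\eqref{eq:n_lower_bound}. Finally, the term $\lambda_1 \Mtwo^2 \log^2 B /(B^2 (\eigengap)^3)$ is bounded by the same argument used in the first step and gives the bound as before.

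The main (and essentially only) obstacle is bookkeeping: making sure each assumption is used once with the correct exponents, and tracking the dependence on $\log B$ carefully (in particular, the mild cancellation between the $\log B$ in $\eta_B$ and the various $\log^k B$ factors on the LHS). Each of the four comparisons is then a clean algebraic manipulation with no probabilistic content, so once the comparisons are set up, the proof is mechanical. I would structure the final write-up as: (i) rewrite the RHS using~\eqref{eq:Eone_expectation_c_k} and~\eqref{eq:true_hajek_close}; (ii) list the four target inequalities in one display; (iii) dispatch each using exactly one of the three assumptions.
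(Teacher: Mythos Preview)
Your proposal is correct and follows exactly the approach indicated in the paper: rewrite the right-hand side via equations~\eqref{eq:true_hajek_close} and~\eqref{eq:Eone_expectation_c_k} as (up to constants) $c_k^2 \Mtwo^2 \log B / (m B (\eigengap)^2)$, then compare each of the four LHS terms against it, each comparison reducing algebraically to one clause of assumptions~\eqref{eq:N_lower_bound},~\eqref{eq:n_lower_bound}, or~\eqref{eq:B_lower_bound}. The paper's own proof is only a one-line pointer to this comparison, so your write-up is in fact more detailed than what appears there.
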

It follows that a stronger multiplicative guarantee holds for any coordinate $k$ that satisfies the above assumptions:
\begin{lemma}\label{lemma:constant_prob_error_bound_some}
For any coordinate $k$ that satisfies Lemma~\ref{lemma:constant_prob_error_bound_all} and assumptions~\ref{eq:N_lower_bound},~\ref{eq:n_lower_bound}, and ~\ref{eq:B_lower_bound}, 
\bas{
    \Abs{\hat{\sigma}_{k}^2 - \eta_B \bb{\eigengap} e_k^\top \V e_k} \le O\bb{\frac{\eta_B \bb{\eigengap} e_k^\top \V e_k}{\sqrt{m}}}.
}
\end{lemma}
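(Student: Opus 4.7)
The proof should follow almost directly by plugging the bound of Lemma~\ref{lemma:constant_prob_helper} into the concentration bound of Lemma~\ref{lemma:constant_prob_error_bound_all}. The plan is to observe that Lemma~\ref{lemma:constant_prob_error_bound_all} already exhibits a natural splitting of the error into (i) a ``main'' multiplicative term of order $(1/\sqrt{m} + 2/m)\,\eta_B(\lambda_1-\lambda_2)\, e_k^\top \V e_k$, and (ii) a sum of four ``additive'' terms governed by $N$, $B$, $b_k$, $\Mtwo$, and $\Mfour$. Lemma~\ref{lemma:constant_prob_helper} is tailored precisely so that, under assumptions \eqref{eq:N_lower_bound}--\eqref{eq:B_lower_bound}, each such additive term is dominated by $\eta_B(\lambda_1-\lambda_2)\, e_k^\top \V e_k / m$, which is already subsumed by the existing multiplicative term. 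After absorbing these, the $1/\sqrt{m}$ term is the dominant contribution, which gives the desired bound.

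More concretely, the first step is to note that assumptions \eqref{eq:N_lower_bound}--\eqref{eq:B_lower_bound} imply the upper bound \eqref{eta_B_upper_bound} on $\eta_B$ (this is mentioned right after Lemma~\ref{lemma:constant_prob_error_bound_all}'s statement), so Lemma~\ref{lemma:constant_prob_error_bound_all} applies to coordinate $k$ with probability at least $3/4$. Second, invoke Lemma~\ref{lemma:constant_prob_helper} to collectively bound all four additive terms on the right-hand side of \eqref{eq:total_error_bound_all_coord} by $\eta_B(\lambda_1-\lambda_2)\, e_k^\top \V e_k / m$. Third, combine these into
\bas{
\Abs{\hat{\sigma}_{k}^2 - \eta_B (\lambda_1-\lambda_2) e_k^\top \V e_k} \lesssim \bb{\frac{1}{\sqrt{m}} + \frac{2}{m}} \eta_B (\lambda_1-\lambda_2) e_k^\top \V e_k + \frac{\eta_B (\lambda_1-\lambda_2) e_k^\top \V e_k}{m},
}
and absorb the $1/m$ contributions into the $1/\sqrt{m}$ term to obtain the claimed $O(\eta_B(\lambda_1-\lambda_2)e_k^\top \V e_k / \sqrt{m})$ bound.

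The only nontrivial bookkeeping, which is really carried out inside Lemma~\ref{lemma:constant_prob_helper} rather than here, is the translation between $e_k^\top \V e_k$ and the scale-free parameter $c_k$ via the identity $\E[(e_k^\top \Eone{B})^2] = \eta_B c_k^2 \Mtwo^2/(\lambda_1-\lambda_2)$ from \eqref{eq:Eone_expectation_c_k}, together with Lemma~\ref{lemma:second_moment_matrix}'s approximation $\E[(e_k^\top \Eone{B})^2] \approx \eta_B(\lambda_1-\lambda_2)\V_{kk}$ up to $O(\eta_B^2 \lambda_1 \Mtwo^2/(\lambda_1-\lambda_2))$. Once that identification is in hand, each of the four lower-bound conditions on $N$ and $B$ is precisely what is needed to match one additive term against the target $\eta_B(\lambda_1-\lambda_2)e_k^\top \V e_k / m$. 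Since this calculation is done already, for this lemma the proof reduces to citing \ref{lemma:constant_prob_error_bound_all} and \ref{lemma:constant_prob_helper} and combining the two inequalities. There is no real obstacle; the lemma is essentially a corollary packaging the two preceding lemmas into a cleaner multiplicative-error form for the ``good'' coordinates $k \in K$ used in Theorem~\ref{thm:high_prob_error_bound}.
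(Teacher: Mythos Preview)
Your proposal is correct and matches the paper's approach exactly: the paper also treats this lemma as an immediate corollary, stating only ``It follows that a stronger multiplicative guarantee holds for any coordinate $k$ that satisfies the above assumptions'' after Lemma~\ref{lemma:constant_prob_helper}, and the combination you describe (plug the additive bound from Lemma~\ref{lemma:constant_prob_helper} into the error decomposition of Lemma~\ref{lemma:constant_prob_error_bound_all} and absorb the $1/m$ terms into the $1/\sqrt{m}$ term) is precisely the intended argument.
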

Given a per-coordinate guarantee with success probability $3/4$, we can boost the success probability and give a high-probability guarantee over all coordinates using the median-of-means approach (see Lemma~\ref{lemma:MoM_modified}). Since the vector $\vmain$ is shared between all Oja vectors in the uncertainty algorithm, the estimates $\hat{\sigma}_{k,i}^2$ are not independent for any coordinate $k$. However, the errors due to the terms
$\Eone{n}^{(i)}, \Etwo{n}^{(i)}, \Ethree{n}^{(i)}, \Efour{n}^{(i)}$ are mutually independent because they do not depend on $\vmain$. Moreover, the uncertainty in the estimate due to $\Ezero{n}^{(i)}$ can be accounted for by the sin-squared error of $\vmain$ only, which is bounded with high probability by equation~\ref{eq:vtilde}.

\begin{lemma}\label{lemma:high_prob_error_bound}
Let $\left \{ \hat{\gamma}_k \right \}_{k \in [d]}$ be the output of Algorithm~\ref{alg:variance_estimation}. Under assumption~\eqref{eta_B_upper_bound}, with probability $1-2\delta$, for all $k \in [d]$,
\bas{
    \Abs{\hat{\gamma}_k - \V_{kk}} &\le  8\bb{\frac{1}{\sqrt{m}} + \frac{2}{m}} \V_{kk} + O\bb{\frac{b_k^2 \log B}{\sqrt{mB}} \bb{\frac{\Mfour}{\eigengap}}^2 + \frac{B \log\bb{\frac{1}{\delta}}\log\bb{N/\log\bb{\frac{1}{\delta}}}}{N \log B} \bb{\frac{\Mtwo}{\eigengap}}^2}  \nonumber\\
    &+ O\bb{\frac{b_k^2 m^2 \log^2 d \log^3 B}{B}  \bb{\frac{\Mtwo}{\eigengap}}^4 + \frac{\lambda_1 \Mtwo^2 \log B}{B \bb{\eigengap}^3}}.
}
Moreover, let $K$ be the set of indices in [d] that satisfy assumptions~\eqref{eq:N_lower_bound},~\eqref{eq:n_lower_bound}, and~\eqref{eq:B_lower_bound}. Then, for all $k \in K$,
\bas{
 \Abs{\hat{\gamma}_k - e_k^\top \V e_k} = O\bb{\frac{  \V_{kk} }{\sqrt{m}}}.
}
\end{lemma}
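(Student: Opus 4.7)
\textbf{Proof plan for Lemma~\ref{lemma:high_prob_error_bound}.} The proof assembles three ingredients that have already been established: the constant-probability per-batch bound in Lemma~\ref{lemma:constant_prob_error_bound_all} (respectively Lemma~\ref{lemma:constant_prob_error_bound_some} for $k \in K$), the median-boosting in Lemma~\ref{lemma:MoM}, and a final rescaling by $\eta_B(\eigengap)$. Throughout, we condition on the event that $\vmain$ is a ``good'' proxy for $v_1$, as is standing in Section~\ref{sub_appendix:uncertainty}, and note that the estimates $\hat{\sigma}^2_{k,\ell}$ across $\ell \in [m_1]$ are mutually independent because Algorithm~\ref{alg:variance_estimation} builds each one from a disjoint block of data together with a freshly drawn initial vector.

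First I would fix any coordinate $k \in [d]$ and let $E_k$ denote the error bound from the right-hand side of equation~\eqref{eq:total_error_bound_all_coord} in Lemma~\ref{lemma:constant_prob_error_bound_all}; that lemma says that for every $\ell \in [m_1]$, the event $\hat{\sigma}^2_{k,\ell} \in I_k := \bigl[\eta_B(\eigengap)\V_{kk} - E_k,\;\eta_B(\eigengap)\V_{kk} + E_k\bigr]$ holds with probability at least $3/4$. Since $m_1 = 8\log(d/\delta)$ and the $\hat{\sigma}^2_{k,\ell}$ are independent, Lemma~\ref{lemma:MoM} applied with $\delta' = \delta/d$ yields $\Prob\bigl(\median_\ell \hat{\sigma}^2_{k,\ell} \in I_k\bigr) \ge 1 - \delta/d$. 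A union bound over $k \in [d]$ shows that simultaneously for every coordinate the median lies in $I_k$ with probability at least $1-\delta$.

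Next I would translate this containment to a bound on $\hat{\gamma}_k$. By the definition $\hat{\gamma}_k = \median_\ell \hat{\sigma}^2_{k,\ell}/(\eta_B(\eigengap))$, dividing through gives $|\hat{\gamma}_k - \V_{kk}| \le E_k/(\eta_B(\eigengap))$ uniformly in $k$. Substituting $\eta_B(\eigengap) = \Theta(\log B / B)$ into each of the four additive terms appearing in $E_k$ reproduces precisely the four lower-order terms claimed in the statement, and the leading $8(m^{-1/2} + 2m^{-1})\eta_B(\eigengap)\V_{kk}$ term from~\eqref{eq:total_error_bound_all_coord} becomes the leading $8(m^{-1/2} + 2m^{-1})\V_{kk}$ term. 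For the second assertion, I would repeat the same median-then-union-bound argument but invoke Lemma~\ref{lemma:constant_prob_error_bound_some} in place of Lemma~\ref{lemma:constant_prob_error_bound_all} for each $k \in K$; this collapses all additive error terms into the multiplicative $O(\V_{kk}/\sqrt{m})$ bound after the same rescaling.

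The proof is essentially a bookkeeping exercise on top of the heavier work already done; the only subtle points are to (i) ensure the events are defined with respect to the same interval $I_k$ so that Lemma~\ref{lemma:MoM} applies, which uses that the bound $E_k$ in Lemma~\ref{lemma:constant_prob_error_bound_all} is deterministic (depending only on $B$, $N$, $m$, and model parameters), and (ii) verify that the independence of the $\hat{\sigma}^2_{k,\ell}$ across $\ell$ is genuine, which follows from the disjoint block structure and fresh initializations in Algorithm~\ref{alg:variance_estimation}. No new concentration argument is required, so I would expect the writeup to be short.
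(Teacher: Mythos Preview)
Your proposal is correct and follows essentially the same route as the paper's own proof: invoke Lemma~\ref{lemma:constant_prob_error_bound_all} (resp.\ Lemma~\ref{lemma:constant_prob_error_bound_some}) to get a per-batch $3/4$-probability bound, boost via Lemma~\ref{lemma:MoM} using $m_1 = 8\log(d/\delta)$ to obtain a $1-\delta/d$ guarantee per coordinate, then union-bound over $k \in [d]$ and rescale by $\eta_B(\eigengap)$. The paper's writeup is only three sentences and omits the points you flagged (that $E_k$ is deterministic so the interval $I_k$ is fixed, and that the $\hat\sigma^2_{k,\ell}$ are independent across $\ell$ by the disjoint-block construction), but these are exactly the details needed to make the argument go through, so your version is if anything more complete.
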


\begin{proof}
Recall the error decomposition equation~\eqref{eq:triangle_ineq} of Lemma~\ref{lemma:constant_prob_error_bound_all}. The first term is bounded in equation~\eqref{eq:true_hajek_close},
while the second term is bounded above as
\bas{
    \Abs{\hat{\sigma}_{k, \ell}^{2} - \E\bbb{\bb{e_k^{\top} \Eone{B}}^2}} &\le (1+\eps) \Abs{\frac{\sum_{j \in [m]} \bb{e_k^{\top} \Eone{B}^{(j)}}^2}{m} - \E\bbb{\bb{e_k^{\top} \Eone{B}}^2}} + \eps \E\bbb{\bb{e_k^{\top} \Eone{B}}^2} \\
    &+ \underbrace{\frac{8}{\eps} \sum_{j \in [m]} \frac{\bb{e_k^{\top} \Ezero{B}^{(j)}}^2 + \bb{e_k^{\top} \Etwo{B}^{(j)}}^2 + \bb{e_k^{\top} \Ethree{B}^{(j)}}^2 + \bb{e_k^{\top} \Efour{B}^{(j)}}^2}{m}}_{:= \errorsmall} = \alpha_{k,\ell} + \beta_{k,\ell},
    }
where $\eps = 2/\sqrt{m}$, $\alpha_{k, \ell} = \dfrac{8}{\eps m} \sum_{j \in [m]} (e_k^{\top} \Ezero{B}^{(j)})^2$ and $\beta_{k,\ell}$ is the sum of the remaining summands. We drop the index $k$ from the subscript presently. 

By assumption~\eqref{def:good_event_vmain} on $\vmain$, the event
\bas{
\mathcal{E} := \left\{\sin^2 \bb{\vmain, v_1} \le \frac{C\log\bb{\frac{1}{\delta}}\log\bb{N/\log\bb{\frac{1}{\delta}}}\Mtwo^{2}}{N(\eigengap)^{2}}\right\}
}
occurs with probability at least $1-\delta$. Under this assumption, Lemma~\ref{lemma:en0_tail_bound} implies 
\bas{
\alpha_{\ell} = 4\sqrt{m} \sum_{i\in [m]}\frac{\bb{e_k^{\top} \Ezero{n}^{(i)}}^2}{m} \lesssim \frac{\sqrt{m} \log\bb{\frac{1}{\delta}}\log\bb{N/\log\bb{\frac{1}{\delta}}}\Mtwo^{2}}{N(\eigengap)^{2}}. 
}
for all $\ell \in [m_1]$. Also, following the proof of Lemma~\ref{lemma:constant_prob_error_bound_all}, with probability at least $3/4$,
\bas{
\beta_{\ell} &\lesssim 8\bb{\frac{1}{\sqrt{m}} + \frac{2}{m}}\eta_B \bb{\eigengap} \V_{kk} + O\bb{\frac{b_k^2 \log^2 B}{B^{3/2} m^{1/2}} \bb{\frac{\Mfour}{\eigengap}}^2} \\
&+ O\bb{\frac{b_k^2 m^2 \log^2 d \log^4 B}{B^2}  \bb{\frac{\Mtwo}{\eigengap}}^4 + \frac{\lambda_1 \Mtwo^2 \log^2 B}{B^2 \bb{\eigengap}^3}}.
}
Also, the quantities $\beta_1, \beta_2, \dots, \beta_{\ell}$ are mutually independent as they do not involve the error term $\Ezero{n}$ or $\vmain$. By Lemma~\ref{lemma:MoM_modified}, the choice $m_1 = 8 \log (d/\delta)$ and conditioned on the event $\mathcal{E}$, the median
\ba{\label{eq:median_sig_bound}
\median\bb{\{\sigma_{k,\ell}^2\}_{\ell \in [m_1]}} = \eta_B \bb{\eigengap} \gamma_k
}
satisfies the bound of Lemma~\ref{lemma:constant_prob_error_bound_all}, with probability at least $1-\delta/d$. 

Taking a union bound over all $k \in [d]$ and rescaling equation~\ref{eq:median_sig_bound} by $\frac{1}{\eta_B \bb{\eigengap}}$, it follows that conditioned on $\mathcal{E}$, with probability at least $1-\delta$, 
\bas{
\Abs{\hat{\gamma}_k - \V_{kk}} &\le  8\bb{\frac{1}{\sqrt{m}} + \frac{2}{m}} \V_{kk} + O\bb{\frac{b_k^2 \log B}{\sqrt{mB}} \bb{\frac{\Mfour}{\eigengap}}^2 + \frac{B \log\bb{\frac{1}{\delta}}\log\bb{N/\log\bb{\frac{1}{\delta}}}}{N \log B} \bb{\frac{\Mtwo}{\eigengap}}^2}  \nonumber\\
&+ O\bb{\frac{b_k^2 m^2 \log^2 d \log^3 B}{B}  \bb{\frac{\Mtwo}{\eigengap}}^4 + \frac{\lambda_1 \Mtwo^2 \log B}{B \bb{\eigengap}^3}}.
}
Since the event $\mathcal{E}$ holds with probability $1-\delta$, the bound holds unconditionally with probability at least $1-2\delta$. 
The second result follows by Lemma~\ref{lemma:constant_prob_error_bound_some}.

\end{proof}

\begin{remark}\label{remark:prop2_higher_order}
The first term of the error of Lemma~\ref{lemma:high_prob_error_bound} is $O\bb{ \V_{kk}/\sqrt{m}}$, where $m = \log n$. We verify that for $N = n = m_1 m$ the other terms are smaller asymptotically in $n$. Since $m = \log n$ and $m_2 = 8 \log (20d/\delta)$ where $d = \text{poly}(n)$, we have
\bas{
B = \frac{n}{m m_1} = \Theta\bb{\frac{n}{\log n \log d}}.
}
Therefore, each summand with a $\sqrt{B}$ or $B$ in the denominator of the error of Lemma~\ref{lemma:high_prob_error_bound} is $\tilde{O}(1/\sqrt{n})$. It suffices to show that $\frac{1}{\sqrt{m}}$ asymptotically dominates $\frac{B \log N'}{N' \log B}$ where $N' = N/\log\bb{\frac{1}{\delta}}$. Note that $1 \le \log d \le 5 \log n$, $B = \tilde{\Theta}(n)$ and $\log B = \Theta(\log n)$. Therefore,
\bas{
\frac{B \log N'}{N' \log B} &= \frac{ \log (1/\delta) \log \bb{\frac{Bm_1 m}{\log (1/\delta)}}}{m_1 m \log B} = \frac{\log (1/\delta) \log (Bm_1) }{m_1 m \log B} + \frac{\log (1/\delta) \log \frac{m}{\log(1/\delta)}}{m_1 m \log B} \\
&= \Theta\bb{\frac{\log(1/\delta) \log n}{\log n \log (d/\delta) \log n} + \frac{\log \bb{\frac{m}{\log (1/\delta)}}}{\frac{m}{\log (1/\delta)} \log n}} = O\bb{\frac{1}{\log n}} = o\bb{\frac{1}{\sqrt{m}}}.
}
\end{remark}
\section{Entrywise Error Bounds}\label{appendix:entrywise_error_bounds}

\newcommand{\yij}{Y_{j,i}}

\begin{lemma}\label{lemma:oja_error_hajek_tail_bound}
Let the learning rate, $\eta_n$, be set according to Lemma~\ref{lemma:learning_rate_choice}. Further, for $X_i \sim \mathcal{P}, A_i = X_iX_i^{\top}$, let $\normop{A_i - \Sigma} \leq \Mone$ almost surely. Then, for $\delta \in \bb{0,1}$, with probability at least $1-\delta$, we have for all $k \in [d]$, 
\bas{
    \Abs{e_{k}^{\top} \Eone{n}} &\lesssim  \sqrt{\eta_{n}\bb{e_k^{\top}\vp R_0\vp^{\top}e_{k}}\log\bb{\frac{d}{\delta}}} +  \eta_{n}b_k\bb{\Mone\log\bb{\frac{d}{\delta}} + \Mtwo\sqrt{\frac{\lambda_{1}}{\eigengap}}\sqrt{\log\bb{\frac{d}{\delta}}}}
}
where $\Eone{n}$ is defined in Lemma~\ref{lemma:oja_error_decomposition}, $b_k := \norm{\vp^{\top}e_k}_{2}$,  $\widetilde{M} := \E\bbb{\vp^{\top}\bb{A_j-\Sigma}v_1v_1^{\top}\bb{A_j-\Sigma}^{\top}\vp}$ and $R_0 \in \R^{(d-1) \times (d-1)}$ with entires
\bas{
    R_0(k,l) := \frac{\widetilde{M}_{k\ell}}{2\lambda_1-\lambda_{k+1}-\lambda_{\ell+1}}, \;\; \forall k, l \in [d-1]
}
\end{lemma}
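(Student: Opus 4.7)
The plan is to apply Bernstein's inequality coordinatewise to the Hájek projection, using the explicit representation of $\Eone{n}$ as a sum of independent bounded random variables provided by Lemma~\ref{lemma:hajek_decomposition}. Specifically, write
\[
e_k^{\top}\Eone{n} = \eta_n \sum_{j=1}^{n} Z_{j,k}, \qquad Z_{j,k} := e_k^{\top} X_j^n = \frac{\sign(v_1^{\top}u_0)}{1+\eta_n\lambda_1}\, e_k^{\top} \vp \lambp^{n-j}\vp^{\top}(A_j-\Sigma)v_1,
\]
which is a sum of independent mean-zero random variables conditional on $\sign(v_1^{\top}u_0)$. For the almost-sure bound, use $\|\lambp\|_{\textup{op}}\le 1$, $\|\vp^{\top}e_k\|=b_k$, $\|v_1\|=1$, and the hypothesis $\|A_j-\Sigma\|_{\textup{op}}\le \Mone$ to conclude $|Z_{j,k}|\le b_k \Mone / (1+\eta_n\lambda_1)\le b_k\Mone$.

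For the variance, independence of the $Z_{j,k}$'s gives $\sum_j \Var(Z_{j,k})=\Var(e_k^{\top}Y_n)=e_k^{\top}\vp R^{(n)}\vp^{\top}e_k$ by Lemma~\ref{lemma:second_moment_matrix}. The target bound is phrased in terms of $R_0$, so I will compare $\eta_n R^{(n)}$ to $R_0$ using the Frobenius bound derived in the proof of Lemma~\ref{lemma:second_moment_matrix}, namely $\|\vp(\eta_n R^{(n)}-R_0)\vp^{\top}\|_{\textup{op}}\lesssim \eta_n\lambda_1\Mtwo^{2}/\eigengap$. Since $\|\vp^{\top}e_k\|=b_k$, this yields
\[
\sum_{j=1}^n \Var(Z_{j,k}) \;\le\; \frac{e_k^{\top}\vp R_0\vp^{\top}e_k}{\eta_n} + b_k^{2}\,\frac{\lambda_1\Mtwo^{2}}{\eigengap}.
\]

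With these ingredients, Bernstein's inequality applied to $\sum_j Z_{j,k}$ with failure probability $\delta/d$ gives
\[
\Abs{\sum_{j=1}^{n}Z_{j,k}} \;\lesssim\; \sqrt{\Bigl(\tfrac{e_k^{\top}\vp R_0\vp^{\top}e_k}{\eta_n} + b_k^{2}\tfrac{\lambda_1\Mtwo^{2}}{\eigengap}\Bigr)\log\tfrac{d}{\delta}} + b_k\Mone\log\tfrac{d}{\delta},
\]
and multiplying by $\eta_n$ together with $\sqrt{a+b}\le\sqrt{a}+\sqrt{b}$ produces exactly the three summands appearing in the lemma. A union bound over $k\in[d]$ (which is why the log factor is $\log(d/\delta)$ rather than $\log(1/\delta)$) delivers the uniform statement.

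The main obstacle is to ensure the variance term is expressed cleanly in terms of $R_0$ rather than $R^{(n)}$, since the target bound uses the scale-free matrix $R_0$ but Hoeffding naturally produces $R^{(n)}$. This is handled by the perturbation estimate from Lemma~\ref{lemma:second_moment_matrix} combined with the $b_k^{2}$ scaling from the projections $\vp^{\top}e_k$, and the resulting residual variance contributes the intermediate term $\eta_n b_k \Mtwo\sqrt{\lambda_1/\eigengap}\sqrt{\log(d/\delta)}$ after taking square roots. The almost-sure boundedness assumption on $\|A_j-\Sigma\|_{\textup{op}}$ is what allows the use of classical Bernstein rather than heavier-tailed variants, which is why this assumption is imposed in the hypothesis.
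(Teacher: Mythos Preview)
Your proposal is correct and matches the paper's proof essentially step for step: write $e_k^{\top}\Eone{n}$ as $\eta_n$ times a sum of independent mean-zero terms via Lemma~\ref{lemma:hajek_decomposition}, bound each summand almost surely by $b_k\Mone$, compute the total variance as $\eta_n^2 e_k^{\top}\vp R^{(n)}\vp^{\top}e_k$, convert $R^{(n)}$ to $R_0$ using the perturbation bound from Lemma~\ref{lemma:second_moment_matrix}, apply Bernstein with failure probability $\delta/d$, and union bound over $k\in[d]$. The only cosmetic difference is that the paper applies Bernstein first and then substitutes the variance estimate, whereas you substitute first; the resulting bound is identical.
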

\begin{proof}
Using Lemma~\ref{lemma:hajek_decomposition}, we have 
\bas{
e_{k}^{\top}\Eone{n} = \eta_{n}e_{k}^{\top}Y_n = \sum_{j=1}^n \eta_{n}e_{k}^{\top}X_j^{n}, \text{ where  } X_j^{n} := \vp\Lambda_{\perp}^{n-j}\vp^{\top}(A_j-\Sigma)v_1
}
Let $\alpha_{j} := \eta_{n}e_{k}^{\top}X_j^{n}$. Then, note that $\E\bbb{\alpha_j} = 0$. Furthermore, 
\bas{
    \E\bbb{\alpha_j^{2}} &= \eta_{n}^{2}e_{k}^{\top}\vp\lambp^{n-j}\E\bbb{\vp^{\top}\bb{A_j-\Sigma}v_1v_1^{\top}\bb{A_j-\Sigma}^{\top}\vp}\lambp^{n-j}\vp^{\top}e_{k} = \eta_{n}^{2}e_{k}^{\top}\vp\lambp^{n-j}\widetilde{M}\lambp^{n-j}\vp^{\top} e_{k} =: \sigma_{jk}^{2}, \\
    \Abs{\alpha_j} &= \Abs{\eta_{n}e_k^{\top}\vp\Lambda_{\perp}^{n-j}\vp^{\top}(A_j-\Sigma)v_1} \leq \eta_{n}b_k\normop{\lambp^{n-j}}\Mone \leq \eta_{n}b_k\Mone
}
Therefore, using the fact that $\alpha_{j}$ are independent of each other, along with Bernstein's inequality, (see e.g. Proposition 2.14 and the subsequent discussion in \cite{wainwright2019high}), we have with probability at least $1-\delta$, 
\bas{
    \Abs{e_{k}^{\top}\Eone{n}} &\leq \sqrt{\bb{\sum_{j=1}^{n}\sigma_{jk}^{2}}\log\bb{\frac{1}{\delta}}} + \eta_{n}\Mone b_k\log\bb{\frac{1}{\delta}}
}
Furthermore, considering a union bound over $k \in [d]$, we have for all $k \in [d]$, 
\bas{
   \Abs{e_{k}^{\top}\Eone{n}} &\leq \sqrt{\bb{\sum_{j=1}^{n}\sigma_{jk}^{2}}\log\bb{\frac{d}{\delta}}} + \eta_{n}\Mone\log\bb{\frac{d}{\delta}} 
}
Finally, using Lemma~\ref{lemma:second_moment_matrix}, we have
\bas{
\sum_{j=1}^{n}\sigma_{jk}^{2} &= \eta_{n}^{2}e_{k}^{\top}\bb{\sum_{j=1}^{n}\vp\lambp^{n-j}\widetilde{M}\lambp^{n-j}}\vp^{\top} e_{k} \\
&= \eta_{n}^{2}e_{k}^{\top}\E\bbb{Y_{n}{Y_n}^{\top}} e_{k} \\
&= \eta_{n}^{2}e_{k}^{\top}\vp\bb{R^{(n)}} \vp^{\top}e_{k} \\
&= \eta_{n}^{2}e_{k}^{\top}\vp\bb{\frac{R_0}{\eta_n} + \bb{R^{(n)} - \frac{R_0}{\eta_n}}}\vp^{\top} e_{k} \\
&\leq \eta_{n}e_{k}^{\top}\vp R_0 \vp^{\top}e_{k} + \eta_{n}^{2}b_{k}^{2}\norm{R^{(n)} - \frac{R_0}{\eta_n}}_{F} \\
&\leq \eta_{n}e_{k}^{\top}\vp R_0 \vp^{\top}e_{k} + \frac{\eta_n^{2}b_{k}^{2}\lambda_1\Mtwo^{2}}{\bb{\eigengap}}
}
which completes our proof.



\end{proof}

\begin{lemma}\label{lemma:oja_error_decomposition_higher_order_tail_bounds}
    Let the learning rate, $\eta_n$, be set according to Lemma~\ref{lemma:learning_rate_choice}. Then, for $\delta \in \bb{0,1}$, with probability at least $1-\delta$, we have
    \bas{
         \norm{\Etwo{n} + \Ethree{n} + \Efour{n}}_{2} &\lesssim \frac{ \eta_{n}^2n \Mtwo^2 \log d} {\sqrt{\delta}} + \frac{\sqrt{s_n}\eta_n\sqrt{n}\Mtwo\log\bb{d}}{\sqrt{\delta}} \\
         &\quad\quad + \frac{\log\bb{\frac{1}{\delta}}}{\delta^{3}} \bb{\sqrt{d}\exp\bb{-\eta_{n}n\bb{\lambda_{1}-\lambda_{2}}} + \frac{\sqrt{\eta_{n}^{3}n}\Mtwo^{2}\log\bb{d}}{\sqrt{\lambda_{1}-\lambda_{2}}}} 
    }
    and for all $k \in [d]$,
    \bas{
         \Abs{e_{k}^{\top}\bb{ \Etwo{n} + \Ethree{n} + \Efour{n}}} &\leq b_{k}\norm{\Etwo{n} + \Ethree{n} + \Efour{n}}_{2} \\
         &\lesssim \frac{b_k \eta_{n}^2n \Mtwo^2 \log d} {\sqrt{\delta}} + \frac{b_k\sqrt{s_n}\eta_n\sqrt{n}\Mtwo\log\bb{d}}{\sqrt{\delta}} \\
         &\quad\quad + b_{k}\frac{\log\bb{\frac{1}{\delta}}}{\delta^3}\bb{\sqrt{d}\exp\bb{-\eta_{n}n\bb{\lambda_{1}-\lambda_{2}}} + \frac{\sqrt{\eta_{n}^3 n}\Mtwo^{2}\log\bb{d}}{\sqrt{\lambda_{1}-\lambda_{2}}}} 
    }
    where $\Etwo{n}, \Ethree{n}, \Efour{n}$ are as defined in Lemma~\ref{lemma:oja_error_decomposition}, $b_k := \norm{\vp^{\top}e_k}_{2}$ and $s_n := \frac{C\log\bb{\frac{1}{\delta}}}{\delta^{3}}\frac{\eta_n\Mtwo^{2}}{\bb{\eigengap}}$ for a universal constant $C > 0$.
\end{lemma}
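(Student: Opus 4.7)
The strategy is straightforward: bound each of the three error terms $\Etwo{n}, \Ethree{n}, \Efour{n}$ separately using Lemmas~\ref{lemma:en2_norm}, \ref{lemma:en3_norm}, and \ref{lemma:en4_norm} respectively, each invoked at failure probability $\delta/3$, and then combine the results by a union bound together with the triangle inequality
\[
\norm{\Etwo{n}+\Ethree{n}+\Efour{n}}_{2} \le \norm{\Etwo{n}}_{2} + \norm{\Ethree{n}}_{2} + \norm{\Efour{n}}_{2}.
\]
Each of the three summands in the claimed bound will be matched against the output of one of the three lemmas, so the entire argument is a careful bookkeeping of terms and no new probabilistic tools are needed.

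Matching term by term, Lemma~\ref{lemma:en2_norm} contributes the first summand $\eta_n^2 n \Mtwo^2 \log d/\sqrt{\delta}$ directly. Lemma~\ref{lemma:en3_norm} splits into two pieces: the second piece gives exactly the $\sqrt{s_n}\,\eta_n\sqrt{n}\,\Mtwo\log d/\sqrt{\delta}$ term, while its first piece has the same $d\,\exp\bb{-2\eta_n n \bb{\eigengap} + \eta_n^2 n(\lambda_1^2+\Mtwo^2)} + \eta_n \Mtwo^2/\bb{\eigengap}$ structure appearing inside the square root of Lemma~\ref{lemma:en4_norm}'s bound, and will be absorbed into the third summand after $\sqrt{a+b}\le\sqrt{a}+\sqrt{b}$. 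Lemma~\ref{lemma:en4_norm} contributes the third summand after invoking items~2 and~3 of Lemma~\ref{lemma:learning_rate_choice} to bound the exponential overshoot $\exp\bb{\eta_n^2 n(\lambda_1^2+\Mtwo^2)}$ by a universal constant, and using the crude upper bounds $\delta^{-3/2} \le \log(1/\delta)\,\delta^{-3}$ (valid for $\delta$ bounded away from $1$) and $\sqrt{\log d}\le \log d$ to conform to the stated form.

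For the coordinate-wise bound, I would use the structural observation that each of $\Etwo{n}, \Ethree{n}, \Efour{n}$ is explicitly constructed as $\vp\vp^{\top}w$ for some random vector $w$ (readily visible from the definitions in Lemma~\ref{lemma:oja_error_decomposition}). Hence their sum equals its own projection onto $\linspan(\vp)$, and Cauchy--Schwarz yields
\[
\Abs{e_k^{\top}(\Etwo{n}+\Ethree{n}+\Efour{n})} = \Abs{e_k^{\top}\vp\vp^{\top}(\Etwo{n}+\Ethree{n}+\Efour{n})} \le \norm{\vp^{\top}e_k}_{2}\,\norm{\Etwo{n}+\Ethree{n}+\Efour{n}}_{2} = b_k\,\norm{\Etwo{n}+\Ethree{n}+\Efour{n}}_{2},
\]
which transfers the operator-norm bound to every coordinate with the prefactor $b_k$. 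The main obstacle is not conceptual but purely arithmetic: reconciling the $\delta^{-3/2}$-versus-$\delta^{-3}\log(1/\delta)$ and $\sqrt{\log d}$-versus-$\log d$ gaps between what Lemmas~\ref{lemma:en3_norm} and~\ref{lemma:en4_norm} actually produce and what the target statement records, and confirming that the sub-dominant piece from Lemma~\ref{lemma:en3_norm} is indeed dominated by the third target summand under Lemma~\ref{lemma:learning_rate_choice}.
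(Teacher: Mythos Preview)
Your proposal is correct and follows the same approach as the paper: bound each of $\Etwo{n},\Ethree{n},\Efour{n}$ via Lemmas~\ref{lemma:en2_norm},~\ref{lemma:en3_norm},~\ref{lemma:en4_norm} at failure probability $\delta/3$, union bound and triangle inequality, then use the fact that each term lies in the column space of $\vp$ to extract the $b_k$ factor. The bookkeeping you outline (using $\sqrt{a+b}\le\sqrt{a}+\sqrt{b}$, item~3 of Lemma~\ref{lemma:learning_rate_choice} to kill the $\exp(\eta_n^2 n(\lambda_1^2+\Mtwo^2))$ overshoot, and the crude $\delta^{-3/2}\lesssim \log(1/\delta)\,\delta^{-3}$) is exactly what the paper does; the paper additionally uses $s_n\le 1$ and the explicit expansion of $\sqrt{s_n}$ when absorbing the first piece of the $\Ethree{n}$ bound into the third summand, together with $\eta_n n(\eigengap)=\alpha\log n\gtrsim 1$, but these are precisely the ``reconciling'' steps you already flagged as the only obstacle.
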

\begin{proof}
    We have 
    \ba{
        \norm{\Etwo{n} + \Ethree{n} + \Efour{n}}_{2} \leq  \Abs{e_k^{\top}\Etwo{n}} + \Abs{e_k^{\top}\Ethree{n}} + \Abs{e_k^{\top}\Efour{n}} \label{eq:E2_E3_E4_triangle_ineq}
    }
    Using Lemma~\ref{lemma:en2_norm}, we have for all $k \in [d]$, with probability at least $1-\frac{\delta}{3}$, 
    \ba{
        \norm{\Etwo{n}} \leq \frac{12\eta_{n}^2 \mathcal{M}_2^2 n \log d} {\sqrt{\delta/3}} \leq  \frac{21\eta_{n}^2 \mathcal{M}_2^2 n \log d} {\sqrt{\delta}}. \label{eq:residual_en2_bound}
    }
    Using Lemma~\ref{lemma:en3_norm} , along with the definition of $\eta_n$ in Lemma~\ref{lemma:learning_rate_choice}, with probability at least $1-\frac{\delta}{3}$,
    \ba{
    \norm{\Ethree{n}}_{2} &\lesssim \frac{\sqrt{s_n}\sqrt{\log\bb{\frac{1}{\delta}}}}{\delta^{\frac{3}{2}}}  \bb{\sqrt{d}\exp\bb{-\eta_{n}n\bb{\lambda_{1}-\lambda_{2}}} + \frac{\sqrt{\eta_{n}}\Mtwo}{\sqrt{\lambda_{1}-\lambda_{2}}}} + \sqrt{s_n}\frac{\eta_n\sqrt{n}\Mtwo\log\bb{d}}{\sqrt{\delta}} \notag \\
    &\lesssim \frac{\sqrt{\log\bb{\frac{1}{\delta}}}}{\delta^{\frac{3}{2}}}  \bb{\sqrt{d}\exp\bb{-\eta_{n}n\bb{\lambda_{1}-\lambda_{2}}} + \sqrt{\frac{C \log (1/\delta)}{\delta^3} \frac{\eta_{n} \Mtwo^2}{\eigengap}} \cdot \frac{\sqrt{\eta_{n}}\Mtwo \log d}{\sqrt{\lambda_{1}-\lambda_{2}}}} \notag \\
    &\lesssim \frac{\log\bb{\frac{1}{\delta}}}{\delta^3} 
    \bb{\sqrt{d}\exp\bb{-\eta_{n}n\bb{\lambda_{1}-\lambda_{2}}} + \frac{\sqrt{\eta_{n}^3 n}\Mtwo^{2}\log\bb{d}}{\sqrt{\lambda_{1}-\lambda_{2}}}},
    \label{eq:residual_en3_bound}
    }
    where the second inequality used $s_n \le 1$. 
    Using Lemma~\ref{lemma:en4_norm}, along with the definition of $\eta_n$ in Lemma~\ref{lemma:learning_rate_choice}, with probability at least $1-\frac{\delta}{3}$,
    \ba{
    \norm{\Efour{n}}_{2} \lesssim \frac{1}{\delta^{\frac{3}{2}}}  \bb{\sqrt{d}\exp\bb{-\eta_{n}n\bb{\lambda_{1}-\lambda_{2}}} + \frac{\sqrt{\eta_{n}^{3}n}\Mtwo^{2}\log\bb{d}}{\sqrt{\lambda_{1}-\lambda_{2}}}} \label{eq:residual_en4_bound}
    }
    The first result follows by a union bound over \eqref{eq:residual_en2_bound}, \eqref{eq:residual_en3_bound}, \eqref{eq:residual_en4_bound} and substituting in \eqref{eq:E2_E3_E4_triangle_ineq}.
    Finally, note that using Lemma~\ref{lemma:oja_error_decomposition}, $\exists x_{n}, y_{n}, z_{n} \in \R^{d-1}$ such that  of $\Etwo{n} = \vp\vp^{\top}x_{n}$, $\Ethree{n} = \vp\vp^{\top}x_{n}$, $\Efour{n} = \vp\vp^{\top}x_{n}$. Therefore,  
    \bas{
        \Abs{e_{k}^{\top}\bb{\Etwo{n} + \Ethree{n} + \Efour{n}}} &= \Abs{e_{k}^{\top}\vp\vp^{\top}\bb{x_n + y_n + z_n}} \\
        &= \Abs{e_{k}^{\top}\vp\vp^{\top}\vp\vp^{\top}\bb{x_n + y_n + z_n}} \\
        &\leq \norm{e_{k}^{\top}\vp\vp^{\top}}_{2}\norm{\vp\vp^{\top}\bb{x_n + y_n + z_n}}_{2} \\
        &= b_k \norm{\Etwo{n} + \Ethree{n} + \Efour{n}}_{2}
    }
    which completes the proof of the second result.
\end{proof}
Now we are ready to prove a detailed version of Theorem~\ref{thm:main:entrywise_concentration_bound}.
\begin{lemma}\label{lemma:entrywise_concentration_bound} Let the learning rate, $\eta_n$, be set according to Lemma~\ref{lemma:learning_rate_choice}. Further, for $X_i \sim \mathcal{P}, A_i = X_iX_i^{\top}$, let $\normop{A_i - \Sigma} \leq \Mone$ almost surely. Define $\roja := \voja - \bb{v_1^{\top}\voja}v_1$.  Then, with probability at least $1-\delta$, for all $k \in [d]$, 
\bas{
    \Abs{e_k^{\top}\roja} &\lesssim \sqrt{\eta_{n}\bb{e_k^{\top}\vp R_0\vp^{\top}e_{k}}\log\bb{\frac{d}{\delta}}} +  \eta_{n}b_k\bb{\Mone\log\bb{\frac{d}{\delta}} + \Mtwo\sqrt{\frac{\lambda_{1}}{\eigengap}}\sqrt{\log\bb{\frac{d}{\delta}}}} \\
    &\quad\quad + b_{k}\frac{\log\bb{\frac{1}{\delta}}}{\delta^3}\bb{\sqrt{d}\exp\bb{-\eta_{n}n\bb{\lambda_{1}-\lambda_{2}}} + \frac{\sqrt{\eta_{n}^{3}n}\Mtwo^{2}\log\bb{d}}{\sqrt{\lambda_{1}-\lambda_{2}}}} \\
    &\quad\quad + \frac{b_k \eta_{n}^2n \Mtwo^2 \log d} {\sqrt{\delta}} + \frac{b_k\sqrt{s_n}\eta_n\sqrt{n}\Mtwo\log\bb{d}}{\sqrt{\delta}} 
}
where $b_k := \norm{\vp^{\top}e_k}_{2}$, $s_n := \frac{C\log\bb{\frac{1}{\delta}}}{\delta^{3}}\frac{\eta_n\Mtwo^{2}}{\bb{\eigengap}}$,  $\widetilde{M} := \E\bbb{\vp^{\top}\bb{A_j-\Sigma}v_1v_1^{\top}\bb{A_j-\Sigma}^{\top}\vp}$ and $R_0 \in \R^{(d-1) \times (d-1)}$ with entires
    \bas{
    R_0(k,l) = \frac{\widetilde{M}_{k\ell}}{2\lambda_1-\lambda_{k+1}-\lambda_{\ell+1}}, k, l \in [d-1]
    }.
\end{lemma}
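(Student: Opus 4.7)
The plan is to directly combine the two key concentration bounds already established in this subsection, namely Lemma~\ref{lemma:oja_error_hajek_tail_bound} for the Hájek projection term $\Eone{n}$ and Lemma~\ref{lemma:oja_error_decomposition_higher_order_tail_bounds} for the residual terms $\Etwo{n} + \Ethree{n} + \Efour{n}$. The only setup step is to observe that $\roja := \voja - (v_{1}^{\top}\voja)v_{1}$ admits a clean four-term decomposition without the proxy-error term $\Ezero{n}$: by the definition of $\Ezero{n} = (v_{1}^{\top}\voja)v_{1} - (\vmain^{\top}\voja)\vmain$ in Lemma~\ref{lemma:oja_error_decomposition}, substituting $\vmain = v_{1}$ (equivalently, rearranging the identity in equation~\eqref{eq:ojadecomp}) gives
\[
\roja = \Eone{n} + \Etwo{n} + \Ethree{n} + \Efour{n}.
\]

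Next, I would apply the triangle inequality to split the target quantity as $|e_k^{\top}\roja| \le |e_k^{\top}\Eone{n}| + |e_k^{\top}(\Etwo{n} + \Ethree{n} + \Efour{n})|$, and then control each summand on a common high-probability event. Specifically, I would invoke Lemma~\ref{lemma:oja_error_hajek_tail_bound} with failure probability $\delta/2$ to obtain, uniformly over $k \in [d]$,
\[
|e_k^{\top}\Eone{n}| \lesssim \sqrt{\eta_{n}\bb{e_k^{\top}\vp R_0 \vp^{\top}e_k}\log(2d/\delta)} + \eta_{n}b_{k}\bb{\Mone\log(2d/\delta) + \Mtwo\sqrt{\tfrac{\lambda_1}{\eigengap}}\sqrt{\log(2d/\delta)}},
\]
and invoke Lemma~\ref{lemma:oja_error_decomposition_higher_order_tail_bounds} with failure probability $\delta/2$ to control $|e_k^{\top}(\Etwo{n} + \Ethree{n} + \Efour{n})|$ by the $b_k$-scaled operator-norm bound stated there. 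A union bound over these two events yields the claim with overall failure probability $\delta$, and the logarithmic factors $\log(2d/\delta)$ absorb into the stated $\log(d/\delta)$ up to constants.

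There is essentially no hard step here; this is a bookkeeping lemma that stitches together the Hájek-projection Bernstein bound (Lemma~\ref{lemma:oja_error_hajek_tail_bound}) and the residual tail bound (Lemma~\ref{lemma:oja_error_decomposition_higher_order_tail_bounds}) via the decomposition of $\roja$ and a simple union bound. The only minor thing to verify is that applying each of the two constituent lemmas at level $\delta/2$ indeed yields the bounds as stated (with $\log(d/\delta)$ absorbing the $\log(2d/\delta)$), and that the $\sqrt{\delta}$ and $\delta^{3}$ denominators inherited from Lemma~\ref{lemma:oja_error_decomposition_higher_order_tail_bounds} match the form written in the final statement up to constants. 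The resulting bound then boosts trivially to the high-probability form of Theorem~\ref{thm:main:entrywise_concentration_bound} (with the specific choice $\delta = 1/4$) via a median-of-means/geometric-aggregation step as noted after Theorem~\ref{thm:main:entrywise_concentration_bound}.
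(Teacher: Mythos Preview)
Your proposal is correct and matches the paper's own proof essentially line for line: the paper also writes $e_k^{\top}\roja = e_k^{\top}\Eone{n} + e_k^{\top}\Etwo{n} + e_k^{\top}\Ethree{n} + e_k^{\top}\Efour{n}$ via Lemma~\ref{lemma:oja_error_decomposition}, applies the triangle inequality, and concludes by a union bound over Lemma~\ref{lemma:oja_error_hajek_tail_bound} and Lemma~\ref{lemma:oja_error_decomposition_higher_order_tail_bounds}.
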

\begin{proof}
    Using Lemma~\ref{lemma:oja_error_decomposition}, we have
    \bas{
        e_{k}^{\top}\roja := e_{k}^{\top}\Eone{n} + e_{k}^{\top}\Etwo{n} + e_{k}^{\top}\Ethree{n} + e_{k}^{\top}\Efour{n}
    }
    Therefore, 
    \bas{
        \Abs{e_{k}^{\top}\roja} &\leq \Abs{e_{k}^{\top}\Eone{n}} + \Abs{e_{k}^{\top}\Etwo{n} + e_{k}^{\top}\Ethree{n} + e_{k}^{\top}\Efour{n}}
    }
    The result then following by a union bound over the events defined in Lemma~\ref{lemma:oja_error_hajek_tail_bound} and Lemma~\ref{lemma:oja_error_decomposition_higher_order_tail_bounds}.
\end{proof}

\section{Central Limit Theorem for entries of the Oja vector}\label{appendix:entrywise_clt}
We consider the following setup from~\cite{ChernoCLT2015}.
Let $\mathcal{A}^{\text{re}}$ denote the class of all hyperrectangles in $\mathbb{R}^p$. That is, $\mathcal{A}^{\text{re}}$ consists of all sets $A$ of the form:
\begin{equation}
    A = \{w \in \mathbb{R}^p : a_j \leq w_j \leq b_j \text{ for all } j = 1, \dots, p\}
\end{equation}
for some real values $a_j$ and $b_j$ satisfying $-\infty \leq a_j \leq b_j \leq \infty$ for each $j = 1, \dots, p$. 

Consider 
\[
S_n^X  =\frac{1}{\sqrt{n}} \sum_{i=1}^{n} X_i.
\]
where $X_i,i\in [n]\in \mathbb{R}^p$ are independent random vectors with $\E[X_{ij}]=0$ and $\E[X_{ij}^2]<\infty$, for $i\in [n], j\in [p]$.
Consider the following  Gaussian approximation to $S_n^X$.
Define the normalized sum for the Gaussian random vectors:
\[
S_n^Y =\frac{1}{\sqrt{n}} \sum_{i=1}^{n} Y_i,
\]
where $Y_1, \dots, Y_n$ be independent mean zero Gaussian random vectors in $\mathbb{R}^p$ such that each $Y_i$ has the same covariance matrix as $X_i$. We are interested in bounding the quantity 
\bas{\rho_{n}\bb{\mathcal{\mathcal{A}^{\text{re}}}} := \sup_{A \in \mathcal{A}^{\text{re}}}\Abs{\Prob\bb{S_{n}^{X} \in A} - \Prob\bb{S_{n}^{Y} \in A}}}
Let $C_n \geq 1$ be a sequence of constants possibly growing to infinity as $n \rightarrow \infty$, and let $b, q > 0$ be some constants. Assume that $X_{i}$ satisfy,
   

\begin{itemize}
    \item[(M.1)] \( n^{-1} \sum_{i=1}^{n} \mathbb{E}[X_{ij}^2] \geq b \) \textit{for all} \( j = 1, \dots, p \),
    \item[(M.2)] \( n^{-1} \sum_{i=1}^{n} \mathbb{E}[|X_{ij}|^{2+k}] \leq C_n^k \) \textit{for all} \( j = 1, \dots, p \) \textit{and} \( k = 1,2 \).
\end{itemize}

Further, the authors consider examples where one of the following conditions also holds:

\begin{itemize}
    \item[(E.1)] \( \mathbb{E}[\exp(|X_{ij}| / C_n)] \leq 2 \) \textit{for all} \( i = 1, \dots, n \) \textit{and} \( j = 1, \dots, p \),
    \item[(E.2)] \( \mathbb{E}[(\max_{1 \leq j \leq p} |X_{ij}| / C_n)^q] \leq 2 \) \textit{for all} \( i = 1, \dots, n \).
\end{itemize}

Let
\[
D_n^{(1)} = \left( \frac{C_n^2 \log^7 (pn)}{n} \right)^{1/6}, \quad
D_{n,q}^{(2)} = \left( \frac{C_n^2 \log^3 (pn)}{n^{1 - 2/q}} \right)^{1/3}.
\]

Now we present Proposition 2.1 \citep{ChernoCLT2015}.

\begin{theorem}[\label{thm:prop2pt1}Proposition 2.1~\citep{ChernoCLT2015}]
    \textit{Suppose that conditions (M.1) and (M.2) are satisfied. Then under (E.1), we have}
\[
\rho_n(\mathcal{A}^{\text{re}}) \leq C D_n^{(1)},
\]
\textit{where the constant} \( C \) \textit{depends only on} \( b \); \textit{while under (E.2), we have}
\[
\rho_n(\mathcal{A}^{\text{re}}) \leq C \{ D_n^{(1)} + D_{n,q}^{(2)} \},
\]
\textit{where the constant} \( C \) \textit{depends only on} \( b \) \textit{and} \( q \).
\end{theorem}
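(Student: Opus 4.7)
\textbf{Proof proposal for Theorem~\ref{thm:prop2pt1}.}

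The plan is to follow the three-pillar strategy underlying the high-dimensional CLT of Chernozhukov, Chetverikov, and Kato: smoothing of the indicator of a hyperrectangle, Lindeberg-style interpolation between $S_n^X$ and $S_n^Y$, and Gaussian anti-concentration to pay back the smoothing error. First, for a fixed hyperrectangle $A=\prod_j[a_j,b_j]$ and a smoothing parameter $\beta>0$, I would approximate the indicator $\1_A$ by a smooth function $f_\beta$ built from the soft-max $F_\beta(w)=\beta^{-1}\log(\sum_j e^{\beta w_j})$, since $F_\beta$ approximates $\max_j w_j$ up to $\beta^{-1}\log p$ and all its partial derivatives obey the key recursion $\sum_j|\partial^k f_\beta/\partial x_{j_1}\cdots\partial x_{j_k}|\lesssim \beta^{k-1}$. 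One then forms an $A$-specific smoothed indicator $g_{\beta,A}$ whose first three derivatives satisfy the needed summability bounds and whose error $|g_{\beta,A}-\1_A|$ is controlled by the indicator of an $O(\beta^{-1}\log p)$-boundary of $A$.

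Next, I would apply the Lindeberg-Slepian interpolation to $\E[g_{\beta,A}(S_n^X)]-\E[g_{\beta,A}(S_n^Y)]$. Introducing the interpolated sum $Z_i(t)=\sqrt{t}X_i+\sqrt{1-t}Y_i$ appropriately aggregated, and swapping $X_i$ for $Y_i$ one coordinate block at a time, a third-order Taylor expansion gives an upper bound of the form $n^{-1/2}\beta^2 \cdot n^{-1}\sum_i \E[\|X_i\|_\infty^3+\|Y_i\|_\infty^3]$ (up to logarithmic factors in $p$), because only the third derivatives survive after the first two moments are matched. To control the $\ell_\infty$-third-moment term I would perform a truncation: write $X_i=X_i^\leq+X_i^>$ where $X_i^\leq$ is the truncation at level $u_n$. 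Under (E.1) one takes $u_n\asymp C_n\log(pn)$; under (E.2) one takes $u_n\asymp C_n n^{1/q}$. The tail contribution from $X_i^>$ is handled directly by Markov's inequality applied to the exponential/polynomial moment assumption, while the bounded truncated part feeds into the Lindeberg bound through $\E[\|X_i^\leq\|_\infty^3]\lesssim u_n C_n^2$ (using (M.2)).

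The smoothing error is bought back via Nazarov's anti-concentration inequality (cf.\ Theorem~\ref{thm:Nazarov} referenced earlier in the paper): under assumption (M.1), the probability that $S_n^Y$ lies within an $\varepsilon$-neighborhood of the boundary of any hyperrectangle is bounded by $C_b\,\varepsilon\sqrt{\log p}$. Thus replacing $g_{\beta,A}$ by $\1_A$ costs $O(\beta^{-1}\log^{3/2}p)$. Putting all three pieces together and optimizing the smoothing parameter $\beta$ against the Lindeberg cost yields the bound
\[
\rho_n(\mathcal{A}^{\mathrm{re}})\lesssim \bigl(C_n^2 \log^7(pn)/n\bigr)^{1/6}+\bigl(u_n^2\log^3(pn)/n\bigr)^{1/3},
\]
which under (E.1) simplifies to $D_n^{(1)}$ and under (E.2) produces the additional $D_{n,q}^{(2)}$ term after choosing $u_n\asymp C_n n^{1/q}$ and adding the Markov tail $C_n^q/u_n^q$.

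The main obstacle, and the reason this result is delicate, is the simultaneous control of the high-order partial-derivative norms of $g_{\beta,A}$ and the need to exploit the mean-zero and matched-second-moment structure carefully so that the $\beta^2/\sqrt{n}$ scaling (rather than $\beta/\sqrt{n}$ or $\beta^3/\sqrt{n}$) appears in the Lindeberg bound; getting a sub-logarithmic polylog factor in $p$ requires the precise $\ell_1\to\ell_\infty$-style summability of the derivatives of the soft-max, together with Nazarov's sharp $\sqrt{\log p}$ anti-concentration rate. Once these two ingredients are in place, the rest of the proof is a careful bookkeeping of truncation levels and optimization over $\beta$.
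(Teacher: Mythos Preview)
The paper does not prove this theorem at all: it is quoted verbatim as Proposition~2.1 of \cite{ChernoCLT2015} and used as a black box in the proof of Proposition~\ref{prop:clt_appendix}. There is therefore no ``paper's own proof'' to compare your proposal against.

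That said, your sketch is a faithful high-level summary of the strategy in the original Chernozhukov--Chetverikov--Kato paper: soft-max smoothing of the hyperrectangle indicator, Lindeberg/Slepian interpolation with a truncation, and Nazarov's anti-concentration to undo the smoothing, followed by an optimization over the smoothing parameter. If your goal were to reproduce the CCK proof, you have identified the right pillars and the right tension (matching second moments so that only the third-order term survives, and controlling the summed partial derivatives of the soft-max to get the correct polylog exponent). But for the purposes of this paper no proof is required---you should simply cite the result.
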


Next, we will need the following result cited by~\citet{chernozhukov2017detailed}.
\begin{theorem}[\label{thm:Nazarov}Nazarov's inequality~\citep{nazarov2003maximal}, Theorem~1 in~\citep{ChernoCLT2015}]
    
Let \( Y = (Y_1, \dots, Y_p)^T \) be a centered Gaussian random vector in \( \mathbb{R}^p \) such that 
\[
\mathbb{E}[Y_j^2] \geq \sigma^2, \quad \text{for all } j = 1, \dots, p,
\]
for some constant \( \sigma > 0 \). Then, for every \( y \in \mathbb{R}^p \) and \( \delta > 0 \),
\[
\Prob(Y \leq y + \delta) - \Prob(Y \leq y) \leq \frac{\delta}{\sigma} (\sqrt{2 \log p} + 2).
\]
Here, for vector $y \in \R^{p}$, $y + \delta$ denotes the vector constructed by adding $\delta$ to each entry of $y$.
\end{theorem}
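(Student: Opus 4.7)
The plan is to reduce the statement to a bound on the density of an appropriate one-dimensional marginal, then exploit Gaussian integration by parts combined with a soft-max smoothing that pays only a logarithmic factor in $p$.

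First, by rescaling $Y \mapsto Y/\sigma$ and $y, \delta$ correspondingly, I would assume $\sigma = 1$. Writing $Z_j := Y_j - y_j$, the event $\{Y \leq y + \delta\} \setminus \{Y \leq y\}$ coincides with $\{0 < \max_j Z_j \leq \delta\}$, where each $Z_j$ is Gaussian with $\mathrm{Var}(Z_j) \geq 1$. I would then introduce the log-sum-exp $F_\beta(z) := \beta^{-1} \log \sum_{j=1}^p e^{\beta z_j}$, a smooth convex approximation of $\max_j z_j$ satisfying $\max_j z_j \leq F_\beta(z) \leq \max_j z_j + (\log p)/\beta$. Its gradient $\nabla F_\beta(z) = \pi(z)$ is the softmax, which lies in the probability simplex; in particular $\|\nabla F_\beta(z)\|_1 = 1$ and $\|\nabla F_\beta(z)\|_\infty \leq 1$ pointwise.

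With $\beta$ chosen so that the smoothing bias $(\log p)/\beta$ is on the scale of $\delta$, and a $C^1$ truncation $\chi$ of the indicator of the strip $(0, \delta]$ with $\|\chi'\|_\infty \lesssim 1/\delta$, the probability in question is controlled (up to smoothing error) by $\mathbb{E}[\chi(F_\beta(Z))]$. Applying Gaussian integration by parts along the all-ones direction $\mathbf{1}$ and using $\nabla F_\beta = \pi$, the derivative $\frac{d}{dt} \mathbb{E}[\chi(F_\beta(Z) + t)]$ reduces to $\mathbb{E}[\chi'(F_\beta(Z))]$ multiplied by a quantity controlled through the covariance of $\pi(Z)^\top Z$. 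The tuning $\beta \asymp \sqrt{\log p}/\delta$ then produces the $\sqrt{2\log p}$ factor, and integrating the density bound over the strip of width $\delta$ yields the claimed $(\delta/\sigma)(\sqrt{2\log p} + 2)$.

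The main obstacle is avoiding a naive bound of order $p \cdot (1/\sqrt{2\pi})$ that would arise by summing the densities of each individual $Z_j$ at the max; this is off by a factor of roughly $\sqrt{p/\log p}$. The soft-max smoothing, combined with $\pi$ being a probability vector rather than a generic $\ell_2$-unit vector, is exactly what buys the improvement from $p$ to $\sqrt{\log p}$. The subtle technical point is handling arbitrary correlation structure, where $\pi(Z)^\top \Sigma \pi(Z)$ could collapse below $\min_j \Sigma_{jj}$; the standard route is a Slepian-style comparison with an independent Gaussian proxy preserving the diagonal of $\Sigma$, which preserves the variance lower bound and feeds into the integration-by-parts bound cleanly.
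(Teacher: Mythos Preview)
The paper does not prove this theorem; it is quoted as a known external result from~\citet{nazarov2003maximal} and~\citet{chernozhukov2017detailed} and invoked as a black box inside the proof of Proposition~\ref{prop:clt_appendix}. There is therefore no in-paper argument to compare against.

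On your sketch itself: the reduction to $\sigma=1$ and the identification $\{Y\le y+\delta\}\setminus\{Y\le y\}=\{0<\max_j Z_j\le\delta\}$ are correct, and approaching the max via a log-sum-exp smoothing is in the spirit of the high-dimensional CLT machinery. However, two of your steps do not go through as written. Your ``Gaussian integration by parts along $\mathbf{1}$'' is just the chain rule, $\tfrac{d}{dt}\E[\chi(F_\beta(Z)+t)]=\E[\chi'(F_\beta(Z)+t)]$; no covariance-type factor $\pi(Z)^\top\Sigma\pi(Z)$ or $\pi(Z)^\top Z$ appears from this, and you have not specified what functional Stein's identity would be applied to in order to control $\E[\chi'(F_\beta(Z))]$ itself. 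More seriously, the Slepian reduction to independent coordinates is a genuine gap: Slepian's lemma orders $\Prob(\max_j Z_j\le t)$ monotonically in the off-diagonal covariances, but what you need is a bound on the \emph{difference} $\Prob(\max_j Z_j\le t+\delta)-\Prob(\max_j Z_j\le t)$. Under a Slepian comparison both endpoints shift in the same direction, so nothing follows for the gap; and for a general $\Sigma$ with off-diagonals of mixed sign there is no ordering between the correlated and the independent model to begin with.

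For reference, the published proof in~\citet{chernozhukov2017detailed} avoids smoothing altogether. After standardizing each coordinate to unit variance it differentiates $G(t)=\Prob(Y_j\le y_j+t\text{ for all }j)$ and bounds the resulting density $G'(t)=\sum_j\phi(y_j+t)\,\Prob(Y_k\le y_k+t,\ k\neq j\mid Y_j=y_j+t)$ directly, with the correlation handled inside the conditional probabilities rather than through any comparison lemma.
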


Now we are ready to state our main result in Proposition~\ref{prop:clt_appendix},

\begin{proposition}[\label{prop:clt_appendix}CLT for a suitable subset of entries]
Suppose the learning rate $\eta_n$, set according to Lemma~\ref{lemma:learning_rate_choice}, satisfies $\frac{\Mtwo^{2} \lambda_1 \eta_n}{\bb{\eigengap}^2}\leq \frac{C_0 b}{2}$ for some $b>0$ and a small universal constant $C_0$.  Let $\{X_i\}_{i=1}^n\in \mathbb{R}^d$ be i.i.d. mean-zero random vectors with covariance matrix \( \Sigma \) such that for all vectors \( v \in \mathbb{R}^d \), we have
\[
\mathbb{E} \left[ \exp \left( v^T X_1 \right) \right] \leq \exp \left( \frac{\sigma^2 v^T \Sigma v}{2} \right).
\]

Let $\roja := \voja - (v_{1}^{\top}\voja)v_{1}$. Consider the set $J := \{j:  \V_{jj}\geq  b\}$, and let $p :=|J|$.
Let $H_i := \frac{\sign(u_0^Tv_1)}{1+\eta_n\lambda_1} \vp\lambp^{n-i}\vp^{\top}\bb{X_{i}X_{i}^{\top}-\Sigma}v_{1}$. Let $Y_i\in \mathbb{R}^p$ be independent mean zero normal vectors such that $$\E[Y_iY_i^T]=\frac{n\eta_n}{\eigengap}\E[H_i[J]H_i[J]^T].$$ 
    Then, {\small
    \bas{
    &\sup_{A\in \mathcal{A}_{re}}\left|P\bb{\frac{\roja[J]}{\sqrt{\bb{\eigengap}\eta_n}}\in A}-P\bb{\frac{\sum_i Y_i}{\sqrt{n}}\in A}\right|= 
   \tilde{O}\bb{ \max\bb{\bb{\frac{\Mfour}{\eigengap }}^{1/3}n^{-1/6}, \bb{\frac{\Mtwo}{\eigengap}}^{1/2}n^{-1/8}}},
    }
    }
    where $\tilde{O}$ hides logarithmic factors in $n$, $p$, and constants depending on $b$.
\end{proposition}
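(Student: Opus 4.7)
The plan is to approximate $\roja[J]/\sqrt{\eta_n(\eigengap)}$ by its H\'ajek projection, apply the high-dimensional CLT of \cite{ChernoCLT2015} (Theorem~\ref{thm:prop2pt1}) to the projection, and absorb the remainder via an $\epsilon$-expansion argument combined with Nazarov's anti-concentration inequality (Theorem~\ref{thm:Nazarov}). Using Lemma~\ref{lemma:oja_error_decomposition} with $\vmain = v_1$ (so that $\Ezero{n}=0$) together with Lemma~\ref{lemma:hajek_decomposition}, I write
\[
\roja = \Eone{n} + \Res_n, \quad \Eone{n} = \eta_n \sum_{i=1}^n H_i, \quad \Res_n := \Etwo{n}+\Ethree{n}+\Efour{n},
\]
so that after scaling by $\sqrt{\eta_n(\eigengap)}$ the H\'ajek piece becomes $\frac{1}{\sqrt n}\sum_i \tilde H_i[J]$, where $\tilde H_i := \sqrt{n\eta_n/(\eigengap)}\, H_i$ has covariance matching that of $Y_i$.

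First I apply Theorem~\ref{thm:prop2pt1} to the coordinates of $\tilde H_i[J]$, which yields Gaussian approximation of $\frac{1}{\sqrt n}\sum_i \tilde H_i[J]$ by $\frac{1}{\sqrt n}\sum_i Y_i$ over hyperrectangles at rate $\tilde O((\Mfour/(\eigengap))^{1/3}\, n^{-1/6})$. Condition (M.1) holds because $\frac{1}{n}\sum_i \E[\tilde H_{ij}^2]=\E[(e_j^\top\Eone{n})^2]/(\eta_n(\eigengap))$, which by Lemma~\ref{lemma:second_moment_matrix} differs from $\V_{jj}\ge b$ by at most $\eta_n\lambda_1\Mtwo^2/(\eigengap)^2\le C_0 b/2$. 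Conditions (M.2) and (E.1)/(E.2) follow from the pointwise bound $|\tilde H_{ij}|\lesssim \sqrt{n\eta_n/(\eigengap)}\, b_j\|A_i-\Sigma\|$ combined with the sub-exponential tails on $\|A_i-\Sigma\|$ implied by the sub-Gaussian hypothesis on $X_i$, with the constant $C_n$ in Theorem~\ref{thm:prop2pt1} absorbing $\Mfour/(\eigengap)$ and polylogarithmic factors in $n,d$.

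For the remainder I use $\|\Res_n[J]\|_\infty \le \|\Res_n\|_2$ and collect the high-probability operator-norm bounds of Lemmas~\ref{lemma:en2_norm},~\ref{lemma:en3_norm}, and~\ref{lemma:en4_norm}. Substituting the choice of learning rate from Lemma~\ref{lemma:learning_rate_choice} and scaling by $1/\sqrt{\eta_n(\eigengap)}$, these give that with probability at least $1-\delta$ the scaled remainder is bounded by $\tilde O(\Mtwo/(\eigengap))\cdot n^{-1/2}\cdot \delta^{-3}$, where the $\delta^{-3}$ factor arises from the Gaussian anticoncentration for $|v_1^\top g|$ combined with the trace bounds on $\vp^\top B_n\vp$ that drive Lemmas~\ref{lemma:en3_norm}--\ref{lemma:en4_norm}. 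For any hyperrectangle $A$ and any $\epsilon>0$, letting $A^\epsilon$ denote its $\ell_\infty$-$\epsilon$-expansion (again a hyperrectangle),
\[
\Prob\Par{\tfrac{\roja[J]}{\sqrt{\eta_n(\eigengap)}}\in A}\le \Prob\Par{\tfrac{\Eone{n}[J]}{\sqrt{\eta_n(\eigengap)}}\in A^\epsilon}+\delta;
\]
the first probability is within $\tilde O(n^{-1/6})$ of $\Prob(S_Y/\sqrt n\in A^\epsilon)$ by the CLT step, and Nazarov's inequality with variance lower bound $b/2$ gives $\Prob(S_Y/\sqrt n\in A^\epsilon)-\Prob(S_Y/\sqrt n\in A)\lesssim \epsilon\sqrt{\log p}/\sqrt b$; the reverse inequality is symmetric. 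Optimizing $\epsilon+\delta$ subject to $\epsilon \asymp \tilde O(\Mtwo/(\eigengap))\, n^{-1/2}\,\delta^{-3}$ gives $\delta \asymp (\Mtwo/(\eigengap))^{1/4} n^{-1/8}$, yielding the second $\tilde O((\Mtwo/(\eigengap))^{1/2} n^{-1/8})$ term.

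The main obstacle I anticipate is tracking the polynomial-in-$1/\delta$ dependence through the tail bounds for $\Ethree{n}$ and $\Efour{n}$: both rely on anticoncentration for $|v_1^\top g|$, then on Markov-type inequalities for $\Tr(\vp^\top B_n\vp\vp^\top B_n^\top \vp)$, and must be combined with the $\sin^2$-error bound $s_n$ used in Lemma~\ref{lemma:en3_norm}. Carefully bookkeeping these factors---while checking that the coordinate-wise $\ell_2$-to-$\ell_\infty$ passage over $J$ does not introduce additional dimension dependence---is what ultimately fixes the rate at $n^{-1/8}$ rather than something sharper.
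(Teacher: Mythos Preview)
Your proposal is correct and follows essentially the same route as the paper: decompose $\roja$ via Lemma~\ref{lemma:oja_error_decomposition}, apply the high-dimensional CLT (Theorem~\ref{thm:prop2pt1}) to the rescaled H\'ajek projection $\tilde H_i[J]$ after verifying (M.1) via Lemma~\ref{lemma:second_moment_matrix} and (M.2)/(E.1) via sub-exponentiality, then handle the remainder by the $\epsilon$-enlargement/contraction argument with Nazarov's inequality and optimize $\delta$. The only minor deviations are that the paper verifies (E.1) coordinate-wise (noting $u^\top H_i$ is a product of two sub-Gaussians, hence sub-exponential with parameter $\sim\sigma^2\lambda_1$) rather than via $\|A_i-\Sigma\|$, and the remainder after rescaling actually scales like $\tilde O\bigl((\Mtwo/(\eigengap))^{2}\bigr)\,n^{-1/2}\delta^{-3}$, which is what makes the optimized $\delta$ come out as $(\Mtwo/(\eigengap))^{1/2}n^{-1/8}$ rather than the $1/4$-power you wrote.
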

\begin{proof}[Proof of Proposition~\ref{prop:clt_appendix}]
Consider the error decomposition of the Oja vector in Lemma~\ref{lemma:oja_error_decomposition}. 
   We have $\roja = \Psi_{n,1} + \Psi_{n,2} + \Psi_{n,3} + \Psi_{n,4}$, where $\Psi_{n,1}, \Psi_{n,2}, \Psi_{n,3}, \Psi_{n,4}$ are defined in Equation~\eqref{eq:ojadecomp}.
  Let $\rem:=\Psi_{n,2} + \Psi_{n,3} + \Psi_{n,4}$.
   
   For any $\delta \in \bb{0,1}$, $\exists \epsilon > 0$ such that from Lemma~\ref{lemma:oja_error_decomposition_higher_order_tail_bounds} we have,  
   \bas{
   \Prob((\eta_n\bb{\eigengap})^{-1/2}\|\rem\|_{2}\geq \epsilon)\leq \delta
   }
   we will specify $\epsilon$ as needed in the proof.
  


For all $i \in [n]$, let 
\ba{
U_i:=\underbrace{\sqrt{n\eta_n/\bb{\eigengap}}}_{c_n} H_i
}
 We show that $U_1, U_2, \dots, U_n$ satisfy conditions (M.1) and (M.2) with suitable constants. 

For (M.1), using equation \eqref{eq:ojadecomp}, 

\ba{ \sum_{i=1}^{n}H_{i} = \Psi_{n,1}.
\label{eq:psin1}}

By Lemma~\ref{lemma:second_moment_matrix} (equation \eqref{eq:variance_diff_bound}), there exists a universal constant $C_0$ such that
\bas{
\Abs{e_j^{\top} \bb{ \frac{\eta_n}{\eigengap} \sum_{i=1}^{n} \E\bbb{H_iH_i^{\top}} - \V} e_j} \leq \frac{\eta_n \lambda_1 \Mtwo^2}{C_0 \bb{\eigengap}^2} \le \frac{b}{2} \le \frac{\V_{jj}}{2}.
}
for all $j \in J$, where the last two inequalities follow by assumption and definition of $J$. This implies for all $j\in J$,
\bas{
\frac{\eta_n}{\eigengap} \sum_{i}^{n} \E\bbb{H_{ij}^2} \ge \V_{jj}/2 \ge b/2 \iff \frac{1}{n} \sum_i \E[U_{ij}^2] \geq \V_{jj}/2\geq b/2
}
 
To show (M.2), by Lyapunov's inequality and Assumption~\ref{assumption:bounded_moments}:
    \bas{
    \E\bbb{\|U_{ij}^{2+k}\|_{2}} = \E \bbb{c_n^{2+k}|H_{ij}|^{2+k}}&\leq 2 (c_n\Mfour)^{2+k}
    }
    for $k \in \{1,2\}$, where $C_n:=2c_n\Mfour$.
    
    We now check condition E.1. Now note that for any unit vector $u \in \R^{d}$, $u^T H_i$ is subexponential with parameter  $\sigma^2\lambda_1$  (Proposition 2.7.1. of~\citep{vershynin2018high}). Hence, there exists a constant $C > 0$ such that 
    \bas{
    \E [\exp(|H_{ij}|/C\lambda_1 \sigma^2)] \leq 2
    }
    Therefore, 
    \bas{
    \E [\exp(|U_{ij}|/C\lambda_1 c_n\sigma^2)]\leq 2.
    }
    
    Now we set $C_n :=\max(2c_n\Mfour,C\lambda_1 c_n\sigma^2)$.
    
    Using Eq~\ref{eq:psin1},
    \bas{\frac{1}{\sqrt{(\eigengap)\eta_n}}\Psi_{n,1}[J]=\sqrt{\eta_n/\bb{\eigengap}}\sum_i H_i[J]=\frac{1}{\sqrt{n}}\sum_iU_i[J],}
     the random variables $U_{i}[J], i \in [n]$ satisfy conditions (M.1), (M.2) and (E.1). By Theorem~\ref{thm:prop2pt1},
    \bas{
    \rho(\mathcal{A}^{\text{re}})\leq C \left( \frac{C_n^2 \log^7 (pn)}{n} \right)^{1/6}
    }

Recall from the statement of the proposition that $Y_i,i\in [n]$ are mean zero independent Gaussian vectors in $ \mathbb{R}^{p}$ with the same covariance structure as $U_i[J]$, i.e, $\E\bbb{Y_{i}Y_{i}^{\top}} = \E\bbb{U_i[J]U_i[J]^{\top}}$.

Let $S_W$ be the random variable $\sum_i W_i$ for any collection $W$ of $n$ random variables $W_1, W_2, \dots, W_n$. Consider the vector $S_{W}[J]$ to be the projection of $W$ on the set $J$, defined as $e_{i}^{\top}S_{W}[J] = e_{i}^{\top}S_{W}$ for $i \in J$.

Recall that 
\bas{
e_i^T\roja:= e_{i}^{\top}\bb{\sum_{j=1}^n\eta_{n}H_j + \rem}.
}
Let $A :=\{u\in \mathbb{R}^{p}|u_i\in [a_i,b_i], i\in J\}$. Let $A_{\epsilon}^+ :=\{X|X_i\in [a_i-\epsilon,b_i+\epsilon], i\in [p]\}$ and $A_{\epsilon}^- := \{X|X_i\in [a_i+\epsilon,b_i-\epsilon], i\in J\}$. 

Let $S_R[J] := \sum_{i \in J}e_i^T\roja$. Then, we have $S_R[J]=\eta_n  S_H[J]+\rem[J]$. 

We will use the following identity for vectors $G_1,G_2\in \mathbb{R}^p$.  
\bas{
\Prob(G_1\in A_{\epsilon}^-, \|G_2\|\leq \epsilon)\leq \Prob(G_1+G_2\in A,\|G_2\|\leq \epsilon)\leq P(G_1\in A_{\epsilon}^+, \|G_2\|\leq \epsilon)
}

So, 
\bas{
\Prob(G_1+G_2\in A)&\leq \Prob(G_1\in A_{\epsilon}^+, \|V\|\leq \epsilon)+P(\|V\|\geq \epsilon)\\
\Prob(G_1+G_2\in A)&\geq P(G_1\in A_{\epsilon}^-, \|G_2\|\leq \epsilon)
}

Using $G_1=S_U[J]/\sqrt{n}$ and $G_2=(\eta_n \bb{\eigengap})^{-1/2}\rem$, we have:
\bas{
&\Prob((\bb{\eigengap}\eta_n)^{-1/2}\roja[J]\in A)-\Prob(S_Y/\sqrt{n}\in A)\\
   &\leq \Prob((\bb{\eigengap}\eta_n)^{-1/2}\roja[J] \in A, (\eta_n\bb{\eigengap})^{-1/2}\|\rem\|\leq \epsilon)+\Prob((\eta_n \bb{\eigengap})^{-1/2}\|\rem\|_{2}\geq \epsilon) \\
   & \;\;\;\; -\Prob(S_Y/\sqrt{n}\in A)\\
   &\leq \Prob(S_U[J]/\sqrt{n}\in A_\epsilon^+)+\Prob((\eta_n \bb{\eigengap})^{-1/2}\|\rem\|_{2}\geq \epsilon)-\Prob(S_Y/\sqrt{n}\in A) =: \gamma_A.
}
Note that $\gamma_A$ can be bounded as
\bas{
   \gamma_A &\leq |\Prob(S_U[J]/\sqrt{n}\in A_\epsilon^+)-\Prob(S_Y/\sqrt{n}\in A_\epsilon^+)| \\
   &+|\Prob(S_Y/\sqrt{n}\in A_\epsilon^+)-\Prob(S_Y/\sqrt{n}\in A)|+\Prob((\eta_n \bb{\eigengap})^{-1/2}\|\rem\|\geq \epsilon).
    }
Similarly,
\bas{
   &\Prob((\bb{\eigengap} \eta_n)^{-1/2}\roja[J]\in A)-\Prob(S_Y/\sqrt{n}\in A) \ge \omega_{A},
   }
   where
   \bas{
   \omega_{A} &:= \Prob(S_U[J]/\sqrt{n}\in A_\epsilon^-, (\eta_n \bb{\eigengap})^{-1/2}\|\rem\|\geq \epsilon)-\Prob(S_Y/\sqrt{n}\in A)\\
   &\geq \Prob(S_U[J]/\sqrt{n}\in A_\epsilon^-)-\Prob((\eta_n \bb{\eigengap})^{-1/2}\|\rem\|\geq \epsilon)\\
   &-\Prob(S_Y/\sqrt{n}\in A_\epsilon^-)+\Prob(S_Y/\sqrt{n}\in A_\epsilon^-)-\Prob(S_Y/\sqrt{n}\in A).
}
Therefore, we have by Theorem~\ref{thm:prop2pt1} that for some constant $C'$ that depends only on $b$,
    \ba{
    \sup_{A\in \mathcal{A}_{re}}|\gamma_{A}|\leq  C' \left( \frac{C_n^2 \log^7 (pn)}{n} \right)^{1/6}+\Abs{\Prob(S_Y/\sqrt{n}\in A_\epsilon^+)-\Prob(S_Y/\sqrt{n}\in A)}+\delta.\label{eq:gamma}
    }
    
    Similarly, 
    \ba{\label{eq:omega}
    \sup_{A\in \mathcal{A}_{re}}|\omega_{A}|\leq  C' \left( \frac{C_n^2 \log^7 (pn)}{n} \right)^{1/6}+\Abs{\Prob(S_Y/\sqrt{n}\in A_\epsilon^-)-\Prob(S_Y/\sqrt{n}\in A)}+\delta.
    }
    For $\Prob(S_Y/\sqrt{n}\in A_\epsilon^+)-\Prob(S_Y/\sqrt{n}\in A)$, we will use Nazarov's inequality (Lemma~\ref{thm:Nazarov}):
    \ba{\label{eq:aplusnazarov}
   \Abs{\Prob(S_Y/\sqrt{n}\in A_\epsilon^+)-\Prob(S_Y/\sqrt{n}\in A)}\leq \frac{\sqrt{2}\epsilon}{b^{1/2}}(\sqrt{2\log p}+2).
    }

For bounding the terms concerning $A_{\epsilon}^-$, we need to be a little careful because if $b_i-a_i \leq  2\epsilon$, then $A_{\epsilon}^-$ has measure zero under the Gaussian distribution. If $A_{\epsilon}^-$ is nonempty, then we have the same bound as Eq~\ref{eq:aplusnazarov}.
However, in case that is not true, note that there must be some $i\in [p]$ such that $b_i-a_i \leq 2\epsilon$.
Hence
\ba{\label{eq:aminusnazarovempty}
   \Abs{\Prob(S_Y/\sqrt{n}\in A_\epsilon^-)-\Prob(S_Y/\sqrt{n}\in A)}&=\Prob(S_Y/\sqrt{n}\in A)\notag\\
   \rd &=\Prob(S_Y[i]/\sqrt{n}\in [a_i,b_i])\notag \\
   &\leq \frac{2\epsilon}{\sqrt{\pi}b^{1/2}}.\bk
    }
    So overall,
   \ba{\label{eq:aminusnazarov}
   \Abs{\Prob(S_Y/\sqrt{n}\in A_\epsilon^-)-\Prob(S_Y/\sqrt{n}\in A)}&=\Prob(S_Y/\sqrt{n}\in A)\notag\\
   \rd &=\Prob(S_Y[i]/\sqrt{n}\in [a_i,b_i])\notag \\
   &\leq \max\bb{\frac{2\epsilon}{\sqrt{\pi}b^{1/2}},\frac{\sqrt{2}\epsilon}{b^{1/2}}(\sqrt{2\log p}+2).\bk
    }
    }
    Putting Eqs~\ref{eq:gamma},~\ref{eq:omega}, ~\ref{eq:aplusnazarov} and~\ref{eq:aminusnazarov} together, we have, for some absolute constant $C_1$:
    \ba{
    &\sup_{A\in \mathcal{A}_{re}}|\Prob((\bb{\eigengap}\eta_n)^{-1/2}\roja[J]\in A)-\Prob(n^{-1/2}S_Y\in A)|\leq \max(\sup_{A\in \mathcal{A}_{re}}|\gamma_{A}|,\sup_{A\in \mathcal{A}_{re}}|\omega_{A}|) \notag \\
    &\lesssim \left(\frac{C_n^2 \log^7 (pn)}{n} \right)^{1/6}+\frac{C_1\epsilon}{b^{1/2}}\sqrt{\log p}+\delta.  \label{eq:clt_bound_1}
    }
We invoke Lemma A.2.3 in~\cite{kumarsarkar2024markovoja} to see that:  $\Mfour \leq \lambda_1+\sigma^2\tr{\Sigma}$. Therefore, for some constant $C'' > 0$,
\bas{
C_n=\max(2c_n\Mfour,C\lambda_{1}c_n\sigma^2) \leq C''\sqrt{\frac{n\eta_n}{\bb{\eigengap}}}\Mfour.
}
From Lemma~\ref{lemma:oja_error_decomposition_higher_order_tail_bounds} and the assumption on the learning rate (Lemma~\ref{lemma:learning_rate_choice}),
\ba{
\sqrt{\eta_{n}\bb{\eigengap}}\epsilon \lesssim \frac{ \eta_{n}^2n \Mtwo^2 \log d} {\sqrt{\delta}} + \frac{\sqrt{s_n}\eta_n\sqrt{n}\Mtwo\log\bb{d}}{\sqrt{\delta}}
         + \frac{\log\bb{\frac{1}{\delta}}}{\delta^{3}}\bb{ \frac{\sqrt{\eta_{n}^{3}n}\Mtwo^{2}\log\bb{d}}{\sqrt{\lambda_{1}-\lambda_{2}}}} \label{eq:Sv_tail_bound}.  \bk
}


    Substituting the bound on $\eps$ from equation~\eqref{eq:Sv_tail_bound} into equation~\eqref{eq:clt_bound_1} and optimizing over $\delta$ yields
     \ba{
\delta
=\tilde{O}\bb{\bb{\frac{\log p}{b}}^{1/8} \sqrt{\frac{\Mtwo}{\eigengap}}n^{-1/8}}. \label{eq:delta_bound}
    }
    Substituting the choice of $\delta$ from equation~\eqref{eq:delta_bound} in~\eqref{eq:clt_bound_1}, we conclude
    \bas{
    & \sup_{A\in \mathcal{A}^{\text{re}}}|\Prob((\bb{\eigengap}\eta_n)^{-1/2}\roja[J]\in A)-\Prob(n^{-1/2}S_Y\in A)| \\
    & \quad\quad = \tilde{O}\bb{ \max\bb{\bb{\frac{\Mfour}{\eigengap }}^{1/3}n^{-1/6}, \bb{\frac{\Mtwo}{\eigengap}}^{1/2}n^{-1/8}}}.
    }
\end{proof}

\end{appendix}
\end{document}